\documentclass[10pt]{amsart}

\usepackage{amsmath,amsfonts,amssymb,amsthm}
\usepackage{xcolor}
\usepackage{tikz}


\newtheorem{theorem}{Theorem}[subsection]
\newtheorem{lemma}[theorem]{Lemma}
\newtheorem{proposition}[theorem]{Proposition}
\newtheorem{corollary}[theorem]{Corollary}

\theoremstyle{definition}
\newtheorem{definition}[theorem]{Definition}
\newtheorem{example}[theorem]{Example}

\theoremstyle{remark}
\newtheorem{remark}[theorem]{Remark}

\numberwithin{equation}{subsection}


\newcommand{\KD}{\ensuremath\mathrm{KD}}
\newcommand{\kd}{\ensuremath\mathrm{key}}
\newcommand{\SSYT}{\ensuremath\mathrm{SSYT}}

\newcommand{\wt}{\ensuremath\mathrm{\mathbf{wt}}}
\newcommand{\cwt}{\ensuremath\mathrm{\mathbf{cwt}}}

\newcommand{\comp}[1]{\ensuremath\mathbf{#1}}

\newcommand{\supp}{\ensuremath\mathbf{supp}}
\newcommand{\Row}{\ensuremath\mathrm{Row}}
\newcommand{\drop}{\ensuremath\mathbf{drop}}

\newcommand{\lswap}{\ensuremath\mathrm{lswap}}
\newcommand{\thread}{\ensuremath\boldsymbol{\theta}}

\newcommand{\rectify}{\ensuremath\mathrm{rectify}}

\newcommand{\key}{\ensuremath\kappa}
\newcommand{\schubert}{\ensuremath\mathfrak{S}}

\newcommand{\KDs}{\ensuremath\mathcal{D}}
\newcommand{\KDstr}{\ensuremath\overline{\mathcal{D}}(\comp{a},k)}
\newcommand{\KDstrm}{\ensuremath\overline{\mathcal{D}}^{(m)}(\comp{a},k)}


\newcommand{\setW}{\ensuremath U^{=k}}
\newcommand{\setV}{\ensuremath U^{<k}}
\newcommand{\setVV}{\ensuremath U^{\leq k}}
\newcommand{\setY}{\ensuremath U^{\mathrm{rect}}}


\newcommand{\col}{\ensuremath\mathrm{col}}
\newcommand{\droppable}{\ensuremath\mathfrak{d}}

\newcommand{\ins}{\Delta}
\newcommand{\Le}{\ensuremath\mathbf{Le}}
\newcommand{\Lbl}{\ensuremath\mathcal{L}}

\newcommand{\matchable}{\ensuremath\mathfrak{m}}
\newcommand{\Mtch}{\ensuremath\mathcal{M}}

\newcommand{\plength}{\ensuremath\mu}

\newcommand{\row}{\ensuremath\mathrm{row}}

\newcommand{\Tab}{\mathbb{D}^{-1}}
\newcommand{\thd}{\thread}

\newcommand{\rect}{\ensuremath\varrho}

\newlength\cellsize \setlength\cellsize{8\unitlength}
\newlength\smcellsize \setlength\smcellsize{8\unitlength}

\savebox2{
\begin{picture}(8,8)
\put(0,0){\line(1,0){8}}
\put(0,0){\line(0,1){8}}
\put(8,0){\line(0,1){8}}
\put(0,8){\line(1,0){8}}
\end{picture}}

\newcommand\boxify[1]{\def\thearg{#1}\def\nothing{}%
\ifx\thearg\nothing\vrule width0pt height\cellsize depth0pt%
  \else\hbox to 0pt{\usebox2\hss}\fi%
  \vbox to \cellsize{\vss\hbox to \cellsize{\hss$_{#1}$\hss}\vss}}

\savebox3{
\begin{picture}(8,8)
\put(4,4){\circle{8}}
\end{picture}}

\newcommand{\circify}[1]{\def\thearg{#1}\def\nothing{}%
\ifx\thearg\nothing\vrule width0pt height\smcellsize depth0pt%
  \else\hbox to 0pt{\usebox3\hss}\fi%
  \vbox to \smcellsize{\vss\hbox to \smcellsize{\hss$_{#1}$\hss}\vss}}

\newcommand{\grayify}[1]{\def\thearg{#1}\def\nothing{}%
\ifx\thearg\nothing\vrule width0pt height\smcellsize depth0pt%
  \else\hbox to 0pt{\usebox3\hss}\fi%
  \vbox to \smcellsize{\vss\hbox to \smcellsize{\hss$\color{gray}_{#1}$\hss}\vss}}
\newcommand{\redify}[1]{\def\thearg{#1}\def\nothing{}%
\ifx\thearg\nothing\vrule width0pt height\smcellsize depth0pt%
  \else\hbox to 0pt{\usebox3\hss}\fi%
  \vbox to \smcellsize{\vss\hbox to \smcellsize{\hss$\color{red}_{#1}$\hss}\vss}}

\newcommand\nullify[1]{\def\thearg{#1}\def\nothing{}%
\ifx\thearg\nothing\vrule width0pt height\cellsize depth0pt%
  \else\hbox to 0pt{\hss}\fi%
  \vbox to \cellsize{\vss\hbox to \cellsize{\hss$_{#1}$\hss}\vss}}

\newcommand\tableau[1]{\vtop{\let\\=\cr
\setlength\baselineskip{-8000pt}
\setlength\lineskiplimit{8000pt}
\setlength\lineskip{0pt}
\halign{&\boxify{##}\cr#1\crcr}}}

\newcommand\cirtab[1]{\vtop{\let\\=\cr
\setlength\baselineskip{-8000pt}
\setlength\lineskiplimit{8000pt}
\setlength\lineskip{0pt}
\halign{&\circify{##}\cr#1\crcr}}}

\newcommand\nulltab[1]{\vtop{\let\\=\cr
\setlength\baselineskip{-8000pt}
\setlength\lineskiplimit{8000pt}
\setlength\lineskip{0pt}
\halign{&\nullify{##}\cr#1\crcr}}}

\definecolor{boxgray}{gray}{.7}

\newcommand{\cball}[1]{%
  \begin{tikzpicture}
    \filldraw[fill=#1!50,draw=black] circle (4pt);
  \end{tikzpicture}
}

\newcommand{\mball}{\makebox[0pt]{\raisebox{1.5pt}{$\leftarrow$}}\cball{red}}

\usetikzlibrary{calc}
\newcommand{\convexpath}[2]{
  [   
  create hullcoords/.code={
    \global\edef\namelist{#1}
    \foreach [count=\counter] \nodename in \namelist {
      \global\edef\numberofnodes{\counter}
      \coordinate (hullcoord\counter) at (\nodename);
    }
    \coordinate (hullcoord0) at (hullcoord\numberofnodes);
    \pgfmathtruncatemacro\lastnumber{\numberofnodes+1}
    \coordinate (hullcoord\lastnumber) at (hullcoord1);
  },
  create hullcoords
  ]
  ($(hullcoord1)!#2!-90:(hullcoord0)$)
  \foreach [
  evaluate=\currentnode as \previousnode using \currentnode-1,
  evaluate=\currentnode as \nextnode using \currentnode+1
  ] \currentnode in {1,...,\numberofnodes} {
    let \p1 = ($(hullcoord\currentnode) - (hullcoord\previousnode)$),
    \n1 = {atan2(\y1,\x1) + 90},
    \p2 = ($(hullcoord\nextnode) - (hullcoord\currentnode)$),
    \n2 = {atan2(\y2,\x2) + 90},
    \n{delta} = {Mod(\n2-\n1,360) - 360}
    in 
    {arc [start angle=\n1, delta angle=\n{delta}, radius=#2]}
    -- ($(hullcoord\nextnode)!#2!-90:(hullcoord\currentnode)$) 
  }
}

\begin{document}


\title[Pieri rule for Demazure characters]{A Pieri rule for Demazure characters \\ of the general linear group}  

\author[Assaf]{Sami Assaf}
\address{Department of Mathematics, University of Southern California, 3620 S. Vermont Ave., Los Angeles, CA 90089-2532, U.S.A.}
\email{shassaf@usc.edu}
\thanks{Work supported in part by NSF DMS-1763336.}

\author[Quijada]{Danjoseph Quijada}
\address{Department of Mathematics, University of Southern California, 3620 S. Vermont Ave., Los Angeles, CA 90089-2532, U.S.A.}
\email{dquijada@usc.edu}

\subjclass[2010]{Primary 05E05; Secondary 05E10, 14N10, 14N15}



\keywords{Demazure characters, key polynomials, Pieri rule, RSK insertion}

\begin{abstract}
  The Pieri rule is a nonnegative, multiplicity-free formula for the Schur function expansion of the product of an arbitrary Schur function with a single row Schur function. Key polynomials are characters of Demazure modules for the general linear group that generalize the Schur function basis of symmetric functions to a basis of the full polynomial ring. We prove a nonsymmetric generalization of the Pieri rule by giving a cancellation-free, multiplicity-free formula for the key polynomial expansion of the product of an arbitrary key polynomial with a single part key polynomial. Our proof is combinatorial, generalizing the Robinson--Schensted--Knuth insertion algorithm on tableaux to an insertion algorithm on Kohnert diagrams.
\end{abstract}

\maketitle
\tableofcontents

%
\section{Introduction}
%
\label{sec:introduction}

Schur polynomials are ubiquitous throughout mathematics. They arise in representation theory as characters for irreducible polynomial representations of the general linear group and as Frobenius characters for irreducible representations of the symmetric group; they arise in geometry as polynomial representatives for the cohomology classes of Schubert cycles in Grassmannians. Thus the Schur polynomials are naturally indexed by integer partitions. In these contexts, combinatorial rules for expanding the product of Schur polynomials in the Schur basis has deep meaning, in the former cases giving the irreducible decomposition of tensor products or restrictions of modules, and in the latter case giving intersection multiplicities for Schubert varieties. The celebrated Pieri rule \cite{Pie93}, originally stated in the context of Schubert Calculus \cite{Sch79}, gives a manifestly positive combinatorial formula for computing these structure constants when one of the factors has a special form, namely consists of a single nonzero part. In this case, the Schur expansion is multiplicity-free, meaning that the only nonzero coefficient appearing is $1$.

Demazure \cite{Dem74} generalized the Weyl character formula to certain submodules of irreducible modules generated by extremal weight spaces under the action of a Borel subalgebra of a Lie algebra. The \emph{Demazure modules} originally arose in connection with Schubert calculus \cite{Dem74a} and have since been shown to have deep connections with certain specializations of nonsymmetric Macdonald polynomials \cite{San00,Ion03,Ass18}. A \emph{Demazure character} is naturally indexed by a highest weight and an element of the Weyl group, which, in the case of the general linear group, becomes an integer partition and a permutation. When the permutation is taken to be the longest element, the Demazure module is the full irreducible module, and so the corresponding Demazure character is a Schur polynomial. In general, the set of Demazure characters of the general linear group form a basis of the polynomial ring often called \emph{key polynomials} and are indexed by weak compositions obtained by acting on the given partition with the given permutation. As the key polynomials form a basis of the polynomial ring containing the Schur polynomials, it is natural to consider the expansion of a product of key polynomials into the key basis. However the coefficients appearing are not, in general, nonnegative.

We prove, in Theorem~\ref{thm:monkey}, a cancellation-free combinatorial formula for these structure constants when one of the factors has a special form parallel to the Pieri case for Schur polynomials. Moreover, our \emph{nonsymmetric Pieri rule} is multiplicity-free in the sense that the only nonzero coefficients appearing are $1$ and $-1$.

Our proof of this new rule is combinatorial, arising via a bijection, stated in Theorem~\ref{thm:RSKD}, on the combinatorial model of \emph{Kohnert diagrams} \cite{Koh91} that generates key polynomials. This bijection generalizes the beautiful Robinson--Schensted--Knuth insertion algorithm \cite{Rob38,Sch61,Knu70} that can be used to prove Pieri's formula for Schur polynomials. We give simple, direct proofs of the bijection for two extreme cases in Section~\ref{sec:bijection}, and we develop new tools for studying the combinatorics of Kohnert diagrams to prove the general case in Section~\ref{sec:stratify}. 

Related to our nonsymmetric Pieri rule, Haglund, Luoto, Mason and van Willigenburg \cite{HLMvW11-2} give a \emph{nonnegative} combinatorial formula for the key expansion of a product of a key polynomial and a Schur polynomial with a certain number of variables. Our formula is more general in that we do not restrict the number of variables for the Schur polynomial, though it is less general in that we consider only Schur polynomials indexed by partitions with one part. When both formulas apply, we are in one of the extremal cases where our proof simplifies greatly to give a simpler proof and formula than what is given in \cite{HLMvW11-2}.

Assaf and McNamara give a \emph{skew Pieri rule} \cite{AM11} involving signs that is also proved by generalizing Robinson--Schensted--Knuth insertion, but the proof there uses a sign-reversing involution to cancel terms. In contrast, the signs in our nonsymmetric Pieri formula arise from the fact that the image of the bijection induced by insertion is to a union of objects that is, in general, not disjoint. As a corollary, in Section~\ref{sec:applications-pos}, we characterize when the union in the image is disjoint, and thus characterize when the key expansion of the product of key polynomial and a single-part key polynomial is nonnegative. We apply these characterizations to the \emph{Schubert polynomials} of Lascoux and Sch{\"u}tzenberger \cite{LS82}, providing geometric motivation for these results.

%
\section{Key combinatorics}
%
\label{sec:kohnert}

Schur polynomials may be defined combinatorially as the generating polynomials of \emph{semistandard Young tableaux}, certain positive integer fillings of \emph{Young diagrams}, graphical representations of integer partitions as diagrams of unit cells in the plane. The Demazure characters for the general linear group, studied combinatorially as \emph{standard bases} by Lascoux and Sch{\"u}tzenberger \cite{LS90} and Kohnert \cite{Koh91}, then under the name \emph{key polynomials} by Reiner and Shimozono \cite{RS95}, Mason \cite{Mas09}, Assaf and Searles \cite{AS18}, Assaf \cite{Ass-W} and others, can be characterized in many equivalent ways. In Section~\ref{sec:kohnert-diagrams}, we review Kohnert's \cite{Koh91} elegant combinatorial algorithm for computing a key polynomial based on diagrams, which lies at the heart of our Pieri rule for key polynomials. In Section~\ref{sec:kohnert-lswap}, we review a partial order on weak compositions studied by Assaf and Searles \cite{AS18} that allows us to characterize when diagrams corresponding to one key polynomial also correspond to another. 

\subsection{Young tableaux}
\label{sec:kohnert-tableaux}

Throughout, we fix a positive integer $n$ and consider polynomials in variables $x_1, x_2, \ldots, x_n$.

A \emph{partition} $\lambda$ of $m$ is sequence $(\lambda_1 \geq \cdots \geq \lambda_{\ell} > 0)$ of non-negative integers that sum to $m$. We insist $\ell \leq n$ and set $\lambda_i=0$ for all $\ell<i\leq n$. We draw the \emph{Young diagram} of a partition $\lambda$ in English notation as the set of left justified unit cells with $\lambda_{n-i+1}$ cells in row $i$ indexed from the bottom as depicted in Fig.~\ref{fig:5441}.

\begin{figure}[ht]
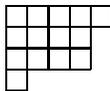

  \begin{displaymath}
    \tableau{ \ & \ & \ & \ & \ \\
      \ & \ & \ & \ \\
      \ & \ & \ & \ \\
      \ }
  \end{displaymath}
  \caption{\label{fig:5441} The Young diagram for the partition $(5,4,4,1)$.}
\end{figure}

A \emph{semistandard Young tableau of shape $\lambda$} is a filling of the cells of the Young diagram of $\lambda$ with positive integers such that entries weakly increase left to right along rows and strictly increase top to bottom down columns. Let $\SSYT_n(\lambda)$ denote the set of semistandard Young tableaux of shape $\lambda$ with image in $\{1,2,\ldots,n\}$. Here we emphasize that entries may not exceed $n$. For example, the semistandard Young tableaux of shape $(3,2)$ with largest entry $3$ are given in Fig.~\ref{fig:SSYT}.

\begin{figure}[ht]
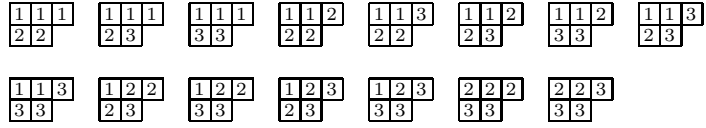

  \begin{displaymath}
    \begin{array}{cccccccc}
      \tableau{1 & 1 & 1 \\ 2 & 2 } &
      \tableau{1 & 1 & 1 \\ 2 & 3 } &
      \tableau{1 & 1 & 1 \\ 3 & 3 } &
      \tableau{1 & 1 & 2 \\ 2 & 2 } &
      \tableau{1 & 1 & 3 \\ 2 & 2 } &
      \tableau{1 & 1 & 2 \\ 2 & 3 } &
      \tableau{1 & 1 & 2 \\ 3 & 3 } &
      \tableau{1 & 1 & 3 \\ 2 & 3 } \\ \\
      \tableau{1 & 1 & 3 \\ 3 & 3 } &
      \tableau{1 & 2 & 2 \\ 2 & 3 } &
      \tableau{1 & 2 & 2 \\ 3 & 3 } &
      \tableau{1 & 2 & 3 \\ 2 & 3 } &
      \tableau{1 & 2 & 3 \\ 3 & 3 } &
      \tableau{2 & 2 & 2 \\ 3 & 3 } &
      \tableau{2 & 2 & 3 \\ 3 & 3 } 
    \end{array}
  \end{displaymath}
  \caption{\label{fig:SSYT}The set $\SSYT_3(3,2)$ of semistandard Young tableaux of shape $(3,2)$ with largest entry $3$.}
\end{figure}

A \emph{weak composition} $\comp{a} = (a_1,\ldots,a_{n})$ is a sequence $n$ of nonnegative integers. To each semistandard Young tableau $T$, we associate the weak composition $\wt(T)$ whose $i$th component is equal to the number of occurrences of $i$ in $T$. For example, the weights of the first column of Fig.~\ref{fig:SSYT} are $(3,2,0),(2,0,3)$, from top to bottom.

Classically, Schur polynomials may be defined combinatorially as the generating polynomials of semistandard Young tableaux as follows.

\begin{definition}
  The \emph{Schur polynomial} $s_{\lambda}(x_1,\ldots,x_n)$ is given by
  \begin{equation}
    s_{\lambda}(x_1,\ldots,x_n) = \sum_{T \in \SSYT_n(\lambda)} x_1^{\wt(T)_1} \cdots x_n^{\wt(T)_n}.
    \label{e:schur}
  \end{equation}
  \label{def:schur}
\end{definition}

For example, from Fig.~\ref{fig:SSYT} we compute the Schur polynomial
\begin{eqnarray*}
  s_{(3,2)}(x_1,x_2,x_3) & = & x_2^2 x_3^3 + x_1 x_2 x_3^3 + x_1^2 x_3^3 + x_2^3 x_3^2 + 2 x_1 x_2^2 x_3^2 + 2 x_1^2 x_2 x_3^2 \\
  & & + x_1^3 x_3^2 + x_1 x_2^3 x_3 + 2 x_1^2 x_2^2 x_3 + x_1^3 x_2 x_3 + x_1^2 x_2^3 + x_1^3 x_2^2.
\end{eqnarray*}

\subsection{Kohnert diagrams}
\label{sec:kohnert-diagrams}

A \emph{diagram} is any finite collection of unit cells in the first quadrant. To distinguish between generic diagrams and Young diagrams, we draw cells of generic diagrams as unit circles.

The \emph{key diagram} of a weak composition $\comp{a}$, denoted by $\kd_{\comp{a}}$ as the set of left justified cells with $a_i$ in row $i$ indexed in Cartesian coordinates. For example, Fig.~\ref{fig:40514} shows the key diagram for the weak composition $(4,1,5,0,4)$. 

\begin{figure}[ht]
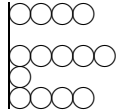

  \begin{displaymath}
    \vline\cirtab{ \ & \ & \ & \ \\
      \\
      \ & \ & \ & \ & \ \\
      \ \\
      \ & \ & \ & \  } 
  \end{displaymath}
  \caption{\label{fig:40514} The key diagram for the weak composition $(4,1,5,0,4)$.}
\end{figure}

\begin{definition}[\cite{Koh91}]
  A \emph{Kohnert move} on a diagram selects the rightmost cell of a given row and moves the cell down within its column to the first available position below, if it exists, jumping over other cells in its way as needed. 
\label{def:move}
\end{definition}

Denote the set of diagrams that can be obtained by Kohnert moves from the key diagram $\kd_{\comp{a}}$ by $\KD(\comp{a})$. Note that there might be multiple ways to obtain a diagram from different Kohnert moves of a given diagram, but each resulting diagram is included in the set exactly once.

\begin{example}
  To construct the set $\KD(0,3,2)$ of Kohnert diagrams for $(0,3,2)$, we begin with the key diagram $\kd_{(0,3,2)}$ show at the top of Fig.~\ref{fig:kohnert}. Selecting the second row, we may move the rightmost cell down to the first. Selecting the third row, we must also move the rightmost cell down to the first row, jumping over the cell in the same column and second row. Fig.~\ref{fig:kohnert} shows all diagrams that can be obtained via Kohnert moves from the key diagram of $(0,3,2)$. 
  \label{ex:KD}
\end{example}

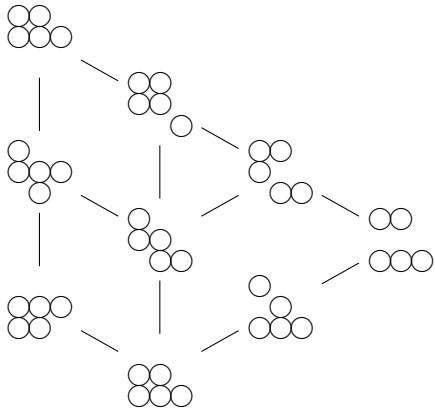
\begin{figure}[ht]
  \begin{center}
    \begin{tikzpicture}[xscale=1.6,yscale=0.9]
      \node at (0,5) (A) {$\vline\cirtab{  \ &  \ \\  \ &  \ &  \ \\ & }$};
      \node at (1,4) (B) {$\vline\cirtab{  \ &  \ \\  \ &  \ \\ & & \ }$};
      \node at (2,3) (C) {$\vline\cirtab{  \ &  \ \\  \ \\ &  \ &  \ }$};
      \node at (3,2) (D) {$\vline\cirtab{  \ &  \ \\ \\  \ &  \ &  \ }$};
      \node at (1,2) (E) {$\vline\cirtab{  \ \\  \ &  \ \\ &  \ &  \ }$};
      \node at (2,1) (F) {$\vline\cirtab{  \ \\ &  \ \\  \ &  \ &  \ }$};
      \node at (1,0) (G) {$\vline\cirtab{ \\  \ &  \ \\  \ &  \ &  \ }$};
      \node at (0,3) (H) {$\vline\cirtab{  \ \\  \ &  \ &  \ \\ &  \ }$};
      \node at (0,1) (I) {$\vline\cirtab{ \\  \ &  \ &  \ \\  \ &  \ }$};
      \draw[thin] (A) -- (H) ;
      \draw[thin] (A) -- (B) ;
      \draw[thin] (H) -- (I) ;
      \draw[thin] (H) -- (E) ;
      \draw[thin] (B) -- (E) ;
      \draw[thin] (B) -- (C) ;
      \draw[thin] (I) -- (G) ;
      \draw[thin] (E) -- (G) ;
      \draw[thin] (C) -- (E) ;
      \draw[thin] (C) -- (D) ;
      \draw[thin] (D) -- (F) ;
      \draw[thin] (F) -- (G) ;
    \end{tikzpicture}
    \caption{\label{fig:kohnert}Construction of Kohnert diagrams for $(0,3,2)$, where southeast edges indicate Kohnert moves on the second row, and south or southwest edges indicate Kohnert moves on the third row.}
  \end{center}
\end{figure}

To each diagram $T$ of cells in the first quadrant we associate the weak composition $\wt(T)$ whose $i$th component is equal to the number of cells in row $i$ of $T$. For example, the weights of diagrams in the leftmost column of Fig.~\ref{fig:kohnert} are $(0,3,2),(1,3,1),(2,3,0)$, from top to bottom.

We take as our definition the following result of Kohnert \cite{Koh91} that characterizes key polynomials as the generating polynomials for Kohnert diagrams.

\begin{definition}
  The \emph{key polynomial} $\key_{\comp{a}}$ is given by
  \begin{equation}
    \key_{\comp{a}} = \sum_{T \in \KD(\comp{a})} x_1^{\wt(T)_1} \cdots x_n^{\wt(T)_n} .
    \label{e:key}
  \end{equation}
  \label{def:key}  
\end{definition}

For example, from Fig.~\ref{fig:kohnert} we compute
\begin{eqnarray*}
  \key_{(0,3,2)} & = & x_2^3 x_3^2 + x_1 x_2^2 x_3^2 + x_1^2 x_2 x_3^2 + x_1^3 x_3^2 + x_1^2 x_2^2 x_3 \\ & & + x_1^3 x_2 x_3 + x_1^3 x_2^2 + x_1 x_2^3 x_3 + x_1^2 x_2^3 .
\end{eqnarray*}

Assaf and Searles \cite[Definition~4.5]{AS18} give an explicit map between Kohnert diagrams for $\comp{a}$ and semistandard Young tableaux of shape $\comp{\mathrm{sort}(a)}$ that is always injective and is surjective if and only if $\comp{a}$ is weakly increasing \cite[Theorem~4.6]{AS18}.

\begin{proposition}[\cite{AS18}]
  The map that places entry $n-i+1$ in each cell of row $i$ and raises cells of the diagram of $\comp{a}$ to partition shape $\lambda$ is a well-defined, weight-reversing, injective map $\KD(\comp{a}) \hookrightarrow \SSYT_n(\lambda)$, where $\lambda = \comp{\mathrm{sort}(a)}$. Moreover, this map is a bijection if and only if $\comp{a}$ is weakly increasing.
  \label{prop:sort}
\end{proposition}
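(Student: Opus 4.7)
The plan is to establish in sequence: (i) the map lands in $\SSYT_n(\lambda)$, (ii) it is weight-reversing, (iii) it is injective, and (iv) it is bijective precisely when $\comp{a}$ is weakly increasing. First I would observe that each Kohnert move permutes row-indices within a single column of a diagram and leaves all other columns untouched, so every $D \in \KD(\comp{a})$ has the same column-height vector $\lambda'$ as $\kd_{\comp{a}}$. Raising the cells of column $j$ into the top $\lambda'_j$ rows in an order-preserving manner therefore produces a diagram of partition shape $\lambda$, and assigning label $n-r+1$ to a cell originally in row $r$ gives strictly increasing labels down each column because smaller $r$ yields a larger label and relative vertical order within a column is preserved.

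The substantive content of (i) is weak increase along rows, which translates to the dominance inequality
\[
r^{(j)}_{k+\delta} \geq r^{(j+1)}_k \quad \text{for all } 1 \leq k \leq \lambda'_{j+1},
\]
where $r^{(j)}_1 < r^{(j)}_2 < \cdots$ lists the rows of cells of $D$ in column $j$ and $\delta := \lambda'_j - \lambda'_{j+1}$. I would prove this by induction on the number of Kohnert moves reaching $D$ from $\kd_{\comp{a}}$; the base case is immediate from the containment $\{k : a_k \geq j+1\} \subseteq \{k : a_k \geq j\}$. The inductive step is the main technical obstacle: a single Kohnert move on column $c_0$ decreases by exactly one a contiguous segment of positions in the sorted row-sequence of $c_0$, leaving all other columns and positions fixed, so dominance must be verified with both neighbors $c_0 \pm 1$. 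The side facing $c_0 - 1$ is preserved on general grounds since only the dominated side decreases, while the side facing $c_0 + 1$ requires the strict inequality that the moved row $r$ does not appear in the row-set of column $c_0 + 1$, which the rightmost-cell condition of the Kohnert move supplies; this strictness then propagates down through the block of jumped-over cells using the prior dominance.

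Weight-reversal is a direct count, and injectivity is immediate since $D$ is recovered from $T$ by placing a cell at $(j, n-\ell+1)$ for each cell of $T$ in column $j$ with label $\ell$. For (iv), the key tool is the descent invariant that Kohnert moves only lower cells within columns, so the sorted row-sequence of each column of any $D \in \KD(\comp{a})$ is componentwise at most that of $\kd_{\comp{a}}$. When $\comp{a}$ is weakly increasing the cells of $\kd_{\comp{a}}$ are stacked at the top of every column, so this bound is trivially satisfied by every dominance-compatible diagram with column heights $\lambda'$, and I would show reachability of all such diagrams by descending cells column by column from right to left, with the dominance inequality ensuring each move targets the rightmost cell of its row. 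Conversely, when $\comp{a}$ is not weakly increasing, some column $c$ of $\kd_{\comp{a}}$ has its cells not stacked at the top, and the superstandard SSYT (with label $n-i+1$ filling all of row $i$) unraises in column $c$ to the top-stacked row-sequence $(n - \lambda'_c + 1, \ldots, n)$, which exceeds the initial row-sequence of column $c$ in $\kd_{\comp{a}}$ at some position and thus violates the descent invariant; hence this SSYT has no preimage.
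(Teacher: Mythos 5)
Your treatment of well-definedness, weight-reversal, injectivity, and the ``only if'' direction of (iv) is sound. The dominance inequality you write is exactly the criterion of Proposition~\ref{prop:lemma2.2}, correctly reformulated in terms of sorted column row-sequences, and the inductive step works: the rightmost-cell condition supplies the strict inequality at the top of the jumped-over block, and this propagates downward through the block because the entries of the sorted sequence in the neighboring column differ by at least one. The descent invariant together with the superstandard tableau disposes of the non-weakly-increasing case correctly.

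The surjectivity half of (iv) has a genuine gap. Dominance does \emph{not} ensure that a column-by-column right-to-left descent always moves the rightmost cell of its row: once a column to the right has been pushed all the way to its target, it can permanently block a move still needed in a column to its left. Concretely, take $\comp{a}=(0,2,2)$ (so $n=3$, $\lambda=(2,2)$) and the SSYT with rows $(1,2)$ and $(3,3)$, which unraises to $D=\{(1,1),(1,3),(2,1),(2,2)\}$. Any Kohnert sequence that first pushes column $2$ to its target $\{1,2\}$ occupies $(2,2)$, after which the cell at $(1,2)$, which must still descend to $(1,1)$, is never again the rightmost cell of its row; and a Kohnert move on row $3$ jumps $(1,2)$, sending column $1$ to $\{1,2\}$ rather than the needed $\{1,3\}$. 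The diagram $D$ \emph{is} in $\KD(\comp{a})$, but only by an interleaved sequence such as $(2,2)\to(2,1)$, then $(1,2)\to(1,1)$, then $(2,3)\to(2,2)$. So the schedule you propose fails, and the justification that dominance yields the rightmost-cell condition is not correct as stated. A route that avoids scheduling entirely and fits the paper's machinery: $D$ is a generic Kohnert diagram by Proposition~\ref{prop:lemma2.2} (your dominance condition), $\thread(D)$ sorts to $\lambda$, and a weakly increasing $\comp{a}$ is the unique maximum in left swap order among compositions sorting to $\lambda$, so Lemma~\ref{lem:3.7} gives $\thread(D)\preceq\comp{a}$ and hence $D\in\KD(\comp{a})$.
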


\begin{example}
  Consider the diagram $T$ on the left of Fig.~\ref{fig:colsort}, which is a Kohnert diagram for the weak composition $(4,1,5,0,4)$. We label the cells in row $i$ with label $5-i+1$ as shown, then raise the cells to the semistandard Young tableau of shape $(5,4,4,1)$ shown on the right of Fig.~\ref{fig:colsort}. 
\end{example}

\begin{figure}[ht]
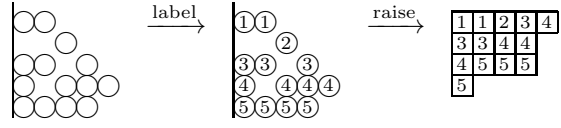

  \begin{displaymath}
    \begin{array}{ccccc}
      \vline\cirtab{%
        \ & \ \\
        & & \ \\
        \ & \ & & \ \\
        \ & & \ & \ & \ \\
        \ & \ & \ & \ }
      & \xrightarrow{\mathrm{label}} &
      \vline\cirtab{%
        1 & 1 \\
        &   & 2 \\
        3 & 3 &   & 3 \\
        4 &   & 4 & 4 & 4 \\
        5 & 5 & 5 & 5 }
      & \xrightarrow{\mathrm{raise}} &
      \tableau{%
        1 & 1 & 2 & 3 & 4 \\
        3 & 3 & 4 & 4 \\
        4 & 5 & 5 & 5 \\
        5 }
    \end{array}
  \end{displaymath}
  \caption{\label{fig:colsort}An example of the injective map from Kohnert diagrams to semistandard Young tableaux.}
\end{figure}

In particular, this immediately gives a combinatorial proof of the following.

\begin{corollary}[\cite{Mac91}]
  For $\comp{a}$ weakly increasing of length $n$, we have
  \begin{equation}
    \key_{\comp{a}} = s_{\mathrm{rev}(\comp{a})}(x_1,\ldots,x_n),
  \end{equation}
  where $\mathrm{rev}(\comp{a})$ is the partition $(a_n,a_{n-1},\ldots,a_1)$.
\end{corollary}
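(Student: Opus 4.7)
The plan is to read this off Proposition~\ref{prop:sort} together with the symmetry of Schur polynomials. First I would observe that if $\comp{a}$ is weakly increasing then the partition $\comp{\mathrm{sort}(a)}$ (sorted into weakly decreasing order) is exactly $\lambda := \mathrm{rev}(\comp{a}) = (a_n, a_{n-1}, \ldots, a_1)$. In this case Proposition~\ref{prop:sort} provides a bijection $\phi : \KD(\comp{a}) \to \SSYT_n(\lambda)$ that is weight-reversing, i.e.\ $\wt(\phi(T))_i = \wt(T)_{n-i+1}$ for every $i$.

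Next I would rewrite the defining sum from Definition~\ref{def:key} by reindexing through $\phi$. Setting $T' = \phi(T)$ and using the weight-reversing property gives
\begin{equation*}
  \key_{\comp{a}} \;=\; \sum_{T \in \KD(\comp{a})} x_1^{\wt(T)_1} \cdots x_n^{\wt(T)_n} \;=\; \sum_{T' \in \SSYT_n(\lambda)} x_1^{\wt(T')_n} x_2^{\wt(T')_{n-1}} \cdots x_n^{\wt(T')_1}.
\end{equation*}
The only subtlety is that this monomial has the variables listed in the opposite order from Definition~\ref{def:schur}. To close the gap I would invoke the symmetry of the Schur polynomial: the identity $s_\lambda(x_1,\ldots,x_n) = s_\lambda(x_n,\ldots,x_1)$ turns $\sum_{T'} x_1^{\wt(T')_1} \cdots x_n^{\wt(T')_n}$ into $\sum_{T'} x_n^{\wt(T')_1} \cdots x_1^{\wt(T')_n}$, which is precisely the right-hand side above.

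There is no real obstacle here — all of the combinatorial content has already been absorbed into Proposition~\ref{prop:sort}. The only point requiring care is the bookkeeping between Kohnert's convention of indexing rows from the bottom and the English convention for Young tableaux, which is exactly what introduces the reversal $\lambda = \mathrm{rev}(\comp{a})$ and forces the use of symmetry of $s_\lambda$ in the last step.
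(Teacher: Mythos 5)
Your proof is correct and is exactly the natural unwinding of the paper's one-line justification, which simply cites Proposition~\ref{prop:sort} as giving the corollary "immediately." You fill in precisely the right details: for weakly increasing $\comp{a}$ the bijection exists and $\mathrm{sort}(\comp{a}) = \mathrm{rev}(\comp{a})$, the weight-reversing property turns the Kohnert generating sum into the SSYT generating sum with variables listed in reverse, and the symmetry $s_\lambda(x_1,\ldots,x_n) = s_\lambda(x_n,\ldots,x_1)$ closes the gap.
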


Note that $\key_{(0,3,2)}$ is not equal to $s_{(3,2)}(x_1,x_2,x_3)$, as the map in this case is not surjective. However, every Kohnert diagram for $(0,3,2)$ is also a Kohnert diagram for $(0,2,3)$, i.e. $\KD(0,3,2) \subset \KD(0,2,3)$. Containment of Kohnert diagrams leads to a useful partial order on weak compositions that we explore next.

\subsection{Left swap order}
\label{sec:kohnert-lswap}

Not all diagrams, for instance the diagram with two cells in positions $(1,1)$ and $(2,2)$, can result from Kohnert moves on key diagrams. We will often need to distinguish between diagrams that can arise from Kohnert moves on a key diagram with those that cannot. 

\begin{definition}
  A diagram $T$ is a \emph{generic Kohnert diagram} if there exists a weak composition $\comp{a}$ for which $T \in \KD(\comp{a})$.
  \label{def:generic}
\end{definition}

Assaf and Searles \cite[Lemma~2.2]{AS18} give a useful criterion to determine if $T$ is a generic Kohnert diagram.

\begin{proposition}[\cite{AS18}]
  A diagram $T$ is a Kohnert diagram for some weak composition if and only if for every position $(c,r)\in\mathbb{N} \times \mathbb{N}$ with $c>1$, we have
  \begin{equation}
    \#\{ (c-1,s) \in T \mid s \geq r \} \geq \#\{ (c,s) \in T \mid s \geq r \}.
    \label{e:rectified}
  \end{equation}
  \label{prop:lemma2.2}
\end{proposition}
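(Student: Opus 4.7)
The plan is to prove both directions of the biconditional.

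For the only-if direction, I would verify that (i) every key diagram satisfies (\ref{e:rectified}) and (ii) Kohnert moves preserve it, completing an induction on the Kohnert-move sequence from a key diagram. For (i), the cells of $\kd_{\comp{a}}$ in column $c$ occupy the rows $\{i : a_i \geq c\} \subseteq \{i : a_i \geq c-1\}$, which yields the inequality at every height. For (ii), writing $N_c(r) = \#\{(c,s)\in T : s\geq r\}$, a Kohnert move from $(c_0,r_1)$ down to $(c_0,r_0)$ decreases $N_{c_0}(r)$ by one for $r_0<r\leq r_1$ and leaves all other $N_c(r)$ unchanged. The inequality against column $c_0-1$ is preserved trivially. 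Against column $c_0+1$, the pre-move inequality was in fact strict: since $(c_0,r_1)$ is rightmost in its row, $(c_0+1,r_1)\notin T$; since $r_0$ is the first available position below $(c_0,r_1)$, all positions $(c_0,s)$ for $r_0<s<r_1$ lie in $T$. Combined with (\ref{e:rectified}) at height $r_1+1$, these facts yield
\begin{displaymath}
N_{c_0}(r) \geq (r_1-r+1) + N_{c_0}(r_1+1) \geq 1 + (r_1-r) + N_{c_0+1}(r_1+1) \geq 1 + N_{c_0+1}(r)
\end{displaymath}
for $r_0<r\leq r_1$, so the condition persists after the unit decrease.

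For the if direction, given $T$ satisfying (\ref{e:rectified}) I would build $\comp{a}$ by chain decomposition. For each column $c\geq 2$, the condition at every height ensures a matching between column-$c$ cells and column-$(c-1)$ cells pairing each $(c,r)$ with some $(c-1,r')$ with $r'\geq r$; the greedy top-to-bottom pairing works. Chaining these matchings rightward assigns each cell of $T$ an ancestor in column $1$ at a weakly higher row. Setting $a_i$ to be the length of the chain starting at $(1,i)$, or $0$ if $(1,i)\notin T$, defines the candidate weak composition $\comp{a}$, and the cells of $\kd_{\comp{a}}$ are in column-preserving bijection with the cells of $T$ via ancestors.

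To conclude $T\in\KD(\comp{a})$, I would induct on the total displacement $\sum_{(c,r)\in T}(\anc(c,r)-r)$; the base case of zero displacement forces $T=\kd_{\comp{a}}$. For the inductive step, I would select a cell lying strictly below its ancestor row and exhibit a valid inverse Kohnert move raising it, producing a diagram $T'$ with strictly smaller displacement and still satisfying (\ref{e:rectified}); induction then gives $T'\in\KD(\comp{a})$ and one forward Kohnert move recovers $T$. The main technical obstacle is this inductive step: identifying a raisable cell for which the resulting diagram continues to satisfy (\ref{e:rectified}), since inverse Kohnert moves do not preserve the condition in general. I expect this to be handled by choosing the cell carefully — for example, the lowest cell of a chain whose immediate vertical neighborhood admits the required occupied/empty/rightmost configuration — and by leveraging the strict forms of (\ref{e:rectified}) that arise from the chain decomposition.
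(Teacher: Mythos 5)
Your only-if direction is complete and correct: key diagrams satisfy Eq.~\eqref{e:rectified} because a cell in column $c$ row $i$ of $\kd_{\comp{a}}$ requires $a_i \geq c$, hence $a_i \geq c-1$; and your chain of inequalities showing that Kohnert moves preserve the condition is sound, with the needed strictness against column $c_0+1$ coming exactly as you say from the moved cell being rightmost in its row (so $(c_0+1,r_1)\notin T$) together with the intervening positions $(c_0,s)$, $r_0<s\leq r_1$, all being occupied.

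The if direction, however, has a genuine gap that you flag but do not close. Everything reduces to the inductive step: exhibiting a cell whose upward raise (an inverse Kohnert move) yields a diagram still satisfying Eq.~\eqref{e:rectified} and with strictly smaller displacement. As you observe, raising a cell in column $c_0$ increases $N_{c_0}(r)$ for intermediate $r$ and can therefore break the inequality against column $c_0-1$; the choice of cell genuinely matters. For instance, with $T=\{(1,1),(2,1),(1,3),(3,1)\}$, raising $(3,1)$ to $(3,2)$ violates the condition at position $(3,2)$, while raising $(2,1)$ to $(2,2)$ preserves it — so ``the lowest cell of a chain'' without further qualification is not a valid recipe. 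Your proposal stops at the conjecture that some careful choice works; that conjecture is the mathematical content of this direction and cannot be left as an expectation. There is also a second, unaddressed subtlety: the chain weight $\comp{a}'$ of the raised diagram $T'$ need not equal $\comp{a}$, so the displacement must be shown to decrease against the updated matching (or you must settle for concluding $T\in\KD(\comp{a}')$ for some $\comp{a}'$ rather than the specific $\comp{a}$, which is fine for the proposition but means the statement ``induction then gives $T'\in\KD(\comp{a})$'' needs correcting).

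You may find a column-peeling induction cleaner than displacement induction: deleting column $1$ from $T$ and shifting the remainder left by one column produces a smaller diagram that manifestly still satisfies Eq.~\eqref{e:rectified} (the condition at $(c',r')$ for the shifted diagram is exactly the condition at $(c'+1,r')$ for $T$); apply the inductive hypothesis to it, then argue the reinserted first column can be absorbed by reverse Kohnert moves. This is essentially the structure used in the proof of Lemma~\ref{lem:matching-wt} in the present paper for a related, stronger statement, and it sidesteps the need to identify a single raisable cell and to track a displacement invariant across changing matchings.
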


Given a diagram $T$, the \emph{column weight of $T$}, denote by $\cwt(T)$, the weak composition whose $i$th part is the number of cells in the $i$th column of $T$. Abusing notation,  given a weak composition $\comp{a}$, the \emph{column weight of $\comp{a}$} is $\cwt(\comp{a}) = \cwt(\kd_{\comp{a}})$.

We have the following immediate consequence of Proposition~\ref{prop:lemma2.2}.

\begin{corollary}
  The column weight of a generic Kohnert diagram is a partition.
  \label{cor:cwt}
\end{corollary}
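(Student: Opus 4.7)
The plan is to read the corollary as the direct $r=1$ specialization of the Proposition~\ref{prop:lemma2.2} criterion. To say that $\cwt(T) = (\cwt(T)_1, \cwt(T)_2, \ldots)$ is a partition means precisely that $\cwt(T)_{c-1} \geq \cwt(T)_c$ for every $c > 1$ (nonnegativity is automatic, since each part counts cells). So the task reduces to establishing this single family of inequalities.

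First I would fix an arbitrary column index $c > 1$ and observe that, because every cell of $T$ lies in the first quadrant, every $(c,s) \in T$ satisfies $s \geq 1$, and likewise for $(c-1,s) \in T$. Consequently,
\begin{equation*}
  \cwt(T)_c \;=\; \#\{(c,s) \in T\} \;=\; \#\{(c,s) \in T \mid s \geq 1\},
\end{equation*}
and similarly with $c$ replaced by $c-1$. Now I would invoke Proposition~\ref{prop:lemma2.2} at the position $(c,1)$: since $T$ is a generic Kohnert diagram, we get
\begin{equation*}
  \#\{(c-1,s) \in T \mid s \geq 1\} \;\geq\; \#\{(c,s) \in T \mid s \geq 1\},
\end{equation*}
which rewrites as $\cwt(T)_{c-1} \geq \cwt(T)_c$. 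Since $c > 1$ was arbitrary, $\cwt(T)$ is weakly decreasing, and hence a partition.

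There is essentially no obstacle here: the corollary is an immediate consequence of specializing the row-index parameter $r$ in the criterion of Proposition~\ref{prop:lemma2.2} to its smallest value, at which point the set inequality collapses to a comparison of full column sizes. The only thing worth being careful about is the convention that Kohnert diagrams live in the first quadrant (rows indexed by positive integers), which is what justifies replacing the "$s \geq 1$" filter by no filter at all.
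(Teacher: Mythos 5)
Your proof is correct and is precisely the argument the paper intends: the paper states the corollary as an ``immediate consequence'' of Proposition~\ref{prop:lemma2.2} without supplying a proof, and your specialization of the criterion to $r=1$ is the obvious and intended way to realize that immediacy.
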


Since Kohnert moves preserve the column weight, we may define a partial order on generic Kohnert diagrams with fixed column weight $\mu$ by $S \prec T$ whenever $S$ can be obtained from $T$ by a sequence of Kohnert moves. For example, Fig.~\ref{fig:kohnert} gives the Hasse diagram for this partial order restricted to $\KD(0,3,2)$. This partial order is neither ranked nor is it a lattice, though it does have a unique bottom element $\kd_{\mu^{\prime}}$, where $\mu^{\prime}$ is the \emph{conjugate} of $\mu$ determined by $\mu^{\prime}_i = \#\{ k \mid \mu_k \geq i \}$.

Related to this order, Assaf and Searles \cite{AS18} considered a partial order on weak compositions that sort to a given partition defined as follows.

\begin{definition}[\cite{AS18}]
  A \emph{left swap} on a weak composition $\comp{a} = (a_1, a_2, \cdots )$ exchanges two parts $a_i < a_j$ with $i < j$. The \emph{left swap order} on weak compositions is the transitive closure of the relations $\comp{b} \preceq \comp{a}$ whenever $\comp{b}$ is a left swap of $\comp{a}$. 
  \label{def:lswap}
\end{definition}

\begin{figure}[ht]
  \begin{center}
    \begin{tikzpicture}[xscale=2.25,yscale=1]
      \node at (1,5.25)   (b5) {$(0,2,2,3)$};
      \node at (0.5,4.25) (a4) {$(2,0,2,3)$};
      \node at (1.5,4.25) (c4) {$(0,2,3,2)$}; 
      \node at (0,3.25)   (a3) {$(2,2,0,3)$};
      \node at (1,3.25)   (b3) {$(2,0,3,2)$};
      \node at (2,3.25)   (c3) {$(0,3,2,2)$};
      \node at (0,2)      (a2) {$(2,2,3,0)$};
      \node at (1,2)      (b2) {$(2,3,0,2)$}; 
      \node at (2,2)      (c2) {$(3,0,2,2)$}; 
      \node at (0.5,1)    (a1) {$(2,3,2,0)$};
      \node at (1.5,1)    (c1) {$(3,2,0,2)$}; 
      \node at (1,0)      (b0) {$(3,2,2,0)$}; 
      \draw[thin] (b5) -- (a4) ;
      \draw[thin] (b5) -- (c4) ;
      \draw[thin] (a4) -- (a3) ;
      \draw[thin] (a4) -- (b3) ;
      \draw[thin] (c4) -- (b3) ;
      \draw[thin] (c4) -- (c3) ;
      \draw[thin] (a3) -- (a2) ;
      \draw[thin] (a3) -- (b2) ;
      \draw[thin] (b3) -- (a2) ;
      \draw[thin] (b3) -- (b2) ;
      \draw[thin] (b3) -- (c2) ;
      \draw[thin] (c3) -- (b2) ;
      \draw[thin] (c3) -- (c2) ;
      \draw[thin] (a2) -- (a1) ;
      \draw[thin] (b2) -- (a1) ;
      \draw[thin] (b2) -- (c1) ;
      \draw[thin] (c2) -- (c1) ;
      \draw[thin] (a1) -- (b0) ;
      \draw[thin] (c1) -- (b0) ;
    \end{tikzpicture}
    \caption{\label{fig:lswap}The left swap order on weak compositions of length $4$ that sort to the partition $(3,2,2)$.}
  \end{center}
\end{figure}
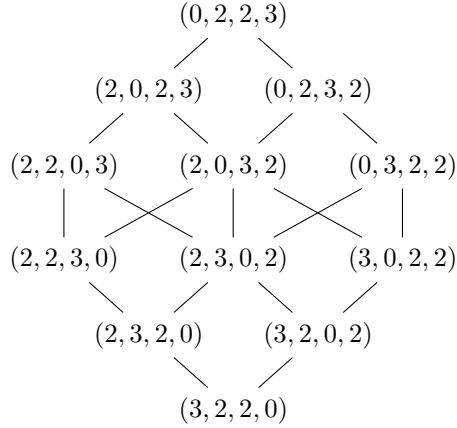

\begin{example}
  Setting $n=4$ and taking $\lambda=(3,2,2)$, the Hasse diagram for the left swap order on weak compositions of length $n$ that sort to $\lambda$ is shown in Fig.~\ref{fig:lswap}. Notice unlike the partial order in Fig.~\ref{fig:kohnert}, this order is \emph{ranked} by \emph{co-inversions}, the number of pairs $(i,j)$ such that $i<j$ and $a_i < a_j$.
\end{example}

Given a weak composition $\comp{a}$, let $\lswap(\comp{a})$ denote the set of weak compositions $\comp{b}$ for which $\comp{b} \preceq \comp{a}$. For example, we have
\[ \lswap(0,3,2) = \{(0,3,2), (3,0,2), (3,2,0), (2,3,0)\}. \]
Notice the remaining two weak compositions that sort to the partition $(3,2)$, namely $(2,0,3)$ and $(0,2,3)$, are not included in this set. Comparing with Fig.~\ref{fig:kohnert}, the weak compositions in $\lswap(0,3,2)$ are precisely those whose key diagrams are Kohnert diagrams for $(0,3,2)$, and this observation holds in general. To prove this, we require the following definition from \cite[Definition~3.5]{AS18}.

\begin{definition}[\cite{AS18}]
  The \emph{thread decomposition} of a generic Kohnert diagram partitions the cells into \emph{threads} as follows. Beginning with the rightmost column, select the lowest available cell to begin the thread. After threading a cell in column $j+1$, thread the lowest available cell in column $j$ that is weakly above the threaded cell in column $j+1$. Continue the thread until all columns are threaded or no choices remain. Continue threading until all cells are part of some thread.
  \label{def:thread}
\end{definition}

As noted in \cite{AS18}, Proposition~\ref{prop:lemma2.2} ensures each thread of the thread decomposition ends in the first column. Following \cite[Lemma~3.6]{AS18}, we may define the \emph{thread weight} of a Kohnert diagram $T$ to be the weak composition $\thread(T)$ whose $i$th part is the number of cells in the thread occupying row $i$ in the first column of $T$.

\begin{example}
  Consider the generic Kohnert diagram $T$ shown in Fig.~\ref{fig:thread}. We begin threading in column $5$ as shown in the second diagram where we label the cells of the thread with entry $3$ since the thread terminates in row $3$. The next thread begins in column $4$ with the cell in row $1$, and the cells of this thread are labeled by the terminal row $1$. We next thread in column $4$, row $3$, and we label this thread with $5$. The final thread consists of the single remaining cell in row $2$, which gets label $2$. The resulting thread weight of this diagram is $\thread(T) = (4,1,5,0,4)$.
    \label{ex:thread}
\end{example}

\begin{figure}[ht]
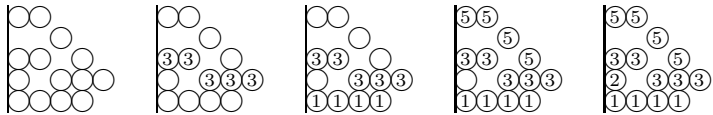

  \begin{displaymath}
    \arraycolsep=\cellsize
    \begin{array}{ccccc}
      \vline\cirtab{%
        \ & \ \\
        & & \ \\
        \ & \ & & \ \\
        \ & & \ & \ & \ \\
        \ & \ & \ & \ } &
      \vline\cirtab{%
        \ & \ \\
        & & \ \\
        3 & 3 & & \ \\
        \ & & 3 & 3 & 3 \\
        \ & \ & \ & \ } &
      \vline\cirtab{%
        \ & \ \\
        & & \ \\
        3 & 3 & & \ \\
        \ & & 3 & 3 & 3 \\
        1 & 1 & 1 & 1 } &
      \vline\cirtab{%
        5 & 5 \\
        & & 5 \\
        3 & 3 & & 5 \\
        \ & & 3 & 3 & 3 \\
        1 & 1 & 1 & 1 } &
      \vline\cirtab{%
        5 & 5 \\
        & & 5 \\
        3 & 3 & & 5 \\
        2 & & 3 & 3 & 3 \\
        1 & 1 & 1 & 1 } 
    \end{array}
  \end{displaymath}
  \caption{\label{fig:thread}An example of thread decomposition of a generic Kohnert diagram, where the cells of a given thread are labeled the same.}
\end{figure}

Implicit in \cite[Theorem~3.7]{AS18}, we have the following useful fact.

\begin{lemma}[\cite{AS18}]
  For a generic Kohnert diagram $T$, we have $T \in \KD(\comp{a})$ if and only if $\thread(T) \preceq \comp{a}$.
  \label{lem:3.7}
\end{lemma}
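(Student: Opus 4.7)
I would prove both implications by reducing to the behavior of the thread weight under a single Kohnert move, using as a base fact that $\thread(\kd_{\comp{a}}) = \comp{a}$. To see the latter, observe that in the left-justified key diagram each row forms a contiguous strip, so the rightmost-lowest threading rule keeps each thread inside a single row; indeed, any attempt to leave row $i$ by dropping to a lower row $i'$ in some column $c$ would force a thread of a shorter row to cross column $c-1$ above its own endpoint, contradicting the rectification condition of Proposition~\ref{prop:lemma2.2}.

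For the forward direction, I would induct on the number of Kohnert moves producing $T$ from $\kd_{\comp{a}}$. The crux is to show that if $S$ is obtained from $T$ by a single Kohnert move---dropping the rightmost cell $c$ of some row $r$ down to the highest empty position below, at row $r'<r$ in the same column---then $\thread(S)\preceq\thread(T)$. I would track the threading from right to left: columns strictly to the right of the affected column are unchanged, while in columns weakly to the left, the thread $\tau$ previously containing the cell at height $r$ must reattach at height $r'$, which lies weakly below $(c,r)$, forcing at most one interchange with a thread previously anchored between rows $r'$ and $r$. The net effect on the column-$1$ endpoints is a single left swap, and a short case analysis (for instance when $c$ was already the terminal cell of its thread, or when no intervening thread is available) handles the remaining possibilities.

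For the backward direction, I would first establish $T\in\KD(\thread(T))$ directly, by induction on the number of cells of $T$ not lying in the terminal row of their thread. Each such cell lies strictly above its thread's terminus; one can identify a topmost such cell whose ``undrop''---raising it along its column into the row of the next cell in its thread---is a valid reverse Kohnert move producing a Kohnert diagram $T'$ with strictly smaller defect and identical thread weight, and iterating collapses $T$ onto $\kd_{\thread(T)}$. To promote this to arbitrary $\comp{a}\succeq\thread(T)$, it suffices to prove $\KD(\comp{b})\subseteq\KD(\comp{a})$ whenever $\comp{b}$ is a single left swap of $\comp{a}$: given $a_i<a_j$ with $i<j$, explicit Kohnert moves drop the $a_j-a_i$ rightmost cells of row $j$ in $\kd_{\comp{a}}$ down through the intermediate rows to land in row $i$, producing $\kd_{\comp{b}}$, after which any Kohnert diagram of $\comp{b}$ is reached from $\kd_{\comp{a}}$ by composition.

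The main obstacle is the bookkeeping in the forward direction, since threads are built greedily from the rightmost column and a single cell drop can propagate leftward through many columns. The delicate point is verifying that, no matter how the recomputed threading unfolds, the net effect on thread endpoints in column~$1$ amounts to exactly one left swap, matching the mechanism implicit in \cite[Theorem~3.7]{AS18}.
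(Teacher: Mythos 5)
The paper does not actually prove this lemma --- it is stated with the attribution ``[\cite{AS18}]'' and the remark that it is ``implicit in \cite[Theorem~3.7]{AS18}''; the heavy lifting is outsourced to that reference. So your proposal is a from-scratch reconstruction rather than a re-derivation, which is worth attempting. Your backward direction is structurally sound: showing $T\in\KD(\thread(T))$ and then $\KD(\comp{b})\subseteq\KD(\comp{a})$ for $\comp{b}\preceq\comp{a}$ mirrors the argument the paper later gives as Lemma~\ref{lem:matching-wt} (applied to $M=\Mtch_{\thd}(T)$) combined with the directly-proved direction of Proposition~\ref{prop:lswap}. (Minor slip there: cells in a thread lie weakly \emph{below} the column-$1$ terminus, not above, since threads run weakly up-and-left.)

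The forward direction, however, has a concrete gap. Your mechanism --- ``the net effect on the column-$1$ endpoints is a single left swap,'' obtained via ``at most one interchange'' --- is false. Take $T'=\{(1,2),(2,2),(3,1),(1,3),(2,1)\}\in\KD(0,3,2)$, which has $\thread(T')=(0,3,2)$. The only available Kohnert move drops the rightmost cell $(1,3)$ of row $3$ to $(1,1)$, giving $T''=\{(1,2),(2,2),(3,1),(1,1),(2,1)\}$. Rethreading $T''$: the first thread now terminates at $(1,1)$ instead of $(1,2)$, which in turn frees $(1,2)$ so the second thread terminates there instead of at the now-vanished $(1,3)$. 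Thus $\thread(T'')=(3,2,0)$, and the change $(0,3,2)\rightsquigarrow(3,2,0)$ requires two left swaps, not one. More importantly, the qualitative picture is wrong: the dropped cell does not merely ``reattach'' the thread that previously contained it; it changes which thread it belongs to, and that triggers a \emph{cascade} of endpoint shifts reminiscent of the daisy-chain phenomenon the paper isolates (in a different setting) in Lemma~\ref{lem:weak-Kohnert-daisy-chain} and Fig.~\ref{fig:daisy}. The claim $\thread(S)\preceq\thread(T)$ for a single Kohnert move is still plausible, but establishing it requires tracking the full chain of displaced endpoints rather than a single interchange, so the key step of your induction is currently unproved.
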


An easy and exceedingly useful consequence of this fact is the following.

\begin{proposition}
  Given weak compositions $\comp{a}$ and $\comp{b}$, we have $\comp{b} \preceq \comp{a}$ if and only if $\kd_{\comp{b}} \in \KD(\comp{a})$.
  \label{prop:lswap}
\end{proposition}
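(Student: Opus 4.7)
The plan is to reduce the proposition to Lemma~\ref{lem:3.7} by explicitly computing the thread weight of a key diagram. I claim that
\[ \thread(\kd_{\comp{b}}) = \comp{b} \]
for every weak composition $\comp{b}$. Granting this, note that $\kd_{\comp{b}}$ is itself a generic Kohnert diagram: column $c$ of $\kd_{\comp{b}}$ contains exactly the rows $s$ with $b_s \geq c$, and since any such row also contributes a cell to column $c-1$, the criterion of Proposition~\ref{prop:lemma2.2} is satisfied. Lemma~\ref{lem:3.7} then yields
\[ \kd_{\comp{b}} \in \KD(\comp{a}) \iff \thread(\kd_{\comp{b}}) \preceq \comp{a} \iff \comp{b} \preceq \comp{a}, \]
which is exactly the proposition.

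To prove $\thread(\kd_{\comp{b}}) = \comp{b}$, I would argue by induction on the number of nonzero parts of $\comp{b}$ that the thread decomposition of $\kd_{\comp{b}}$ partitions the cells into one thread per nonzero row, with the thread for row $r$ consisting of all $b_r$ cells of that row. The central local fact is that once a thread arrives at the rightmost cell $(b_r,r)$ of a row $r$, it must continue straight across row $r$ to column $1$: in any column $c$ with $1 \leq c < b_r$, rows $r' < r$ are excluded by the ``weakly above row $r$'' constraint, while row $r$ itself has a cell in column $c$ because $b_r \geq c$, so row $r$ is the lowest available option.

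For the induction, I would process rows in decreasing order of $b_r$, breaking ties by smallest row index first. At each stage the rightmost as-yet-unthreaded column equals $\max b_r$ over the unthreaded rows $r$, and the lowest cell in that column is the smallest such $r$ achieving the maximum; by the local fact above, the resulting thread is exactly row $r$ and contributes $b_r$ to the $r$-th part of $\thread(\kd_{\comp{b}})$. Deleting these cells leaves precisely the key diagram of the composition obtained from $\comp{b}$ by zeroing out position $r$, so the induction hypothesis applies. The main obstacle is verifying that the threading order and the ``lowest weakly above'' rule cooperate to trap each thread inside a single row of the key diagram; beyond this, the argument is combinatorial bookkeeping that relies only on the fact that row $r$ of $\kd_{\comp{b}}$ occupies columns $1,\ldots,b_r$ contiguously.
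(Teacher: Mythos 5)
Your proof is correct. The shared core of your argument and the paper's is the observation that $\thread(\kd_{\comp{b}}) = \comp{b}$ together with Lemma~\ref{lem:3.7}; where you diverge is that the paper invokes Lemma~\ref{lem:3.7} only for the reverse implication and proves the forward implication constructively, by exhibiting an explicit sequence of Kohnert moves that carry $\kd_{\comp{a}}$ to $\kd_{\comp{b}}$ one left swap at a time. You instead apply the full biconditional of Lemma~\ref{lem:3.7} in both directions, which is cleaner and avoids the constructive argument entirely. You also supply a careful inductive justification for $\thread(\kd_{\comp{b}}) = \comp{b}$ (processing rows in decreasing order of $b_r$, ties broken by smallest row index, and using the ``lowest weakly above'' rule to trap each thread in a single row), whereas the paper disposes of this in a single sentence. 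Your more detailed treatment is sound: once the thread reaches the rightmost cell $(b_r,r)$ of an unthreaded row $r$, every column $c < b_r$ contains the available cell $(c,r)$, which is both weakly above $r$ and the lowest such, so the thread runs straight along the row; and deleting that row from the unthreaded cells leaves exactly the key diagram of $\comp{b}$ with position $r$ zeroed out, completing the induction. The trade-off is that the paper's constructive forward argument gives more direct intuition about how left swaps act on key diagrams, whereas your route is shorter once Lemma~\ref{lem:3.7} is in hand.
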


\begin{proof}
  Suppose $\comp{b}$ is obtained from $\comp{a}$ by a left swap for $i<j$. Beginning with $\kd_{\comp{a}}$, we may apply a Kohnert move to row $j$ until the cell lands in row $i$ below it, jumping over cells in rows $k$ if $a_k \geq a_j$. Repeating this for the rightmost $a_j-a_i$ cells in row $j$, we obtain $\kd_{\comp{b}}$. Thus $\kd_{\comp{b}}$ is a Kohnert diagram for $\comp{a}$.

  Conversely, suppose $\kd_{\comp{b}} \in \KD(\comp{a})$. By Definition~\ref{def:thread}, the threads of a key diagram necessarily consist of all cells in a given row, and so $\thread(\kd_{\comp{b}}) = \comp{b}$. Thus by Lemma~\ref{lem:3.7}, we have $\comp{b} \preceq \comp{a}$.  
\end{proof}

%
\section{Key formula}
%
\label{sec:formula}

The Pieri rule for Schur polynomials has an elegant description in terms of adding cells to Young diagrams. A beautiful combinatorial proof uses the insertion algorithm of Schensted \cite{Sch61} based on ideas of Robinson \cite{Rob38} and later generalized by Knuth \cite{Knu70} that takes a pair of semistandard Young tableaux, the latter being a single row, and maps it bijectively to another single semistandard Young tableau. Analogous to this, in Section~\ref{sec:formula-main}, we state our first main result giving a similar bijection that takes a pair of Kohnert diagrams, the latter being a single box, and maps it bijectively to another Kohnert diagram. Our expression for the union in the image of our bijection has redundancy, and in Section~\ref{sec:formula-addable} we reduce the indexing set to the maximal elements which can be described in terms of addable cells, parallel to the classical case. The image of the tableaux under RSK form a \emph{disjoint union}, from which the Schur expansion of the product of Schur polynomials can be deduced easily via generating polynomials. In Section~\ref{sec:formula-drop}, we state our second main result giving an inclusion-exclusion formula for the key expansion of the product of key polynomials from our bijection for which the unions in the image are not disjoint.

\subsection{Main result}
\label{sec:formula-main}

An elegant combinatorial proof of the Pieri rule for Schur polynomials \cite{Pie93} uses the following consequence of the  Robinson--Schensted--Knuth insertion algorithm \cite{Rob38,Sch61,Knu70} on semistandard Young tableaux.

\begin{theorem}[\cite{Sch61}]
  Given any partition $\lambda$ of length at most $n$, there exists a weight-preserving bijection
  \begin{equation}
    \SSYT_n(\lambda) \times \SSYT_n(1) \stackrel{\sim}{\longrightarrow} \bigsqcup_{\substack{  \mu \supset \lambda \\ |\mu/\lambda| = 1 }} \SSYT_n(\mu) .
    \label{e:RSK}
  \end{equation}
  \label{thm:RSK}
\end{theorem}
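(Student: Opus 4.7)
The plan is to construct the bijection via Schensted's row insertion algorithm and verify it is weight-preserving with a well-defined inverse. Given a pair $(T, x) \in \SSYT_n(\lambda) \times \SSYT_n(1)$ where the single-box tableau carries entry $x \in \{1,\ldots,n\}$, I would insert $x$ into the longest row of $T$: if $x$ is weakly greater than every entry of that row, append a new cell with entry $x$ at the right end and terminate; otherwise locate the leftmost entry $y$ of that row strictly greater than $x$, replace $y$ by $x$, and recursively insert $y$ into the next row, i.e.\ the one with the next shorter partition part. Call the resulting diagram $T \leftarrow x$.

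First I would verify $T \leftarrow x$ is a semistandard Young tableau of some shape $\mu \supset \lambda$ with $|\mu/\lambda| = 1$ and all entries in $\{1, \ldots, n\}$. Row weak-increase is preserved because the leftmost-greater rule ensures the substitution $y \mapsto x$ leaves the row weakly increasing; column strict-increase is preserved because the cell in the adjacent row directly below the replaced one held an entry strictly greater than $y > x$. To show the chain terminates with $\ell(\mu) \leq n$, I would observe that successive bumped values strictly increase: after $k$ bumping steps the current value is at least $x + k \geq k + 1$, and once this reaches $n$ no strictly greater entry exists, so the chain must append. Exactly one cell is added at the end of the chain, and weight is preserved since the multiset of entries of $T \leftarrow x$ equals that of $T$ together with $\{x\}$.

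Next I would define the inverse by reverse row bumping. Given $T' \in \SSYT_n(\mu)$ with $\mu \supset \lambda$ and $\mu/\lambda$ the single cell $c$, remove the entry $z$ from $c$: if $c$ lies in the longest row of $T'$, output the remaining tableau paired with $z$; otherwise in the row of the next longer part find the \emph{rightmost} entry $w$ strictly less than $z$, replace $w$ by $z$, and continue by reverse-bumping $w$ out of that row. The critical step is the classical bumping-path monotonicity lemma that a bumping path moves weakly leftward as it descends, which guarantees that the forward \emph{leftmost strictly greater} rule and the reverse \emph{rightmost strictly less} rule match at every level of the chain. This monotonicity lemma is the main obstacle; once established, mutual inversion of forward and reverse bumping follows by induction on path length, and the union on the right-hand side of \eqref{e:RSK} is automatically disjoint since the added cell $\mu/\lambda$ is determined by the output.
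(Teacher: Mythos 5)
The paper does not prove this theorem; it is stated as a citation to Schensted \cite{Sch61} and used as a black box (indeed, the paper later rederives it as Corollary~\ref{cor:RSKD}). Your proposal reproduces the standard Schensted row-insertion argument, correctly adapted to the paper's convention of indexing rows from the bottom so that the longest row is on top, and it is essentially correct: row insertion, termination and weight-preservation, and reverse bumping are all handled in the usual way.

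One small imprecision worth noting: you invoke the bumping-path monotonicity lemma only to reconcile the forward and reverse rules, but it is already needed in the \emph{forward} direction to close the verification that $T \leftarrow x$ is a valid tableau. Column-strictness at the newly inserted entry against the cell \emph{above} it (in the longer row) requires knowing the path column index weakly decreases as the path descends; without that, the leftmost-strictly-greater rule alone does not rule out a violation from above. The same lemma is also what guarantees that the appended cell lands at an addable corner, so that $\mu$ is again a partition rather than an arbitrary shape. Since you have already identified this lemma as the crux and plan to prove it, these are straightforward to absorb, but the verification of ``$T\leftarrow x \in \SSYT_n(\mu)$ for a partition $\mu$'' should explicitly route through it rather than only the observation about the cell below.
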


The image of the bijection in Theorem~\ref{thm:RSK} induced by RSK insertion is \emph{disjoint}. Therefore we may take generating polynomials to obtain Pieri's rule for multiplying Schur polynomials \cite{Pie93} as an immediate corollary.

\begin{theorem}[\cite{Pie93}]
  Given any partition $\lambda$ of length at most $n$, we have
  \begin{equation}
    s_{\lambda}(x_1,\ldots,x_n) \cdot s_{(1)}(x_1,\ldots,x_n) = \sum_{\substack{  \mu \supset \lambda \\ |\mu/\lambda| = 1 }} s_{\mu}(x_1,\ldots,x_n) .
    \label{e:Pieri}
  \end{equation}
  \label{thm:Pieri}
\end{theorem}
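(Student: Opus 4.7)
The plan is to deduce Theorem~\ref{thm:Pieri} essentially immediately from Theorem~\ref{thm:RSK} by taking weight generating polynomials on both sides of the stated bijection. The work has already been absorbed into the bijection, so the proof reduces to a short computation verifying that the two sides of \eqref{e:Pieri} are the generating polynomials of the two sides of \eqref{e:RSK}.

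First I would handle the left-hand side. For any pair $(T,U)\in\SSYT_n(\lambda)\times\SSYT_n(1)$, the combined weight satisfies $\wt(T,U)_i=\wt(T)_i+\wt(U)_i$ since the weight simply counts occurrences of each entry across the pair. Thus the weight generating polynomial factors as
\[ \sum_{T\in\SSYT_n(\lambda)}\sum_{U\in\SSYT_n(1)} \prod_{i=1}^n x_i^{\wt(T)_i+\wt(U)_i} = s_{\lambda}(x_1,\ldots,x_n)\cdot s_{(1)}(x_1,\ldots,x_n) \]
by Definition~\ref{def:schur} applied to each factor.

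Next I would handle the right-hand side. Crucially, the union appearing in Theorem~\ref{thm:RSK} is \emph{disjoint}, so summing the monomial $\prod_i x_i^{\wt(V)_i}$ over $V$ in the union splits cleanly into a sum indexed by $\mu$:
\[ \sum_{\substack{\mu\supset\lambda\\ |\mu/\lambda|=1}} \sum_{V\in\SSYT_n(\mu)}\prod_{i=1}^n x_i^{\wt(V)_i} = \sum_{\substack{\mu\supset\lambda\\ |\mu/\lambda|=1}} s_{\mu}(x_1,\ldots,x_n), \]
again by Definition~\ref{def:schur}. Because the bijection of Theorem~\ref{thm:RSK} is weight-preserving, these two generating polynomials must coincide, which is exactly \eqref{e:Pieri}.

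There is essentially no obstacle in this derivation; the content is entirely packaged inside Theorem~\ref{thm:RSK}. The only two features of that bijection that matter here are \emph{weight preservation}, which gives equality of generating polynomials, and \emph{disjointness of the image}, which allows the sum over $\mu$ to separate out as an honest sum of Schur polynomials rather than as an inclusion-exclusion expression. It is precisely the failure of the second property in the Kohnert setting, foreshadowed in the introduction and addressed in Section~\ref{sec:formula-drop}, that will force signs to appear in the nonsymmetric Pieri rule of Theorem~\ref{thm:monkey}; here in the symmetric case both features hold and the corollary is immediate.
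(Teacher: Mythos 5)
Your proof is correct and matches the paper's approach exactly: the paper also deduces Theorem~\ref{thm:Pieri} as an immediate corollary of Theorem~\ref{thm:RSK} by taking weight generating polynomials, relying precisely on the two features you highlight (weight preservation of the bijection and disjointness of the image). Nothing is missing.
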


The partitions $\mu$ appearing on the right-hand side of Eq.~\eqref{e:RSK} can be described in terms of \emph{addable cells} for the Young diagram of the partition $\lambda$. A cell not in the Young diagram of $\lambda$ is an \emph{addable cell for $\lambda$} if the union of $\lambda$ and the cell is the Young diagram of a partition. 

\begin{figure}[ht]
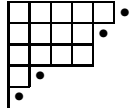

  \begin{displaymath}
      \vline\nulltab{
        \boxify{\ } & \boxify{\ } & \boxify{\ } & \boxify{\ } & \boxify{\ } & \bullet \\
        \boxify{\ } & \boxify{\ } & \boxify{\ } & \boxify{\ } & \bullet \\
        \boxify{\ } & \boxify{\ } & \boxify{\ } & \boxify{\ } \\
        \boxify{\ } & \bullet \\
        \bullet } 
  \end{displaymath}
  \caption{\label{fig:add-part}The four addable cells ($\bullet$) for the partition $(5,4,4,1)$.}
\end{figure}

\begin{example}\label{ex:add-part}
  The partition $\lambda = (5,4,4,1)$ has four addable cells illustrated in Fig.~\ref{fig:add-part}. Therefore Theorem~\ref{thm:RSK} ensures the existence of a bijection
  \begin{multline*}
    \SSYT_n(5,4,4,1) \times \SSYT_n(1) \stackrel{\sim}{\longrightarrow} \\ \SSYT_n(5,4,4,1,1) \sqcup \SSYT_n(5,4,4,2) \sqcup \SSYT_n(5,5,4,1) \sqcup \SSYT_n(6,4,4,1) ,
  \end{multline*}
  assuming $n \geq 5$. If $n=4$, then the first term on the right is omitted.
  Since the union on the right is \emph{disjoint}, taking generating polynomials immediately gives
  \[ s_{(5,4,4,1)} s_{(1)} = s_{(5,4,4,1,1)} + s_{(5,4,4,2)} + s_{(5,5,4,1)} + s_{(6,4,4,1)}. \]
\end{example}

The analogous statement for key polynomials is more subtle. Now the images we want must be key diagrams of weak compositions, but we also allow downward movement within the left swap order before adding a cell. 

Furthermore, the key polynomial product requires an additional parameter $k$ that governs the number of variables in the right term of the product. Given a positive integer $k\leq n$, let $\comp{e}_k$ denote the weak composition with a single nonzero part of value $1$ in position $k$, i.e. $\comp{e}_k = (0^{k-1},1,0^{n-k})$.

Our bijection, whose proof comprises Section~\ref{sec:bijection}, is stated succinctly as follows.

\begin{theorem}
  Given any weak composition $\comp{a}$ and any positive integer $k \leq n$, there exists a weight-preserving bijection
  \begin{equation}
    \KD(\comp{a}) \times \KD(\comp{e}_k) \stackrel{\sim}{\longrightarrow} \bigcup_{\substack{ \comp{b} \preceq \comp{a} \\ 1 \leq j \leq k }} \KD(\comp{b} + \comp{e}_j) ,
    \label{e:RSKD}
  \end{equation}
  where the addition of weak compositions on the right is coordinate-wise.
  \label{thm:RSKD}
\end{theorem}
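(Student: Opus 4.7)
The strategy is to construct an explicit insertion algorithm $\ins$ that takes a pair $(T, c)$ with $T \in \KD(\comp{a})$ and $c$ a single cell at position $(1,j)$ for some $j \leq k$ (ranging over $\KD(\comp{e}_k)$), and outputs a diagram $S = \ins(T,c)$ with one more cell in row $j$ than $T$. To realize the desired bijection, I must show that $S$ lies in some $\KD(\comp{b}+\comp{e}_i)$ with $\comp{b} \preceq \comp{a}$ and $1 \leq i \leq k$, and that the map is invertible. This is the Kohnert analogue of Schensted insertion of a single cell into a tableau.

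First I would define the insertion by placing $c$ into column $1$ of $T$ at row $j$ if that position is vacant, and otherwise triggering a bumping procedure that pushes the obstructing cell upward within column $1$, cascading (when necessary) into higher columns in a controlled way. The precise rule must be chosen so that the resulting diagram satisfies the generic Kohnert criterion of Proposition~\ref{prop:lemma2.2}; then $S \in \KD(\thread(S))$ follows from Lemma~\ref{lem:3.7}. Next I would use the thread decomposition to identify the image: the inserted cell participates in a canonical thread of $S$ whose terminal row in column $1$ records an index $i$, and subtracting that contribution from $\thread(S)$ leaves a weak composition $\comp{b}$. I would then verify $\comp{b} \preceq \comp{a}$ by comparing the thread decompositions of $T$ and $S$, and $i \leq k$ from the fact that bumping only moves cells upward starting from row $j \leq k$, so the inserted thread terminates weakly above row $k$ in column $1$.

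For the inverse, given any $S$ in the union on the right, I would use the thread decomposition of $S$ to canonically identify a distinguished cell of column $1$ whose removal recovers $T$, reading off $j$ as its row. Because $S$ may lie in $\KD(\comp{b}+\comp{e}_i)$ for several pairs $(\comp{b},i)$, this distinguished cell must be intrinsic to $S$ rather than to any particular presentation. Following the outline previewed in the introduction, I would first dispatch two extreme cases directly in Section~\ref{sec:bijection}, likely $j=k$ (insertion at the top of the column, where nothing needs to be bumped beyond column $1$) and a boundary case where the cascade is forced all the way to row $1$, so that the bijection in either case is transparent. The general case in Section~\ref{sec:stratify} would then stratify pairs $(T,c)$ by the interaction of $c$ with the existing thread decomposition and reduce to the two extremes.

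The main obstacle is precisely that the union on the right is \emph{not disjoint}: a single Kohnert diagram may sit in $\KD(\comp{b}+\comp{e}_i)$ for multiple pairs $(\comp{b}, i)$, so one cannot simply count sizes of disjoint pieces as in the classical RSK argument for Pieri. This forces the inverse to make a canonical choice coherent across the overlapping pieces, which is where the tableau intuition breaks down and where the new Kohnert-specific tools developed in Section~\ref{sec:stratify} become essential. Pinning down the correct stratifying invariant so that insertion and its inverse match cleanly, and verifying $\comp{b} \preceq \comp{a}$ uniformly rather than only in the extreme cases, will be the most delicate part of the argument.
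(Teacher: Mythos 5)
Your core insertion mechanism runs in the wrong direction. You propose placing the new cell at $(1,j)$ and bumping obstructions \emph{upward}, cascading into higher columns. But Kohnert moves push cells down, and the rectification operator $\rect$ pushes cells left; the generic-Kohnert criterion of Proposition~\ref{prop:lemma2.2} is precisely a left-dominance condition on columns, and upward bumping has no reason to preserve it. The paper's top insertion $\ins_{\infty}$ appends the new cell at $(m+1,j)$ in a fresh column \emph{on the right} and then rectifies, so cells move left and down; this is exactly what lets Theorem~\ref{thm:insertion-top} bound the terminal thread row by $k$ (``since rectification paths move down and left''). Your version, bumping upward from row $j$, gives a terminal row $\geq j$ with no a priori upper bound, so your claimed $i \leq k$ does not follow from your mechanism. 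And the bottom insertion $\ins_1$ handling $k=1$ involves no bumping at all — it simply places the cell at the leftmost empty position of row $1$ of $T$.

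Your identification of the two extreme cases also does not match the paper. They are $k=1$ and $k \geq \ell(\comp{a})$ (with $\ell(\comp{a})$ the largest index of a nonzero part), not ``$j=k$'' and ``cascade forced to row $1$.'' More importantly, the general case is \emph{not} settled by constructing one uniform insertion map with an explicit inverse, which is what you are aiming at. Instead the target is stratified as $\KDs(\comp{a},n) = \KDs(\comp{a},1) \sqcup \bigsqcup_{1<k\le n} \KDstr$, and for each $k>1$ the paper builds an injective \emph{excision} map $\partial_{\comp{a},k}\colon \KDstr \to \KD(\comp{a})$ going in the other direction, weight-shifted by $\comp{e}_k$; combined with the two extreme bijections, a counting argument over weight spaces then forces every $\partial_{\comp{a},k}$ to be a bijection. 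Your ``distinguished cell'' idea for the inverse is aimed at the right difficulty — the union is not disjoint — but you have no well-defined forward insertion for a fixed $1<k<\ell(\comp{a})$, and the upward-bumping rule will not in general land in a generic Kohnert diagram, let alone in the right stratum.
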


Notice Theorem~\ref{thm:RSKD} generalizes Theorem~\ref{thm:RSK} in the following way.

\begin{corollary}
  For $\comp{a}$ a weakly increasing weak composition, we have a bijection
  \begin{equation}
    \KD(\comp{a}) \times \KD(\comp{e}_n) \stackrel{\sim}{\longrightarrow} \bigsqcup_{\substack{1 \leq j \leq n \\ a_j < a_{j+1} }} \KD(\comp{a} + \comp{e}_j) .
  \end{equation}
  In particular, by Proposition~\ref{prop:sort}, Theorem~\ref{thm:RSKD} implies Theorem~\ref{thm:RSK}.
  \label{cor:RSKD}
\end{corollary}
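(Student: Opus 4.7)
The plan is to specialize Theorem~\ref{thm:RSKD} to $k=n$ and exploit the fact that a weakly increasing composition is the unique maximum of its rearrangement class under the left swap order; this collapses the general union in Eq.~\eqref{e:RSKD} onto the smaller disjoint union indexed by positions $j$ with $a_j<a_{j+1}$ (with the convention $a_{n+1}=\infty$). Proposition~\ref{prop:sort} then identifies each Kohnert set appearing in the bijection with a single set of semistandard Young tableaux, converting it into the bijection of Theorem~\ref{thm:RSK}.

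Applying Theorem~\ref{thm:RSKD} with $k=n$, I obtain the bijection $\KD(\comp{a}) \times \KD(\comp{e}_n) \stackrel{\sim}{\longrightarrow} \bigcup_{\comp{b} \preceq \comp{a},\, 1\leq j\leq n} \KD(\comp{b}+\comp{e}_j)$. The inclusion of the proposed target inside this union is immediate by taking $\comp{b}=\comp{a}$. For the reverse inclusion, set $\lambda = \mathrm{sort}(\comp{a})$; since left swaps preserve the multiset of parts, each $\comp{b}\preceq\comp{a}$ is a rearrangement of $\lambda$, so for any $j$ the sort $\mu$ of $\comp{b}+\comp{e}_j$ is obtained from $\lambda$ by adding a single cell at the addable row $i$ determined by the value $b_j$. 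Because $\comp{a}$ is weakly increasing, there is a unique $j' = n-i+1$ with $a_{j'}<a_{j'+1}$ for which $\comp{a}+\comp{e}_{j'}$ is also weakly increasing and sorts to this same $\mu$. Being weakly increasing, $\comp{a}+\comp{e}_{j'}$ is the top element of its rearrangement class in the left swap order, so $\comp{b}+\comp{e}_j \preceq \comp{a}+\comp{e}_{j'}$, and Lemma~\ref{lem:3.7} yields $\KD(\comp{b}+\comp{e}_j) \subseteq \KD(\comp{a}+\comp{e}_{j'})$. Disjointness on the target side then follows because Kohnert moves preserve column weight, and for weakly increasing $\comp{c}$ the column weight of $\kd_{\comp{c}}$ equals the conjugate of $\mathrm{sort}(\comp{c})$; distinct admissible $j'$ yield distinct $\mu$ and hence distinct column weights.

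For the second assertion, $\comp{a}$, $\comp{e}_n$, and each $\comp{a}+\comp{e}_{j'}$ appearing on the right are all weakly increasing, so Proposition~\ref{prop:sort} provides weight-reversing bijections from these $\KD$ sets onto the corresponding $\SSYT_n$ sets. Moreover, $j'\mapsto n-j'+1$ identifies $\{j' : a_{j'}<a_{j'+1}\}$ with the addable rows of $\lambda$, equivalently with the partitions $\mu\supset\lambda$ satisfying $|\mu/\lambda|=1$. Under these identifications the bijection of the first part becomes precisely that of Theorem~\ref{thm:RSK}. I expect the main technical point to be the upward-matching step in the reverse inclusion, where one must simultaneously choose the correct $j'$ dictated by $\comp{b}+\comp{e}_j$ and use both that $\comp{b}\preceq\comp{a}$ forces $\comp{b}$ to be a rearrangement of $\lambda$ and that weakly increasing compositions are maximal in their rearrangement class, in order to promote the base composition from $\comp{b}$ back to $\comp{a}$ after the cell addition.
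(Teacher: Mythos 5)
Your proof is correct and structurally parallels the paper's: both arguments collapse the general union from Theorem~\ref{thm:RSKD} onto the terms indexed by the single composition $\comp{a}$ at positions $j$ with $a_j < a_{j+1}$, then invoke disjointness of column weights. The difference is in how the dominating term is identified. The paper locates an explicit index $i$ with $b_j = a_i$ (and afterwards separately removes indices $j$ with $a_j = a_{j+1}$), whereas you route through the sorted partition $\mu$ and then invoke the fact that a weakly increasing composition is the unique maximum of its rearrangement class in left swap order to conclude $\comp{b}+\comp{e}_j \preceq \comp{a}+\comp{e}_{j'}$. This maximality principle is correct and easy to verify (any adjacent descent can be removed by an upward swap, so only weakly increasing compositions are maximal), but it is not stated as such in the paper, so you are implicitly supplying a small lemma. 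Your route is arguably slightly cleaner: it handles the reduction to $\comp{b}=\comp{a}$ and the restriction to positions with $a_j < a_{j+1}$ in a single step, and it is manifestly compatible with the identification $\KD \leftrightarrow \SSYT$ of Proposition~\ref{prop:sort} (so the passage to Theorem~\ref{thm:RSK} is transparent), while the paper's two-step reduction requires two separate applications of Proposition~\ref{prop:lswap}. The convention $a_{n+1}=\infty$ you flag is indeed needed for the indexing to be well posed at $j=n$, matching the addable cell at row $1$ of $\lambda$.
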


\begin{proof}
  We consider the right-hand side of Eq.~\eqref{e:RSKD} when $\comp{a}$ is weakly increasing and $k=n$. We claim first that
  \[ \bigcup_{\substack{ \comp{b} \preceq \comp{a} \\ 1 \leq j \leq n }} \KD(\comp{b} + \comp{e}_j) = \bigcup_{\substack{ 1 \leq j \leq n }} \KD(\comp{a} + \comp{e}_j) . \]
  To see this, suppose $\comp{b} \prec \comp{a}$. Then for any $j \leq n$, we have $\comp{b} + \comp{e}_j \prec \comp{a} + \comp{e}_i$ where $i \geq j$ is the largest index for which $b_j = a_i$. Therefore by Proposition~\ref{prop:lswap}, $\KD(\comp{b} + \comp{e}_j) \subset \KD(\comp{a} + \comp{e}_i)$. Therefore the pair $(\comp{b}, j)$ can be removed from the indexing set for all $j$, proving the claimed equality. Similarly, if $a_j = a_{j+1}$, then $\comp{a} + \comp{e}_j \prec \comp{a} + \comp{e}_{j+1}$, so again by Proposition~\ref{prop:lswap}, $\KD(\comp{a} + \comp{e}_j) \subset \KD(\comp{a} + \comp{e}_{j+1})$ and so the index $j$ may be removed from the union on the right. Since $\comp{a}$ is weakly increasing, the indexing is as stated. Moreover, the set of pairs $(\comp{a},j)$ for $j$ such that $a_j < a_{j+1}$ result in weak compositions $\comp{a} + \comp{e}_j$ with different column weights, and so the union must be disjoint. The result now follows from Theorem~\ref{thm:RSKD}.
\end{proof}

We make note of two key differences between Theorem~\ref{thm:RSKD} and Theorem~\ref{thm:RSK}. First, by Proposition~\ref{prop:lswap}, the union on the right-hand side of Eq.~\eqref{e:RSKD} has redundancy whenever $\comp{b}' + \comp{e}_{j'} \prec \comp{b} + \comp{e}_j$ for some $\comp{b}',\comp{b} \preceq \comp{a}$ and some $j',j \leq k$. Second, the image of the bijection in Theorem~\ref{thm:RSKD} is not, in general, \emph{disjoint}, even after accounting for this redundancy. Therefore when taking generating polynomials, we must use inclusion--exclusion to account for the nontrivial intersections.

\begin{example}\label{ex:41504}
  Consider the weak composition $(4,1,5,0,4)$ and the integer $k=3$. Theorem~\ref{thm:RSKD} states that there exists a bijection that maps pairs of Kohnert diagrams $(T,U) \in \KD(4,1,5,0,4) \times \KD(0,0,1,0,0)$ onto the set of Kohnert diagrams obtained from weak compositions of the form $\comp{b} + \comp{e}_j$, where $\comp{b} \in \lswap(4,1,5,0,4)$ and $j \in \{ 1,2,3 \}$. The 14 weak compositions in $\lswap(4,1,5,0,4)$ give up to 42 terms on the right hand side. However, after filtering those terms by Proposition~\ref{prop:lswap}, only four maximal terms remain. Thus, for this case, Theorem~\ref{thm:RSKD} is equivalent to the existence of a bijection
  \begin{multline*}
    \KD(4,1,5,0,4) \times \KD(0,0,1,0,0) \stackrel{\sim}{\longrightarrow} \\
    \KD(4,2,5,0,4) \cup \KD(4,5,5,0,1) \cup \KD(5,1,5,0,4) \cup \KD(4,1,6,0,4) .
  \end{multline*}
  Of these four sets, all have distinct column weights and so are pairwise disjoint \emph{except} for the middle two, which have nontrivial intersection given by
  \[ \KD(4,5,5,0,1) \cap \KD(5,1,5,0,4) = \KD(5,4,5,0,1) . \]
  Therefore taking generating polynomials gives the \emph{signed} expansion
  \[ \key_{(4,1,5,0,4)} \key_{(0,0,1,0,0)} = \key_{(4,2,5,0,4)} + \key_{(4,5,5,0,1)} + \key_{(5,1,5,0,4)} - \key_{(5,4,5,0,1)} + \key_{(4,1,6,0,4)} . \]
\end{example}

\subsection{Addable cells}
\label{sec:formula-addable}

As suggested by Corollary~\ref{cor:RSKD}, the maximal, in the sense of Proposition~\ref{prop:lswap}, weak compositions appearing on the right-hand side of Eq.~\eqref{e:RSKD} can be described in terms of \emph{addable cells} for key diagrams.

\begin{definition}
  Given a weak composition $\comp{a}$, the cell in row $r$ and column $c$ is an \emph{addable cell for $\comp{a}$} if $a_r < c$ and there exists $s \geq r$ such that $a_s = c-1$.
  \label{def:addable}
\end{definition}

For partitions, we may add a cell in row $r$ and column $c$ only if row $r$ has length $c-1$. For weak compositions, this condition is relaxed so that row $r$ has length \emph{at most} $c-1$ and some row $s$ weakly above row $r$ has length \emph{exactly} $c-1$ since the excess cells of row $s$ can be dropped down to row $r$ via Kohnert moves. 

\begin{example}\label{ex:addable}
  Consider $\comp{a} = (4,1,5,0,4)$ and $k=3$. The three addable cells corresponding to $\comp{a} + \comp{e}_j$ for $j=1,2,3$ are depicted on the left side of Fig.~\ref{fig:add-comp}. These are the cells for which row $r$ has length exactly $c-1$. In addition to this, the cell in row $2$ and column $5$ is also addable since row $5>2$ has length $4 = c-1$, as indicated in the right side of Fig.~\ref{fig:add-comp}. In order to realize this addition in the form $\comp{b} + \comp{e}_2$, we must drop the supporting cells in row $5$ down to row $2$, giving $\comp{b} = (4,4,5,0,1) \prec \comp{a}$. Notice these four additions are precisely the four maximal terms found in Ex.~\ref{ex:41504}.
\end{example}

\begin{figure}[ht]
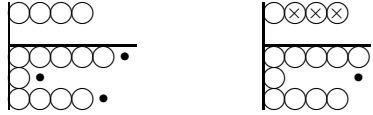

  \begin{displaymath}
    \vline\nulltab{ \circify{\ } & \circify{\ } & \circify{\ } & \circify{\ } \\
      \\\hline
      \circify{\ } & \circify{\ } & \circify{\ } & \circify{\ } & \circify{\ } & \bullet \\
      \circify{\ } & \bullet & \\
      \circify{\ } & \circify{\ } & \circify{\ } & \circify{\ } & \bullet }
    \hspace{6\cellsize}
    \vline\nulltab{ \circify{\ } & \circify{\times } & \circify{\times } & \circify{\times } \\
      \\\hline
      \circify{\ } & \circify{\ } & \circify{\ } & \circify{\ } & \circify{\ }  \\
      \circify{\ } & & & & \bullet \\
      \circify{\ } & \circify{\ } & \circify{\ } & \circify{\ } }
  \end{displaymath}
  \caption{\label{fig:add-comp}The four $3$-addable cells ($\bullet$) for the weak composition $(4,1,5,0,4)$, where for the right diagram we use $(4,4,5,0,1) \prec (4,1,5,0,4)$ to support the addable cell.}
\end{figure}

When the cell in row $r$ and column $c$ is an addable cell for $\comp{a}$, we also require a way to construct the weak composition $\comp{b} \preceq \comp{a}$ such that $\comp{b} + \comp{e}_r$ appears as a term on the right-hand side of Eq.~\eqref{e:RSKD} corresponding to this addition.

\begin{definition}
  Given a weak composition $\comp{a}$ and an addable position $(c,r)$, the \emph{maximal support composition for $\comp{a}$ at $(c,r)$} is the weak composition
  \begin{equation}
    \supp_{\comp{a}}^{(c,r)} = t_{r_0,r_1} \cdots t_{r_{q-1},r_{q}} \cdot \comp{a},
    \label{e:top-support}
  \end{equation}
  where $t_{i,j}$ is the transposition interchanging parts in positions $i$ and $j$, and $r = r_0 < r_1 < \cdots < r_q$ is the unique increasing sequence of row indices such that $a_{r_{i-1}} < a_{r_i}$ and if  $r_{i-1} < s < r_i$, then either $a_s \leq a_{r_{i-1}}$ or $a_s > a_{r_i}$.
  \label{def:top-support}
\end{definition}

\begin{example}\label{ex:support}
  Consider the weak composition $\comp{a} = (4,6,4,3,0,1,1,2,5,4)$ and $k=6$. To find addable cells in column $c = 5$, we look for a row index $r\leq k$ for which $a_r < c$, giving options of $r = 1,3,4,5,6$ depicted in Fig.~\ref{fig:support}. By Definition~\ref{def:addable}, rows $4,5,6$ require the existence of a row index $l>r$ for which $a_l = c-1$, and since $a_{10} = c-1$, this condition is met. Considering the cell in row $5$, the maximal support composition is constructed using the sequence $r_0 = 5 < 6 < 8 < 10$ so that the cells in each of these rows marked by $\circify{\times}$ drop down to row $5$ to create $\supp_{\comp{a}}^{(5,5)} = (4,6,4,3,4,1,0,1,5,2)$. The maximal support compositions for the other addable cells in column $c$ are similarly marked in Fig.~\ref{fig:support}.
\end{example}

\begin{figure}[ht]
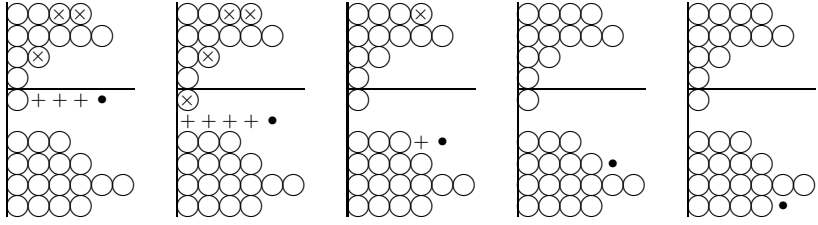

  \begin{displaymath}
    \arraycolsep=\cellsize
    \begin{array}{ccccc}
    \vline\nulltab{\circify{\ } & \circify{\ } & \circify{\times} & \circify{\times} \\
      \circify{\ } & \circify{\ } & \circify{\ } & \circify{\ } & \circify{\ } \\
      \circify{\ } & \circify{\times } \\
      \circify{\ } \\\hline
      \circify{\ } & + & + & + & \bullet \\
      \\
      \circify{\ } & \circify{\ } & \circify{\ } \\
      \circify{\ } & \circify{\ } & \circify{\ } & \circify{\ } \\
      \circify{\ } & \circify{\ } & \circify{\ } & \circify{\ } & \circify{\ }  & \circify{\ } \\
      \circify{\ } & \circify{\ } & \circify{\ } & \circify{\ } 
    } &
    \vline\nulltab{\circify{\ } & \circify{\ } & \circify{\times } & \circify{\times} \\
      \circify{\ } & \circify{\ } & \circify{\ } & \circify{\ } & \circify{\ } \\
      \circify{\ } & \circify{\times } \\
      \circify{\ } \\\hline
      \circify{\times } \\
      + & + & + & + & \bullet\\
      \circify{\ } & \circify{\ } & \circify{\ } \\
      \circify{\ } & \circify{\ } & \circify{\ } & \circify{\ } \\
      \circify{\ } & \circify{\ } & \circify{\ } & \circify{\ } & \circify{\ }  & \circify{\ } \\
      \circify{\ } & \circify{\ } & \circify{\ } & \circify{\ } 
    } &
    \vline\nulltab{\circify{\ } & \circify{\ } & \circify{\ } & \circify{\times} \\
      \circify{\ } & \circify{\ } & \circify{\ } & \circify{\ } & \circify{\ } \\
      \circify{\ } & \circify{\ } \\
      \circify{\ } \\\hline
      \circify{\ } \\
      & & & \\
      \circify{\ } & \circify{\ } & \circify{\ } & + & \bullet \\
      \circify{\ } & \circify{\ } & \circify{\ } & \circify{\ } \\
      \circify{\ } & \circify{\ } & \circify{\ } & \circify{\ } & \circify{\ }  & \circify{\ } \\
      \circify{\ } & \circify{\ } & \circify{\ } & \circify{\ } 
    } &
    \vline\nulltab{\circify{\ } & \circify{\ } & \circify{\ } & \circify{\ }  \\
      \circify{\ } & \circify{\ } & \circify{\ } & \circify{\ } & \circify{\ } \\
      \circify{\ } & \circify{\ } \\
      \circify{\ } \\\hline
      \circify{\ } \\
      \\
      \circify{\ } & \circify{\ } & \circify{\ } \\
      \circify{\ } & \circify{\ } & \circify{\ } & \circify{\ } & \bullet \\
      \circify{\ } & \circify{\ } & \circify{\ } & \circify{\ } & \circify{\ }  & \circify{\ } \\
      \circify{\ } & \circify{\ } & \circify{\ } & \circify{\ } 
    } &
    \vline\nulltab{\circify{\ } & \circify{\ } & \circify{\ } & \circify{\ } \\
      \circify{\ } & \circify{\ } & \circify{\ } & \circify{\ } & \circify{\ } \\
      \circify{\ } & \circify{\ } \\
      \circify{\ } \\\hline
      \circify{\ } \\
      \\
      \circify{\ } & \circify{\ } & \circify{\ } \\
      \circify{\ } & \circify{\ } & \circify{\ } & \circify{\ } \\
      \circify{\ } & \circify{\ } & \circify{\ } & \circify{\ } & \circify{\ }  & \circify{\ } \\
      \circify{\ } & \circify{\ } & \circify{\ } & \circify{\ } & \bullet 
    }
    \end{array}
  \end{displaymath}
  \caption{\label{fig:support}The five addable cells $(\bullet)$ for $(4,6,4,3,0,1,1,2,5,4)$ in column $5$. Here the marked cells $(\otimes)$ drop to positions $(+)$ in row $r$ in creating the maximal support composition. The first, third and fourth are $6$-addable but the second and fifth are not.}
\end{figure}

We can now re-characterize the right-hand side of Eq.~\eqref{e:RSKD} in terms of addable cells, making our first reduction in the terms in the union.

\begin{lemma}
  Given a weak composition $\comp{a}$ and positive integer $k \leq n$, we have
  \begin{equation}
    \bigcup_{\substack{ \comp{b} \preceq \comp{a} \\ 1 \leq j \leq k }} \KD(\comp{b} + \comp{e}_j)  = 
    \bigcup_{\substack{ 1 \leq j \leq k \\ (c,j) \text{ addable for } \comp{a} }} \KD(\supp_{\comp{a}}^{(c,j)} + \comp{e}_j) .
    \label{e:addable}
  \end{equation}
  \label{lem:addable}
\end{lemma}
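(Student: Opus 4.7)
The plan is to prove the set equality by establishing both containments. The containment RHS $\subseteq$ LHS is direct: I show $\supp_{\comp{a}}^{(c, j)} \preceq \comp{a}$ for every addable cell $(c, j)$ with $j \leq k$, which places $\KD(\supp_{\comp{a}}^{(c, j)} + \comp{e}_j)$ on the LHS indexed by the pair $(\supp_{\comp{a}}^{(c, j)}, j)$. Unpacking Definition~\ref{def:top-support}, the defining transpositions are composed right-to-left; by induction on the step, when $t_{r_{i-1}, r_i}$ is applied, the previously-applied transpositions have cyclically shifted $a_{r_q}$ into position $r_i$ while position $r_{i-1}$ still holds its original value $a_{r_{i-1}} < a_{r_q}$. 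Each transposition is therefore a left swap on its current composition in the sense of Definition~\ref{def:lswap}, so $\supp_{\comp{a}}^{(c, j)} \preceq \comp{a}$.

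For LHS $\subseteq$ RHS, given $\comp{b} \preceq \comp{a}$ and $j \leq k$, I seek addable $(c, j')$ with $j' \leq k$ and $\comp{b} + \comp{e}_j \preceq \supp_{\comp{a}}^{(c, j')} + \comp{e}_{j'}$; by Proposition~\ref{prop:lswap} this yields the desired set containment. Since the left-swap relation preserves the multiset of parts, matching multisets on both sides forces $c = b_j + 1$. For the choice of $j'$, I consider two cases. If the value $b_j$ appears in $\comp{a}$ at some position in $\{1, \ldots, j\}$, take $j'$ to be the largest such position. Otherwise every occurrence of $b_j$ in $\comp{a}$ lies at a position $> j$, and I invoke the prefix-dominance inequality $\#\{i \leq j : b_i \leq b_j\} \leq \#\{i \leq j : a_i \leq b_j\}$ (which holds because each left swap can only move larger values into earlier positions, monotonically decreasing the small-value count of every prefix) together with the observation that the left side is at least one to produce $j' \leq j$ with $a_{j'} \leq b_j$. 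In either case $(c, j')$ is addable for $\comp{a}$ with $j' \leq j \leq k$.

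The main obstacle is verifying the inequality $\comp{b} + \comp{e}_j \preceq \supp_{\comp{a}}^{(c, j')} + \comp{e}_{j'}$. My strategy is to exhibit an explicit chain of left swaps in two phases. The first phase partially rewinds the cyclic rotation defining $\supp_{\comp{a}}^{(c, j')}$ so that the extra cell of value $c$ migrates from position $j'$ to position $j$, at each step swapping with a currently larger value so the swap is legitimate. The second phase applies the left swaps that carry the resulting intermediate composition down to $\comp{b} + \comp{e}_j$; these exist because the restriction of $\comp{b}$ to positions outside the rotation is dominated by the corresponding restriction of $\comp{a}$. The delicate point, and the bulk of the routine work, is keeping careful track of how the values at positions $r_0, \ldots, r_q$ evolve as the two phases interleave, so that each individual transposition remains a bona fide left swap on its current composition.
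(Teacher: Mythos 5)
Your overall structure is sound, and in fact you correctly flag a subtlety that the paper's own one-line proof glosses over: the paper sets $c = b_j+1$ and implicitly takes $j'=j$, but $(b_j+1, j)$ need not be addable for $\comp{a}$. (Concrete instance: $\comp{a} = (1,0,2,3)$, $\comp{b} = (2,1,0,3)$, $j = 2$ gives $c=2$, and $(2,2)$ fails Definition~\ref{def:addable}(2) since no $s \ge 2$ has $a_s = 1$.) Your two-case choice of $j'$ is a reasonable way to supply an addable cell $(c,j')$, and the containment RHS $\subseteq$ LHS via unwinding the defining transpositions is correct and fills in a detail the paper omits.

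However, the proof is incomplete at the step that actually carries the weight. You need $\comp{b} + \comp{e}_j \preceq \supp_{\comp{a}}^{(c,j')} + \comp{e}_{j'}$, and your verification of this is only a plan, not a proof: the ``two-phase'' chain of left swaps is never exhibited, the assertion that ``the restriction of $\comp{b}$ to positions outside the rotation is dominated by the corresponding restriction of $\comp{a}$'' is neither given a precise meaning nor justified, and you explicitly defer ``the delicate point, and the bulk of the routine work'' of checking that each intermediate transposition is a genuine left swap. This is not a routine bookkeeping detail; it is the content of the lemma. (In Case 2 the choice of $j'$ is also left unspecified --- ``produce $j' \le j$ with $a_{j'} \le b_j$'' does not say which one --- and the argument may depend on that choice.) The paper's own shorthand for this step is the unstated ``maximal support'' fact that $\supp_{\comp{a}}^{(c,j')}$ is the largest element of $\{\comp{d} \preceq \comp{a} : d_{j'} = c-1\}$ in left swap order; proving that directly from the characterization of the $r_i$'s in Definition~\ref{def:top-support}, and then reducing an arbitrary $(\comp{b},j)$ to the representative with $j'$ chosen as in your Case 1 or Case 2, would be a cleaner route than tracking the interleaving of two swap phases. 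As written, the inequality is not established, so the proof has a genuine gap.
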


\begin{proof}
  By Definition~\ref{def:top-support}, we have $\supp_{\comp{a}}^{(c,j)} \preceq \comp{a}$, and so we have containment of the right-hand side of Eq.~\eqref{e:addable} in the left-hand side. For the other direction, suppose $\comp{b} \preceq \comp{a}$ and $j \leq k$ satisfies $b_j = c-1$. Then by the choice of $r_i$'s in Definition~\ref{def:top-support}, $\comp{b} \preceq \supp_{\comp{a}}^{(c,j)}$, and so $\comp{b} + \comp{e}_j \preceq \supp_{\comp{a}}^{(c,j)} + \comp{e}_j$. Therefore, by Proposition~\ref{prop:lswap}, $\KD(\comp{b} + \comp{e}_j) \subseteq \KD(\supp_{\comp{a}}^{(c,j)} + \comp{e}_j)$, and so we may eliminate these terms from the union on the left, giving the desired equality.
\end{proof}

Lemma~\ref{lem:addable} reduces the index set for the union in Eq.~\eqref{e:RSKD} to addable cells for $\comp{a}$. In order to characterize the maximal terms for the union, we strengthen Definition~\ref{def:addable} to the following notion of \emph{$k$-addable cells}.

\begin{definition}
  Given a weak composition $\comp{a}$ and positive integer $k$, the cell in row $r \leq k$ and column $c$ is a \emph{$k$-addable cell for $\comp{a}$} if 
  \begin{enumerate}
  \item $a_r < c$ and if $a_r < c-1$, then there exists some $l>k$ such that $a_l = c-1$;
  \item for all $r < i \leq k$, either $a_i < a_r$ or $a_i \geq c$.
  \end{enumerate}
  \label{def:k-addable}
\end{definition}

Notice Definition~\ref{def:k-addable}(1) is stronger than Definition~\ref{def:addable} since in this case we require the supporting row to be above row $k$, not just above row $r$. Taken together, the conditions of Definition~\ref{def:k-addable} imply a $k$-addable cell is an addable cell such that none of the row indices in Definition~\ref{def:top-support} except the first lies below row $k$.

\begin{example}\label{ex:k-addable}
  Continuing with Ex.~\ref{ex:support}, Definition~\ref{def:k-addable}(1) is met for all rows since $a_{10} = c-1$. Checking Definition~\ref{def:k-addable}(2) eliminates rows $1$ and $5$, and so the only $k$-addable cells in column $c$ occur in rows $3,4,6$. Notice above each eliminated row sits another that is $k$-addable.
\end{example}

\begin{lemma}
  Given a weak composition $\comp{a}$ and a positive integer $k$, if $c$ is a column index for which there exists some row index $r\leq k$ such that $(c,r)$ is an addable cell for $\comp{a}$, then there exists a row index $r \leq s \leq k$ such that $(c,s)$ is $k$-addable for $\comp{a}$ and $\supp_{\comp{a}}^{(c,r)} \preceq \supp_{\comp{a}}^{(c,s)}$. 
  \label{lem:k-addable}
\end{lemma}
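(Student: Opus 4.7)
The plan is to take $s$ to be the largest row in $[r, k]$ such that $(c, s)$ is $k$-addable for $\comp{a}$, and to verify both that this $s$ exists and that the support inequality holds.

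For existence, let $l^*$ denote the largest index with $a_{l^*} = c - 1$, which is at least $r$ since $(c, r)$ is addable. If $l^* > k$, I take $s := \max\{i \in [r, k] : a_i < c\}$. This set contains $r$ by hypothesis, so $s$ is well-defined; Definition~\ref{def:k-addable}(1) is satisfied using $l^*$ as the external witness when $a_s < c - 1$, while Definition~\ref{def:k-addable}(2) holds by maximality of $s$, since every $i \in (s, k]$ must have $a_i \geq c$. If instead $l^* \leq k$, I take $s := l^*$; condition~(1) is immediate because $a_s = c - 1$, and condition~(2) follows from the maximality of $l^*$ among indices with value $c - 1$. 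In both cases $r \leq s \leq k$, and a routine check confirms that this $s$ is in fact the largest $k$-addable row in $[r, k]$.

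To prove $\supp_{\comp{a}}^{(c, r)} \preceq \supp_{\comp{a}}^{(c, s)}$, write $r = r_0 < r_1 < \cdots < r_q$ for the support sequence of $(c, r)$ from Definition~\ref{def:top-support} and let $p$ be the largest index with $r_p \leq k$. I would exhibit an explicit sequence of left swaps transforming $\supp^{(c, s)}$ into $\supp^{(c, r)}$ by moving the value $c - 1$ from row $s$ down through the chain $r_p, r_{p-1}, \ldots, r_0 = r$, realized by the transpositions $t_{r_{p-1}, r_p}, \ldots, t_{r_0, r_1}$ (preceded by an initial swap from row $s$ to row $r_p$ when $s > r_p$). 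The strict chain $a_{r_0} < a_{r_1} < \cdots < a_{r_p}$ supplied by Definition~\ref{def:top-support}, together with the observation that the upper swaps used to build $\supp^{(c, s)}$ from $\comp{a}$ act only on rows strictly above $k \geq r_p$ and hence leave the values at rows $r_0, \ldots, r_p$ untouched, guarantees that each step is a legitimate left swap on the intermediate composition.

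The main obstacle is the initial bridging step from row $s$ to row $r_p$ when $s$ strictly exceeds $r_p$. Here one must verify, using the maximality of $s$ together with the intermediate conditions in Definition~\ref{def:top-support} for the $(c, r)$-sequence between $r_p$ and $r_{p+1}$, that the entry at row $r_p$ in $\supp^{(c, s)}$ is strictly less than $c - 1$ and that no row in the interval $(r_p, s)$ obstructs this downward movement. Once this structural compatibility is secured, the subsequent descents along $r_p, r_{p-1}, \ldots, r_0$ reduce to a direct check that each transposition produces a left swap on the current composition.
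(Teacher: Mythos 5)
Your choice of $s$ is incorrect, and this causes the proof to fail. You propose to take $s$ to be the largest $k$-addable row in $[r,k]$, realized as $s = \max\{i \in [r,k] : a_i < c\}$ when $l^* > k$. But this $s$ need not satisfy $\supp_{\comp{a}}^{(c,r)} \preceq \supp_{\comp{a}}^{(c,s)}$. Consider $\comp{a} = (1,2,0,3)$, $c=4$, $r=1$, $k=3$ (so $l^*=4>k$). Here $(4,1)$ is addable but not $3$-addable, and your formula gives $s=3$. One computes $\supp^{(4,1)} = (3,1,0,2)$ (chain $1<2<4$) while $\supp^{(4,3)} = (1,2,3,0)$ (chain $3<4$); the latter has $0$ in position $4$ while the former has $2$ there, and left swaps can never increase the entry in the highest position, so $\supp^{(4,1)} \not\preceq \supp^{(4,3)}$. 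The correct choice (the one the paper makes) is $s=2$: indeed $\supp^{(4,2)} = (1,3,0,2)$, and $t_{1,2}$ takes $(1,3,0,2)$ to $(3,1,0,2)$.

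The deeper issue is that your $s$ may start a support chain carrying a strictly smaller value than $a_{r_p}$ up into rows above $k$; that smaller value then ends up in the top of the resulting key diagram and cannot be replaced by the larger value that $\supp^{(c,r)}$ deposits there. The paper avoids this by tying $s$ directly to the support chain of $(c,r)$: it picks $s' = r_p$ (the largest chain index $\leq k$) and then $s$ to be the largest row $\leq k$ with $a_s = a_{s'}$, so that $\supp^{(c,r)}$ and $\supp^{(c,s)}$ push the \emph{same} value above row $k$ and agree there. This is why the paper splits into three cases rather than a single formula. The same error also bites you when $(c,r)$ is already $k$-addable: e.g. $\comp{a} = (1,0,2)$, $c=3$, $r=1$, $k=2$ gives both rows $1$ and $2$ as $2$-addable, your formula picks $s=2$, and $\supp^{(3,1)} = (2,0,1) \not\preceq (1,2,0) = \supp^{(3,2)}$ -- indeed Lemma~\ref{lem:incomparable} already warns that distinct $k$-addable rows in the same column produce incomparable compositions. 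The paper simply takes $s=r$ in this situation. Your existence argument and the casework on $l^*$ are fine as far as they go, but you must replace "largest $k$-addable row" with the chain-based choice, and you must peel off the $k$-addable-$(c,r)$ case first.
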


\begin{proof}
  If $(c,r)$ is $k$-addable for $\comp{a}$, then we may take $s=r$. Otherwise, we consider three cases based on how $(c,r)$ fails to be $k$-addable for $\comp{a}$.

  Suppose Definition~\ref{def:k-addable}(1) fails for $(c,r)$. Since $(c,r)$ is addable, there exists a maximal row index $s$ such that $r<s\leq k$ and $a_s = c-1$. Then $(c,s)$ is $k$-addable, satisfying Definition~\ref{def:k-addable}(1) since $a_s = c-1$ and Definition~\ref{def:k-addable}(2) since, by choice of $s$, no row index $i>s$ has $a_i = c-1$. Moreover, the final index $t$ appearing in Definition~\ref{def:top-support} for $(c,r)$ satisfies $t \leq s$ and $a_t = a_s$, ensuring we have $\supp_{\comp{a}}^{(c,r)} \preceq \supp_{\comp{a}}^{(c,t)} \preceq \supp_{\comp{a}}^{(c,s)}$ as desired.

  Suppose Definition~\ref{def:k-addable}(1) holds but Definition~\ref{def:k-addable}(2) fails for $(c,r)$ with some index $r < i \leq k$ for which $a_r < a_i \leq c-1$. Then there exists a row index $j \leq k$ appearing in Definition~\ref{def:top-support} for $(c,r)$, and we may take $s'$ to be the maximum such index. Define $s$ to be the largest row index such that $s \leq k$ and $a_s = a_{s'}$. Then $(c,s)$ is $k$-addable, satisfying Definition~\ref{def:k-addable}(1) since $(c,r)$ does and Definition~\ref{def:k-addable}(2) since by Definition~\ref{def:top-support} for $(c,r)$, no row index $i>s$ has $a_s < a_i \leq c-1$ and by choice of $s$ no row index $i>s$ has $a_s = a_i$. Moreover, since $s' \leq s$ and $a_{s'} = a_s$, we have $\supp_{\comp{a}}^{(c,r)} \preceq \supp_{\comp{a}}^{(c,s')} \preceq \supp_{\comp{a}}^{(c,s)}$ as desired.

  Suppose Definition~\ref{def:k-addable}(1) holds but Definition~\ref{def:k-addable}(2) fails for $(c,r)$ only for some index $r < i \leq k$ for which $a_i = a_r$. We may take $s$ to be the maximal index such that $r<s\leq k$ and $a_s = a_r$. By Definition~\ref{def:top-support}, the sequences of row indices for the cells $(c,r)$ and $(c,s)$ differ only for the first index $r_0$, and no other index is weakly less than $k$. In particular, $(c,s)$ is $k$-addable, satisfying Definition~\ref{def:k-addable}(1) since $(c,r)$ does and Definition~\ref{def:k-addable}(2) since by Definition~\ref{def:top-support} for $(c,r)$, no row index $i>s$ has $a_s < a_i \leq c-1$ and by choice of $s$ no row index $i>s$ has $a_s = a_i$. Once again, since $r \leq s$ and $a_{r} = a_s$, we have $\supp_{\comp{a}}^{(c,r)} \preceq \supp_{\comp{a}}^{(c,s)}$ as desired.
\end{proof}

Lemma~\ref{lem:k-addable} allows us to reduce the indexing set on the right-hand side of Eq.~\eqref{e:addable} still further, and indeed this is the most we can reduce it.


\begin{lemma}
  Given a weak composition $\comp{a}$ and a positive integer $k$, if both $(c,r)$ and $(c,s)$ are $k$-addable cells for $\comp{a}$ with $r<s$, then $a_r > a_s$ and $\supp_{\comp{a}}^{(c,r)}+\comp{e}_r$ and $\supp_{\comp{a}}^{(c,s)}+\comp{e}_s$ are incomparable in left swap order.
  \label{lem:incomparable}
\end{lemma}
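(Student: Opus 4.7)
The statement has two claims. The first, $a_r > a_s$, is immediate: Definition~\ref{def:k-addable}(2) for $(c,r)$ applied with $i = s$ forces either $a_s < a_r$ or $a_s \geq c$, and the latter is ruled out because $(c,s)$ is also $k$-addable, hence $a_s < c$.

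For incomparability, the key preliminary observation is that $k$-addability forces each support sequence to leave the range $\{r+1, \ldots, k\}$ entirely after the initial index. Concretely, in Definition~\ref{def:top-support} for $(c,r)$, the second index $r_1$ satisfies $a_r < a_{r_1} \leq c-1$; Definition~\ref{def:k-addable}(2) at $i = r_1$ would then demand $a_{r_1} < a_r$ or $a_{r_1} \geq c$, both impossible, so $r_1 > k$. Monotonicity of the sequence gives $r_1 < r_2 < \cdots < r_q$ all above $k$, and the same holds for the $(c,s)$ sequence. Writing $\comp{b} = \supp_{\comp{a}}^{(c,r)} + \comp{e}_r$ and $\comp{b}' = \supp_{\comp{a}}^{(c,s)} + \comp{e}_s$, this pins down all values at rows $\leq s$: both compositions agree with $\comp{a}$ at every such position except $r$ and $s$, where $\comp{b}_r = c$, $\comp{b}'_r = a_r$, $\comp{b}_s = a_s$, $\comp{b}'_s = c$.

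To establish incomparability I will use Proposition~\ref{prop:lswap} together with the fact that Kohnert moves only slide cells downward within columns. In particular, $\comp{c} \preceq \comp{d}$ forces, for every column $j$ and row $t$, that $\kd_{\comp{c}}$ contain at least as many cells in column $j$ at rows $\leq t$ as $\kd_{\comp{d}}$ does. To rule out $\comp{b}' \preceq \comp{b}$ I will test column $c$ at row $t = s-1$: the cell contributed by $\comp{b}_r = c$ is present in $\kd_{\comp{b}}$ but absent from $\kd_{\comp{b}'}$, while all other rows below $s$ agree with $\comp{a}$, so $\kd_{\comp{b}}$ has one extra cell in this range—a violation in the wrong direction. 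To rule out $\comp{b} \preceq \comp{b}'$ I will test column $a_s + 1$ at row $t = s$: using $a_r > a_s$, both $\comp{b}'_r = a_r$ and $\comp{b}'_s = c$ are at least $a_s + 1$, contributing two cells to $\kd_{\comp{b}'}$'s count, while $\kd_{\comp{b}}$ contributes only one (from row $r$, since $\comp{b}_s = a_s$), again a violation.

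The main obstacle is the preliminary claim that $r_1, \ldots, r_q$ and $s_1, \ldots, s_{q'}$ all exceed $k$; without it, the values of $\comp{b}$ and $\comp{b}'$ at intermediate rows could be unpredictable and the column-row counts would not separate cleanly. With that step in hand, the two tests are essentially automatic bookkeeping.
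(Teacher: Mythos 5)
Your proof is correct, and it follows the same overall strategy as the paper. The first claim $a_r > a_s$ is handled identically. Your preliminary observation---that $k$-addability forces all indices $r_1, \ldots, r_q$ (resp.\ $s_1, \ldots, s_{q'}$) in the support sequence to lie strictly above $k$---is exactly the remark the paper makes just after Definition~\ref{def:k-addable}, and your re-derivation of it via Definition~\ref{def:k-addable}(2) is sound. For incomparability, both you and the paper invoke the monotonicity of Kohnert moves (cells only move down within columns) together with Proposition~\ref{prop:lswap}; the difference is only in which ``test statistic'' is applied. The paper rules out $\comp{b}' \preceq \comp{b}$ by noting the first $r-1$ parts agree while the $r$th part of $\comp{b}$ is strictly larger, and rules out $\comp{b} \preceq \comp{b}'$ by the global count of cells above row $k$. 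You instead use two column-specific cell counts: column $c$ restricted to rows $\leq s-1$, and column $a_s+1$ restricted to rows $\leq s$. Both pairs of tests are valid and hinge on the same structural facts, so I would classify this as the same proof with a different choice of witness counts rather than a genuinely different route. One small stylistic note: your column-count criterion (``$\comp{c}\preceq\comp{d}$ implies $\kd_{\comp{c}}$ has at least as many cells as $\kd_{\comp{d}}$ in each column at rows $\leq t$'') is stated without citation; it follows from Proposition~\ref{prop:lswap} together with Kohnert moves preserving column weight and only pushing cells downward, and it is worth a sentence to spell this out since Proposition~\ref{prop:lswap} alone does not say it.
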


\begin{proof}
  Since both cells $(c,r)$ and $(c,s)$ are addable, we must have $a_r, a_s < c$. Since $(c,r)$ is $k$-addable, by Definition~\ref{def:k-addable}(2) we must have $a_r > a_s$. The first $r-1$ parts of $\supp_{\comp{a}}^{(c,r)}+\comp{e}_r$ and $\supp_{\comp{a}}^{(c,s)}+\comp{e}_s$ must agree, and the $r$th part of the former is strictly larger, ensuring it cannot be above the latter in left swap order by Proposition~\ref{prop:lswap}. On the other hand, there are $a_r - a_s>0$ fewer cells above row $k$ in the latter than in the former, ensuring the latter cannot be above the former in left swap order. Thus the two are incomparable. 
\end{proof}

We may now state the minimal indexing set for the union in Eq.~\eqref{e:RSKD}.

\begin{theorem}
  For a weak composition $\comp{a}$ and positive integer $k$, we have
  \begin{equation}
    \bigcup_{\substack{ \comp{b} \preceq \comp{a} \\ 1 \leq j \leq k }} \KD(\comp{b} + \comp{e}_j) = 
    \bigcup_{\substack{ 1 \leq j \leq k \\ (c,j) \text{ $k$-addable for } \comp{a} }} \KD(\supp_{\comp{a}}^{(c,j)} + \comp{e}_j) ,
    \label{e:k-addable}
  \end{equation}
  where no term on the right is strictly contained in another.
  \label{thm:k-addable}
\end{theorem}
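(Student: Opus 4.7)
The plan is to derive the theorem directly from Lemmas~\ref{lem:addable}, \ref{lem:k-addable}, and \ref{lem:incomparable}, with each handling one step of the reduction. For the equality of unions, I would first apply Lemma~\ref{lem:addable} to rewrite the left-hand side of Eq.~\eqref{e:k-addable} as the union over all addable cells $(c,j)$ with $1 \leq j \leq k$. The right-hand side restricts this union further to $k$-addable cells, so it suffices to show that each addable-but-not-$k$-addable term can be absorbed by one of the $k$-addable terms. Given such $(c,j)$, Lemma~\ref{lem:k-addable} produces a $k$-addable cell $(c,s)$ with $j \leq s \leq k$ and $\supp_{\comp{a}}^{(c,j)} \preceq \supp_{\comp{a}}^{(c,s)}$. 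I would then upgrade this to $\supp_{\comp{a}}^{(c,j)} + \comp{e}_j \preceq \supp_{\comp{a}}^{(c,s)} + \comp{e}_s$, from which Proposition~\ref{prop:lswap} yields the containment $\KD(\supp_{\comp{a}}^{(c,j)} + \comp{e}_j) \subseteq \KD(\supp_{\comp{a}}^{(c,s)} + \comp{e}_s)$, justifying the removal of the extraneous term.

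For the minimality claim, I would split on whether two distinct $k$-addable cells $(c,j)$ and $(c',j')$ share a column. If $c \neq c'$, the compositions $\supp_{\comp{a}}^{(c,j)} + \comp{e}_j$ and $\supp_{\comp{a}}^{(c',j')} + \comp{e}_{j'}$ have different column weights: permuting entries leaves column weights invariant, while incrementing row $j$ to value $c$ (respectively row $j'$ to value $c'$) adds one cell in column $c$ (respectively $c'$) to the column weight of $\comp{a}$. Since Kohnert moves preserve column weight, by Corollary~\ref{cor:cwt} the two Kohnert sets are disjoint and thus incomparable under inclusion. If $c = c'$, then Lemma~\ref{lem:incomparable} provides that $\supp_{\comp{a}}^{(c,j)} + \comp{e}_j$ and $\supp_{\comp{a}}^{(c,j')} + \comp{e}_{j'}$ are incomparable in left swap order, and Proposition~\ref{prop:lswap} together with the fact that $\kd_{\comp{x}} \in \KD(\comp{x})$ lifts this to incomparability of the corresponding Kohnert sets.

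The main obstacle is the promotion step from $\supp_{\comp{a}}^{(c,j)} \preceq \supp_{\comp{a}}^{(c,s)}$ to $\supp_{\comp{a}}^{(c,j)} + \comp{e}_j \preceq \supp_{\comp{a}}^{(c,s)} + \comp{e}_s$, since Lemma~\ref{lem:k-addable} alone does not track the effect of adding $\comp{e}_j$ versus $\comp{e}_s$. To handle this, I would revisit the three cases in the proof of Lemma~\ref{lem:k-addable} and reinterpret the chain of transpositions witnessing $\supp_{\comp{a}}^{(c,j)} \preceq \supp_{\comp{a}}^{(c,s)}$ as a sequence of left swaps transporting the value $c-1$ from row $s$ downward to row $j$ through the intermediate rows specified by Definition~\ref{def:top-support}. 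After adding $\comp{e}_s$ on the larger side and $\comp{e}_j$ on the smaller side, the value in transit becomes $c$ rather than $c-1$, which only strengthens each intermediate inequality certifying a left swap, so every step in the chain remains valid and the promoted relation follows.
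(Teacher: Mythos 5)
Your proposal is correct and follows the paper's own argument step for step: Lemma~\ref{lem:addable} to get to addable cells, Lemma~\ref{lem:k-addable} plus the left-swap ``promotion'' $\supp_{\comp{a}}^{(c,j)}+\comp{e}_j\preceq\supp_{\comp{a}}^{(c,s)}+\comp{e}_s$ to reduce to $k$-addable cells, then column weights for distinct columns and Lemma~\ref{lem:incomparable} within a column for the minimality claim. The one step you flag as the ``main obstacle'' is indeed the only non-routine point; the paper asserts it with just the remark ``Since $r<s$,'' and your proposed justification is sound in spirit, with the small caveat that in Case 1 of Lemma~\ref{lem:k-addable} the witnessing chain for $\supp_{\comp{a}}^{(c,j)}\preceq\supp_{\comp{a}}^{(c,s)}$ actually moves the value $c-1$ starting from the terminal index $t$ of Definition~\ref{def:top-support} rather than from $s$ itself when $t<s$, so the promoted chain needs one extra initial left swap $t_{t,s}$ (valid because $a_t=c-1<c$) before the transport argument you describe takes over.
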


\begin{proof}
  By Lemma~\ref{lem:k-addable}, if $(c,r)$ is an addable cell for $\comp{a}$ that it not $k$-addable, then there exists a $k$-addable cell $(c,s)$ such that $\supp_{\comp{a}}^{(c,r)} \preceq \supp_{\comp{a}}^{(c,s)}$. Since $r<s$, we also have $\supp_{\comp{a}}^{(c,r)} + \comp{e}_r \preceq \supp_{\comp{a}}^{(c,s)} + \comp{e}_s$. Therefore, by Proposition~\ref{prop:lswap}, the set of Kohnert diagrams of the former is contained in the set of Kohnert diagrams of the latter, so the addable cell $(c,r)$ may be removed from the indexing set on the right side of Eq.~\eqref{e:addable}. Thus the equality follows from Lemma~\ref{lem:addable}. 

  To see that the terms on the right side of Eq.~\eqref{e:k-addable} are pairwise not contained in one another, note that for $c' \neq c$, the column weights of $\supp_{\comp{a}}^{(c,j')}+\comp{e}_{j'}$ and $\supp_{\comp{a}}^{(c,j)}+\comp{e}_j$ are different, regardless of the values for $j',j$, and so the sets of Kohnert diagrams in this case are disjoint. For cells added within the same column, Lemma~\ref{lem:incomparable} ensures there is no pairwise containment.
\end{proof}

\subsection{Drop sets}
\label{sec:formula-drop}

The image of the bijection in Theorem~\ref{thm:RSK} induced by RSK insertion is \emph{disjoint}. Therefore we may take generating polynomials to obtain Pieri's rule for multiplying Schur polynomials as an immediate corollary.

In contrast with this, the image of the bijection in Theorem~\ref{thm:RSKD} is not, in general, disjoint. Therefore when taking generating polynomials to obtain our key analog of Pieri's rule for multiplying key polynomials, we must use inclusion--exclusion to account for the nontrivial intersections.

As remarked in the proof of Theorem~\ref{thm:k-addable}, if $c' \neq c$, then the column weights of $\supp_{\comp{a}}^{(c,j')}+\comp{e}_{j'}$ and $\supp_{\comp{a}}^{(c,j)}+\comp{e}_j$ are different, regardless of the values for $j',j$, and so the sets of Kohnert diagrams in this case are disjoint. Therefore we focus our attention on cells added within a given column. 

\begin{definition}
  Given a weak composition $\comp{a}$, a positive integer $k$, and a column index $c$, the \emph{$k$-addable row set for $\comp{a}$ in column $c$} is given by
  \begin{equation}
    \Row_{\comp{a},k}^c = \{ r \leq k \mid (c,r) \text{ is $k$-addable for $\comp{a}$} \} .
    \label{e:col-set}
  \end{equation}
  We say that $c$ is a \emph{$k$-addable column for $\comp{a}$} whenever the set $\Row_{\comp{a},k}^c$ is nonempty.
  \label{def:col-set}
\end{definition}

By Lemma~\ref{lem:incomparable}, if we take elements of $\Row_{\comp{a},k}^c$ as increasing, $r_1 < \cdots < r_p$, then the corresponding parts of $\comp{a}$ are decreasing, $a_{r_1} > \cdots > a_{r_p}$. 

Similar to Definition~\ref{def:top-support}, we can construct the weak compositions that index the intersections of the sets $\KD(\supp_{\comp{a}}^{(c,r)} + \comp{e}_r)$ using left swap order.

\begin{definition}
  Let $\comp{a}$ be a weak composition, $k$ a positive integer, and $c$ a $k$-addable column for $\comp{a}$. Given a nonempty subset $R \subseteq \Row_{\comp{a},k}^c$, the \emph{maximal drop composition for $\comp{a}$ in column column $c$ at rows $R$} is the weak composition
  \begin{equation}
    \drop_{\comp{a}}^{(c,R)} = t_{r_{-p},r_{-p+1}} \cdots t_{r_{-1},r_{0}} \cdot \supp_{\comp{a}}^{(c,r_0)},
    \label{e:drop}
  \end{equation}
  where $R = \{r_{-p} < \cdots < r_{-1} < r_{0}\}$.
  \label{def:drop}
\end{definition}

In particular, for singleton sets we have the equivalence $\drop_{\comp{a}}^{(c,\{r\})} = \supp_{\comp{a}}^{(c,r)}$.

\begin{example}\label{ex:drop}
  Consider again the weak composition $\comp{a} = (4,6,4,3,0,1,1,2,5,4)$ and $k=6$. In Ex.~\ref{ex:k-addable} we found three $k$-addable cells for $\comp{a}$ in column $5$ giving $\Row_{\comp{a},k}^c = \{3,4,6\}$. There are three doubleton subsets as well as the entire set to consider for $R$, as indicated in Fig.~\ref{fig:drop}. Notice the cells that fall from above row $k$ are the same as for the maximal support composition of the highest row of $R$, but now not all cells fall to the same row.  
\end{example}

\begin{figure}[ht]
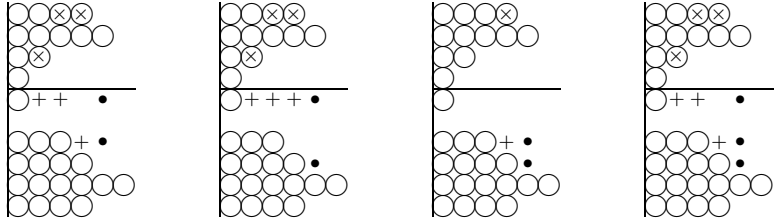

  \begin{displaymath}
    \arraycolsep=2\cellsize
    \begin{array}{cccc}
    \vline\nulltab{\circify{\ } & \circify{\ } & \circify{\times} & \circify{\times} \\
      \circify{\ } & \circify{\ } & \circify{\ } & \circify{\ } & \circify{\ } \\
      \circify{\ } & \circify{\times } \\
      \circify{\ } \\\hline
      \circify{\ } & + & + & & \bullet \\
      \\
      \circify{\ } & \circify{\ } & \circify{\ } & + & \bullet \\
      \circify{\ } & \circify{\ } & \circify{\ } & \circify{\ } \\
      \circify{\ } & \circify{\ } & \circify{\ } & \circify{\ } & \circify{\ }  & \circify{\ } \\
      \circify{\ } & \circify{\ } & \circify{\ } & \circify{\ } 
    } &
    \vline\nulltab{\circify{\ } & \circify{\ } & \circify{\times } & \circify{\times} \\
      \circify{\ } & \circify{\ } & \circify{\ } & \circify{\ } & \circify{\ } \\
      \circify{\ } & \circify{\times } \\
      \circify{\ } \\\hline
      \circify{\ } & + & + & + & \bullet \\
      \\
      \circify{\ } & \circify{\ } & \circify{\ } & \\
      \circify{\ } & \circify{\ } & \circify{\ } & \circify{\ } & \bullet\\
      \circify{\ } & \circify{\ } & \circify{\ } & \circify{\ } & \circify{\ }  & \circify{\ } \\
      \circify{\ } & \circify{\ } & \circify{\ } & \circify{\ } 
    } &
    \vline\nulltab{\circify{\ } & \circify{\ } & \circify{\ } & \circify{\times }  \\
      \circify{\ } & \circify{\ } & \circify{\ } & \circify{\ } & \circify{\ } \\
      \circify{\ } & \circify{\ } \\
      \circify{\ } \\\hline
      \circify{\ } \\
      \\
      \circify{\ } & \circify{\ } & \circify{\ } & + & \bullet \\
      \circify{\ } & \circify{\ } & \circify{\ } & \circify{\ } & \bullet \\
      \circify{\ } & \circify{\ } & \circify{\ } & \circify{\ } & \circify{\ }  & \circify{\ } \\
      \circify{\ } & \circify{\ } & \circify{\ } & \circify{\ } 
    } &
    \vline\nulltab{\circify{\ } & \circify{\ } & \circify{\times } & \circify{\times }  \\
      \circify{\ } & \circify{\ } & \circify{\ } & \circify{\ } & \circify{\ } \\
      \circify{\ } & \circify{\times } \\
      \circify{\ } \\\hline
      \circify{\ } & + & + & & \bullet \\
      \\
      \circify{\ } & \circify{\ } & \circify{\ } & + & \bullet \\
      \circify{\ } & \circify{\ } & \circify{\ } & \circify{\ } & \bullet \\
      \circify{\ } & \circify{\ } & \circify{\ } & \circify{\ } & \circify{\ }  & \circify{\ } \\
      \circify{\ } & \circify{\ } & \circify{\ } & \circify{\ } 
    } 
    \end{array}
  \end{displaymath}
  \caption{\label{fig:drop}The four nonempty, non-singleton subsets of rows $(\bullet)$ of the $6$-addable cells for $(4,6,4,3,0,1,1,2,5,4)$ in column $5$. Here marked cells $(\otimes)$ will drop down to the indicated position $(+)$ below in creating the maximal drop composition.}
\end{figure}

\begin{lemma}
  Let $\comp{a}$ be a weak composition, $k$ a positive integer, and $c$ a $k$-addable column for $\comp{a}$. Given a nonempty subset $R \subseteq \Row_{\comp{a},k}^c$ and a row index $s\in\Row_{\comp{a},k}^c$ such that $s > \max{R}$, we have
  \begin{multline}
    \KD\left( \supp_{\comp{a}}^{(c,s)} + \comp{e}_s \right) \cap
    \KD\left( \drop_{\comp{a}}^{(c,R)} + \comp{e}_{\min(R)} \right) = \\
    \KD\left( \drop_{\comp{a}}^{(c,R \cup \{s\})} + \comp{e}_{\min(R)} \right) .
    \label{e:intersect}
  \end{multline}
  \label{lem:intersect}
\end{lemma}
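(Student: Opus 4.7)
The proof of both inclusions reduces to statements in the left swap order, via Proposition~\ref{prop:lswap} (which guarantees $\KD(\comp{b}) \subseteq \KD(\comp{c})$ whenever $\comp{b} \preceq \comp{c}$) together with Lemma~\ref{lem:3.7} (which characterizes $T \in \KD(\comp{c})$ as $\thread(T) \preceq \comp{c}$). Write $\comp{u}_s := \supp_{\comp{a}}^{(c,s)} + \comp{e}_s$, $\comp{u}_R := \drop_{\comp{a}}^{(c,R)} + \comp{e}_{\min R}$, and $\comp{u}^+ := \drop_{\comp{a}}^{(c,R \cup \{s\})} + \comp{e}_{\min R}$; these three compositions share a common column weight, so they are comparable in the left swap order.

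For the $\supseteq$ inclusion, it suffices to establish both $\comp{u}^+ \preceq \comp{u}_s$ and $\comp{u}^+ \preceq \comp{u}_R$. The first relation is transparent from Definition~\ref{def:drop}: because $s = \max(R \cup \{s\})$, the composition $\comp{u}^+$ arises from $\comp{u}_s$ by the cascade $t_{r_{-p},r_{-p+1}} \cdots t_{r_{-1},s}$ that transports the value $c$ at position $s$ down through the rows of $R$ to $\min R$, and each transposition is a legitimate left swap because $a_r < c$ for every $r \in R$ by addability. The second relation is more delicate, since $\comp{u}_R$ and $\comp{u}^+$ are built from different support starting points, $\supp_{\comp{a}}^{(c,\max R)}$ and $\supp_{\comp{a}}^{(c,s)}$, whose chains typically traverse different rows above $k$. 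The $k$-addability condition forces every chain row after the first to lie above $k$, and Lemma~\ref{lem:incomparable} gives $a_{\max R} > a_s$; combining these, one can convert $\comp{u}_R$ into $\comp{u}^+$ by first using left swaps between position $s$ and the chain rows above $k$ to migrate from the $\max R$-support configuration to the $s$-support configuration, and then applying the cascade above.

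For the $\subseteq$ inclusion, let $T \in \KD(\comp{u}_s) \cap \KD(\comp{u}_R)$ and set $\comp{b} := \thread(T)$; by Lemma~\ref{lem:3.7} we have $\comp{b} \preceq \comp{u}_s$ and $\comp{b} \preceq \comp{u}_R$, and the task is to conclude $\comp{b} \preceq \comp{u}^+$. The approach I would take is the Ehresmann-style positional characterization of the left swap order: $\comp{b} \preceq \comp{c}$ if and only if the prefix counts $f_{\comp{c}}(j,l) := \#\{i \leq l : c_i \geq j\}$ satisfy $f_{\comp{b}}(j,l) \geq f_{\comp{c}}(j,l)$ for all $j$ and $l$, with equality at $l = n$. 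Under this reformulation the question reduces to the pointwise inequality
\[ f_{\comp{u}^+}(j,l) \leq \max\bigl(f_{\comp{u}_s}(j,l),\, f_{\comp{u}_R}(j,l)\bigr) \qquad \text{for all } j, l, \]
so that any $\comp{b}$ that prefix-dominates both $\comp{u}_s$ and $\comp{u}_R$ automatically prefix-dominates $\comp{u}^+$. The main obstacle is executing this case analysis cleanly: one must split on $j$ using the cutoff values $a_s < a_{r_{-1}} < \cdots < a_{r_{-p}} < c$ (which are ordered by Lemma~\ref{lem:incomparable}) and on where $l$ falls among the rows in $R \cup \{s\}$ and the above-$k$ chain rows of the two support chains. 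In each regime, the required inequality reduces to an elementary comparison, because the three compositions differ from $\comp{a}$ only on this controlled set of rows, but organized bookkeeping of the support chain structure is needed throughout.
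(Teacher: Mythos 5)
Your $\supseteq$ half follows the paper's route almost exactly: you want $\comp{u}^+ \preceq \comp{u}_s$ and $\comp{u}^+ \preceq \comp{u}_R$, which together with Proposition~\ref{prop:lswap} give the containment. The first relation is handled adequately by your cascade argument. The second relation is where the substance lies, and there you gesture at ``migrating between support configurations'' via Lemma~\ref{lem:incomparable} and the fact that chain rows lie above $k$, but you do not actually produce a sequence of left swaps or otherwise verify $\drop_{\comp{a}}^{(c,R\cup\{s\})} \preceq \drop_{\comp{a}}^{(c,R)}$. The paper resolves this cleanly by observing that enlarging $R$ by a new maximum $s$ leaves the positions from $\max R$ down unchanged while only expanding which cells above row $k$ drop; that observation, plus $\min(R\cup\{s\}) = \min R$, is all that is needed.

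Your $\subseteq$ half takes a genuinely different route and this is where the real gap is. You propose the Ehresmann/Gale prefix-count characterization of $\preceq$ and reduce the problem to the pointwise inequality $f_{\comp{u}^+}(j,l) \leq \max(f_{\comp{u}_s}(j,l),\,f_{\comp{u}_R}(j,l))$. Two issues: first, the prefix-count characterization of the left swap order is neither stated nor proved anywhere in the paper, so you would need to supply a proof (the order is essentially strong Bruhat on a parabolic quotient, so this is true, but it is an additional lemma). Second, and more seriously, you explicitly leave the pointwise inequality unverified, deferring to ``organized bookkeeping of the support chain structure.'' That bookkeeping is precisely the content; without it the proof does not exist. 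The paper avoids the entire prefix-count machinery with a short contrapositive: take $T$ with the right column weight but $T \notin \KD(\comp{u}^+)$, reduce to $T = \kd_{\thread(T)}$ by Lemma~\ref{lem:3.7}, let $j$ be the largest index at which $\wt(T)$ fails to match any $\comp{c} \preceq \comp{u}^+$, then split on $j>s$ versus $j\leq s$ using the facts that $\drop_{\comp{a}}^{(c,R\cup\{s\})}$ agrees with $\supp_{\comp{a}}^{(c,s)}$ beyond index $s$ and with $\drop_{\comp{a}}^{(c,R)}$ before index $s$. These two agreements (one coming from the transpositions in Definition~\ref{def:drop} involving only indices $\leq s$, the other from the chain rows of $q_t = \max R$ lying strictly above $k \geq s$) do all the work and sidestep the case analysis you were dreading. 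I would recommend adopting this split rather than trying to complete the prefix-count computation.
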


\begin{proof}
  It is immediate from Definition~\ref{def:drop} that $\drop_{\comp{a}}^{(c,R)} \preceq \supp_{\comp{a}}^{(c,\max(R))}$ for any nonempty $R \subseteq \Row_{\comp{a},k}^c$. Since $s = \max(R\cup\{s\}) > \min(R)$, it follows from Proposition~\ref{prop:lswap} that
  \[ \KD\left( \drop_{\comp{a}}^{(c,R \cup \{s\})} + \comp{e}_{\min(R)} \right) \subseteq \KD\left( \supp_{\comp{a}}^{(c,s)} + \comp{e}_s \right) .\]
  Similarly, adding a new maximum row index to $R$ expands the set of cells above row $k$ that are dropped but does not affect the resulting positions of cells from $\max(R)$ down, so $\drop_{\comp{a}}^{(c,R\cup\{s\})} \preceq \drop_{\comp{a}}^{(c,R)}$. Since $\min(R\cup\{s\}) = \min(R)$, by Proposition~\ref{prop:lswap} again we have
  \[ \KD\left( \drop_{\comp{a}}^{(c,R \cup \{s\})} + \comp{e}_{\min(R)} \right) \subseteq \KD\left( \drop_{\comp{a}}^{(c,R)} + \comp{e}_{\min(R)} \right) .\]
  Therefore the right-hand side of Eq.~\eqref{e:intersect} is contained in the left-hand side.

  For brevity, let $\comp{b} = \drop_{\comp{a}}^{(c,R \cup \{s\})} + \comp{e}_{\min(R)}$. Given a generic Kohnert diagram $T$ such that $\cwt(T) = \cwt(\comp{b})$ but $T \not\in\KD(\comp{b})$, we aim to show $T$ is not contained in the left-hand side of Eq.~\eqref{e:intersect}. By Lemma~\ref{lem:3.7}, the thread weight of $T$ also satisfies $\kd_{\thread(T)} \not\in \KD(\comp{b})$, and so we may assume $T = \kd_{\thread(T)}$. Let $j$ denote the largest index such that no weak composition $\comp{c} \preceq \comp{b}$ satisfies $c_i = \wt(T)_i$ for all $i \geq j$. If $j>s$, then since $\drop_{\comp{a}}^{(c,R\cup\{s\})}$ and $\supp_{\comp{a}}^{(c,s)}$ have the same values beyond index $s$, we must have $\wt(T) \not\preceq \supp_{\comp{a}}^{(c,s)} + \comp{e}_s$, and so $T$ does not appear in the set on the left-hand side of Eq.~\eqref{e:intersect}. If $j \leq s$, then since $\drop_{\comp{a}}^{(c,R\cup\{s\})}$ and $\drop_{\comp{a}}^{(c,R)}$ have the same values before index $s$, we must have $\wt(T) \not\preceq \drop_{\comp{a}}^{(c,R)} + \comp{e}_{\min{R}}$, and so again $T$ does not appear in the set on the left-hand side of Eq.~\eqref{e:intersect}. 
\end{proof}

\begin{example}
  Beginning with Theorem~\ref{thm:k-addable} our running example of the weak composition $\comp{a} = (4,6,4,3,0,1,1,2,5,4)$ with $k=6$ in column $c=5$ gives
  \begin{eqnarray*}
    \bigcup_{\substack{ r \in \Row_{\comp{a},k}^{c} }} \KD(\supp_{\comp{a}}^{(c,r)} + \comp{e}_r) & = &
    \KD(4,6,4,3,0,5,1,1,5,2) \\[-12pt] & & \cup \KD(4,6,4,5,0,1,1,2,5,1) \\ & & \cup \KD(4,6,5,3,0,1,1,2,5,4).
  \end{eqnarray*}
  Taking the generating polynomial by iteratively applying Lemma~\ref{lem:intersect} gives
  \begin{multline*}
    \key_{(4,6,4,3,0,5,1,1,5,2)} + \key_{(4,6,4,5,0,1,1,2,5,1)} + \key_{(4,6,5,3,0,1,1,2,5,4)} \\
    - \key_{(4,6,4,5,0,3,1,1,5,2)} - \key_{(4,6,5,3,0,4,1,1,5,2)} - \key_{(4,6,5,4,0,1,1,2,5,3)} \\
    +  \key_{(4,6,5,4,0,3,1,1,5,2)}
  \end{multline*}
\end{example}

We finally have all the ingredients needed to state the key analog of Pieri's rule.

\begin{theorem}
  Given a weak composition $\comp{a}$ and positive integer $k$, we have
  \begin{equation}
    \key_{\comp{a}} \cdot s_{(1)}(x_1,\ldots,x_k) = \sum_{\substack{ c \text{ $k$-addable for $\comp{a}$} \\ \varnothing \neq R \subseteq \Row_{\comp{a},k}^{c}}} (-1)^{\#R-1} \key_{\drop_{\comp{a}}^{(c,R)}+\comp{e}_{\min(R)}} .
    \label{e:monkey}
  \end{equation}
  Moreover, the terms on the right-hand side are pairwise distinct.
  \label{thm:monkey}
\end{theorem}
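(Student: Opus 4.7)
The plan is to take generating polynomials in the bijection of Theorem~\ref{thm:RSKD} and apply inclusion-exclusion to the right-hand side. Since $\KD(\comp{e}_k)$ consists of the single cell in column $1$ moved to any of rows $1,\ldots,k$, we have $\key_{\comp{e}_k} = x_1 + \cdots + x_k = s_{(1)}(x_1,\ldots,x_k)$, so the generating polynomial of the domain of Theorem~\ref{thm:RSKD} equals $\key_{\comp{a}} \cdot s_{(1)}(x_1,\ldots,x_k)$. After rewriting the codomain using Theorem~\ref{thm:k-addable}, the right-hand side becomes a union of sets $\KD(\supp_{\comp{a}}^{(c,j)} + \comp{e}_j)$ indexed by $k$-addable cells $(c,j)$. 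As observed in the proof of Theorem~\ref{thm:k-addable}, sets with different column index $c$ have distinct column weights $\cwt(\comp{a}) + \comp{e}_c$ and are therefore pairwise disjoint, so it suffices to analyze each $k$-addable column $c$ separately.

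For each such $c$, I apply ordinary inclusion-exclusion to the union $\bigcup_{r \in \Row_{\comp{a},k}^c} \KD(\supp_{\comp{a}}^{(c,r)} + \comp{e}_r)$. The central step is to identify, for each nonempty $R \subseteq \Row_{\comp{a},k}^c$, the intersection
\[ \bigcap_{r \in R} \KD\bigl(\supp_{\comp{a}}^{(c,r)} + \comp{e}_r\bigr) = \KD\bigl(\drop_{\comp{a}}^{(c,R)} + \comp{e}_{\min(R)}\bigr), \]
which I prove by induction on $|R|$. The base case $|R|=1$ is immediate from the identity $\drop_{\comp{a}}^{(c,\{r\})} = \supp_{\comp{a}}^{(c,r)}$ noted after Definition~\ref{def:drop}. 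For the inductive step, writing $s = \max(R)$ and $R' = R \setminus \{s\}$, the hypothesis $s > \max(R')$ of Lemma~\ref{lem:intersect} is satisfied by construction, so combining that lemma with the inductive hypothesis yields the claim. The generating polynomial of this intersection is $\key_{\drop_{\comp{a}}^{(c,R)} + \comp{e}_{\min(R)}}$, and summing the $(-1)^{|R|-1}$ contributions over all $k$-addable columns and all nonempty subsets produces Eq.~\eqref{e:monkey}.

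For the pairwise distinctness claim, terms from different columns $c$ are automatically distinct by their column weights. For two subsets $R \neq R'$ of a shared $\Row_{\comp{a},k}^c$, I argue directly from Definition~\ref{def:drop} that the resulting weak compositions differ: the drop places value $c$ at position $\min(R)$, cyclically moves the original values $a_r$ for $r \in R \setminus \{\max(R)\}$ up to the next element of $R$, and modifies positions above $\max(R)$ in a manner depending only on $\max(R)$. Combined with the observation after Definition~\ref{def:col-set} that $a_r$ is strictly decreasing in $r$ along $\Row_{\comp{a},k}^c$, one can read off $R$ from the positions and new values in the resulting composition, showing that distinct $R$ yield distinct terms. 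The main obstacle throughout is the inductive identification of intersections, but this is a direct application of Lemma~\ref{lem:intersect}, which is tailored precisely for the purpose.
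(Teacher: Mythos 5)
Your proof is correct and takes essentially the same route as the paper: pass to generating polynomials of the weight-preserving bijection in Theorem~\ref{thm:RSKD}, reduce the target via Theorem~\ref{thm:k-addable}, separate $k$-addable columns by column weight, and then apply inclusion--exclusion within each column using Lemma~\ref{lem:intersect}. Your explicit induction showing $\bigcap_{r \in R} \KD(\supp_{\comp{a}}^{(c,r)} + \comp{e}_r) = \KD(\drop_{\comp{a}}^{(c,R)} + \comp{e}_{\min(R)})$ makes precise what the paper calls "using Lemma~\ref{lem:intersect} iteratively," and your direct argument for pairwise distinctness of the terms (reading off $\min(R)$, $\max(R)$, and the interior of $R$ from the modified entries, using the strict decrease of $a_r$ along $\Row_{\comp{a},k}^c$) actually addresses a claim the paper states but leaves to the reader; your sketch there is sound, modulo the minor quibble that the word "cyclically" overstates the movement pattern, since the values shift strictly upward to the next element of $R$ and nothing cycles back.
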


\begin{proof}
  Combining Theorems~\ref{thm:RSKD} and \ref{thm:k-addable}, we have a weight-preserving bijection
  \[ \KD(\comp{a}) \times \KD(\comp{e}_k) \stackrel{\sim}{\longrightarrow}
  \bigcup_{\substack{ 1 \leq j \leq k \\ (c,j) \text{ $k$-addable for } \comp{a} }} \KD(\supp_{\comp{a}}^{(c,j)} + \comp{e}_j). \]
  The generating polynomial on the left-hand side is $\key_{\comp{a}} \cdot s_{(1)}(x_1,\ldots,x_k)$. The generating polynomial on the right-hand side can be computed by first noting the sets are disjoint for different columns $c$, then using Lemma~\ref{lem:intersect} iteratively to compute intersections. The equality now follows from the inclusion--exclusion formula for intersections of sets.
\end{proof}

In particular, notice the right hand side of Eq.~\eqref{e:monkey} is nonnegative if and only if the $k$-addable row set for $\comp{a}$ for each $k$-addable column $c$ is a singleton. 

%
\section{Key bijections}
%
\label{sec:bijection}

We now prove the key Pieri rule for certain extremal cases via a reversible insertion of a single box into a generic Kohnert diagram. This gives an explicit weight-preserving bijection as asserted in Theorem~\ref{thm:RSKD}. To ease notation, given a weak composition $\comp{a}$ and a positive integer $k$, we denote the \emph{target space} of the bijection by $\KDs(\comp{a},k)$, that is
\begin{equation}
    \KDs(\comp{a},k) = \bigcup_{\substack{ \comp{b} \preceq \comp{a} \\ 1 \leq j \leq k }} \KD(\comp{b} + \comp{e}_j).
\end{equation}
In Section~\ref{sec:bijection-bottom}, we give a simple insertion algorithm for the case $k=1$, developing along the way several tools for understanding Kohnert diagrams that are essential for all cases. In Section~\ref{sec:bijection-rectify}, we review a generalization of the classical RSK insertion algorithm on tableaux to an insertion algorithm on diagrams \cite{Ass-K,AG}. In Section~\ref{sec:bijection-top}, we use rectification to construct a more subtle insertion algorithm for the case $k \geq \ell(\comp{a})$, where $\ell(\comp{a})$ denotes the largest index $i$ for which $a_i > 0$. 

\subsection{Bottom insertion}
\label{sec:bijection-bottom}

The bijection of Theorem~\ref{thm:RSKD} for the case $k=1$ is simple to state, though the proof requires several additional tools.

\begin{definition}
  Let $T$ be a generic Kohnert diagram, and let 
  \[ c = \min\{i \mid (i,1) \notin T\} \]
  be the column of the left-most empty position of $T$ in the first row. Then the \emph{bottom insertion map} $\ins_1$ sends $T$ to the diagram
  \begin{equation}
    \ins_1(T)= T \sqcup \{(c,1)\} .
    \label{e:insert-bot}  
  \end{equation}
  \label{def:insert-bot}  
\end{definition}

\begin{example}
  Consider the weak composition $\comp{a} = (0,3,2)$ and $k=1$. Refining the right-hand side of Theorem~\ref{thm:RSKD} using Theorem~\ref{thm:k-addable}, we expect a bijection
  \[ \KD{(0,3,2)} \times \KD{(1,0,0)} \stackrel{\sim}{\rightarrow} \KD{(1,3,2)} \cup \KD{(3,3,0)} \cup \KD{(4,0,2)} . \]
  Indeed, Fig.~\ref{fig:insert-bot} shows the images of the Kohnert diagrams in $\KD(0,3,2)$ (see Fig.~\ref{fig:kohnert}) under the bottom insertion map $\ins_1$, which is precisely the union on the right.
  \label{ex:bottom}
\end{example}

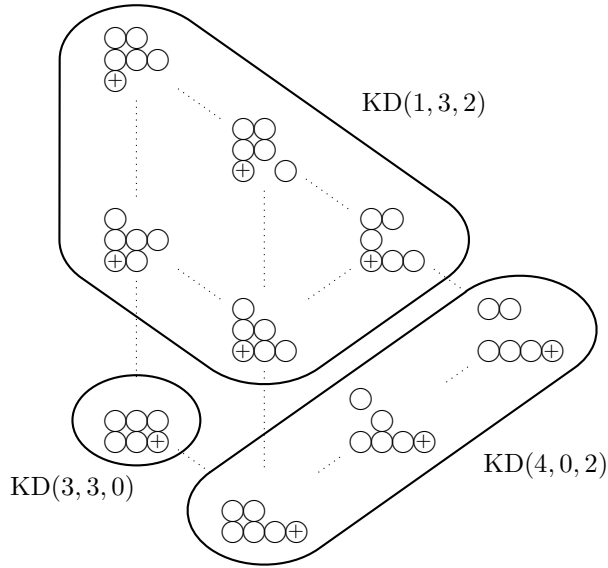
\begin{figure}[ht]
  \begin{center}
    \begin{tikzpicture}[xscale=1.7,yscale=1.2]
      \node at (0,5) (A) {$\vline\cirtab{  \ &  \ \\  \ &  \ &  \ \\ + & }$};
      \node at (1,4) (B) {$\vline\cirtab{  \ &  \ \\  \ &  \ \\ + & & \ }$};
      \node at (2,3) (C) {$\vline\cirtab{  \ &  \ \\  \ \\ + &  \ &  \ }$};
      \node at (3,2) (D) {$\vline\cirtab{  \ &  \ \\ \\  \ &  \ &  \ & + }$};
      \node at (1,2) (E) {$\vline\cirtab{  \ \\  \ &  \ \\ + &  \ &  \ }$};
      \node at (2,1) (F) {$\vline\cirtab{  \ \\ &  \ \\  \ &  \ &  \ & + }$};
      \node at (1,0) (G) {$\vline\cirtab{ \\  \ &  \ \\  \ &  \ &  \ & + }$};
      \node at (0,3) (H) {$\vline\cirtab{  \ \\  \ &  \ &  \ \\ + &  \ }$};
      \node at (0,1) (I) {$\vline\cirtab{ \\  \ &  \ &  \ \\  \ &  \ & + }$};
      \draw[thick] \convexpath{D,G}{6mm};
      \draw[thick] \convexpath{H,A,C,E}{6mm};
      \draw[thick] (0,1) circle (5mm);
      \node at (-0.5,0.25) (KD330) {$\KD(3,3,0)$};
      \node at (3.2,0.5) (KD402) {$\KD(4,0,2)$};
      \node at (2.25,4.5) (KD132) {$\KD(1,3,2)$};
      \draw[dotted] (A) -- (H) ;
      \draw[dotted] (A) -- (B) ;
      \draw[dotted] (H) -- (I) ;
      \draw[dotted] (H) -- (E) ;
      \draw[dotted] (B) -- (E) ;
      \draw[dotted] (B) -- (C) ;
      \draw[dotted] (I) -- (G) ;
      \draw[dotted] (E) -- (G) ;
      \draw[dotted] (C) -- (E) ;
      \draw[dotted] (C) -- (D) ;
      \draw[dotted] (D) -- (F) ;
      \draw[dotted] (F) -- (G) ;      
    \end{tikzpicture}
    \caption{\label{fig:insert-bot}The Kohnert diagrams for $(0,3,2)$ along with the appended cell $(\oplus)$ under the bottom insertion map $\ins_1$.}
  \end{center}
\end{figure}

In order to show $\ins_1(T)$ is a generic Kohnert diagram, we reformulate the criterion given in Proposition~\ref{prop:lemma2.2} by generalizing the thread decomposition  given in Definition~\ref{def:thread} to \emph{matching sequences} defined as follows.

\begin{definition}
  Let $C$ (respectively, $D$) be a diagram consisting of cells in some column $i$ (respectively, $i+1$). A \emph{matching} from $D$ to $C$ is a directed graph with vertex set $D \sqcup C$ such that for every $x\in D$ and every $y\in C$ we have
  \begin{enumerate}
  \item $x$ has out-degree $1$ and in-degree $0$;
  \item $y$ has out-degree $0$ and in-degree at most $1$;
  \item if $y \leftarrow x$, then the row index of $y$ is weakly greater than that of $x$.
  \end{enumerate}
  For a given matching $M$, we say a cell $x \in D$ \emph{matches to} a cell $y \in C$, written $(y \leftarrow x)$ or $M(x)=y$, whenever $M$ has a directed edge from $x$ to $y$.
  \label{def:matching}
\end{definition}

\begin{definition}
  For $T$ an arbitrary diagram, a \emph{matching sequence on $T$} is a directed graph $M$ with vertex set the cells of $T$ such that for every pair of adjacent columns $i$ and $i+1$ for which column $i+1$ is nonempty in $T$, the restriction of $M$ to the cells of $T$ in columns $i$ and $i+1$ is a matching. 
  \label{def:matching_seq}
\end{definition}

If $T$ is a generic Kohnert diagram, then the thread decomposition of $T$ induces the matching sequence $\Mtch_{\thd}(T)$ on $T$ defined by $x$ matching to $y$ for cells $x \in T$ in column $i+1$ and $y \in T$ in column $i$ if and only if $x$ and $y$ are in the same thread. 

\begin{figure}[ht]
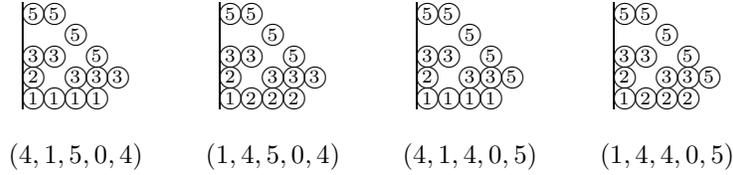

  \begin{displaymath}
    \arraycolsep=1.5\cellsize
    \begin{array}{cccc}
      \vline\cirtab{%
        5 & 5 \\
        & & 5 \\
        3 & 3 & & 5 \\
        2 & & 3 & 3 & 3 \\
        1 & 1 & 1 & 1 } &
      \vline\cirtab{%
        5 & 5 \\
        & & 5 \\
        3 & 3 & & 5 \\
        2 & & 3 & 3 & 3 \\
        1 & 2 & 2 & 2 } &
      \vline\cirtab{%
        5 & 5 \\
        & & 5 \\
        3 & 3 & & 5 \\
        2 & & 3 & 3 & 5 \\
        1 & 1 & 1 & 1 } &
      \vline\cirtab{%
        5 & 5 \\
        & & 5 \\
        3 & 3 & & 5 \\
        2 & & 3 & 3 & 5 \\
        1 & 2 & 2 & 2 } \\ \\
      (4,1,5,0,4) &
      (1,4,5,0,4) &
      (4,1,4,0,5) &
      (1,4,4,0,5)
    \end{array}
  \end{displaymath}
  \caption{\label{fig:matching}The four possible matching sequences of a generic Kohnert diagram along with their anchor weights (below), where the matched cells in adjacent columns are labeled the same.}
\end{figure}

\begin{example}
  Consider the generic Kohnert diagram from Example~\ref{ex:thread}. Considering columns $4$ and $5$, by Definition~\ref{def:matching}(3) there are two possible matchings since the cell in column $5$ cannot match to the cell in column $4$, row $1$. Continuing left, the matching is unique for columns $3$ and $4$ as well as for columns $2$ and $3$. However, we again have a choice for columns $1$ and $2$, with the cell in column $2$ row $1$ matching either to the cell in column $1$ row $1$ or the cell in column $1$ row $2$. The resulting four possible matching sequences are depicted in Fig.~\ref{fig:matching}. Notice the leftmost matches the thread decomposition in Fig.~\ref{fig:thread}.
  \label{ex:matching}
\end{example}

By the Hall Marriage Theorem, the characterization of generic Kohnert diagrams $T$ in Proposition~\ref{prop:lemma2.2} is equivalent to the existence of a matching sequence on $T$.

While matchings allow us to determine if an arbitrary diagram is a generic Kohnert diagram, in order to prove Theorem~\ref{thm:RSKD} we must be able to determine as well for which weak compositions $\comp{b}$ a generic Kohnert diagram lies in $\KD(\comp{b})$. 

\begin{definition}
  Given a matching $M$ on a diagram $T$, the \emph{anchor weight of $M$} is the weak composition $\wt(M)$ whose $i$th part is the number of cells along the path in $M$ that terminates in column $1$, row $i$.
  \label{def:wts}
\end{definition}

\begin{lemma}
  Let $T$ be a generic Kohnert diagram, and let $M$ be a matching sequence on $T$. Then $T \in \KD(\wt(M))$.
  \label{lem:matching-wt}
\end{lemma}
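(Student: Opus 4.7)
The plan is to reduce the statement to Lemma~\ref{lem:3.7} by establishing the intermediate claim $\thread(T) \preceq \wt(M)$ in the left-swap order. Once this is in hand, Proposition~\ref{prop:lswap} gives $\kd_{\thread(T)} \in \KD(\wt(M))$, so concatenating the Kohnert-move sequence that produces $\kd_{\thread(T)}$ from $\kd_{\wt(M)}$ with the Kohnert-move sequence producing $T$ from $\kd_{\thread(T)}$ (guaranteed by Lemma~\ref{lem:3.7}) yields $T \in \KD(\wt(M))$.

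To prove $\thread(T) \preceq \wt(M)$, I would first observe that the matching sequence $\Mtch_{\thd}(T)$ induced by Definition~\ref{def:thread} has anchor weight exactly $\thread(T)$, so it suffices to transform $M$ into $\Mtch_{\thd}(T)$ through modifications that each weaken the anchor weight in the left-swap order. Define a \emph{crossing} of $M$ at a column pair $(i,i+1)$ to be a pair of matches $y_1 \leftarrow x_2$ and $y_2 \leftarrow x_1$ in which $x_1$ lies strictly below $x_2$ in column $i+1$ and $y_1$ lies strictly below $y_2$ in column $i$. Because the thread decomposition matches the lowest available cell first, $\Mtch_{\thd}(T)$ is characterized as the unique matching sequence on $T$ with no crossings at any column pair. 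I would then induct on the total number of crossings in $M$: the base case forces $M = \Mtch_{\thd}(T)$ and $\wt(M) = \thread(T)$; for the inductive step, I would uncross a single crossing to produce a new matching sequence $M'$ with strictly fewer crossings, verify that $M'$ still satisfies the row conditions of Definition~\ref{def:matching} (which follows from the elementary inequalities $r_1 < r_2 \leq s_1 < s_2$ implied by the original crossing), and establish $\wt(M') \preceq \wt(M)$. The uncrossing exchanges the right-portions of the two paths through $x_1$ and $x_2$, so the anchor weight is altered only at the two positions $a_1$ and $a_2$ corresponding to the anchor rows of the paths containing $y_1$ and $y_2$ respectively, with the right-portion lengths at these positions swapped.

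The main obstacle is verifying that each uncrossing corresponds to a left-swap of the anchor weight rather than its reverse — that is, after uncrossing, the larger of the two affected values ends up at the smaller of the two indices $a_1, a_2$. Morally this must hold because the uncrossed configuration is the thread-consistent one, and threads are built by greedy lowest-first matching, but a rigorous verification requires tracking the left-portions of the two paths from column $1$ up to the swap columns and comparing how the anchor rows $a_1$ versus $a_2$ relate to the right-portion lengths $R_1$ versus $R_2$. Once this bookkeeping is complete, the inductive step closes, the intermediate claim $\thread(T) \preceq \wt(M)$ follows by transitivity of the left-swap order, and the lemma follows by the chain of reductions described above.
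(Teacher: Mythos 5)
Your reduction to the claim $\thread(T) \preceq \wt(M)$ is sound — in fact you can skip the detour through Proposition~\ref{prop:lswap}, since the ``if'' direction of Lemma~\ref{lem:3.7} with $\comp{a} = \wt(M)$ already gives $T \in \KD(\wt(M))$ directly. The gap is in the uncrossing argument: the thread decomposition $\Mtch_{\thd}(T)$ is \emph{not} the unique crossing-free matching sequence on $T$. Take $T$ with cells $(1,1),(1,2),(1,3)$ in column~$1$ and $(2,1),(2,2)$ in column~$2$ (a generic Kohnert diagram by Proposition~\ref{prop:lemma2.2}). The thread decomposition matches $(2,1)\to(1,1)$, $(2,2)\to(1,2)$, anchor weight $(2,2,1)$. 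But the matching $(2,1)\to(1,1)$, $(2,2)\to(1,3)$ is also crossing-free under your definition (the edges do not cross), yet it has anchor weight $(2,1,2)\neq(2,2,1)$. So repeatedly uncrossing can terminate at a crossing-free matching that is not $\Mtch_{\thd}(T)$, and your base case does not close. To fix this you would need an additional local move — something like sliding an edge onto a strictly lower unmatched target in the left column — and you would then need to verify that this move also only weakens the anchor weight in left-swap order, which is exactly the ``bookkeeping'' you already flag as incomplete.

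For contrast, the paper's proof avoids any characterization of $\Mtch_{\thd}$ or appeal to left-swap order: it inducts on the number of columns occupied by $T$, deletes the first column and shifts everything left to get $T'$ with the induced matching $M'$, applies the inductive hypothesis to conclude $T'\in\KD(\wt(M'))$, and then reassembles a diagram $T''$ that dominates $T$ and lies in $\KD(\wt(M))$ by a second, inner induction on the number of connected components of $M$. That route is longer to write but sidesteps the delicate comparison of matchings that your approach requires.
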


\begin{proof}
  We proceed by induction on the number of columns $c$ occupied by $T$. If $c=1$, then $T$ is a key diagram with $T = \kd_{\wt(M)} \in \KD(\wt(M))$.

  Suppose $c>1$ and assume the result for any generic Kohnert diagram occupying $c-1$ columns.
  Let $T'$ be the diagram obtained from $T$ by removing all the cells in the first column and pushing each cell in columns $2$ to $c$ to the left from position $(i,j)$ to position $(i-1,j)$.
  Let $M'$ be the matching sequence on $T'$ induced from $M$ by preserving all the existing matchings between cells in $T$ that were moved left to get $T'$. In particular, $T'$ is a generic Kohnert diagram by Proposition~\ref{prop:lemma2.2}, and by induction we have $T' \in \KD(\wt(M'))$.

  Let $T''$ be the diagram with the same first column as $T$ and the key diagram $\kd_{\wt(M')}$ in columns $2$ and beyond. Then $T \preceq T''$, so it suffices to show $T'' \in \KD(\wt(M))$. We do this by induction on the number of connected components of $M$.
  Observe the second column of $T''$ coincides with the first column of $T'$, and so $T$ and $T''$ coincide in the first two columns. Let $M''$ be the matching sequence on $T''$ defined by $M''(y)$ is the cell to the left of $y$ for $y$ strictly right of the second column, and $M''(y) = M(y)$ for $y$ in the second column. Then $\wt(M'') = \wt(M)$.

  If $M''$ has one component, then each column of $T''$ beyond the first has at most one cell. Letting $y$ denote the cell in column $2$, we have apply reverse Kohnert moves to columns $2,3,$ and so on until the cell lies in the same row as $M''(y)$. The corresponding matching is preserved, and so $T'' \in \KD(\wt(M))$ as desired. If $M''$ has more than one component, then let $x$ denote the highest cell in the first column of $T''$, and we may similarly apply reverse Kohnert moves to columns $2,3,\ldots$ to the cells on the component of $x$ until they lie in the same row as $x$. Having done this, we may remove the top row from the result, correspondingly removing one component of the matching. By induction, the remainder lifts as well, and so once again $T'' \in \KD(\wt(M))$.
\end{proof}



We may now strengthen Proposition~\ref{prop:lemma2.2} as follows.

\begin{theorem}
  For $T$ an arbitrary diagram, $T$ is a Kohnert diagram for $\comp{a}$ if and only if there exists a matching sequence $M$ on $T$ with $\wt(M) \preceq \comp{a}$.
  \label{thm:TFAE_kohnert}
\end{theorem}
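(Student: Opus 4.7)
The statement is essentially a synthesis of the previously established results, so my plan is to deduce each direction from one of the earlier lemmas.

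For the forward direction, assume $T \in \KD(\comp{a})$. Since $T$ is then a generic Kohnert diagram, the thread decomposition of Definition~\ref{def:thread} is defined on $T$ and, as noted immediately before Lemma~\ref{lem:matching-wt}, induces a matching sequence $\Mtch_{\thd}(T)$ whose anchor weight is exactly $\thread(T)$. By Lemma~\ref{lem:3.7}, the hypothesis $T \in \KD(\comp{a})$ is equivalent to $\thread(T) \preceq \comp{a}$, so $M := \Mtch_{\thd}(T)$ is a matching sequence with $\wt(M) \preceq \comp{a}$, as required.

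For the backward direction, suppose $M$ is a matching sequence on $T$ with $\wt(M) \preceq \comp{a}$. Lemma~\ref{lem:matching-wt} directly gives $T \in \KD(\wt(M))$, i.e. $T$ is obtainable from $\kd_{\wt(M)}$ by a sequence of Kohnert moves. Separately, Proposition~\ref{prop:lswap} applied to $\wt(M) \preceq \comp{a}$ yields $\kd_{\wt(M)} \in \KD(\comp{a})$, so $\kd_{\wt(M)}$ is itself obtainable from $\kd_{\comp{a}}$ by Kohnert moves. Concatenating the two sequences of Kohnert moves shows $T \in \KD(\comp{a})$.

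The main obstacle has already been handled in Lemma~\ref{lem:matching-wt}, whose double-induction on column count and number of matching components encapsulates the real content of the theorem; the present theorem simply packages that content with Lemma~\ref{lem:3.7} and Proposition~\ref{prop:lswap} to provide both an existence criterion (via matchings replacing Hall's condition in Proposition~\ref{prop:lemma2.2}) and the correct weight control. If there is any point to watch, it is only that the matching sequence produced in the forward direction is not canonical — different matching sequences on the same $T$ can have different anchor weights, as illustrated in Example~\ref{ex:matching} — but this is harmless because the theorem is phrased as an existence statement, and choosing the thread matching yields the weight $\thread(T)$ which is precisely the minimum one needs.
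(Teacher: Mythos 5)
Your proof is correct and follows essentially the same route as the paper's: the forward direction takes $M = \Mtch_{\thd}(T)$ and invokes Lemma~\ref{lem:3.7}, while the backward direction combines Lemma~\ref{lem:matching-wt} with Proposition~\ref{prop:lswap}. You are slightly more explicit than the paper in spelling out the concatenation of Kohnert-move sequences in the backward direction, and slightly less explicit in that you do not name-check the fact (noted in the paper just after Definition~\ref{def:matching_seq}, via Hall's marriage theorem) that the existence of a matching sequence already forces $T$ to be a generic Kohnert diagram, which is needed for Lemma~\ref{lem:matching-wt} to apply; but this is a cosmetic omission, not a gap. Your closing observation that the real content lives in Lemma~\ref{lem:matching-wt} and that the theorem is merely packaging is accurate.
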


\begin{proof}
  If a diagram $T$ is not a generic Kohnert diagram, then by Proposition~\ref{prop:lemma2.2} both statements are indeed false for all weak compositions $\comp{a}$. Suppose, then, $T$ is a generic Kohnert diagram. Statement (1) implies (2) using the thread decomposition by Lemma~\ref{lem:3.7}. Finally, to see (2) implies (1), we have $T \in \KD(\wt(M))$ by Lemma~\ref{lem:matching-wt} and $\wt(M) \preceq \comp{a}$, so, by Proposition~\ref{prop:lswap}, $T \in \KD(\comp{a})$. 
\end{proof}

Theorem~\ref{thm:TFAE_kohnert} yields the following useful characterization of the left swap order. 

\begin{corollary}
  For weak compositions $\comp{a}$ and $\comp{b}$, we have $\comp{b} \preceq \comp{a}$ if and only if $\wt(M) \preceq \comp{a}$ for some matching sequence $M$ on $\kd_{\comp{b}}$.
    \label{cor:TFAE_lswap}
\end{corollary}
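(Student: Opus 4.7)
The plan is to observe that this corollary follows almost immediately by specializing Theorem~\ref{thm:TFAE_kohnert} to the case where the ambient diagram $T$ is the key diagram $\kd_{\comp{b}}$, and then translating between ``is a Kohnert diagram for $\comp{a}$'' and ``is below $\comp{a}$ in left swap order'' via Proposition~\ref{prop:lswap}. Both of these are already available from the preceding development, so no new combinatorial construction is needed.

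Concretely, the first step is to note that $\kd_{\comp{b}}$ is trivially a generic Kohnert diagram, since $\kd_{\comp{b}} \in \KD(\comp{b})$ by Definition~\ref{def:move} (no Kohnert moves are required). Next, I apply Proposition~\ref{prop:lswap} to rewrite the condition $\comp{b} \preceq \comp{a}$ as the condition $\kd_{\comp{b}} \in \KD(\comp{a})$. Finally, I apply Theorem~\ref{thm:TFAE_kohnert} with the diagram $T = \kd_{\comp{b}}$: this gives that $\kd_{\comp{b}} \in \KD(\comp{a})$ if and only if there exists a matching sequence $M$ on $\kd_{\comp{b}}$ with $\wt(M) \preceq \comp{a}$. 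Chaining these two equivalences yields exactly the claimed equivalence.

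There is no substantive obstacle here; the only thing worth double-checking is that Theorem~\ref{thm:TFAE_kohnert} genuinely applies to $\kd_{\comp{b}}$ in both directions of the equivalence, which it does since $\kd_{\comp{b}}$ is generic. The content of the corollary is really that, with the machinery of matchings and the left swap characterization of $\KD(\comp{a})$ in hand, left swap order itself admits a purely intrinsic description via matchings on key diagrams, without reference to Kohnert moves. The proof is therefore essentially a one-line specialization argument.
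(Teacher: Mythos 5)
Your argument is exactly what the paper intends: the corollary is stated in the paper with no proof beyond the remark that it follows from Theorem~\ref{thm:TFAE_kohnert}, and your chain---$\comp{b} \preceq \comp{a}$ iff $\kd_{\comp{b}} \in \KD(\comp{a})$ (Proposition~\ref{prop:lswap}) iff some matching sequence $M$ on $\kd_{\comp{b}}$ has $\wt(M) \preceq \comp{a}$ (Theorem~\ref{thm:TFAE_kohnert})---is precisely the intended specialization. Your observation that $\kd_{\comp{b}}$ is trivially generic is the right justification that the theorem applies.
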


We now have enough tools to show $\ins_1(T)$ is indeed a generic Kohnert diagram. 
Moreover, we can show $\ins_1$ sends $T$ to the appropriate target space. 
That is, if $T \in \KD(\comp{a})$, then $\ins_1(T) \in \KDs(\comp{a},1)$.
We will want to show $\ins_1$ is in fact an injective map, and the following lemma will be instrumental in helping us recover $T$ from $\ins_1(T)$. 
More generally, we use the following lemma for constructing maps in the reverse direction from the target space in subsequent sections.

\begin{lemma}
  Let $\comp{a}$ be a weak composition. For every diagram $U \in \KDs(\comp{a},n)$, there exists a unique column index $c$ such that $\cwt(U) = \cwt(\thread(U)) = \cwt(\comp{a}) + \comp{e}_c$. 
  Moreover, for every weak composition $\comp{b} \in \lswap(\comp{a})$ and positive integer $k \leq n$ such that $U \in \KD(\comp{b} + \comp{e}_k)$, we have $c = b_k + 1$.  
  \label{lem:col-added}
\end{lemma}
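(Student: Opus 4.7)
The plan is to reduce the statement to the invariance of column weight under Kohnert moves, coupled with the elementary observation that left swaps preserve the multiset of row lengths.

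First I would establish the auxiliary fact that $\cwt(\comp{b}) = \cwt(\comp{a})$ for every $\comp{b} \preceq \comp{a}$. By definition, the $i$th part of $\cwt(\comp{c})$ equals $\#\{j : c_j \geq i\}$, which depends only on the multiset $\{c_1,\ldots,c_n\}$. A single left swap exchanges two entries of a weak composition and so preserves this multiset; the general claim follows by taking the transitive closure of left-swap relations. Write $\mu = \cwt(\comp{a})$.

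Next I would compute $\cwt(\comp{b} + \comp{e}_k) = \mu + \comp{e}_{b_k+1}$ for any $\comp{b} \preceq \comp{a}$ and any $k \leq n$. Indeed, $\kd_{\comp{b} + \comp{e}_k}$ differs from $\kd_{\comp{b}}$ by exactly one added cell at position $(b_k+1, k)$, which lies in column $b_k+1$; combined with the first step, this gives the asserted identity. Since Kohnert moves preserve the number of cells in each column, any $U \in \KD(\comp{b} + \comp{e}_k)$ then satisfies $\cwt(U) = \mu + \comp{e}_{b_k+1}$. In particular, the location of the unique nonzero entry of $\cwt(U) - \mu$ is intrinsic to $U$ and $\comp{a}$, so it pinpoints a single column $c$ with $\cwt(U) = \mu + \comp{e}_{c}$; this $c$ must equal $b_k+1$ for every witness $(\comp{b},k)$ to $U \in \KDs(\comp{a},n)$, yielding both the uniqueness of $c$ and the formula $c = b_k + 1$.

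Finally, the equality $\cwt(U) = \cwt(\thread(U))$ follows from Lemma~\ref{lem:3.7}: because $U$ is a generic Kohnert diagram, $U \in \KD(\thread(U))$, so $U$ arises from $\kd_{\thread(U)}$ by a sequence of column-weight-preserving Kohnert moves, and hence $\cwt(U)$ agrees with $\cwt(\kd_{\thread(U)}) = \cwt(\thread(U))$. The argument presents no serious obstacle; the only conceptual point is to notice that $\mu$ is a left-swap invariant of $\comp{a}$, so that the discrepancy $\cwt(U) - \mu$ identifies a single column regardless of which decomposition of $U$ in the target space one takes.
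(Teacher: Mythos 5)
Your proof is correct and follows essentially the same route as the paper: reduce everything to the invariance of column weight under Kohnert moves, note that adding $\comp{e}_k$ to $\comp{b}$ places the extra cell in column $b_k+1$, and invoke Lemma~\ref{lem:3.7} to handle $\thread(U)$. The only cosmetic difference is that you derive $\cwt(\comp{b}) = \cwt(\comp{a})$ for $\comp{b} \preceq \comp{a}$ directly from multiset invariance under left swaps, whereas the paper routes this through Proposition~\ref{prop:lswap} and Kohnert-move preservation of column weights; your version is marginally more elementary but the two are interchangeable.
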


\begin{proof}
  Since $U \in \KDs(\comp{a},n)$, there exist a weak composition $\comp{b} \in \lswap(\comp{a})$ and positive integer $k \leq n$ such that $U \in \KD(\comp{b} + \comp{e}_k)$.
  Proposition~\ref{prop:lswap} implies $\kd_{\comp{b}} \in \KD(\comp{a})$, and since Kohnert moves preserve column weights, we have $\cwt(\comp{b}) = \cwt(\comp{a})$. Therefore the column index $b_k + 1$ satisfies the conditions of the proposition for the diagram $\kd_{\comp{b}+\comp{e}_k}$. 
  Since $U \in \KD(\comp{b} + \comp{e}_k)$, we have $\thread(U) \preceq \comp{b} + \comp{e}_k$ by Lemma~\ref{lem:3.7}, and hence $\kd_{\thread(U)} \in \KD(\comp{b} + \comp{e}_k)$ by Proposition~\ref{prop:lswap}. Again, since Kohnert moves preserve column weights, it follows that $b_k + 1$ satisfies the given conditions. 
  
  Notice $\kd_{\comp{b} + \comp{e}_k} \in \KDs(\comp{a},n)$, and since $\comp{b} \preceq \comp{a}$, we have $\cwt(\comp{b}) = \cwt(\comp{a})$. It follows that $b_k + 1$ is the unique column index satisfying the conditions above for $\kd_{\comp{b} + \comp{e}_k}$. Now, since $U \in \KD(\comp{b} + \comp{e}_k)$ and since Kohnert moves preserve column weights, we have $\cwt(U) = \cwt(\kd_{\comp{b} + \comp{e}_k}) = \cwt(\comp{b} + \comp{e}_k)$. Therefore, $c = b_k + 1$. 
\end{proof}

\begin{definition}
  For $\comp{a}$ a weak composition and $U \in \KDs(\comp{a},n)$, the \emph{added column} (with respect to $\comp{a}$) of $U$ is the unique column index satisfying Lemma~\ref{lem:col-added}.
  \label{def:added-column}
\end{definition}

\begin{figure}[ht]
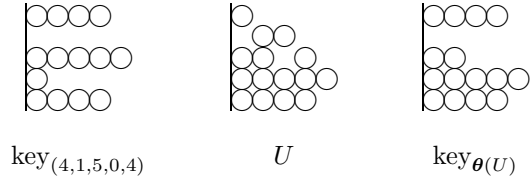

  \begin{displaymath}
    \arraycolsep=2\cellsize
    \begin{array}{ccc}
      \vline\cirtab{%
        \ & \ & \ & \ \\
        \\
        \ & \ & \ & \ & \ \\
        \ \\
        \ & \ & \ & \ } &        
      \vline\cirtab{%
        \ &   &   &   \\
        & \ & \ &   \\
        \ & \ &   & \ \\
        \ & \ & \ & \ & \ \\
        \ & \ & \ & \ } &
      \vline\cirtab{%
        \ & \ & \ & \ \\
        \\
        \ & \ \\
        \ & \ & \ & \ & \ \\
        \ & \ & \ & \ } \\ \\
      \kd_{(4,1,5,0,4)} & U & \kd_{\thread(U)}
    \end{array}
  \end{displaymath}
  \caption{\label{fig:added-column} A generic Kohnert diagram $U$ in $\KDs((4,1,5,0,4),n)$ for $n\geq 3$ with added column $2$.}
\end{figure}

\begin{example}
  Let $\comp{a}=(4,1,5,0,4)$, and consider the generic Kohnert diagram $U$ in Fig.~\ref{fig:added-column}. We have $\cwt(U) = (4,4,3,3,1) = \cwt(\comp{a})+\comp{e}_2$, and so $U$ has added column $c=2$. Furthermore, $\thread(U) = (4,5,2,0,4) = \comp{b}+\comp{e}_3$ where $\comp{b} = (4,5,1,0,4) \prec \comp{a}$. Thus we have $U \in \KD(\comp{b}+\comp{e}_3)$ and indeed $c = b_3+1$. 
  \label{ex:added-column}  
\end{example}

\begin{theorem}
  For each weak composition $\comp{a}$, the map $\ins_1$ induces a weight-preserving bijection
  \begin{equation}
    \KD(\comp{a}) \times \KD(\comp{e}_1) \stackrel{\sim}{\longrightarrow} \KDs(\comp{a},1).
  \end{equation}
  In particular, Theorem~\ref{thm:RSKD} is proved for $k=1$.
  \label{thm:insertion-bottom}
\end{theorem}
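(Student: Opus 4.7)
The strategy is to build the inverse to $\ins_1$ explicitly and check mutual inversion. Weight-preservation is immediate, since the appended cell lies in row $1$ and contributes a single factor $x_1$ matching $\KD(\comp{e}_1) = \{\{(1,1)\}\}$.

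For $T \in \KD(\comp{a})$, I would first verify that $T' := \ins_1(T)$ is a generic Kohnert diagram via Proposition~\ref{prop:lemma2.2}. Only the inequalities straddling columns $c-1, c, c+1$ at row~$1$ are potentially affected; applying Proposition~\ref{prop:lemma2.2} to $T$ at $(c,2)$, together with $(c-1,1) \in T$ and $(c,1) \notin T$, delivers the strict inequality $|T_{c-1}| > |T_c|$ that supplies exactly the slack needed. To place $T'$ in $\KDs(\comp{a},1)$, I would exhibit a matching sequence on $T'$ of weight $\thread(T) + \comp{e}_1$. The key observation is that in the thread decomposition $M$ of $T$, the cell $(c-1,1)$ starts a row-$1$ chain $(c-1,1) \to (c-2,1) \to \cdots \to (1,1)$: no thread descending from column $\geq c$ can pick up $(c-1,1)$, since such a thread's cell in column $c$ would have to lie in row $1$, contradicting $(c,1) \notin T$; hence when the greedy algorithm first reaches column $c-1$, the cell $(c-1,1)$ is available, and inductively so are $(c-i,1)$ for each $i \geq 1$. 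Extending $M$ by the single edge $(c,1) \to (c-1,1)$ yields a matching on $T'$ of weight $\thread(T) + \comp{e}_1$, so Lemma~\ref{lem:matching-wt} together with $\thread(T) \preceq \comp{a}$ places $T' \in \KD(\thread(T) + \comp{e}_1) \subseteq \KDs(\comp{a},1)$.

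For the inverse, I define $\ins_1^{-1}(U) := U \setminus \{(c,1)\}$, where $c$ is the added column supplied by Lemma~\ref{lem:col-added}. The cell $(c,1)$ is in $U$ because, for any witnessing $\comp{b} \preceq \comp{a}$ with $U \in \KD(\comp{b}+\comp{e}_1)$, the key diagram $\kd_{\comp{b}+\comp{e}_1}$ contains $(c,1) = (b_1+1, 1)$, and row-$1$ cells are never removed by Kohnert moves. The main obstacle is showing $T := U \setminus \{(c,1)\} \in \KD(\comp{a})$. My plan is to fix such a $\comp{b}$ and establish the stronger $T \in \KD(\comp{b})$; Proposition~\ref{prop:lswap} then delivers $T \in \KD(\comp{b}) \subseteq \KD(\comp{a})$. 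Starting from the thread decomposition of $U$, the same availability argument as in the forward direction places $(c,1)$ in a thread traversing the row-$1$ chain and terminating at $(1,1)$. Deleting $(c,1)$ and rerouting any in-edge $(c+1,1) \to (c,1)$ to a cell of column $c$ in $T$ produces a valid matching on $T$; such a target exists because $\cwt(U)_c - \cwt(U)_{c+1} = 1 + \#\{i \geq 2 : b_i = c\} \geq 1$, as may be read off from $c = b_1 + 1$, so column $c$ of $T$ is nonempty whenever $(c+1,1) \in U$. Careful bookkeeping of which thread each rerouted edge now joins gives a matching on $T$ whose weight is $\preceq \comp{b}$, and Theorem~\ref{thm:TFAE_kohnert} concludes $T \in \KD(\comp{b})$.

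Finally, mutual inversion is a direct check. For $T \in \KD(\comp{a})$, the added column of $\ins_1(T)$ with respect to $\comp{a}$ is precisely the leftmost empty column $c$ of row $1$ in $T$, so $\ins_1^{-1}(\ins_1(T)) = T$; conversely, for $U \in \KDs(\comp{a},1)$, the contiguous prefix $(1,1), \ldots, (c-1,1)$ of row $1$ of $U$ survives into $T = \ins_1^{-1}(U)$, while $(c,1) \notin T$, so the leftmost empty column in row $1$ of $T$ is $c$, giving $\ins_1(\ins_1^{-1}(U)) = U$. This completes the bijection and yields the case $k=1$ of Theorem~\ref{thm:RSKD}.
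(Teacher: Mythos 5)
Your forward direction follows the paper's argument almost exactly. In the backward direction you spot a genuine subtlety that the paper's stated proof glosses over: the cell $(c,1)$ need not be the rightmost cell of its thread in $\Mtch_{\thd}(U)$. For instance, take $\comp{a}=(0,2)$ and $U=\{(1,1),(1,2),(2,1)\}\in\KD((1,2))\subset\KDs(\comp{a},1)$: here the added column is $c=1$, but the thread decomposition puts $(1,1)$ and $(2,1)$ on the same thread with $(2,1)$ rightmost, so deleting $x=(1,1)$ from $\Mtch_{\thd}(U)$ leaves $(2,1)$ unmatched rather than producing a matching sequence of anchor weight $\comp{b}$. The rerouting step you introduce is exactly what is missing, and your count that $\cwt(U)_c - \cwt(U)_{c+1}\geq 1$ shows the reroute has a target.

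That said, the crucial step is precisely where you appeal to \emph{careful bookkeeping}: you do not actually verify that the rerouted matching $M'$ on $T$ satisfies $\wt(M')\preceq\comp{b}$, and this is not a formality. Rerouting $(c+1,1)\to(c,1)$ to an unmatched cell $z$ of column $c$ splices the tail $(c+1,1),\dots,(e,1)$ of the old row-$1$ thread (with $e=\thread(U)_1$) onto the thread $Q$ anchored at $z$, so $\wt(M')$ is obtained from $\thread(U)$ by replacing $e$ in position $1$ with $c-1$ and replacing $c$ in position $\anc(Q)$ with $e$. Showing this dominates $\comp{b}$ in left-swap order does not follow formally from $\thread(U)\preceq\comp{b}+\comp{e}_1$; it requires exploiting the slack coming from $\thread(U)_1=e>c=(\comp{b}+\comp{e}_1)_1$ in the relevant threshold counts, and the choice of $Q$ matters. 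One way to sidestep the rerouting entirely is to work with the Kohnert matching $\Mtch_{\comp{b}+\comp{e}_1}(U)$ of Definition~\ref{def:kohnert-label} instead of $\Mtch_{\thd}(U)$: the cell $(c,1)$ is the lowest cell of column $c$, and label $1$ does not occur in column $c+1$, so the labeling algorithm assigns label $1$ to $(c,1)$, making it the rightmost cell of its component; then the restriction to $T$ is a bona fide matching sequence, and the translation argument of Proposition~\ref{prop:label-wt} can be applied to bound its weight by $\comp{b}$. As written, though, your proof and the paper's both leave the weight comparison open.
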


\begin{proof}
Let $T \in \KD(\comp{a})$, and consider the matching sequence $M = \Mtch_{\thd}T$ on $T$. Since $(c,1)$ is the left-most empty position of $T$ in row $1$, the thread decomposition algorithm implies that the path 
$$P = (1,1) \leftarrow (2,1) \leftarrow \cdots \leftarrow (c-1,1)$$
is a (weakly) connected component of $M$. 
We can extend $P$ by appending the matching $(c-1,1) \leftarrow (c,1)$ to it.
It follows that the directed graph 
$$M' = M \cup ((c-1,1) \leftarrow (c,1))$$ 
is a matching sequence with anchor weight $$\wt(M') = \thread(T) + \comp{e}_1.$$
Thus, $\ins_1(T) = T \sqcup \{(c,1)\} \in \KD(\thread(T) + \comp{e}_1)$ 
(by Theorem~\ref{thm:TFAE_kohnert}), and
since $\thread(T) \preceq \comp{a}$  (by Lemma~\ref{lem:3.7}),
we have $\ins_1(T) \in \KDs(\comp{a},1)$. 

Since $T \in \KD(\comp{a})$ and Kohnert moves preserve column weights, it follows that 
$\cwt(T) = \cwt(\comp{a})$.
So by Lemma~\ref{lem:col-added}, $c$ is the added column of $\ins_1(T)$, and we may recover $T$ from $\ins_1(T)$. 

On the other hand, for every diagram $U \in \KDs(\comp{a},1)$, we have $U \in \KD(\comp{b} + \comp{e}_1)$ for some weak composition $\comp{b} \in \lswap(\comp{a})$. 
Now consider the thread decomposition $M=\Mtch_{\thd}(U)$ on $U$.
By Theorem~\ref{thm:TFAE_kohnert}, $\thread(M) \preceq \comp{b} + \comp{e}_1$. In particular, $U$ must occupy every position in row $1$ at every column $1$ to $b_1 + 1$.  

Let $x \in U$ be the cell at position $(b_1 + 1,1)$. Then $x$ must be in the same thread at the cell in position $(1,1)$, and it must be the rightmost cell of that thread. In particular, removing $x$ from both $U$ and $M$ gives a matching sequence on $U \setminus \{x\}$ with anchor weight $\comp{b}$. By Theorem~\ref{thm:TFAE_kohnert}, we have $U \setminus \{x\} \in \KD(\comp{b}) \subset \KD(\comp{a})$. Since $\row(x) = 1$ and the diagram $U \setminus \{x\}$ occupies every position in row $1$ left of $x$, we have $\ins_1(U \setminus \{x\}) = U$. It also follows from Lemma~\ref{lem:col-added} that $\col(x) = b_1 + 1$ is the added column of $U$.

Hence, the map $\ins_1: \KD(\comp{a}) \longrightarrow \KDs(\comp{a},1)$ has an inverse that is well-defined on the target space. 
The desired bijection follows.
\end{proof}

\subsection{Rectification}
\label{sec:bijection-rectify}

Not every diagram is a generic Kohnert diagram. The criterion of Proposition~\ref{prop:lemma2.2} extends to a measurable way of identifying where and to what extent a diagram fails to be a generic Kohnert diagram.

\begin{definition}
  Let $T$ be an arbitrary diagram. For each position $(c,r)$ with $c > 1$, define 
  \begin{equation}
    \matchable_T(c,r) = \# \{(c-1,s) \in T \mid s \geq r\} - \# \{(c,s) \in T \mid s \geq r\}.
    \label{e:matchability-measure}
  \end{equation}
\end{definition}

Proposition~\ref{prop:lemma2.2} states $T$ is a generic Kohnert diagram if and only if $\matchable_T(c,r) \geq 0$ for all positions $(c,r)$ with $c>1$. When this fails, say in some column $c>1$, we may take $r$ to be the highest row index such that
$$
\matchable_T(c,r) = \min_{r'}\{\matchable_T(c,r')\} < 0.
$$
Then $T$ has a cell at position $(c,r)$ and no cell at position $(c-1,r)$. Using this observation, we can define a procedure, first used by Assaf and Gonz\'{a}lez \cite{Ass-K,AG}, to correct, or \emph{rectify}, $T$ by moving certain cells left.

\begin{definition}
  For $T$ an arbitrary diagram, define the diagram $\rect(T)$ by
  \begin{itemize}
  \item if $\matchable_T(c,r) \geq 0$ at every position $(c,r)$ with $c > 1$, then $\rect(T) = T$;
  \item otherwise, let $c > 1$ be the leftmost column index and let $r$ be the highest row index such that
    $$
    \matchable_T(c,r) = \min_{r'}\{\matchable_T(c,r')\} < 0,
    $$
    and set $\rect(T)$ to be the diagram obtained by pushing the cell in position $(c,r)$ left to the empty position $(c-1,r)$. 
  \end{itemize}
  \label{def:rect}
\end{definition}

\begin{figure}[ht]
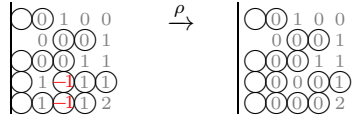

  \begin{displaymath}
    \vline\nulltab{\grayify{\ } & \grayify{0} & \color{gray}1 & \color{gray}0 & \color{gray}0 \\
      & \color{gray}0 & \grayify{0} & \grayify{0} & \color{gray}1 \\
      \grayify{\ } & \grayify{0} & \grayify{0} & \color{gray}1 & \color{gray}1\\
      \grayify{\ } & \color{gray}1 & \redify{-\!1} & \grayify{1} & \grayify{1} \\
      \grayify{\ } & \grayify{1} & \redify{-\!1} & \grayify{1} & \color{gray}2}
    \hspace{2\cellsize}
    \xrightarrow{\rho}
    \hspace{2\cellsize}
    \vline\nulltab{\grayify{\ } & \grayify{0} & \color{gray}1 & \color{gray}0 & \color{gray}0 \\
      & \color{gray}0 & \grayify{0} & \grayify{0} & \color{gray}1 \\
      \grayify{\ } & \grayify{0} & \grayify{0} & \color{gray}1 & \color{gray}1\\
      \grayify{\ } & \grayify{0} & \color{gray}0 & \grayify{0} & \grayify{1} \\
      \grayify{\ } & \grayify{0} & \grayify{0} & \grayify{0} & \color{gray}2}
  \end{displaymath}
  \caption{\label{fig:rho}A diagram $T$ (left) with each position $(c,r)$ with $c>1$ labeled by $\matchable_T(c,r)$, and its image under $\rho$ (right).}
  \end{figure}

\begin{example}
  Consider the diagram $T$ shown on the left side of Fig.~\ref{fig:rho}. For each position $(c,r)$, the index $\matchable_T(c,r)$ defined in Eq.~\eqref{e:matchability-measure} is indicated. Here the leftmost (only) column index $c$ for which $\matchable_T(c,r)<0$ for some $r$ is column $3$, and the highest row index $r$ for which $\matchable_T(3,r)<0$ is row $2$. Therefore $\rho$ acts on $T$ by moving the cell in position $(3,2)$ left to the empty position $(2,2)$, resulting in the diagram on the right side of Fig.~\ref{fig:rho}. Notice $\matchable_{\rho(T)}(c,r)\geq 0$ for all $c,r$, and so $\rho(T)$ is a Kohnert diagram. While one application of $\rho$ will not always result in a Kohnert diagram, we show below that repeated applications will.
  \label{ex:rho}
\end{example}

By our earlier observations $\rect$ is well-defined over all diagrams, and so it can be composed with itself repeatedly until the result is a generic Kohnert diagram. 

\begin{proposition}
  Given an arbitrary diagram $T$, there exists some integer $m \geq 0$ such that for all $N > m$, $\rect^N(T) = \rect^m(T)$. 
\end{proposition}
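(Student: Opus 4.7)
The plan is to prove termination by exhibiting a nonnegative integer-valued monovariant that strictly decreases with each nontrivial application of $\rect$. The natural candidate is the total column-sum
\[
\sigma(T) = \sum_{(c,r) \in T} c,
\]
which measures the horizontal position of the cells of $T$. If $\rect(T)=T$, there is nothing to show; otherwise $\rect$ performs a single leftward move of one cell, so I expect $\sigma(\rect(T)) = \sigma(T)-1$, and termination will follow from the lower bound $\sigma(T) \geq |T|$.

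The first thing I would verify is that the move prescribed by Definition~\ref{def:rect} is actually well-defined: namely, that when $\matchable_T(c,r)<0$ and $r$ is chosen as the highest such row, the source position $(c,r)$ contains a cell while the target position $(c-1,r)$ is empty. This is forced by the choice of $r$. From the definition of $\matchable_T$ one has the one-step recurrence
\[
\matchable_T(c,r+1) - \matchable_T(c,r) = \mathbf{1}[(c,r) \in T] - \mathbf{1}[(c-1,r) \in T],
\]
so the strict inequality $\matchable_T(c,r+1) > \matchable_T(c,r)$, coming from the maximality of $r$, forces $(c,r) \in T$ and $(c-1,r) \notin T$. Hence $\rect$ genuinely moves an existing cell into an empty cell, and in particular $|\rect(T)|=|T|$ and $\sigma(\rect(T)) = \sigma(T)-1$.

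With well-definedness in hand, the argument is a standard finite-descent. Since cells lie in the first quadrant, every cell satisfies $c \geq 1$, so $\sigma(T) \geq |T| \geq 0$ for every diagram arising from iterated application of $\rect$ (the cardinality is preserved throughout). Therefore the strictly decreasing sequence $\sigma(T) > \sigma(\rect(T)) > \sigma(\rect^2(T)) > \cdots$ cannot continue forever, so there exists a least $m \geq 0$ at which $\rect^{m+1}(T) = \rect^m(T)$. By definition of $\rect$, this equality happens precisely when $\matchable_{\rect^m(T)}(c,r) \geq 0$ for every $(c,r)$ with $c>1$, i.e.\ when $\rect^m(T)$ is a generic Kohnert diagram, and in that case $\rect$ fixes $\rect^m(T)$ forever. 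Thus $\rect^N(T) = \rect^m(T)$ for every $N > m$, as claimed.

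The only mild obstacle is the well-definedness step; once one knows $(c-1,r)$ is empty, the rest is just a bounded-descent argument. In fact one can write down the explicit upper bound $m \leq \sigma(T) - |T|$ on the number of nontrivial iterations, which could be recorded as a remark but is not needed for the statement itself.
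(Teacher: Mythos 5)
Your proposal is correct and takes essentially the same approach as the paper: the paper's proof is the one-line observation that cells move only left under $\rect$, the diagram is finite, and cells cannot leave the first quadrant, so the process terminates. You make this descent explicit with the monovariant $\sigma(T)=\sum_{(c,r)\in T}c$ and also re-derive the well-definedness of $\rect$ (that $(c,r)\in T$ and $(c-1,r)\notin T$), which the paper establishes as an observation in the paragraph immediately preceding Definition~\ref{def:rect} rather than inside the proof itself.
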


\begin{proof}
  Cells move only left under $\rect$, so since a diagram has finitely many cells and lies in the first quadrant, the procedure must ultimately terminate when the characterization of Kohnert diagrams in Proposition~\ref{prop:lemma2.2} is satisfied. Weight preservation is immediate since cells never change rows.
\end{proof}

\begin{definition}
  The \emph{rectification} of an arbitrary diagram $T$ is the result of iteratively applying $\rect$ as needed until we have a generic Kohnert diagram. That is,
  \begin{equation}
    \rectify(T) = \rect^m(T),
  \end{equation}
  for $m$ sufficiently large such that $\rect^{m+1}(T) = \rect^m(T)$.  
\end{definition}

\begin{figure}[ht]
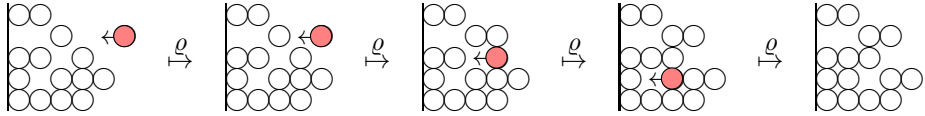

  \begin{displaymath}
    \arraycolsep=0.75\cellsize
    \begin{array}{ccccccccc}
      \vline\cirtab{\ & \ \\ & & \ & & & \mball \\ \ & \ & & \ \\ \ & & \ & \ & \ \\ \ & \ & \ & \ } &
      \raisebox{-2\cellsize}{$\stackrel{\displaystyle\rect}{\mapsto}$} &
      \vline\cirtab{\ & \ \\ & & \ & & \mball \\ \ & \ & & \ \\ \ & & \ & \ & \ \\ \ & \ & \ & \ } &
      \raisebox{-2\cellsize}{$\stackrel{\displaystyle\rect}{\mapsto}$} &
      \vline\cirtab{\ & \ \\ & & \ & \ \\ \ & \ & & \mball \\ \ & & \ & \ & \ \\ \ & \ & \ & \ } &
      \raisebox{-2\cellsize}{$\stackrel{\displaystyle\rect}{\mapsto}$} &
      \vline\cirtab{\ & \ \\ & & \ & \ \\ \ & \ & \ \\ \ & & \mball & \ & \ \\ \ & \ & \ & \ } &
      \raisebox{-2\cellsize}{$\stackrel{\displaystyle\rect}{\mapsto}$} &
      \vline\cirtab{\ & \ \\ & & \ & \ \\ \ & \ & \ \\ \ & \ & & \ & \ \\ \ & \ & \ & \ } 
    \end{array}
  \end{displaymath}
  \caption{\label{fig:rect}Rectification of a weak but not generic Kohnert diagram, where $\rect$ acts by moving the colored cell left.}
\end{figure}

\begin{example}
  Consider the diagram on the left side of Fig.~\ref{fig:rect}. This is the diagram from Fig.~\ref{fig:colsort}, which is a Kohnert diagram for the weak composition $(4,1,5,0,4)$, with an additional cell in row $4$ beyond the last occupied column. Here $\matchable(c,r) \geq 0$ for $c < 5$ or $r > 4$, and $\matchable(5,r) = -1$ for $r \leq 4$. Therefore $\rect$ will act on this additional cell in position $(5,4)$, moving it one column to the left within its row to position $(4,4)$. Iterating, $\rect$ acts by moving the colored cell ($\cball{red}$) left four times until arriving at the Kohnert diagram for $(4,2,5,0,4)$ on the right side of Fig.~\ref{fig:rect}.
  \label{ex:rect}
\end{example}

We use rectification in a limited sense in this paper, though it is worth noting the sense in which we use it gives a generalization of RSK insertion.

\begin{theorem}
  Let $\mathbb{D}:\SSYT_n(\lambda)\rightarrow\KD(\mathrm{rev}(\lambda))$ denote the bijection from semistandard Young tableaux to generic Kohnert diagrams obtained by moving the cells in column $c$ with entry $r$ to position $(c,n+1-r)$. Then for $T\in\SSYT_n(\lambda)$ and $1\leq j\leq n$, we have
  \begin{equation}
    \mathbb{D}(\mathrm{RSK}(T,j)) = \rectify(\mathbb{D}(T) \sqcup \{(\lambda_1+1,n+1-j)\}),
  \end{equation}
  where $\mathrm{RSK}(T,j)$ denotes the result of inserting $j$ into $T$ via RSK.
  \label{thm:RSK-rect}
\end{theorem}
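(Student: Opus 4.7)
The plan is to trace the rectification process carefully and show that it mirrors RSK's bumping procedure. Let $T' = \mathrm{RSK}(T,j)$, and encode the RSK bumping path as columns $c_1 \geq c_2 \geq \cdots \geq c_L$ and values $v_1 < v_2 < \cdots < v_L$ with $v_1 = j$, where $v_i$ is the value inserted into row $i$ and $c_i$ is the column where it lands: the entry at $(c_i, i)$ of $T'$ is $v_i$, while in $T$ the entry at $(c_i, i)$ was $v_{i+1}$ for $i<L$ and the position $(c_L, L)$ was empty. First I would verify the set-theoretic identity that $\mathbb{D}(T')$ and $\mathbb{D}(T) \sqcup \{(\lambda_1+1, n+1-v_1)\}$ differ only in the rows $n+1-v_1, \ldots, n+1-v_L$: the input has a cell at column $\lambda_1+1$ in row $n+1-v_1$ and a cell at column $c_{i-1}$ in row $n+1-v_i$ for $i \geq 2$, whereas the output has a cell at column $c_i$ in row $n+1-v_i$ for every $i$. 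Since $\rect$ preserves the row of every cell, this shows the row multisets already agree; the task reduces to showing that iterated $\rect$ slides these particular cells leftward to their correct columns.

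Next I would argue in $L$ stages: during stage $i$, with the convention $c_0 = \lambda_1+1$, the cell in row $n+1-v_i$ starts at column $c_{i-1}$, cells in rows $n+1-v_{i'}$ for $i' > i$ are still at their starting columns $c_{i'-1}$, and cells in rows $n+1-v_{i'}$ for $i' < i$ have already reached their final columns $c_{i'}$. The inductive claim is that during stage $i$ the map $\rect$ is applied exactly $c_{i-1} - c_i$ times, each application sliding the row-$(n+1-v_i)$ cell one column to the left. To establish this, for every intermediate column $c$ with $c_i < c \leq c_{i-1}$ I would verify that $\matchable < 0$ at column $c$ with the topmost offending row being $n+1-v_i$, and that $\matchable \geq 0$ in every column strictly to the left of $c$. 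These claims follow from the column-strict and row-weakly-increasing conditions on $T$, together with the defining property of $c_i$ as the leftmost column in row $i$ whose entry in $T$ exceeds $v_i$.

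The main obstacle is the transition between consecutive stages: upon the row-$(n+1-v_i)$ cell arriving at column $c_i$, one must show that $\rect$ stops acting on it and instead begins acting on the row-$(n+1-v_{i+1})$ cell sitting at column $c_i$. Concretely, the matchability at $(c_i, n+1-v_i)$ becomes nonnegative because column $c_i-1$ of $T$ contains an entry at row $i$ with value at most $v_i$---forced by the minimality of $c_i$ in the RSK bumping rule---while the cell at $(c_i, n+1-v_{i+1})$ emerges as the new topmost violator of matchability, initiating stage $i+1$. This transition lemma is the Kohnert-diagram counterpart of a single bumping step in RSK, and once it is established, chaining the $L$ stages together produces $\mathbb{D}(T')$ and completes the proof.
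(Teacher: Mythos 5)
Your proposal is correct and mirrors the paper's proof closely: both decompose the rectification into stages indexed by the RSK bumping path and show that each stage of iterated $\rect$ slides one cell left column-by-column, with your stage $i$ (moving the row-$(n+1-v_i)$ cell from column $c_{i-1}$ to $c_i$) corresponding exactly to the paper's transition between intermediate diagrams $U_{i-1} = \mathbb{D}(T_{i-1}) \sqcup \{\cdot\}$ and $U_i = \mathbb{D}(T_i) \sqcup \{\cdot\}$. The only difference is cosmetic: the paper invokes Proposition~\ref{prop:sort} to deduce non-matchability of the intermediate diagrams, where you propose to compute $\matchable$ directly (both reduce to the same fact, that the number of entries at most $v_i$ in columns $c-1$ and $c$ of $T_i$ coincide, forced by column-strictness and the RSK minimality of $c_i$), and your bookkeeping with $c_0 = \lambda_1+1$ is in fact a touch cleaner than the paper's, which has a small off-by-one slip in identifying which row the moving cell occupies.
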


\begin{proof}
  We assume familiarity with RSK insertion; see \cite{EC2} for details. Recall that when an entry $k$ bumps and entry $l$ in some column $c$, we have $k<l$ and there is no entry $k$ in column $c$. Suppose the RSK algorithm generates a sequence $(T,j) = (T_1,j_1),\ldots,(T_p,j_p),(T_{p+1},\varnothing)$ where $T_i \in \SSYT(\lambda)$ is obtained by inserting $j_{i-1}$ into row $i-1$ for $1 \leq i \leq p$, $j_i$ is the bumped entry for $i \leq p$, and $T_{p+1} = \mathrm{RSK}(T,j)$. Then we may consider the diagrams $U_i = \mathbb{D}(T_i) \sqcup \{(c_i,n+1-j_i)\}$ where $c_i$ is the column in which $j_{i}$ bumps $j_{i+1}$ in $T_i$ for $1 \leq i\leq p$ and $c_{p+1} = \lambda_{p+1}+1$. Also set $U_{p+1} = \mathbb{D}(T_{p+1})$, which is a generic Kohnert diagram. It suffices to show for each $i > 1$ there exists some integer $s_i \geq 0$ such that $U_i = \rect^{c_i-c_{i+1}}(U_{i-1})$.

  The difference between $U_{k}$ and $U_{k+1}$ is that the cell in row $n+1-j_k$ has moved left from column $c_k$ to column $c_{k+1}$. If $c_{k+1} = c_k$, then $U_{k+1} = U_{k}$, and we are done. If $c_{k+1} < c_k$, then since moving entries in column $c_{k}$ in rows $r \geq j_k$ will not result in a semistandard Young tableaux, by Proposition~\ref{prop:sort}, $U_k$ is not a generic Kohnert diagram. Moreover, this properties persist even as the cell $y$ in row $n+1-j_k$ column $c_k$ of $U_k$ moves left provided it stays strictly right of column $c_{k+1}$. Since any matching on $\mathbb{D}(T_k)$ gives a matching between all columns, the unmatched cell $y$ of $U_k$ must be the cell that moves under $\rect$, and again, this persists as it moves left until reaching column $c_{k+1}$. Thus $U_i = \rect^{c_i-c_{i+1}}(U_{i-1})$ as desired.
\end{proof}

\begin{figure}[ht]
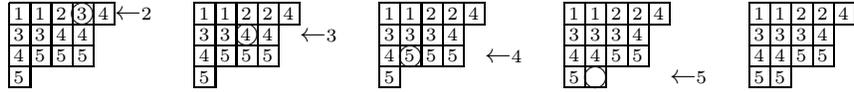

  \begin{displaymath}
    \arraycolsep=\cellsize
    \begin{array}{ccccc}
      \tableau{1 & 1 & 2 & \circify{3} & 4 \\ 3 & 3 & 4 & 4 \\ 4 & 5 & 5 & 5 \\ 5}\raisebox{0.2\cellsize}{$\leftarrow\scriptstyle 2$} &
      \tableau{1 & 1 & 2 & 2 & 4 \\ 3 & 3 & \circify{4} & 4 \\ 4 & 5 & 5 & 5 \\ 5}\raisebox{-0.8\cellsize}{$\leftarrow\scriptstyle 3$} &
      \tableau{1 & 1 & 2 & 2 & 4 \\ 3 & 3 & 3 & 4 \\ 4 & \circify{5} & 5 & 5 \\ 5}\raisebox{-1.8\cellsize}{$\leftarrow\scriptstyle 4$} &
      \tableau{1 & 1 & 2 & 2 & 4 \\ 3 & 3 & 3 & 4 \\ 4 & 4 & 5 & 5 \\ 5 & \circify{ \ }}\raisebox{-2.8\cellsize}{$\leftarrow\scriptstyle 5$} &
      \tableau{1 & 1 & 2 & 2 & 4 \\ 3 & 3 & 3 & 4 \\ 4 & 4 & 5 & 5 \\ 5 & 5}
    \end{array}
  \end{displaymath}
  \caption{\label{fig:RSK}An example of RSK insertion on an element of $\SSYT_5(5,4,4,1)$.}
\end{figure}

\begin{example}
  Consider the tableau $T\in\SSYT_5(5,4,4,1)$ on the left side of Fig.~\ref{fig:RSK} and the integer $j=2$. Under the RSK insertion algorithm, the circled entry is bumped by the external entry to the right of the tableau. Comparing this with the diagrams in Fig.~\ref{fig:rho}, $\mathbb{D}(T)$ is the leftmost diagram without the shaded cell being appended. The first tableau in Fig.~\ref{fig:RSK} maps under $\mathbb{D}$ to the third diagram in Fig.~\ref{fig:rho}, the second tableau maps to the fourth diagram, and the last three tableaux map to the last diagram, with order taken left to right in each figure.
  \label{ex:RSK}
\end{example}

\subsection{Top insertion}
\label{sec:bijection-top}

Rectification is the heart of our bijection for Theorem~\ref{thm:RSKD} for sufficiently large values of $k$ and plays a role in the general case as well.

\begin{definition}
  Let $T$ be a generic Kohnert diagram, let
  \[ c = \max_{x \in T} \{\col(x)\} \]
  be the rightmost occupied column of $T$, and let $j \leq n$ be a positive integer. Then the \emph{top insertion map} $\ins_{\infty}$ sends the tuple $(T,j)$ to the diagram
  \begin{equation}
    \ins_{\infty}(T,j) = \rectify(T \sqcup \{(m+1,j)\}) .
    \label{e:rectification}
  \end{equation}
  \label{def:rectification}
\end{definition}

\begin{example}
  Consider the weak composition $\comp{a} = (0,2,1)$ and $k=3$. Refining the right-hand side of Theorem~\ref{thm:RSKD} using Theorem~\ref{thm:k-addable}, we expect a bijection
  \[ \KD{(0,2,1)} \times \KD{(0,0,1)} \stackrel{\sim}{\rightarrow} \KD{(1,2,1)} \cup \KD{(0,3,1)} \cup \KD{(0,2,2)} . \]
  Indeed, Fig.~\ref{fig:insert-top} shows the images of the Kohnert diagrams in $\KD(0,2,1)$ under the top insertion map $\ins_{\infty}$, which is precisely the union on the right.
  \label{ex:top}
\end{example}

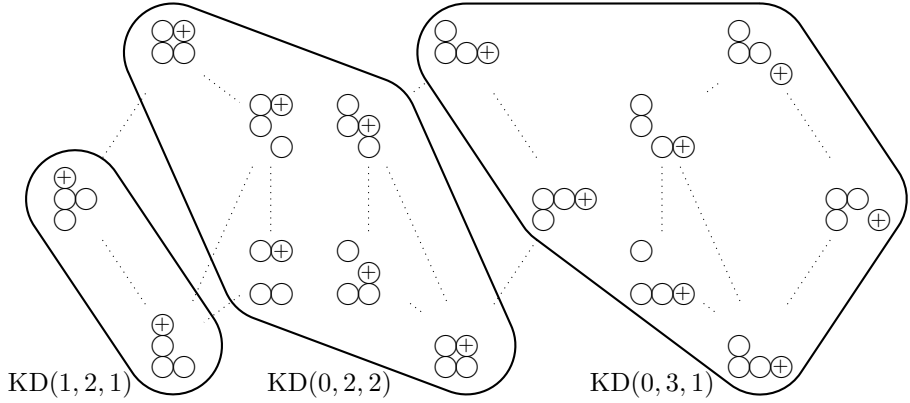
\begin{figure}[ht]
  \begin{center}
    \begin{tikzpicture}[xscale=1.3,yscale=1.3]
      \node at (1,3)   (A3) {$\vline\cirtab{\ & + \\ \ & \ \\ & }$};
      \node at (0,1.5) (B3) {$\vline\cirtab{+ & \\ \ & \ \\ \ & }$};
      \node at (2,2.25)(C3) {$\vline\cirtab{\ & + \\ \ & \\ & \ }$};
      \node at (2,0.75)(D3) {$\vline\cirtab{\ & + \\ & \\ \ & \ }$};
      \node at (1,0)   (E3) {$\vline\cirtab{+ & \\ \ & \\ \ & \ }$};
      \node at (4,3)   (A2) {$\vline\cirtab{\ & \\ \ & \ & + \\ & }$};
      \node at (5,1.5) (B2) {$\vline\cirtab{& \\ \ & \ & + \\ \ & }$};
      \node at (3,2.25)(C2) {$\vline\cirtab{\ & \\ \ & + & \\ & \ }$};
      \node at (3,0.75)(D2) {$\vline\cirtab{\ & \\ & + & \\ \ & \ }$};
      \node at (4,0)   (E2) {$\vline\cirtab{& \\ \ & + & \\ \ & \ }$};
      \node at (7,3)   (A1) {$\vline\cirtab{\ & \\ \ & \ \\ & & + }$};
      \node at (8,1.5) (B1) {$\vline\cirtab{& \\ \ & \ \\ \ & & + }$};
      \node at (6,2.25)(C1) {$\vline\cirtab{\ & \\ \ & \\ & \ & + }$};
      \node at (6,0.75)(D1) {$\vline\cirtab{\ & \\ & \\ \ & \ & + }$};
      \node at (7,0)   (E1) {$\vline\cirtab{& \\ \ & \\ \ & \ & + }$};
      \draw[thick] \convexpath{B3,E3}{5mm};
      \draw[thick] \convexpath{A1,B1,E1,B2,A2}{5mm};
      \draw[thick] \convexpath{A3,C2,E2,D3}{5mm};
      \node at (-0.05,-0.4) (KD121) {$\KD(1,2,1)$};
      \node at (2.6,-0.4) (KD022) {$\KD(0,2,2)$};
      \node at (5.9,-0.4) (KD031) {$\KD(0,3,1)$};
      \draw[dotted] (A3) -- (B3) ;
      \draw[dotted] (A3) -- (C3) ;
      \draw[dotted] (B3) -- (E3) ;
      \draw[dotted] (C3) -- (D3) ;
      \draw[dotted] (C3) -- (E3) ;
      \draw[dotted] (D3) -- (E3) ;
      \draw[dotted] (A2) -- (B2) ;
      \draw[dotted] (A2) -- (C2) ;
      \draw[dotted] (B2) -- (E2) ;
      \draw[dotted] (C2) -- (D2) ;
      \draw[dotted] (C2) -- (E2) ;
      \draw[dotted] (D2) -- (E2) ;
      \draw[dotted] (A1) -- (B1) ;
      \draw[dotted] (A1) -- (C1) ;
      \draw[dotted] (B1) -- (E1) ;
      \draw[dotted] (C1) -- (D1) ;
      \draw[dotted] (C1) -- (E1) ;
      \draw[dotted] (D1) -- (E1) ;
    \end{tikzpicture}
    \caption{\label{fig:insert-top}Three copies of the Kohnert diagrams for $(0,2,1)$ along with the appended cell $(\oplus)$ under the top insertion map $\ins_{\infty}(-,j)$ for $j=3,2,1$, from left to right.}
  \end{center}
\end{figure}

\begin{remark}
  Observe the maps $\ins_1$ and $\ins_{\infty}(-,1)$ differ in general. For instance, we have $\ins_1(\kd_{(0,2,1)}) = \kd_{(1,2,1)}$ whereas $\ins_{\infty}(\kd_{(0,2,1)},1)$ is the rightmost diagram in the top row of Fig.~\ref{fig:insert-top}, which is not a key diagram.
\end{remark}

We devote the remainder of this section to showing the top insertion map $\ins_{\infty}$ induces the bijection of Theorem~\ref{thm:RSKD} for sufficiently large values of $k$. It is important to note, however, that $\ins_{\infty}$ is itself independent of $k$.

\begin{definition}
  A cell $x$ in an arbitrary diagram $T$ is a \emph{removable cell} of $T$ if $T \setminus \{x\}$ is a generic Kohnert diagram. In this case, we say that $T$ is a \emph{weak Kohnert diagram} and the column occupied by $x$ is a \emph{removable column} of $T$.
  \label{def:weak}
\end{definition}

\begin{proposition}
  A weak but not generic Kohnert diagram $T$ has a unique removable column $c > 1$, and $c$ is characterized by
  \begin{itemize}
  \item $\min_r\{\matchable_T(c,r)\} = -1$;
  \item $\matchable_T(i,j) \geq 0$ for every column index $1 < i \neq c$ and every row index $j$. 
  \end{itemize}
  In particular, $T$ has a unique highest and a unique lowest removable cell.
  \label{prop:weak-Kohnert-removable-column}
\end{proposition}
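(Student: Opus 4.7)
The plan is to analyze exactly how the statistic $\matchable_T$ is perturbed when a single cell is deleted, then apply Proposition~\ref{prop:lemma2.2} to the resulting generic Kohnert diagram to pin down the original values of $\matchable_T$. Fix any removable cell $x = (c_0, r_0) \in T$. A direct count from \eqref{e:matchability-measure} shows that removal of $x$ leaves $\matchable_T(c, r)$ unchanged except in columns $c_0$ and $c_0+1$; specifically, $\matchable_T(c_0, r)$ increases by $1$ and $\matchable_T(c_0+1, r)$ decreases by $1$, in both cases only for rows $r \leq r_0$.

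The first step is to apply Proposition~\ref{prop:lemma2.2} to the generic Kohnert diagram $T \setminus \{x\}$, forcing every shifted value of $\matchable$ to be nonnegative. Unpacking the four cases (columns $c \notin \{c_0, c_0+1\}$; column $c_0+1$; column $c_0$ at rows $\leq r_0$; column $c_0$ at rows $> r_0$) yields $\matchable_T(c, r) \geq 0$ for all $c \neq c_0$ and all $r$, together with $\matchable_T(c_0, r) \geq -1$ everywhere. Since $T$ is itself not a generic Kohnert diagram, Proposition~\ref{prop:lemma2.2} provides a position where $\matchable_T$ is negative, and by the bounds just established this position must lie in column $c_0$; hence $\min_r \matchable_T(c_0, r) = -1$. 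This shows $c_0$ satisfies both listed properties.

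The second step is uniqueness. If a column $c'$ also satisfied the two properties, then the second property applied at $i = c_0$ would yield $\matchable_T(c_0, j) \geq 0$ for all $j$, contradicting the first property at $c_0$; hence $c' = c_0$. In particular $c_0$ depends only on $T$ and not on the choice of removable cell, so every removable cell of $T$ lies in this one column. Since cells within a single column are totally ordered by row, this immediately yields the unique highest and unique lowest removable cell.

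The main obstacle is not deep: it is simply the case analysis for the $\matchable_T$ shift and tracking the direction of each inequality after invoking Proposition~\ref{prop:lemma2.2}. Once that bookkeeping is laid out, the characterization and the uniqueness statements are essentially formal consequences.
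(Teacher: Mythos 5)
Your proof is correct and takes essentially the same approach as the paper's: both hinge on the observation that deleting a cell in column $c_0$ changes $\matchable_T$ only in columns $c_0$ and $c_0+1$ (increasing the former and decreasing the latter, for rows weakly below the deleted cell), and both then invoke Proposition~\ref{prop:lemma2.2} on $T \setminus \{x\}$ to deduce the $\geq 0$ and $\geq -1$ bounds. The only cosmetic difference is the order of presentation: the paper first argues uniqueness of the removable column directly (a cell not in the bad column cannot repair a negative $\matchable_T$ value there) and then derives the bounds, whereas you derive the bounds first and conclude uniqueness as a corollary of the characterization; the substance is identical.
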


\begin{proof}
  Since $T$ is not a generic Kohnert diagram, there exists a column index $c$ such that for some row index $r$, we have $\matchable_T(c,r) < 0$. For any cell $y$ in $T$ that is not in column $c$, we have
  \[ \matchable_{T \setminus \{y\}}(c,r) \leq \matchable_T(c,r) < 0, \]
  and so $y$ is not a removable cell. Therefore $c$ is the unique removable column of $T$.

  Let $y$ be a removable cell in column $c$. Since $T \setminus \{y\}$ is a generic Kohnert diagram, for every column index $i > 1$ with $i \neq c$ and for every row index $j$, we have
  \[ \matchable_T(i,j) \geq \matchable_{T \setminus \{y\}}(i,j) \geq 0. \]
  Additionally, if there exists a row index $r$ such that $\matchable_T(c,r) < 1$, then
  \[ \matchable_{T \setminus \{y\}}(c,r) \leq \matchable_T(c,r) + 1 < 0, \]
  so that $T \setminus \{y\}$ is not a generic Kohnert diagram, contradicting $y$ being a removable cell. Hence, $\matchable_T$ is bounded from below in column $c$ by $\min_r\{\matchable_T(c,r)\} = -1$.
\end{proof}

Relating this to rectification, we have the following characterization of the unique lowest removable cells for weak Kohnert diagrams.

\begin{lemma}
  For a cell $y$ in position $(c,r)$ of a weak but not generic Kohnert diagram $T$, the following are equivalent:
  \begin{enumerate}
  \item $y$ is the lowest removable cell of $T$;
  \item $y$ is the cell that moves under $T \mapsto \rect(T)$;
  \item $\matchable_T(c,r) = -1$ and for all row-indices $s > r$, $\matchable_T(c,s) \geq 0$.
  \end{enumerate}
  \label{lem:weak-Kohnert-removable-cell-lowest}
\end{lemma}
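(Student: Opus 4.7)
The plan is to establish the cycle of implications by first showing the easy equivalence (2)$\Leftrightarrow$(3), then treating (1)$\Leftrightarrow$(3), which requires a concrete criterion for removability.

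For (2)$\Leftrightarrow$(3), I would simply unpack Definition~\ref{def:rect} in the light of Proposition~\ref{prop:weak-Kohnert-removable-column}. Since $T$ is weak but not generic, that proposition tells us the unique column index with $\min_s\matchable_T(\cdot,s)<0$ is the removable column, and in that column the minimum equals $-1$. So the leftmost column containing a negative matchability is precisely $c$, and the cell that $\rect$ moves is the one at the largest row index $r$ with $\matchable_T(c,r)=-1$, which is exactly the content of (3).

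For (1)$\Leftrightarrow$(3), the main step is to derive a removability criterion. Given $y=(c'',r_y)\in T$, I would compute that removing $y$ shifts matchabilities only in columns $c''$ and $c''+1$, via
\[
\matchable_{T\setminus\{y\}}(c'',s)=\matchable_T(c'',s)+\mathbb{1}[s\le r_y],\qquad \matchable_{T\setminus\{y\}}(c''+1,s)=\matchable_T(c''+1,s)-\mathbb{1}[s\le r_y].
\]
Combined with Proposition~\ref{prop:weak-Kohnert-removable-column}, this forces $c''=c$ and gives two conditions for removability of $y=(c,r_y)$: \textbf{(A)} $\matchable_T(c,s)\ge 0$ for all $s>r_y$, and \textbf{(B)} $\matchable_T(c+1,s)\ge 1$ for all $s\le r_y$ (vacuous if column $c+1$ is empty).

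Let $r^*$ denote the highest row index with $\matchable_T(c,r^*)=-1$. Condition (A) immediately forces every removable $y$ to satisfy $r_y\ge r^*$. I would next observe that $(c,r^*)\in T$: the step $\matchable_T(c,r^*)-\matchable_T(c,r^*+1)=-1$ can occur only when $(c,r^*)\in T$ and $(c-1,r^*)\notin T$. It remains to verify that $(c,r^*)$ itself is removable, i.e.\ that (B) holds at $r_y=r^*$. Since $T$ is a weak Kohnert diagram, \emph{some} removable cell $y$ exists; by (A) its row satisfies $r_y\ge r^*$, and (B) holding at $r_y$ automatically implies (B) at every smaller threshold, in particular at $r^*$. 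Thus $(c,r^*)$ is removable, and since any other removable cell has row at least $r^*$, it is the lowest one, matching precisely the description in (3). The main obstacle is this last argument: producing the condition (B) at $r^*$ without direct computation, leveraging the mere existence of a removable cell to propagate (B) downward. Once this is in place, both (1) and (3) independently pin down the single cell $(c,r^*)$, closing the equivalence.
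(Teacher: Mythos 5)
Your proof is correct and rests on the same matchability bookkeeping as the paper's: both derive, from the defining formula for $\matchable$, that removing a cell in column $c$ shifts matchability only in columns $c$ and $c+1$, and both reduce everything to row-by-row inequalities in those two columns. The difference is organizational: you package the removability condition as an explicit pair \textbf{(A)} $\matchable_T(c,s)\ge 0$ for $s>r_y$ and \textbf{(B)} $\matchable_T(c+1,s)\ge 1$ for $s\le r_y$, and then note that (B) at any larger threshold propagates down to $r^*$. The paper instead argues (3)$\Rightarrow$(1) by contradiction: it assumes $y$ satisfying (3) is not removable, extracts a row $s\le r$ in column $c+1$ with $\matchable_T(c+1,s)=0$, and shows that any removable cell $x$ strictly above $y$ would then also fail. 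Your monotone-propagation argument is cleaner and avoids the contradiction, though the underlying content (the existence of a removable cell guarantees (B) below its row) is the same thing the paper's contradiction exposes. Both are valid; yours is a mild but genuine streamlining.
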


\begin{proof}
  By Proposition~\ref{prop:weak-Kohnert-removable-column} we have $\matchable_T(i,j) \geq -1$ with equality for only one column index $i>1$. Therefore any cell that satisfies any of conditions (1),(2), or (3) must lie in this column, i.e. $c=i$. In particular, (2) and (3) are equivalent.

  To see (1) implies (3), if $y$ is a removable cell of $T$ and $\matchable_T(c,s) < 0$ for some $s>r$, then $\matchable_{T\setminus\{y\}}(c,s) = \matchable_T(c,s) < 0$ contradicting that $y$ is removable.

  Finally, suppose (3) holds for $y$. If we remove a cell $z$ in column $c$ below $y$, then we still have $\matchable_{T \setminus \{z\}}(c,r) = \matchable_T(c,r) = -1$, so $T \setminus \{z\}$ is not a generic Kohnert diagram and $z$ is not a removable cell. Therefore, $y$ is weakly below the lowest removable cell of $T$. It remains to show $y$ is removable.
  
  For every column index $i > 1$ with $i \neq c, c+1$, we have $\matchable_{T \setminus \{y\}}(i,j) = \matchable_T(i,j) \geq 0$ for every row index $j$. In column $c$, we have $\matchable_{T \setminus \{y\}}(c,s) = \matchable_T(c,s) \geq 0$ for every row index $s>r$, and we have $\matchable_{T \setminus \{y\}}(c,s) = \matchable_T(c,s) + 1 \geq 0$ for every row index $s \leq r$. Then for some row index $s \leq r$, we have $\matchable_{T \setminus \{y\}}(c+1,s) = \matchable_T(c+1,s) - 1 = -1$, so that $\matchable_T(c+1,s) = 0$. 

  If $y$ is not a removable cell of $T$, then there is some $x$ in column $c$ strictly above $y$ that is removable since $T$ is a weak but not generic Kohnert diagram. However, since $x$ is above row $r$ it is also above row $s$, and so $\matchable_{T \setminus \{x\}}(c+1,s) = \matchable_T(c+1,s) - 1 = -1$, contradicting that $T \setminus \{x\}$ is a generic Kohnert diagram. Therefore, $y$ must be removable and hence is the lowest removable cell of $T$. Thus (3) implies (1). 
\end{proof}

For a weak Kohnert diagram is not generic, Lemma~\ref{lem:weak-Kohnert-removable-cell-lowest} characterizes the unique lowest removable cell. We next characterize the unique highest removable cell.

\begin{lemma}
  Let $y$ be the lowest removable cell of a weak but not generic Kohnert diagram $T$, and let $M = \Mtch_{\thd}(T \setminus \{y\})$.
  Then for every cell $x$ in the column of $y$ in $T \setminus \{y\}$, 
  $x$ is above $y$ if and only if $M(x)$ is above $y$. 
  \label{lem:weak-Kohnert-removable-cell-lowest-2}
\end{lemma}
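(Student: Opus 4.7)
The plan is to prove both directions from a short counting argument on the matchability indices in column $c$, where $c$ denotes the column of $y$; the argument does not actually use the thread-decomposition structure of $M$ beyond the fact that it is a matching between columns $c$ and $c-1$, so in fact the result holds for any matching sequence on $T \setminus \{y\}$. Writing $y = (c,r)$, the forward direction is immediate from Definition~\ref{def:matching}(3): if $x$ lies in column $c$ of $T \setminus \{y\}$ with $\row(x) > r$, then $\row(M(x)) \geq \row(x) > r$, so $M(x)$ is above $y$.

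For the reverse direction, I would first extract from Lemma~\ref{lem:weak-Kohnert-removable-cell-lowest}(3) the two inequalities $\matchable_T(c,r) = -1$ and $\matchable_T(c,r+1) \geq 0$. Removing the single cell $y = (c,r)$ from $T$ increases $\matchable$ by exactly $+1$ in column $c$ at rows $\leq r$ and leaves it unchanged elsewhere. Rewriting the resulting values in $T \setminus \{y\}$ and unpacking the definitions yields two structural consequences:
(i) $(c-1, r) \notin T$, and
(ii) $\#\{(c-1,s) \in T \mid s > r\} = \#\{(c,s) \in T \mid s > r\}$.
Concretely, $\matchable_{T\setminus\{y\}}(c,r)=0$ forces the two sides of (ii) to differ by the indicator of $(c-1,r)\in T$, while $\matchable_{T\setminus\{y\}}(c,r+1)\geq 0$ flips the inequality the other way, forcing the indicator to be $0$ and giving (i) and (ii) simultaneously.

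Let $A$ denote the set of cells in column $c$ of $T\setminus\{y\}$ at row $>r$ and $B$ the analogous set in column $c-1$; by (ii), $|A| = |B| = k$ for some $k$. By Definition~\ref{def:matching}(3), $M$ restricts to an injection $A \hookrightarrow B$, hence a bijection. Consequently, if $x$ is a cell in column $c$ of $T\setminus\{y\}$ at row $<r$, then $M(x)$ lies in column $c-1$ at row $\geq \row(x)$; it cannot land in $B$ because $B$ is saturated by $A$, and it cannot land at row $r$ by (i), so $\row(M(x)) < r$ and $M(x)$ is below $y$. The only place that requires care is the bookkeeping producing (i) and (ii); once these are in hand, the remainder is a one-line injectivity argument and the reliance on $M$ being the thread matching plays no role.
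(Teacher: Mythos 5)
Your proof is correct and takes essentially the same route as the paper: both derive $\matchable_T(c,r+1)=0$ from Lemma~\ref{lem:weak-Kohnert-removable-cell-lowest}(3) (you spell out the indicator bookkeeping, including the useful byproduct $(c-1,r)\notin T$, that the paper elides) and then close with a pigeonhole argument showing the equal-sized sets of cells strictly above row $r$ in columns $c$ and $c-1$ are matched bijectively. Your remark that only the matching axioms of Definition~\ref{def:matching}, and not the thread decomposition specifically, are used is also true of the paper's argument, so it is a fair but not new observation.
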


\begin{proof}
  Let $y$ be in position $(c,r)$.
  By Lemma~\ref{lem:weak-Kohnert-removable-cell-lowest}, we must have 
  \[ \matchable_{T \setminus \{y\}}(c,r+1) = \matchable_T(c,r+1) = 0, \]
  so there is the same number of cells above $y$ in columns $c$ and $c-1$. Since $M$ gives a matching from the cells in column $c$ to the cells in column $c-1$, all cells above $y$ in column $c$ must match to the cells above $y$ in column $c-1$.
\end{proof}

To state the next result characterizing the unique highest removable cell, we remark that the threading algorithm described in Definition~\ref{def:thread} is in fact well-defined for any arbitrary diagram, and so it makes sense to refer to the thread decomposition of a weak Kohnert diagram as well. 
With no ambiguity, we extend our notation for matching sequences and write $\Mtch_{\thd}(T)$ for the directed graph on the cells of an arbitrary diagram $T$ induced by the thread decomposition of $T$. 

\begin{lemma}
  Let $x$ be the highest removable cell of a weak but not generic Kohnert diagram $T$. Then the removal of the cell $x$ from $T$ does not change how the other cells are threaded, that is,
  \[ \Mtch_{\thd}(T \setminus \{x\}) \subset \Mtch_{\thd}(T). \]
  Moreover, $x$ is the only removable cell of $T$ with this property and is the unique cell of $T$ not in the first column such that $M(x)$ does not exist.
  \label{lem:weak-Kohnert-removable-cell-highest}
\end{lemma}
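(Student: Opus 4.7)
The plan is to identify, via the thread decomposition of $T$, a unique cell $x^*$ outside column 1 with $M(x^*)$ undefined, show it is the highest removable cell, and deduce the remaining claims from the fact that $x^*$ is isolated in $M$.

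By Proposition~\ref{prop:weak-Kohnert-removable-column}, $T$ has a unique removable column $c > 1$ with $\min_r \matchable_T(c, r) = -1$, while $\matchable_T(i, r) \geq 0$ for all other columns $i > 1$. Hall's theorem applied to each pair of adjacent columns shows that the greedy matching in $M$ is complete between columns $i-1$ and $i$ whenever $i \neq c$, whereas the pair $(c-1, c)$ leaves exactly one cell of column $c$ unmatched. This gives a unique cell $x^* = (c, r^*) \in T$ with $\col(x^*) > 1$ and $M(x^*)$ undefined.

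The key step is to show $x^*$ is also not matched \emph{from} column $c+1$, so that $x^*$ is an isolated vertex in $M$, and moreover that no cell of column $c$ lies strictly above $x^*$. Suppose for contradiction the thread through $x^*$ enters via an edge $(c, r^*) \leftarrow (c+1, r'')$; then $(c, r^*)$ is the greedy (lowest) cell of column $c$ weakly above row $r''$ and, because the thread cannot extend to column $c-1$, no cell of column $c-1$ lies at row $\geq r^*$. Combining with the matchability bounds $\matchable_T(c, \cdot) \geq -1$ and $\matchable_T(c+1, \cdot) \geq 0$ forces $(c, r^*)$ to be the unique cell of column $c$ at row $\geq r''$ and $(c+1, r'')$ to be unique in column $c+1$ at row $\geq r''$. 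A direct check of each possible removal then shows $T$ would have no removable cell, contradicting the weak Kohnert hypothesis. An analogous argument rules out any additional cell of column $c$ at row $> r^*$.

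With $x^*$ isolated in $M$ and at the top of column $c$, the restriction of $M$ to $T \setminus \{x^*\}$ is a bona fide matching sequence in the sense of Definition~\ref{def:matching_seq}, so by Theorem~\ref{thm:TFAE_kohnert}, $T \setminus \{x^*\}$ is a generic Kohnert diagram and $x^*$ is removable. The threading of $T \setminus \{x^*\}$ proceeds identically to that of $T$ save for the omitted singleton thread $\{x^*\}$, so $\Mtch_\thd(T \setminus \{x^*\}) = M$. Since every other cell of column $c$ lies strictly below $x^*$, $x^*$ is the highest removable cell. For any other removable cell $y$, $y$ participates in some edge of $M$, and removing $y$ forces the threading to re-route, producing an edge not in $M$. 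The hardest step is the isolation argument, which requires delicate coordination of the matchability bounds in three consecutive columns together with the existence of a removable cell.
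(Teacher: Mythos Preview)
Your strategy—locate the unique cell $x^*$ outside column $1$ with $M(x^*)$ undefined, prove it is isolated, and then identify it with the highest removable cell—is a reasonable inverse to the paper's approach (which starts from the highest removable cell and shows it is isolated in $M$). However, one of your intermediate claims is false.

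You assert that no cell of column $c$ lies strictly above $x^*$. Take $T = \{(1,5),(2,3),(2,5),(3,5)\}$: here $c=2$ is the unique removable column, the thread decomposition is the single thread $(1,5)\leftarrow(2,5)\leftarrow(3,5)$ with $(2,3)$ isolated, so $x^*=(2,3)$; yet $(2,5)$ sits strictly above $x^*$ in column $2$. One checks that $(2,3)$ is removable (its removal leaves the key diagram of $(0,0,0,0,3)$) while $(2,5)$ is not (its removal gives $\matchable(3,5)=-1$). Thus $x^*$ is indeed the highest removable cell, but your deduction of that fact—``every other cell of column $c$ lies strictly below $x^*$''—does not go through. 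What must actually be shown is that any cell above $x^*$ in column $c$ is matched \emph{from} column $c+1$ and therefore not removable; the paper proves exactly this, but by starting from the hypothesis that $x$ is the highest removable cell and producing contradictions via swaps in $M^*=\Mtch_{\thd}(T\setminus\{x\})$.

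Your isolation argument is also imprecise: when the thread through $x^*$ fails to extend to column $c-1$, you conclude that no cell of column $c-1$ lies at row $\ge r^*$, but threading only guarantees that no \emph{available} cell lies there—earlier threads may already have consumed cells in column $c-1$ above $x^*$. The subsequent ``direct check of each possible removal'' is not spelled out. The paper sidesteps both issues by working from the other direction: given that $x$ is the highest removable cell, it uses the matching $M^*$ on the generic Kohnert diagram $T\setminus\{x\}$ together with Lemma~\ref{lem:weak-Kohnert-removable-cell-lowest-2} to force that nothing threads into $x$ and that all cells of column $c-1$ weakly above the lowest removable row are already consumed, whence $M(x)$ is undefined.
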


\begin{proof}
  Let $M = \Mtch_{\thd}(T)$, and let $M^* = \Mtch_{\thd}(T \setminus \{x\})$. We show $x$ lies in a one cell thread in $M$. Suppose, for contradiction, that $M(y)=x$ for some cell $y$ of $T$. Since the threading algorithm selects the lowest unthreaded cell weakly above $y$ in the next column, we must have $x$ strictly below $M^*(y)$. However, this allows us to construct a matching sequence for $T\setminus\{w\}$ by replacing $w$ with $x$ in $M^*$. By Proposition~\ref{prop:lemma2.2}, this means $w$ is a removable cell for $T$ that lies strictly above $x$, a contradiction. Therefore no cell of $T$ threads into $x$.

  When threading $T$, the cell $x$ must be threaded after all cells to its right and all cells in the same column below it. Suppose, for contradiction, that some cell $z$ above $x$ in the same column has also not yet been threaded. Then again we may construct a matching sequence for $T\setminus\{z\}$ by replacing $z$ with $x$ in $M^*$, arriving at the same contradiction as below. Therefore all cells of $T$ in column $c$ are threaded before $x$. Consider the lowest position, say $(c,r)$, occupied by a remove-able cell of $T$; in particular, $x$ is weakly above row $r$. By Lemma~\ref{lem:weak-Kohnert-removable-cell-lowest-2}, the number of cells weakly above row $r$ in $T\setminus\{x\}$ is the same for columns $c$ and $c-1$, and so all cells in column $c-1$ of $T$ weakly above row $r$ are threaded, leaving no unthreaded cell into which $x$ can thread in $T$.

  By virtue of the threading algorithm selecting the lowest unthreaded cell, if $x$ is never chosen as $M(y)$ for any $y\in T$ and there is no $z\in T$ chosen as $M(x)$, then $M(y) = M^*(y)$ for all $y\in T\setminus\{x\}$ as desired. The latter claims follow.
\end{proof}

We next consider the relationship between the thread decompositions of a diagram with highest or lowest removable cell removed; see Fig.~\ref{fig:daisy}.

\begin{lemma}
  Let $T$ be a weak but not generic Kohnert diagram. Let $x$ (respectively, $y$) be the highest (respectively, lowest) removable cell of $T$. Set $M = \Mtch_{\thd}(T \setminus \{x\})$ and $N = \Mtch_{\thd}(T \setminus \{y\})$. Then there exists an ascending sequence $y = y_0, y_1, \cdots, y_t = x$ of cells in the removable column of $T$ such that 
  \begin{itemize}
  \item $M(y_i) = N(y_{i+1})$ for all $0 \leq i < t$;
  \item if there exists a cell $z_i$ such that $M(z_i) = y_i$, then $N(z_i) = y_{i+1}$;
  \item $M(u) = N(u)$ for all cells $u$ such that $u \neq y_i$ and $M(u) \neq y_i$ for any $i$.
  \end{itemize}
  \label{lem:weak-Kohnert-daisy-chain}
\end{lemma}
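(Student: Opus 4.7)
The plan is to construct the ascending sequence $y_0 = y, y_1, \ldots, y_t = x$ in column $c$ by following an augmenting-style alternating path between $M$ and $N$. By Lemma~\ref{lem:weak-Kohnert-removable-cell-highest}, $M \subset \Mtch_{\thd}(T)$ and $x$ is the unique cell of $T$ outside column $1$ unmatched by $\Mtch_{\thd}(T)$, hence unmatched by $M$. On the other hand, $y$ is the unique cell absent from $T \setminus \{y\}$, so every cell of column $c$ in $T \setminus \{y\}$ is matched by $N$.

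I would set $y_0 = y$ and, inductively, given $y_i$ in column $c$ with $y_i \neq x$, define $d_i := M(y_i)$ and let $y_{i+1}$ be the unique cell of column $c$ with $N(y_{i+1}) = d_i$. To carry this out I need to establish: (a) $d_i$ is matched by $N$; (b) $y_{i+1}$ is strictly above $y_i$; and (c) the process terminates at $y_t = x$. For (a), an induction using Lemma~\ref{lem:weak-Kohnert-removable-cell-lowest-2} shows each $d_i$ lies strictly above the row of $y$ in column $c-1$, placing $d_i$ in the block of column-$(c-1)$ cells matched bijectively by $N$ to column-$c$ cells above $y$. For (b), the greediness of the thread algorithm forces the claim: were $y_{i+1}$ weakly below $y_i$, then the greedy construction of $M$ on $T \setminus \{x\}$ would have assigned $d_i$ to $y_{i+1}$ (or to some still-lower cell) before considering $y_i$, contradicting $M(y_i) = d_i$. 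For (c), the sequence is strictly increasing in row within the finite column $c$, so it must terminate, and termination occurs exactly when $y_{i+1}$ is unmatched by $M$, identifying $y_t = x$.

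For the second bullet, an analogous greedy argument governs the matching from column $c+1$ to column $c$. If $z_i$ is a column-$(c+1)$ cell with $M(z_i) = y_i$, then in threading $T \setminus \{y\}$ the cell $y_i$ is already $N$-matched to $d_{i-1}$ (via the chain step from $y_{i-1}$), so $y_i$ is no longer the lowest available cell of column $c$ weakly above $z_i$; the next available cell is forced to be $y_{i+1}$, giving $N(z_i) = y_{i+1}$. The third bullet follows by uniqueness of the greedy thread decomposition: any cell $u$ with $u, M(u) \notin \{y_0, \ldots, y_t\}$ is threaded identically in $M$ and $N$, so $M(u) = N(u)$. The main obstacle I anticipate is synchronizing the greedy analyses on the two sides of column $c$, i.e.\ ensuring that the same chain controls both the column-$(c-1)$ matchings and the column-$(c+1)$ matchings, which requires a careful simultaneous induction grounded in Lemma~\ref{lem:weak-Kohnert-removable-cell-lowest-2} and the deterministic nature of the thread decomposition.
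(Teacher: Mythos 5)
Your approach constructs the chain directly as an alternating $M$/$N$ path in column $c$, whereas the paper inducts on the number of threads of $\Mtch_{\thd}(T)$, repeatedly peeling off first threads: at each stage it establishes only the single link $M(y_0) = N(y_1)$, with $y_1$ the lowest cell of column $c$ strictly above $y$, and then recurses on the diagram with the first thread of $N$ deleted. These are genuinely different strategies, but your justification for the ascending step (your (b)) has a real gap.

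The flaw is in the assertion that ``were $y_{i+1}$ weakly below $y_i$, then the greedy construction of $M$ on $T\setminus\{x\}$ would have assigned $d_i$ to $y_{i+1}$ \ldots before considering $y_i$.'' This presumes the thread decomposition processes the cells of a fixed column from bottom to top, which is false. For the generic Kohnert diagram with cells at $(1,1),(1,5),(2,1),(2,5),(3,5)$, the first thread is $(3,5)\to(2,5)\to(1,5)$, and only afterward is $(2,1)$ threaded; so $(2,5)$ is matched in column $2$ before $(2,1)$, even though it sits strictly above. The order in which cells of column $c$ pick partners in column $c-1$ is governed by when their threads are built, not by their rows. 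The paper's first-thread analysis works precisely because for the first thread every cell is still available, so the picks in columns $c$ and $c-1$ really are the globally lowest cells, and the identity $N(y_1)=M(y)$ can then be forced via Lemma~\ref{lem:weak-Kohnert-removable-cell-lowest-2}. Once $i\geq 1$, the cell $y_i$ may sit on a later thread, and $M(y_i)$ is merely the lowest cell of column $c-1$ \emph{not already claimed by an earlier thread}; there is no immediate contradiction if $y_{i+1}$ were to fall below $y_i$. The same ordering issue undermines your argument for the lemma's second bullet (that $z_i$ loses access to $y_i$ under $N$), and the third bullet is asserted rather than derived. Your own caveat about ``synchronizing the greedy analyses'' names the right difficulty, but the resolution really is the paper's thread-peeling induction, which at each step restores the first-thread setting in which the local greedy reasoning applies.
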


\begin{figure}[ht]
  \begin{center}
    \begin{tikzpicture}[xscale=1.25,yscale=1,
        roundnode/.style={circle, draw=black, inner sep=0pt, minimum size=18pt}]
      \node[roundnode] at (0,3.75) (wt1){$\scriptstyle w_{t-1}$};
      \node at (0,3) (wd) {$\vdots$};
      \node[roundnode] at (0,2) (w1) {$\scriptstyle w_{1}$};
      \node[roundnode] at (0,1) (w0) {$\scriptstyle w_{0}$};
      \node[roundnode] at (1,3.75) (yt) {$\scriptstyle x$};
      \node[roundnode] at (1,2.75) (yt1){$\scriptstyle y_{t-1}$};
      \node at (1,2) (yd) {$ \vdots$};
      \node[roundnode] at (1,1) (y1) {$\scriptstyle y_{1}$};
      \node[roundnode] at (1,0) (y0) {$\scriptstyle y$};
      \node[roundnode] at (2,2.5) (zt1){$\scriptstyle z_{t-1}$};
      \node at (2,1.75) (zd) {$ \vdots$};
      \node[roundnode] at (2,0.75) (z1) {$\scriptstyle z_{1}$};
      \node[roundnode] at (2,-.25) (z0) {$\scriptstyle z_{0}$};
      \draw[ultra thick] (z0) -- (y0) -- (w0);
      \draw[ultra thick] (z1) -- (y1) -- (w1);
      \draw[ultra thick] (zt1) -- (yt1) -- (wt1);
      \draw[thick,dashed] (z0) -- (y1) -- (w0);
      \draw[thick,dashed] (z1) -- (yd) -- (w1);
      \draw[thick,dashed] (zt1) -- (yt) -- (wt1);
    \end{tikzpicture}
    \caption{\label{fig:daisy} An illustration of the relation between the matching sequences $\Mtch_{\thd}(T \setminus \{x\})$ (solid) and $\Mtch_{\thd}(T \setminus \{y\})$ (dashed).}
  \end{center}
\end{figure}
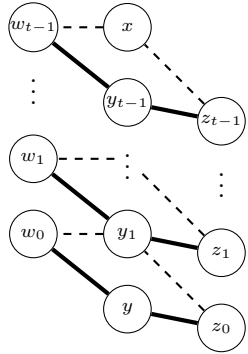

\begin{proof}
  The result is vacuous if $T$ has only one removable cell, so we may assume $x \neq y$. Since $T$ is a weak but not generic Kohnert diagram, $\Mtch_{\thd}(T)$ has at least two threads with one containing only $x$ by Lemma~\ref{lem:weak-Kohnert-removable-cell-highest}. If $\Mtch_{\thd}(T)$ has exactly two threads, then both $M$ and $N$ are single paths related by exchanging $x$ and $y$, and the result holds with $t=1$. We now proceed by induction on the number of threads of $\Mtch_{\thd}(T)$, noting that by Lemma~\ref{lem:weak-Kohnert-removable-cell-highest}, $\Mtch_{\thd}(T)$ is simply $M$ together with a thread consisting solely of $x$.

  Suppose the first thread of $M$ coincides with the first thread of $N$ so that, in particular, neither passes through $x$ or $y$. Let $T'$ denote the diagram obtained by deleting cells along this common thread in $T$, so that $x,y\in T'$. Then any removable cell of $T'$ must also be removable for $T$, ensuring both $x$ and $y$ are removable for $T'$ and remain the highest and lowest, respectively. By induction, the result holds for $T'$, and so too for $T$ once we restore the cells along the common first thread.

  Suppose then the first thread of $M$ differs from the first thread of $N$. Let $c$ denote the index of the removable column of $T$, and let $y_1$ denote the lowest cell in column $c$ strictly above $y$. If $c$ is the rightmost column occupied by $T$, then $y$ (resp. $y_1$) is the first cell of the first thread of $M$ (resp. $N$). Likewise, if $c$ is not the rightmost column occupied by $T$, then there is a cell $z$ in column $c+1$ such that the first thread of $M$ and of $N$ both contain $z$, agree weakly right of $z$, and have $M(z) \neq N(z)$. For this to occur, we must have $M(z)=y$ is the lowest cell in column $c$ weakly above $z$, and so in this case $N(z) = y_1$ since this is the first alternative to thread from $z$ once $y$ is removed. Therefore the first threads for $M$ and $N$ first diverge in column $c$, passing through $y$ and $y_1$, respectively.

  We claim $N(y_1) = M(y)$, as indicated in Fig.~\ref{fig:daisy}. Indeed, $M(y)$ is the lowest cell in column $c-1$ that lies strictly above $y$ (since $y$ has no cell immediately to its left), and $N(y_1)$ is the lowest cell in column $c-1$ that lies weakly above $y_1$ which lies above $y$. Therefore if $N(y_1) \neq M(y)$, then $M(y)$ must be strictly below $y_1$. Since $y_1$ is the lowest cell above $y$, there is no cell above $y$ in column $c$ that can match to $M(y)$ in $T\{y\}$, contradicting Lemma~\ref{lem:weak-Kohnert-removable-cell-lowest-2} and proving the claim.

  As the first thread is a local procedure based on cells of the diagram, and since $T \setminus \{x\}$ and $T \setminus \{y\}$ agree in columns $1$ through $c-1$, once $N(y_1) = M(y)$, the two first threads coincide in columns $1$ through $c-1$. If $y_1 = x$, then the result again holds with $t=1$. Otherwise, let $T'$ denote the diagram obtained by deleting cells along the first thread of $N$ so that $x,y\in T'$. Again, any removable cell of $T'$ must also be removable for $T$, ensuring both $x$ and $y$ are removable for $T'$ and remain the highest and lowest, respectively. By induction, the result holds for $T'$, and so too for $T$ once we restore the cells along the first thread of $N$.
\end{proof}

Observe from the proof of Lemma~\ref{lem:weak-Kohnert-daisy-chain}, if there exists $z_i$ such that $M(z_i) = y_i$ for some $i>0$, then there exists $z_{i-1}$ such that $M(z_{i-1}) = y_{i-1}$.

\begin{corollary}
  Let $T$ be a weak but not generic Kohnert diagram. Let $x$ (respectively, $y$) be the highest (respectively, lowest) removable cell of $T$. Then $\thread(T \setminus \{x\}) = \thread(T \setminus \{y\})$, and $y$ becomes the highest removable cell of $\rect(T)$.
    \label{cor:weak-kohnert-thread-decomposition}
\end{corollary}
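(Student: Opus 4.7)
My plan is to handle the two assertions of Corollary~\ref{cor:weak-kohnert-thread-decomposition} in turn. For the first assertion, $\thread(T \setminus \{x\}) = \thread(T \setminus \{y\})$, I would pair up the threads of $M = \Mtch_{\thd}(T \setminus \{x\})$ and $N = \Mtch_{\thd}(T \setminus \{y\})$ via the daisy-chain sequence $y = y_0, y_1, \ldots, y_t = x$ of Lemma~\ref{lem:weak-Kohnert-daisy-chain}. The first two bullets of that lemma tell us that the thread through $y_i$ in $M$ passes $\ldots \to z_i \to y_i \to w_i \to \ldots$ while the corresponding thread through $y_{i+1}$ in $N$ passes $\ldots \to z_i \to y_{i+1} \to w_i \to \ldots$; these two threads agree cell-for-cell outside the removable column, so in particular they have equal length and terminate at the same row of column $1$. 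Threads avoiding the daisy chain are literally identical in $M$ and $N$ by the third bullet. Since the thread weight of a generic Kohnert diagram records the terminal row and length of each thread, the equality follows.

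For the second assertion, write $y = (c, r)$ and $y' = (c-1, r)$; by Definition~\ref{def:rect}, $(c-1, r)$ is empty in $T$ and $\rect(T) = (T \setminus \{y\}) \cup \{y'\}$. Combining $\matchable_T(c, r) = -1$ with the telescoping identity $\matchable_T(c, r+1) - \matchable_T(c, r) = [(c, r) \in T] - [(c-1, r) \in T] = 1$ gives $\matchable_T(c, r+1) = 0$, and since $\rect(T)$ differs from $T$ only at row $r$ in columns $c-1$ and $c$, we also have $\matchable_{\rect(T)}(c, r+1) = 0$. Hence for any cell $u$ in column $c-1$ of $\rect(T)$ strictly above row $r$, removing $u$ drops $\matchable_{\rect(T)}(c, r+1)$ to $-1$, so $\rect(T) \setminus \{u\}$ fails to be generic and $u$ is not a removable cell of $\rect(T)$.

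If $\rect(T)$ is itself generic, the claim is vacuous; otherwise $\rect(T)$ is weak but not generic, and a direct $\matchable$ computation identifies its removable column as $c-1$ (the only column in which $\matchable_{\rect(T)}$ can be negative, since changes elsewhere either preserve or increase $\matchable$ over $T \setminus \{y\}$, which is itself generic). By the previous paragraph, no cell of $\rect(T)$ strictly above $y'$ in column $c-1$ is removable, while $y'$ itself is removable since $\rect(T) \setminus \{y'\} = T \setminus \{y\}$ is generic. Hence $y'$ is the highest removable cell of $\rect(T)$.

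The main technical ingredient is the identity $\matchable_{\rect(T)}(c, r+1) = 0$, which blocks any cell strictly above $y'$ in column $c-1$ from being removable; given this, the first assertion's daisy-chain bijection and the second assertion's localization of removable cells both follow with only routine bookkeeping.
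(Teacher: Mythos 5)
Your proof is correct and follows essentially the same route as the paper: Lemma~\ref{lem:weak-Kohnert-daisy-chain} supplies the thread-weight equality (your daisy-chain pairing of threads is exactly the content the paper cites), and the telescoping identity on $\matchable$ together with Lemma~\ref{lem:weak-Kohnert-removable-cell-lowest} localizes the removable cells of $\rect(T)$. One small caveat: the intended reading (and what the paper's own proof asserts) is that $y$ becomes the highest removable cell of $\rect(T)$ \emph{in its column}, which is a meaningful claim even when $\rect(T)$ is generic, so the ``vacuous'' case split is unneeded---the two facts you establish unconditionally, that $y'$ is removable and that no cell strictly above it in column $c-1$ is, already yield the conclusion in both cases.
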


\begin{proof}
  The characterization of the thread decompositions for $T \setminus \{x\}$ and $T \setminus \{y\}$ in Lemma~\ref{lem:weak-Kohnert-daisy-chain} ensures $\thread(T \setminus \{x\}) = \thread(T \setminus \{y\})$.
  Additionally, since $\rect(T) \setminus \{y\} = T \setminus \{y\}$, Lemma~\ref{lem:weak-Kohnert-removable-cell-lowest}(3) implies $\matchable_{\rect(T) \setminus \{y\}}(c,r+1) = \matchable_{T \setminus \{y\}}(c,r+1) = 0$. If we remove any cell $w$ above $y$ in $\rect(T)$, then $\matchable_{\rect(T) \setminus \{w\}}(c,r+1) = -1$, and so $\rect(T) \setminus \{w\}$ cannot be a generic Kohnert diagram. Hence, $y$ is the highest removable cell of $\rect(T)$ in its column.
\end{proof}

\begin{lemma}
  Let $U$ be a generic Kohnert diagram, and let $x$ be a removable cell of $U$. If $x$ is the highest removable cell in its column, then
  \begin{enumerate}
  \item $x$ is the rightmost cell of its path in $\Mtch_{\thd}(U)$, and
  \item any cell above $x$ in its column is not the rightmost cell of its path in $\Mtch_{\thd}(U)$.
  \end{enumerate}
  \label{lem:generic-Kohnert-removable-cell-highest-2}
\end{lemma}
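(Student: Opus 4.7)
The plan is to translate both assertions into statements about the matching $\pi$ between the cells of $U$ in columns $c$ and $c+1$ induced by $\Mtch_{\thd}(U)$, where $c$ denotes the column of $x$. By Definition~\ref{def:thread}, $\pi$ processes the cells of column $c+1$ in order of increasing row and pairs each with the lowest unmatched cell of column $c$ weakly above it. Under this reformulation, (1) becomes the claim that $x$ lies outside the image of $\pi$, while (2) becomes the claim that every cell of column $c$ strictly above $x$ lies inside the image of $\pi$. The key structural fact I will lean on is that a cell of column $c$ at row $r$ is removable from $U$ if and only if $\matchable_U(c+1,s) \geq 1$ for every $s \leq r$.

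I will first prove (2). Given any cell $z$ of column $c$ strictly above $x$, the hypothesis that $x$ is the highest removable cell makes $z$ non-removable, so some $s \leq \row(z)$ satisfies $\matchable_U(c+1,s) = 0$, and removability of $x$ forces $s > \row(x)$. The equation $\matchable_U(c+1,s) = 0$ says columns $c$ and $c+1$ have equal cell counts at row $\geq s$; combined with the genericity of $U$ (so that $\pi$ is a valid matching by Theorem~\ref{thm:TFAE_kohnert}) and the constraint $\pi(y)$ weakly above $y$, a Hall-type counting argument shows $\pi$ must pair every column $c$ cell at row $\geq s$ with some column $c+1$ cell at row $\geq s$. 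Since $\row(z) \geq s$, this places $z$ in the image of $\pi$.

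For (1), I plan to first reduce to the case in which $x$ is the topmost cell of column $c$. If it is not, let $z_0$ be the cell of column $c$ immediately above $x$; non-removability of $z_0$, together with the fact that $\matchable_U(c+1,-)$ is non-decreasing on the rows strictly between $x$ and $z_0$ (where column $c$ has no cells), forces $\matchable_U(c+1,\row(x)+1) = 0$. The standard identity relating $\matchable_U(c+1,\row(x))$ to $\matchable_U(c+1,\row(x)+1)$, combined with removability at $\row(x)$, then forces $(c+1,\row(x)) \notin U$. Applying the argument of (2) at $s = \row(x)+1$ shows $\pi$ pairs every column $c+1$ cell at row $>\row(x)$ with some column $c$ cell at row $>\row(x)$, so the restricted matching on column $c+1$ cells at row $<\row(x)$ must land in column $c$ cells at row $\leq \row(x)$. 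In this restricted configuration, $x$ is the topmost column $c$ cell and removability is preserved.

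In the reduced topmost setting, list the column $c$ cells in increasing row as $a_1 < \cdots < a_p = x$ and the column $c+1$ cells as $b_1 < \cdots < b_q$. Assuming for contradiction $\pi(b_i) = a_p$ with $i$ minimal, I aim to prove by downward induction on $k$ that $\pi(b_{i-k}) = a_{p-k}$ for every $0 \leq k \leq i-1$. The inductive step splits on the position of $\pi(b_{i-k-1})$: if $\pi(b_{i-k-1}) < b_{i-k}$, then greedy at $b_{i-k}$ would pick the smallest $a_j \geq b_{i-k}$, which combined with $\pi(b_{i-k}) = a_{p-k}$ and matchability at $b_{i-k}$ (using removability) yields $i \geq q+1$, contradicting $i \leq q$; if $\pi(b_{i-k-1}) \geq b_{i-k}$, then greedy's insistence on the lowest available cell forces $\pi(b_{i-k-1}) = a_{p-k-1}$. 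At $k = i-1$ the induction delivers $\pi(b_1) = a_{p-i+1}$, forcing $a_{p-i} < b_1$; matchability at $b_1$ then produces the same contradiction $i \geq q+1$. I expect this lockstep recursion in the topmost case to be the principal obstacle; once it is in place, the reductions outlined above complete the proof.
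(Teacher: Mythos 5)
Your reduction and the Hall-type counting in part (2) are both sound (they use only the injectivity of the matching, the constraint $\row(\pi(y))\geq\row(y)$, and equal column counts at the appropriate level). However, the lockstep induction for part (1) rests on the assertion that, ``by Definition~\ref{def:thread}, $\pi$ processes the cells of column $c+1$ in order of increasing row and pairs each with the lowest unmatched cell of column $c$ weakly above it.'' This is false. The thread decomposition threads globally, one entire thread at a time, and the order in which column $c+1$ cells are threaded is dictated by the threads entering from the right, not by row. For example, let $U=\{(1,3),(1,4),(2,1),(2,3),(3,2)\}$, which is generic ($\matchable_U\geq 0$ everywhere). Thread 1 is $(1,3)\leftarrow(2,3)\leftarrow(3,2)$ and Thread 2 is $(1,4)\leftarrow(2,1)$, so $\Mtch_{\thd}(U)$ sends $(2,3)\mapsto(1,3)$ and $(2,1)\mapsto(1,4)$; this is not monotone in rows and differs from the bottom-up greedy matching, which would send $(2,1)\mapsto(1,3)$ and $(2,3)\mapsto(1,4)$. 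Adding a fifth cell $(1,5)$ to column $1$ makes $(1,5)$ the highest removable cell, while the column $1$-$2$ matching in $\Mtch_{\thd}$ remains $(2,3)\mapsto(1,3)$, $(2,1)\mapsto(1,4)$ — still not the bottom-up greedy. Since your lockstep induction repeatedly invokes ``greedy at $b_{i-k}$ picks the smallest $a_j\geq b_{i-k}$,'' it is reasoning about the wrong matching and does not constrain $\Mtch_{\thd}(U)$.

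The gap is plausibly repairable: the thread decomposition does compute a greedy matching for \emph{some} processing order of column $c+1$, and a standard adjacent-transposition argument shows that all greedy processing orders for the threshold bipartite graph $\{(a,b):\row(a)\geq\row(b)\}$ produce the same \emph{image}. So ``$x\notin\im(\pi_{\mathrm{greedy}})$'' does imply ``$x$ is not the target of any edge in $\Mtch_{\thd}(U)$,'' which is what statement (1) needs. But this bridge lemma is not stated or proved in your write-up, and without it the argument is logically incomplete. For comparison, the paper's proof sidesteps this local analysis entirely: it compares $M=\Mtch_{\thd}(U)$ with $M^*=\Mtch_{\thd}(U\setminus\{x\})$ and observes that if some $y$ threaded into $x$ in $M$, then swapping $x$ for $M^*(y)$ inside $M^*$ would exhibit a removable cell above $x$, a contradiction; the second assertion is handled by the same substitution. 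That global substitution argument is both shorter and immune to the processing-order issue.
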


\begin{proof}
  Let $M = \Mtch_{\thd}(U)$ and $M^* = \Mtch_{\thd}(U \setminus \{x\})$. Suppose, for contradiction, there exists a cell $y$ of $U$ such that $M(y) = x$. Then there exists some $w$ such that $w = M^*(y)$. Moreover, since $y$ chooses the lowest available cell into which to thread, $w$ must lie above $x$. However, this means replacing $w$ with $x$ in $M^*$ yields a matching sequence on $U \setminus \{w\}$, showing that $w$ is also removable and contradicting that $x$ is the highest removable cell of its column. This proves the first assertion.

  Now suppose, for contradiction, that for some cell $w$ above $x$ in its column, there is no cell $y$ such that $M(y)=w$. Then again we create a matching sequence on $U \setminus \{w\}$ by replacing $w$ with $x$ in $M^*$, and the second assertion follows.
\end{proof}

For the next proof and others to follow, we make use of the lengths of path components of matching sequences for which we now introduce notation.

\begin{definition}
  Let $M$ be a matching sequence on a generic Kohnert diagram $T$. For each cell $x \in T$, the \emph{matching path length for $x$ in $M$}, denoted by $\plength_M(x)$, is the number of cells in the connected component of $M$ containing $x$. 
  \label{def:path-length}
\end{definition}

\begin{lemma}
  Let $U$ be a generic Kohnert diagram, and let $x$ be a removable cell of $U$ that is the highest such in its column. Then $\thread(U) = \thread(U \setminus \{x\}) + \comp{e}_j$, where $j$ is the row index of the cell in the first column on the thread of $x$ in $\Mtch_{\thd}(U)$.
  \label{lem:generic-Kohnert-removable-cell-highest}
\end{lemma}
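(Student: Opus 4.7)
The plan is to show the equality $\thread(U) = \thread(U \setminus \{x\}) + \comp{e}_j$ by combining a matching-sequence construction with a parallel analysis of the threading algorithm on $U$ and $U \setminus \{x\}$.

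First, I will set $M = \Mtch_{\thd}(U)$ and let $P = (x = x_0, x_1, \ldots, x_t)$ denote the path of $M$ containing $x$, where $x_i$ lies in column $c - i$ and $x_t$ lies in column $1$ at row $j$ (such a path exists since every thread of a generic Kohnert diagram terminates in column $1$ by the remark after Definition~\ref{def:thread}). By Lemma~\ref{lem:generic-Kohnert-removable-cell-highest-2}(1), $x$ has no incoming edge in $M$, so deleting $x$ and its unique outgoing edge $x \to x_1$ yields a valid matching sequence $M^{*}$ on $U \setminus \{x\}$, of anchor weight $\wt(M^{*}) = \wt(M) - \comp{e}_j = \thread(U) - \comp{e}_j$. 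Applying Lemma~\ref{lem:matching-wt} and then Lemma~\ref{lem:3.7} gives $\thread(U \setminus \{x\}) \preceq \thread(U) - \comp{e}_j$, which is one of the two inequalities needed.

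Next, I will upgrade this $\preceq$ to equality by running the greedy thread decomposition algorithm in parallel on $U$ and $U \setminus \{x\}$. By Lemma~\ref{lem:generic-Kohnert-removable-cell-highest-2}(2), every cell in column $c$ strictly above $x$ is matched into from column $c+1$ in $M$, so $x$ is never selected as a target by any thread extending leftward into column $c$. Consequently the threading of cells in columns $\geq c$ (apart from $x$ itself) proceeds identically in the two diagrams, and in the algorithm on $U$ the thread $P$ begins at $x$ only after all other cells of column $c$ have been threaded. The case $c = 1$ is immediate: $P = \{x\}$, and removing $x$ simply deletes this singleton thread without affecting any other. For $c > 1$, the anchors of the two thread decompositions occupy the same rows in column $1$ (since column $1$ is unchanged by the removal), and a careful bookkeeping of thread lengths shows that the thread anchored at row $j$ in $\Mtch_{\thd}(U \setminus \{x\})$ has length exactly one less than its counterpart in $\Mtch_{\thd}(U)$, while all other anchor lengths agree. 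This yields $\thread(U) \preceq \thread(U \setminus \{x\}) + \comp{e}_j$, completing the equality.

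The main obstacle is this last bookkeeping step. Even though the anchor weights ultimately differ only at row $j$, the actual threads (as sets of cells) may differ between $U$ and $U \setminus \{x\}$: removing $x$ can force rearrangements in the greedy matchings between adjacent columns, so that the thread anchored at row $j$ in $\Mtch_{\thd}(U \setminus \{x\})$ may consist of entirely different cells than $(x_1, \ldots, x_t)$ while nevertheless having the same length $c - 1$. Verifying that these rearrangements preserve each anchor's length, except for the predicted decrement at row $j$, will likely require a careful case analysis or exchange argument built on Lemma~\ref{lem:generic-Kohnert-removable-cell-highest-2} and the strict inequality of column populations at row $r_x$ after the removal of $x$.
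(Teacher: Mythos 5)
Your first direction is sound: deleting the isolated-at-$x$ edge of $\Mtch_{\thd}(U)$ yields a matching sequence $M^*$ on $U\setminus\{x\}$ of anchor weight $\thread(U)-\comp{e}_j$, and Lemma~\ref{lem:matching-wt} together with Lemma~\ref{lem:3.7} gives $\thread(U\setminus\{x\}) \preceq \thread(U)-\comp{e}_j$. Your reduction for $c=1$ is also correct, and your observation that $x$ is the last cell threaded in its column (via Lemma~\ref{lem:generic-Kohnert-removable-cell-highest-2}) is exactly the WLOG reduction the paper uses.

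However, there is a genuine gap, and you have located it yourself: the ``careful bookkeeping of thread lengths'' for $c>1$ is not a routine verification but is the entire content of the lemma, and you do not prove it. Asserting that ``the thread anchored at row $j$ in $\Mtch_{\thd}(U\setminus\{x\})$ has length exactly one less than its counterpart while all other anchor lengths agree'' is a restatement of the conclusion, not an argument for it. The subtlety is exactly what you flag in your last paragraph: removing $x$ can propagate nonlocal reshuffling of the greedy matchings between adjacent columns, so that the multiset of $(\mathrm{anchor\ row},\ \mathrm{thread\ length})$ pairs could in principle change in some way other than the predicted decrement at row $j$. Neither Lemma~\ref{lem:generic-Kohnert-removable-cell-highest-2} nor a column-count argument near $x$ suffices on its own to rule this out, because the reshuffling can occur far to the left of column $c$.

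The paper closes this gap with a dedicated argument that your sketch would need to reproduce. After the WLOG reduction (so $x$ is alone in its column $c$ with nothing to the right, and $U$ and $U\setminus\{x\}$ have the same number of cells in column $c-1$, hence the same number of threads of length $\geq c-1$), the lemma is reformulated as the set equality
\[
\{z \in U \mid \plength_M(z) \geq c-1\} = \{z \in U\setminus\{x\} \mid \plength_{M^*}(z) = c-1\} \cup \{x\},
\]
because once these cells are threaded the two remnant diagrams coincide, so the remainders of the two thread decompositions coincide. The equality is then proved by contradiction via infinite descent: assuming it fails, one picks the rightmost-then-highest cell $y$ with $\plength_{M^*}(y)=c-1$ but $\plength_M(y)<c-1$, and by alternating between the matchings $M$ and $M^*$ constructs an unbounded strictly descending chain of cells $y > y_1 > y_2 > \cdots$ in the column of $y$ (with a companion chain $z > z_1 > z_2 > \cdots$ one column to the right), contradicting finiteness of the diagram. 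If you want to carry your proposal to completion, this descent construction is precisely the ``exchange argument'' you anticipated needing; you should not expect a local case analysis around $x$ to suffice.
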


\begin{proof}
  By Lemma~\ref{lem:generic-Kohnert-removable-cell-highest-2}, $x$ is the last cell to be threaded in its column. Thus, without loss of generality, we may assume $x$ is the only cell in its column and $U$ is empty to the right of $x$. If $x$ lies in the first column, then the result is trivial, so assume $x$ lies in column $c>1$. Let $M = \Mtch_{\thd}(U)$ and $M^* = \Mtch_{\thd}(U \setminus \{x\})$. 
  
  Since $U$ and $U \setminus \{x\}$ have the same number of cells in column $c-1$, the number of threads of length at least $c-1$ must be the same for both $M$ and $M^*$.
  The lemma is equivalent to the equation
  \begin{equation}
    \{z \in U \mid \plength_M(z) \geq c - 1\} = \{z \in U \setminus \{x\} \mid \plength_{M^*}(z) = c - 1\} \cup \{x\},
    \label{e:path-lengths-after-cell-removed}
  \end{equation}
  since removing these cells would reduce both $U$ and $U \setminus \{x\}$, and the thread decompositions of $U$ and $U \setminus \{x\}$ would coincide after those cells have been threaded.

  Suppose, for contradiction, Eq.~\eqref{e:path-lengths-after-cell-removed} fails. Since the sets have the same cardinality, this happens only if there exists some cell of $U$ in the right set that is not in the left set. Let $y$ be the rightmost then highest such cell, so that $\plength_{M^*}(y) = c - 1$ but $\plength_{M}(y) < c - 1$. Since $\plength_{M}(y) < c - 1$, we must have $y$ strictly left of column $c-1$. We will show  $y$ has an infinite descending sequence of cells beneath it, an obvious contradiction to the finiteness of the diagram $U$.

  To begin, since $\plength_{M^*}(y)=c-1$ and $y$ is left of column $c-1$, we may take $z \in U\setminus\{x\}$ such that $M^*(z) = y$. Set $y_1 = M(z) \neq y$. Since threading selects the lowest available cell, we must have $M(z) = y_{1}$ below $M^*(z) = y$. Moreover, $\plength_{M^*}(z) = \plength_{M^*}(y) = c-1$, so by choice of $y$ we have $\plength_{M}(z) \geq c-1$. Therefore $\plength_{M}(y_1) = \plength_{M}(z) \geq c-1$. Again by choice of $y$, since $y_1$ lies below it, we must have $\plength_{M^*}(y_1) = c-1$. Thus there exists $z_1$ such that $M^*(z_1) = y_1$, and $z_1 \neq z$ since $y_1 \neq y$. Since $M^*(z) = y \neq y_{1}$, we have $z_{1} \neq z$. Since $y$ lies above $y_{1}$, we must have $z_{1}$ threads before $z$, ensuring the former is lower. See Fig.~\ref{fig:absurd}.

  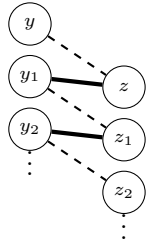
\begin{figure}[ht]
    \begin{center}
      \begin{tikzpicture}[xscale=1.25,yscale=0.7,
          roundnode/.style={circle, draw=black, inner sep=0pt, minimum size=16pt}]
        \node[roundnode] at (1,4) (y0) {$\scriptstyle y$};
        \node[roundnode] at (1,3) (y1){$\scriptstyle y_{1}$};
        \node[roundnode] at (1,2) (y2){$\scriptstyle y_{2}$};
        \node at (1,1.5) (yd) {$ \vdots$};
        \node[roundnode] at (2,2.8) (z0){$\scriptstyle z$};
        \node[roundnode] at (2,1.8) (z1) {$\scriptstyle z_{1}$};
        \node[roundnode] at (2,0.8) (z2) {$\scriptstyle z_{2}$};
        \node at (2,0.3) (zd) {$ \vdots$};
        \draw[ultra thick] (y1) -- (z0);
        \draw[ultra thick] (y2) -- (z1);
        \draw[thick,dashed](y0) -- (z0); 
        \draw[thick,dashed](y1) -- (z1); 
        \draw[thick,dashed](y2) -- (z2); 
      \end{tikzpicture}
      \caption{\label{fig:absurd} An illustration of the infinite descending sequence of cells constructed from the matchings $M = \Mtch_{\thd}(U)$ (thick) and $M^* = \Mtch_{\thd}(U \setminus \{x\})$ (dotted).}
    \end{center}
  \end{figure}

  Continuing, we may set $y_2 = M(z_1) \neq M(z) = y_1$. Since threading selects the lowest available cell, we must have $M(z_1) = y_{2}$ below $M^*(z_1) = y_1$. Since $\plength_{M^*}(z_1) = c-1$ and $z_1$ is to the right of $y$, we must have $\plength_{M}(z_1) \geq c-1$ by choice of $y$. Therefore $\plength_{M}(y_{2}) = \plength_{M}(z_1) \geq c-1$, and so again by the choice of $y$ as the highest in its column and since $y_{2}$ is lower, we must have $\plength_{M^*}(y_{2}) = c-1$. Therefore we may take $z_{2}$ such that $M^*(z_{2}) = y_{2}$. Since $M^*(z_{1}) = y_{1} \neq y_{2}$, we have $z_{2} \neq z_1$. Since $y_1$ lies above $y_{2}$, we must have $z_{2}$ threads before $z_1$, ensuring the former is lower. Iterating, we have an unbounded sequence, contradicting the finiteness of diagrams. Thus, Eq.~\ref{e:path-lengths-after-cell-removed} holds and the lemma is proved.
\end{proof}

Using Corollary~\ref{cor:weak-kohnert-thread-decomposition}, we have the following useful consequence of Lemmas~\ref{lem:generic-Kohnert-removable-cell-highest-2} and \ref{lem:generic-Kohnert-removable-cell-highest} that provides the key to our main result on rectification.

\begin{corollary}
  Let $U$ be a weak Kohnert diagram. Let $x$ be the highest removable cell of $U$ in its column. Define $V = \rectify(U)$, and let $y$ in $V$ be the final cell to move under the rectification of $U$. Then
  \begin{enumerate}
  \item $\thread(V \setminus \{y\}) = \thread(U \setminus \{x\})$;
  \item $\thread(V) = \thread(U \setminus \{x\}) + \comp{e}_j$, where $j = \Lbl_{\Mtch_{\thd}V}(y)$;
  \item $y$ is the rightmost cell of its path in $\Mtch_{\thd}(V)$, and is the highest such cell in its column.
  \end{enumerate}
  \label{cor:insertion-rectify1box}
\end{corollary}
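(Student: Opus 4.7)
The plan is to proceed by induction on the number $m$ of rectification steps, that is, the least integer $m$ for which $\rect^m(U) = V$.

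For the base case $m = 0$, the diagram $U$ is already generic, so $V = U$ and it is natural to set $y = x$ (no cell ever moves). Then assertion (1) is the tautology $V \setminus \{y\} = U \setminus \{x\}$; assertion (2) is precisely the content of Lemma~\ref{lem:generic-Kohnert-removable-cell-highest} applied to the pair $(U,x)$; and assertion (3) is precisely Lemma~\ref{lem:generic-Kohnert-removable-cell-highest-2} applied to $(U,x)$.

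For the inductive step with $m \geq 1$, the diagram $U$ is weak but not generic. Let $y_0$ denote the lowest removable cell of $U$; by Lemma~\ref{lem:weak-Kohnert-removable-cell-lowest}(2), this is the unique cell moved by one application of $\rect$. Corollary~\ref{cor:weak-kohnert-thread-decomposition} supplies the two facts that drive the induction: $\thread(U \setminus \{x\}) = \thread(U \setminus \{y_0\})$, and $y_0$ (in its shifted position) is the highest removable cell of $\rect(U)$ in its column. Since $\rect(U)$ differs from $U$ only by the one-column leftward shift of $y_0$, the diagrams $\rect(U) \setminus \{y_0\}$ and $U \setminus \{y_0\}$ coincide as sets of cells, so their thread weights agree.

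Now apply the induction hypothesis to the pair $(\rect(U), y_0)$: its rectification requires $m - 1$ further steps, terminates at the same diagram $V$, and the final moving cell is the same $y$. The conclusions (1)--(3) for $(\rect(U), y_0)$ transfer to $(U, x)$ as follows. Assertion (3) concerns only $V$ and $y$, hence transfers verbatim. Assertions (1) and (2) transfer via the chain
\[
\thread(V \setminus \{y\}) = \thread(\rect(U) \setminus \{y_0\}) = \thread(U \setminus \{y_0\}) = \thread(U \setminus \{x\}),
\]
and analogously for (2) by adding $\comp{e}_j$ on both sides. The main obstacle is purely organizational: one must scrupulously track the identity of $y_0$ across the single rectification step, ensuring it retains the role of ``highest removable cell in its column'' in $\rect(U)$, which is exactly what Corollary~\ref{cor:weak-kohnert-thread-decomposition} is engineered to guarantee.
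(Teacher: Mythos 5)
Your proposal is correct and takes the same approach the paper intends: the paper states this corollary as an immediate consequence of Corollary~\ref{cor:weak-Kohnert-thread-decomposition} together with Lemmas~\ref{lem:generic-Kohnert-removable-cell-highest-2} and~\ref{lem:generic-Kohnert-removable-cell-highest}, leaving the induction-on-rectification-steps bookkeeping implicit, and you have simply supplied that bookkeeping. Your handling of the base case (interpreting $y=x$ when no cell moves), the observation that $\rect(U)\setminus\{y_0\}=U\setminus\{y_0\}$ so thread weights agree, and the invocation of Corollary~\ref{cor:weak-Kohnert-thread-decomposition} to transfer the ``highest removable cell in its column'' property across a single $\rect$ step are all exactly the pieces needed.
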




Finally, we can proof Theorem~\ref{thm:insertion-top}, thereby also proving Theorem~\ref{thm:RSKD} via rectification for $k$ sufficiently large.

\begin{theorem}
  Let $\comp{a}$ be a weak composition, and set $\ell = \max_i\{a_i>0\}$. For every positive integer $\ell \leq k \leq n$, the map $\ins_{\infty}$ induces a weight preserving bijection
  \begin{equation}
    \KD(\comp{a}) \times \KD(\comp{e}_k) \stackrel{\sim}{\longrightarrow} \KDs(\comp{a},k).
        \label{e:insert-top}
  \end{equation}
  In particular, Theorem~\ref{thm:RSKD} is proved for $k\geq\ell$.
  \label{thm:insertion-top}
\end{theorem}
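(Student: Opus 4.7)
The plan is to verify that $\ins_\infty$ lands in $\KDs(\comp{a},k)$, then to construct an inverse by iteratively ``reverse-rectifying,'' and finally to argue the two maps are mutually inverse. Weight preservation will be immediate because $\rect$ moves cells only horizontally, so $\wt(\ins_\infty(T,j)) = \wt(T) + \comp{e}_j$.

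For well-definedness, given $(T,j) \in \KD(\comp{a}) \times \KD(\comp{e}_k)$ with $j \leq k$, let $m = \max_{x \in T}\col(x)$ and set $U = T \sqcup \{(m+1,j)\}$. Since $U \setminus \{(m+1,j)\} = T$ is generic, the added cell is removable in $U$ and, as the sole occupant of column $m+1$, is the unique highest removable cell of $U$ in its column. I then apply Corollary~\ref{cor:insertion-rectify1box} with $x$ the added cell: writing $V = \rectify(U)$ and letting $y$ be the final cell to move, we obtain $\thread(V \setminus \{y\}) = \thread(T) \preceq \comp{a}$ (using Lemma~\ref{lem:3.7}) and $\thread(V) = \thread(T) + \comp{e}_{j'}$ with $j' = \Lbl_{\Mtch_{\thd}V}(y)$. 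To bound $j' \leq k$, I inspect the terminal cell of $y$'s thread in column $1$: if it is $y$ itself then $j' = \row(y)$, which equals $j \leq k$ when $y$ is the added cell (since $\rect$ preserves rows) and is at most $\ell \leq k$ when $y$ is inherited from $T \in \KD(\comp{a})$; otherwise the terminal lies in $V \setminus \{y\} \in \KD(\thread(T)) \subseteq \KD(\comp{a})$, hence in a row at most $\ell \leq k$. Thus $V \in \KD(\thread(T) + \comp{e}_{j'}) \subseteq \KDs(\comp{a},k)$.

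For the inverse, given $V \in \KDs(\comp{a},k)$ I first extract the unique added column $c$ via Lemma~\ref{lem:col-added}, then identify $y$ as the unique highest cell of column $c$ that is the rightmost cell of its path in $\Mtch_{\thd}V$, matching the characterization in Corollary~\ref{cor:insertion-rectify1box}(3). I then ``reverse rectify'': push $y$ one column to the right, then iterate by pushing the highest removable cell of the (unique, by Proposition~\ref{prop:weak-Kohnert-removable-column}) removable column of the current weak Kohnert diagram one column to the right, stopping as soon as the most recently pushed cell is the only cell in its column and deleting it returns a generic Kohnert diagram; that generic diagram is declared to be $T$ and the row of the deleted isolated cell is $j$. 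Each such reverse step inverts a single forward $\rect$ step because, by Corollary~\ref{cor:weak-kohnert-thread-decomposition}, the lowest removable cell that $\rect$ moved at forward step $i$ becomes the highest removable cell of the post-step diagram in its new column, so pushing it one column back to the right restores the pre-step diagram; the initial push of $y$ plays the analogous role for the very last forward step.

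The main obstacle will be verifying that reverse rectification is well-defined on all of $\KDs(\comp{a},k)$ and always terminates in the stated isolated-cell configuration with $T \in \KD(\comp{a})$. I must check that each intermediate produced by a reverse step is again weak Kohnert with a unique removable column (so that ``highest removable cell'' is unambiguous), and that iterating eventually does produce an isolated cell to the right of a generic remainder rather than overshooting or stalling. The argument will lean on Lemma~\ref{lem:weak-Kohnert-daisy-chain} to control how thread decompositions transform as the roles of highest and lowest removable cells swap across a $\rect$-step, on Lemma~\ref{lem:generic-Kohnert-removable-cell-highest-2} to pin down the matching-structural role of the highest removable cell, and on the bound $j' \leq k$ from the forward analysis to ensure that the extracted $T$ has thread weight $\preceq \comp{a}$ and therefore lies in $\KD(\comp{a})$ itself rather than in a strictly larger $\KD(\comp{c})$ with $\comp{a} \preceq \comp{c}$.
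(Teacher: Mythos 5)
Your plan mirrors the paper's: well-definedness via Corollary~\ref{cor:insertion-rectify1box} (your row bound $j' \leq k$ for the forward direction is correct, if a little more case-by-case than the paper's one-line estimate), and an inverse via reverse rectification anchored at the cell identified through the added column (Lemma~\ref{lem:col-added}) and the thread-decomposition characterization in Corollary~\ref{cor:insertion-rectify1box}(3).

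However, you explicitly leave the substance of the inverse argument unfinished (``I must check that\ldots'', ``The argument will lean on\ldots''), and those deferred checks are not routine; they are the heart of the surjectivity proof. Three points need closure. First, after the initial push of $y$, you invoke Proposition~\ref{prop:weak-Kohnert-removable-column} to obtain a unique removable column, but that proposition applies to weak \emph{non-generic} diagrams; you would need to show each intermediate in the reverse iteration actually is weak but not generic until the termination step, which is not automatic. Second, your stopping condition (``the most recently pushed cell is the only cell in its column and deleting it gives a generic diagram'') is not the paper's test $\col(y)=m+1$ with $m=\max_i a_i$, and you give no argument that it is equivalent or cannot fire prematurely at some intermediate stage. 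Third, and most critically, the claim that the extracted diagram lands in $\KD(\comp{a})$ (rather than in $\KD(\comp{c})$ for some $\comp{c}\succ\comp{a}$) cannot be imported from the forward bound $j'\leq k$: that bound was derived assuming $T\in\KD(\comp{a})$, so quoting it here is circular. The paper's resolution is different in structure: given $V\in\KDs(\comp{a},k)$, first fix $\comp{b}\preceq\comp{a}$ and $j\leq k$ with $V\in\KD(\comp{b}+\comp{e}_j)$, locate $y$ as the cell on the thread of $\Mtch_{\thd}(V)$ anchored at row $j$ in column $b_j+1$, prove $V\setminus\{y\}\in\KD(\comp{b})\subseteq\KD(\comp{a})$ directly from a matching-sequence argument and Theorem~\ref{thm:TFAE_kohnert}, and only then apply the reverse push-right procedure, transporting the conclusion to $T=U\setminus\{x\}$ via Corollary~\ref{cor:insertion-rectify1box}(1). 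Absent that step (or an equivalent one), your inverse does not demonstrably land in the required domain.
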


\begin{proof}
Let $T\in \KD(\comp{a})$, and let $j \leq k$ be a positive integer. Let $m = \max_i\{a_i\}$ be the rightmost occupied column of $T$. We consider 
$$
\ins_{\infty}(T,j) = \rectify(T \sqcup \{(m+1,j)\}).
$$
Since $T$ has no cell in column $m+1$, the diagram $T \sqcup \{(m+1,j)\}$ and the cell in position $(m+1,j)$ satisfy the hypothesis of Corollary~\ref{cor:insertion-rectify1box}, and so we have
$$
\thread(\ins_{\infty}(T,j)) = \thread(T) + \comp{e}_{j'},
$$
where since rectification paths move down and left, $j'$ is bounded from above by
$$
j' \leq \max\{j, \row(u) \mid u \in T \} \leq \max\{k, i \mid a_i > 0\} = k.
$$
By Lemma~\ref{lem:3.7}, we have $\thread(T) \preceq \comp{a}$, and so $\ins_{\infty}(T,j) \in \KDs(\comp{a},k)$.

Now let $y$ be the final cell to move left under rectification. 
Since Kohnert moves preserve column-weights, it follows from
Corollary~\ref{cor:insertion-rectify1box}(1)
that $\col(y)$ is the added column of $\ins_{\infty}(T,j)$. 
We may uniquely determine $y$ among the cells of $\ins_{\infty}(T,j)$ in its column using Corollary~\ref{cor:insertion-rectify1box}(3). Thus, we may recover $y$ from $\ins_{\infty}(T,j)$.

If $\ins_{\infty}(T,j) = T \sqcup \{(m+1,j)\}$, then $y$ is in position $(m+1,j)$, and we may recover $(T,j)$ from $\ins_{\infty}(T,j)$. Otherwise, suppose we have
$$
T \sqcup \{(m+1,j)\} = U_0 \stackrel{\rect}{\mapsto} U_1 \stackrel{\rect}{\mapsto} \cdots \stackrel{\rect}{\mapsto} U_p = \ins_{\infty}(T,j),
$$
for some $p>0$. At each rectification step, the cell that has moved is uniquely determined by Corollary~\ref{cor:insertion-rectify1box}, provided we know the column to which it belongs. 
Having recovered the cell $y$ from the final step of the rectification, may subsequently invert each step until we arrive at a diagram $U_0 = T \sqcup \{(m+1,j)\}$, at which point we recover the tuple $(T,j)$. Thus this process is reversible, and so the map is injective.

For every diagram $V \in \KDs(\comp{a},k)$, we have $V \in \KD(\comp{b},j)$, for some $\comp{b} \in \lswap(\comp{a})$ and some $j \leq k$. 
Consider the thread decomposition $M = \Mtch_{\thd}(V)$ on $V$, and let $y$ be the cell in column $b_j + 1$ threaded to the cell in position $(j,1)$.
Restricting to $V \setminus \{y\}$ gives a matching sequence with anchor weight $\comp{b}$, and so by Theorem~\ref{thm:TFAE_kohnert}, we have $V \setminus \{y\} \in \KD(\comp{b}) \subseteq \KD(\comp{a})$. Given the pair $(V,y)$, let the $(U,x)$ be the pair obtained from iteratively applying the following procedure.

If $\col(y) = m+1$, then set $U=V$ and $x=y$.

Otherwise, let $x' \in U$ be the highest cell in the column of $y$ with the property that removing it makes the diagram $U \setminus \{x'\}$ a generic Kohnert diagram. The cell $x'$ certainly exists since $y$ has the same property, and the choice of $x'$ insists that the position $(\col(y)+1,\row(x'))$ is not occupied by $U$.
  We then get a new tuple $(U',\comp{y'})$, where $U'$ is obtained from $U$ by pushing the cell $x'$ to the right in position $(\col(y)+1,\row(x'))$ and where $y' \in U'$ is the cell in position $(\col(y)+1,\row(x'))$.

Since $V \in \KDs(\comp{a},k)$ and the above algorithm pushes cells only right, we have 
$$
\row(x) \leq \max\{k,i \mid a_i > 0\} = k.
$$
Note that $x \in U$ is necessarily the only cell in its column, so the diagram $U$ and the cell $x \in U$ satisfy the hypothesis of Corollary~\ref{cor:insertion-rectify1box}. 
In light of Corollary~\ref{cor:insertion-rectify1box}, we have, by the above algorithm's design, $V = \rectify(U) = \ins_{\infty}(U \setminus \{x\},\row(x))$.
By Corollary~\ref{cor:insertion-rectify1box}(1) and Lemma~\ref{lem:3.7}, we have
$$
\thread(U \setminus \{x\}) = \thread(V \setminus \{y\}) \preceq \comp{b} \preceq \comp{a},
$$
and so $U \setminus \{x\} \in \KD(\comp{a})$. 
Thus, $V$ is in the image of $\KD(\comp{a}) \times [k]$ under $\ins_{\infty}$.
By Lemma~\ref{lem:col-added}, we know $\col(y) = b_j + 1$ is the added column of $V$. 
Hence, the map $\ins_{\infty} : \KD(\comp{a}) \times [k] \longrightarrow \KDs(\comp{a},k)$ is surjective.
\end{proof}

%
\section{Key stratification}
%
\label{sec:stratify}

We now complete the proof of the existence of a weight-preserving bijection as asserted in Theorem~\ref{thm:RSKD}. Given a weak composition $\comp{a}$, the target spaces $\KDs(\comp{a},k)$ for each positive integer $k$ form a nested sequence of spaces of Kohnert diagrams,
\[ \cdots \supset \KDs(\comp{a},k+1) \supset \KDs(\comp{a},k) \supset \KDs(\comp{a},k-1) \supset \cdots \supset \KDs(\comp{a},1) . \]
We use this to \emph{stratify} the target space of the desired bijection by
\begin{equation}
  \KDstr = \KDs(\comp{a},k) \setminus \KDs(\comp{a},k-1) .
  \label{e:KDstr}
\end{equation}
In Section~\ref{sec:stratify-define}, for each integer $k > 1$, we define an injective map
\begin{equation}
  \partial_{\comp{a},k} \ : \ \KDstr \longrightarrow \KD(\comp{a})
  \label{e:stratum}
\end{equation}
satisfying $\wt(U) = \wt(\partial_{\comp{a},k}(U)) + \comp{e}_k$, and we show how these maps together with bijections for top and bottom insertion prove Theorem~\ref{thm:RSKD}. In Section~\ref{sec:stratify-range}, we prove the image of $\partial_{\comp{a},k}$ is indeed a Kohnert diagram for $\comp{a}$, and in Section~\ref{sec:stratify-inject} we prove $\partial_{\comp{a},k}$ is injective.

\subsection{Stratum maps}
\label{sec:stratify-define}

Up to this point, given a Kohnert diagram $T$, we have been primarily interested in the matching $\Mtch_{\thread}(T)$ corresponding to the thread decomposition $\thread(T)$. To study the stratum maps, we consider also matchings corresponding to the \emph{Kohnert labeling of $T$ with respect to $\comp{a}$} \cite[Definition~2.5]{AS18}.

\begin{definition}[\cite{AS18}]
  Let $\comp{a}$ be a weak composition and $T \in \KD(\comp{a})$. The \emph{Kohnert labeling of $T$ with respect to $\comp{a}$}, denoted by $\Lbl_{\comp{a}}(T)$, assigns labels to cells of $T$ as follows. Assuming all columns right of column $j$ have been labeled, assign labels $\{i \mid a_i \geq j\}$ to cells of column $j$ from bottom to top by choosing at each cell the smallest label $i$ such that the $i$ in column $j+1$, if it exists, is weakly lower.

  The \emph{Kohnert matching of $T$ with respect to $\comp{a}$}, denoted by $\Mtch_{\comp{a}}(T)$, is the matching sequence on $T$ defined by $x$ matching to $y$ for cells $x \in T$ in column $i+1$ and $y \in T$ in column $i$ if and only if $\Lbl_{\comp{a}}(x) = \Lbl_{\comp{a}}(y)$. 
  \label{def:kohnert-label}
\end{definition}

Assaf and Searles \cite[Theorem~2.8]{AS18} prove this is well-defined and use it to define and establish basic properties of \emph{Kohnert tableaux} \cite[Definition~2.3]{AS18}.

\begin{example}
  Consider the four matching sequences for the generic Kohnert diagram from Example~\ref{ex:matching}. Each is the Kohnert labeling $\Lbl_{\comp{b}}$ for the weak composition $\comp{b}$ given below it corresponding to the weight of the matching sequence.
\end{example}

\begin{proposition}
  For $T \in \KD(\comp{a})$, we have $\wt(\Mtch_{\comp{a}}) \preceq \comp{a}$.
  \label{prop:label-wt}
\end{proposition}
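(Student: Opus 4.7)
The plan is to describe the weight $\wt(\Mtch_{\comp{a}}(T))$ explicitly via the Kohnert labeling and then verify the left-swap dominance criterion using a row-label inequality on the labeling.

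First I would observe that by Definition~\ref{def:kohnert-label} each label $i$ with $a_i \geq 1$ occupies exactly one cell in each of the columns $1, 2, \ldots, a_i$ and no cell in any other column. Under the matching $\Mtch_{\comp{a}}(T)$, these cells form a single connected path terminating at the column-$1$ cell bearing that label. Setting $R_T := \{r : (1,r) \in T\}$ and $\sigma(r) := \Lbl_{\comp{a}}(1,r)$ for $r \in R_T$, the map $\sigma$ is a bijection $R_T \to \{i : a_i > 0\}$ and
\[ \wt(\Mtch_{\comp{a}}(T))_r = \begin{cases} a_{\sigma(r)} & \text{if } r \in R_T, \\ 0 & \text{otherwise.} \end{cases} \]
In particular, $\wt(\Mtch_{\comp{a}}(T))$ is a rearrangement of the parts of $\comp{a}$.

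Next I would establish the row-label inequality $\Lbl_{\comp{a}}(j,r) \geq r$ for every cell $(j,r) \in T$ by reverse induction on $j$. For the rightmost occupied column, Kohnert moves acting column-wise preserve column weights while pushing cells downward, so Proposition~\ref{prop:lemma2.2} yields that if cells of column $j$ of $T$ lie at rows $r_1 < \cdots < r_q$ and $\{i : a_i \geq j\} = \{i_1 < \cdots < i_q\}$, then $r_k \leq i_k$ for each $k$; the bottom-up, smallest-first assignment of Definition~\ref{def:kohnert-label} pairs $r_k$ with $i_k$, giving the inequality. For the inductive step on $j$, assuming the inequality for column $j+1$, a further induction on $k$ shows $\Lbl_{\comp{a}}(j,r_k) \geq r_k$: combining the inductive hypothesis for column $j+1$ with the Kohnert descent bound $|\{i \in S_j : i < r_k\}| \leq k-1$, where $S_j := \{i : a_i \geq j\}$, any label $i < r_k$ in $S_j$ that is feasible under the column-$j+1$ constraint at row $r_k$ would already be feasible and hence chosen at some earlier row $r_{k'} < r_k$; therefore the greedy choice at row $r_k$ is necessarily $\geq r_k$.

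Finally I would conclude via the dominance criterion. By Proposition~\ref{prop:lswap}, $\wt(\Mtch_{\comp{a}}(T)) \preceq \comp{a}$ is equivalent to $\kd_{\comp{b}} \in \KD(\comp{a})$ for $\comp{b} := \wt(\Mtch_{\comp{a}}(T))$, which by Proposition~\ref{prop:lemma2.2} reduces to the inequality $|\{i \leq r : b_i \geq v\}| \geq |\{i \leq r : a_i \geq v\}|$ for every $r \geq 1$ and $v \geq 1$. Setting $S_v := \{i : a_i \geq v\}$, the row-label inequality applied in column $1$ gives $\sigma^{-1}(i) \leq i$ for each $i$ in the image of $\sigma$; thus every $i \in S_v \cap [1,r]$ satisfies $\sigma^{-1}(i) \in R_T \cap [1,r]$ and $\sigma(\sigma^{-1}(i)) = i \in S_v$, producing $|S_v \cap [1,r]|$ distinct elements of $R_T \cap [1,r]$ mapped by $\sigma$ into $S_v$, which yields exactly the claimed inequality. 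The main obstacle is the inductive step for the row-label inequality, which requires carefully combining Kohnert descent with the column-$j+1$ constraint.
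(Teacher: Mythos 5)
Your overall strategy is genuinely different from the paper's. The paper obtains the result in a few lines by invoking~\cite[Theorem~2.8]{AS18}: it observes that the labeling $L'$ on $\kd_{\wt(\Mtch_{\comp{a}})}$ induced by the first-column labels of $T$ is the Kohnert labeling of that key diagram with respect to $\comp{a}$, concludes $\kd_{\wt(\Mtch_{\comp{a}})} \in \KD(\comp{a})$, and then applies Proposition~\ref{prop:lswap}. You instead try to prove a row-label inequality $\Lbl_{\comp{a}}(j,r) \geq r$ from scratch and then verify a prefix-count dominance criterion directly. The initial structural observation (that label $i$ spans columns $1,\ldots,a_i$ and that $\wt(\Mtch_{\comp{a}})_r = a_{\sigma(r)}$) is correct and useful, and the descent bound $|\{i \in S_j : i < r_k\}| \leq k-1$ is also correct (though it follows from Kohnert moves strictly lowering cells within columns, not from Proposition~\ref{prop:lemma2.2}, which is an intra-diagram criterion comparing adjacent columns of $T$, not a $T$-versus-$\kd_{\comp{a}}$ comparison).

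However, the inductive step for the row-label inequality has a genuine gap. Your key claim is that any $i < r_k$ in $S_j$ that is feasible at $(j,r_k)$ ``would already be feasible and hence chosen at some earlier row $r_{k'} < r_k$.'' This fails precisely when $i$ appears in column $j+1$ at a row $s$ with $r_{k-1} < s \leq r_k$: then $i$ is infeasible at every $(j,r_{k'})$ with $k' < k$, and it first becomes a candidate at $(j,r_k)$. The inductive hypothesis for column $j+1$ only gives $i \geq s$, hence $r_{k-1} < s \leq i < r_k$, and nothing in the descent bound or the column-$(j{+}1)$ hypothesis excludes this configuration for the bare data you are using (cells of $T$ in columns $j$ and $j{+}1$, and $S_j$, $S_{j+1}$). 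The inequality $\Lbl_{\comp{a}}(j,r) \geq r$ is true, but it appears to require the full force of $T \in \KD(\comp{a})$ via~\cite[Theorem~2.8]{AS18}, not merely the per-column descent bound; indeed when one tries to build a local counterexample to your step the offending diagram always turns out not to lie in $\KD(\comp{a})$, which is precisely the global constraint your argument never brings to bear. Separately, the final reduction of $\comp{b}\preceq\comp{a}$ to the inequality $|\{i \leq r : b_i \geq v\}| \geq |\{i \leq r : a_i \geq v\}|$ is not given by Proposition~\ref{prop:lemma2.2}; that proposition characterizes generic Kohnert diagrams, not the left swap order. The prefix-count characterization of left swap order is plausible and standard, but it is not established anywhere in the paper, so it would need its own proof.
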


\begin{proof}
  We may regard the Kohnert matching sequence $\Mtch_{\comp{a}}$ as a labeling $L$ where $L(x)=k$ whenever the cell on the component of $x$ in the first column is in row $k$. If this labeling agrees with the Kohnert labeling $\Lbl_{\comp{a}}$, then $\wt(\Mtch_{\comp{a}}) = \comp{a}$. Otherwise, consider the key diagram $\kd_{\wt(\Mtch_{\comp{a}})}$ with the labeling $L'$ where $L'(x)$ is the label of the cell in the first column of $T$ in the same row as $x$. By \cite[Theorem~2.8]{AS18}, this is the Kohnert labeling of $\kd_{\wt(\Mtch_{\comp{a}})}$ with respect to $\comp{a}$, and so $\kd_{\wt(\Mtch_{\comp{a}})} \in\KD(\comp{a})$. Thus by Proposition~\ref{prop:lswap}, $\wt(\Mtch_{\comp{a}}) \preceq \comp{a}$.
\end{proof}

The matching sequence on $T$ corresponding to the thread decomposition of $T$ is a special case of a Kohnert matching on $T$.

\begin{proposition}
  For $T$ a generic Kohnert diagram, $\Mtch_{\thd}(T) = \Mtch_{\thread(T)}(T)$.
  \label{prop:matching-thread-decomposition-specializes-Kohnert-labeling}
\end{proposition}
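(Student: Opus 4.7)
The plan is to show that both $\Mtch_{\thd}(T)$ and $\Mtch_{\thread(T)}(T)$ restrict, on every pair of adjacent columns $j$ and $j+1$ of $T$, to the same \emph{canonical} matching. Enumerating the cells of column $j+1$ in increasing row order as $x_1 < x_2 < \cdots < x_p$, by Proposition~\ref{prop:lemma2.2} there is a unique non-crossing, lowest-first matching $M^*$ between columns $j$ and $j+1$: the partner $y_i = M^*(x_i)$ is the cell of column $j$ with smallest row index subject to $\row(y_i) \geq \row(x_i)$ and, for $i>1$, $\row(y_i) > \row(y_{i-1})$. Showing both matchings coincide with $M^*$ on every such pair suffices, since two matching sequences on $T$ that agree in every adjacent column pair are equal.

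For $\Mtch_{\thd}(T)$, the identification with $M^*$ is essentially immediate from Definition~\ref{def:thread}. Threads are built one at a time from the lowest available cell in the rightmost unthreaded column and extended leftward by the ``lowest available cell weakly above'' rule. It follows that the threads reaching column $j+1$ do so at cells $x_1, x_2, \ldots, x_p$ processed in bottom-to-top order, and each such thread then extends to column $j$ at precisely the cell $y_i$ described above, realizing $M^*$.

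For $\Mtch_{\thread(T)}(T)$, I would proceed by induction on column index from right to left. Writing $\comp{a} = \thread(T)$, the set $\{i : a_i \geq j\}$ of labels permitted in column $j$ has cardinality equal to the number of threads reaching column $j$, which matches the number of cells of $T$ in column $j$. Assuming inductively that the Kohnert labeling realizes the canonical matching on columns strictly right of $j$, I would show the same for column $j$. The key observation, proved by the same induction, is that within each column the Kohnert labeling assigns labels in strictly increasing order from bottom to top. Combined with the ``smallest available label such that the column-$(j+1)$ cell with that label is weakly lower'' rule of Definition~\ref{def:kohnert-label}, this forces each $y_i$ in column $j$ to receive the same label as $x_i$ in column $j+1$, hence to match with $x_i$, realizing $M^*$.

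The main obstacle is this inductive step for the Kohnert matching: specifically, establishing that labels in each column are strictly increasing with row and that the greedy labeling rule therefore reproduces the canonical non-crossing, lowest-first pairing rather than some crossing alternative. Both claims are proved simultaneously by the same induction, after which the agreement of the two matching sequences on every adjacent column pair yields the desired equality of matching sequences on $T$.
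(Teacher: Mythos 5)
Your proposal is built on the claim that both $\Mtch_{\thd}(T)$ and $\Mtch_{\thread(T)}(T)$ restrict, on every adjacent pair of columns, to a unique ``non-crossing, lowest-first'' matching $M^*$. This claim is false, and both of your supporting observations fail on a small example.

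Take $T$ to be the diagram with cells at $(1,3),(1,4),(1,5),(1,6)$ in column $1$, at $(2,1),(2,3),(2,5)$ in column $2$, and at $(3,3),(3,5)$ in column $3$. One checks directly that $\matchable_T(c,r)\geq 0$ throughout, so $T$ is a generic Kohnert diagram. Running the threading algorithm, Thread $1$ is $(3,3)\to(2,3)\to(1,3)$, Thread $2$ is $(3,5)\to(2,5)\to(1,5)$, Thread $3$ is $(2,1)\to(1,4)$, and Thread $4$ is the singleton $(1,6)$, so $\thread(T)=(0,0,3,2,3,1)$. Restricted to columns $1$ and $2$, the thread matching sends $(2,1)\mapsto(1,4)$ and $(2,3)\mapsto(1,3)$; since $\row(2,1)<\row(2,3)$ but $\row(1,4)>\row(1,3)$, the edges cross. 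So $\Mtch_{\thd}(T)$ is \emph{not} the non-crossing lowest-first matching $M^*$ on this pair of columns. The root cause is exactly the assumption you slide past: the threads reaching column $j+1$ are \emph{not} processed bottom-to-top. A thread started in a farther-right column (here, Thread $2$ beginning at $(3,5)$) is built before a thread started in column $j+1$ (Thread $3$ beginning at $(2,1)$), and this priority ordering can steal the lowest candidate cell in column $j$, producing a crossing.

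Your ``key observation'' for the Kohnert side also fails on this example. Labeling $T$ with respect to $\comp{a}=\thread(T)=(0,0,3,2,3,1)$: in column $2$ the available labels are $\{3,4,5\}$, and the greedy bottom-to-top assignment gives $(2,1)$ label $4$ (label $3$ sits at row $3$ in column $3$, which is not weakly below row $1$, so $3$ is ineligible), $(2,3)$ label $3$, and $(2,5)$ label $5$. The column-$2$ labels read $4,3,5$ from bottom to top --- not strictly increasing. The two matchings do still agree on this example (the Kohnert matching also sends $(2,1)\mapsto(1,4)$ and $(2,3)\mapsto(1,3)$), so the Proposition holds, but the mechanism is not what you posit. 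Because both pillars of your argument fail, the approach as written cannot be repaired locally; the paper instead argues by induction on whole threads, isolating the component anchored at the smallest row $i$ with $\thread(T)_i>0$ and showing the two constructions trace out the same path for that thread, then removing it and recursing. That argument never needs the matching on a single column pair to be non-crossing or canonical, which is precisely what saves it.
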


\begin{proof}
  By Lemma~\ref{lem:3.7}, we have $T \in \KD(\thread(T))$, so we may consider the Kohnert matching of $T$ with respect to $\thread(T)$. Note $\Mtch_{\thd}(T)$ and $\Mtch_{\thread(T)}(T)$ have the same weight $\thread(T)$, and so the same number of threads. Let $i \geq 1$ be the smallest index for which $\thread(T)_i > 0$. We will show that the components for $\Mtch_{\thd}(T)$ and $\Mtch_{\thread(T)}(T)$ anchored in row $i$ coincide, from which the result follows by induction on the number of threads since the base case of one component is trivial.

  Consider the cell in column $\thread(T)_i$ anchored in row $i$. Since $i$ is minimal, this is the lowest cell of $T$ in column $\thread(T)_i$ for $\Mtch_{\thread(T)}(T)$. If for $\Mtch_{\thd}(T)$ there is a cell below this, then that cell must belong to an earlier thread, and so it will be threaded before the cells anchored at row $i$. However, since it is lower in column $\thread(T)_i$, it will always take weakly lower cells, contradicting the minimality of $i$. Thus both threads begin with the lowest cell of $T$ in column $\thread(T)_i$.

  We may assume the components anchored in row $i$ for $\Mtch_{\thd}(T)$ and $\Mtch_{\thread(T)}(T)$ agree weakly right of column $c+1 \leq \thread(T)_i$. Then for $\Mtch_{\thread(T)}(T)$, the chosen cell in column $c$ will be the lowest cell of $T$ that sits weakly above the cell labeled $i$ in column $c+1$. In order for $\Mtch_{\thd}(T)$ not to choose the same cell, there must be an earlier thread that takes the cell chosen by $\Mtch_{\thread(T)}(T)$, but then again this thread will have priority of the one anchored in row $i$ and will continue to select lower cells, once again contradicting the minimality of $i$. Therefore the components anchored in row $i$ for $\Mtch_{\thd}(T)$ and $\Mtch_{\thread(T)}(T)$ coincide, and the result follows.
\end{proof}


Recall Lemma~\ref{lem:col-added} associates to each $U \in \KDs(\comp{a},n)$ a unique added column with respect to $\comp{a}$, called the added column of $U$ (Definition~\ref{def:added-column}).

\begin{lemma}
  For a weak composition $\comp{a}$ and a positive integer $k>1$, let $U \in \KDstr$, and let $c$ be the added column of $U$. Then
  \begin{enumerate}
  \item the key diagram $\kd_{\thread(U)}$ has a cell $y$ in position $(c,k)$, and
  \item the diagram $\kd_{\thread(U)} \setminus \{y\}$ is a Kohnert diagram of $\comp{a}$.
  \end{enumerate}
  \label{lem:excision}
\end{lemma}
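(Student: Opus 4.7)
The plan is to reduce to the case $U = \kd_{\thread(U)}$ and then use the Kohnert matching of $\kd_{\thread(U)}$ with respect to $\comp{b}+\comp{e}_k$. Set $\comp{d} = \thread(U)$. By Lemma~\ref{lem:3.7}, $U \in \KD(\comp{d})$, and since $U \in \KDs(\comp{a},k)$ there exists $\comp{b} \preceq \comp{a}$ with $\comp{d} \preceq \comp{b}+\comp{e}_k$, and $c = b_k+1$ by Lemma~\ref{lem:col-added}. Proposition~\ref{prop:lswap} gives $\kd_{\comp{d}} \in \KD(\comp{b}+\comp{e}_k) \subseteq \KDs(\comp{a},k)$, and if $\kd_{\comp{d}}$ lay in $\KDs(\comp{a},k-1)$ then $\comp{d} \preceq \comp{b}'+\comp{e}_{j'}$ for some $\comp{b}' \preceq \comp{a}$, $j' < k$, which by Proposition~\ref{prop:lswap} would force $U \in \KD(\comp{d}) \subseteq \KDs(\comp{a},k-1)$, contradicting $U \in \KDstr$. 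Kohnert moves preserve column weights, so $U$ and $\kd_{\comp{d}}$ share the added column $c$, and it suffices to prove the lemma for $U = \kd_{\comp{d}}$.

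For Part (1) I would show $d_k \geq c$ by contradiction. Assume $d_k < c$, so that $(c,k) \notin \kd_{\comp{d}}$. Consider the Kohnert matching $M = \Mtch_{\comp{b}+\comp{e}_k}(\kd_{\comp{d}})$; its label-$k$ cells form a path of $c$ cells spanning columns $1$ through $c$, with row-indices $r_c^k \leq r_{c-1}^k \leq \cdots \leq r_1^k$ by the Kohnert labeling constraint. The goal is to produce $\comp{b}^* \preceq \comp{a}$ and $j^* < k$ with $\comp{d} \preceq \comp{b}^*+\comp{e}_{j^*}$, contradicting $\kd_{\comp{d}} \in \KDstr$. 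Taking $j^* = r_c^k$, I would first argue $r_c^k < k$ from the bottom-to-top, smallest-label-first rule in column $c$ applied to the cells at rows $\{i : d_i \geq c\}$ (none of which equals $k$). Then $\comp{b}^*$ is obtained from $\comp{b}$ by the left swap exchanging positions $r_c^k$ and $k$, valid because $b_{r_c^k} \leq c-2 < c-1 = b_k$. A modified matching sequence, obtained from $M$ by reassigning the label-$k$ path to anchor at row $r_c^k$, witnesses $\comp{d} \preceq \comp{b}^* + \comp{e}_{r_c^k}$ via Theorem~\ref{thm:TFAE_kohnert}.

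For Part (2), with $d_k \geq c$ the cell $y = (c,k)$ coincides with the label-$k$ cell of $M$ in column $c$ --- this is the converse statement to Part (1), deducible by the same labeling analysis --- so $y$ is the rightmost endpoint of the label-$k$ path in $M$. Removing $y$ restricts $M$ to a matching sequence on $\kd_{\comp{d}} \setminus \{y\}$ whose label-$k$ component now has only $c-1$ cells, giving an anchor weight that is $\preceq \comp{b} \preceq \comp{a}$. By Theorem~\ref{thm:TFAE_kohnert}, $\kd_{\comp{d}} \setminus \{y\} \in \KD(\comp{a})$.

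The principal obstacle is Part (1), namely establishing $r_c^k < k$ and $b_{r_c^k} \leq c-2$ when $d_k < c$. Neither conclusion is automatic --- left swaps can move the ``added'' value at position $k$ either leftward or rightward, so the row $r_c^k$ need not lie below $k$ \emph{a priori} --- and both must be extracted from the Kohnert labeling's smallest-label-first rule in column $c$ together with the minimality $U \in \KDstr$. I expect the technical core of the proof to be a careful column-$c$ analysis that identifies $r_c^k$ as a row strictly below $k$ at which $\comp{b}$'s entry is strictly less than $c-1$.
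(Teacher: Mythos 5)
Your approach inverts the paper's order: you try to prove Part (1) first by contradiction, then deduce Part (2). The paper does the opposite. It takes $y$ to be the cell in column $c$ of $\kd_{\thread(U)}$ with label $k$ under the Kohnert labeling $\Lbl_{\comp{b}+\comp{e}_k}$ --- \emph{without} knowing in advance what row $y$ occupies --- proves Part (2) for that $y$ directly by restricting the Kohnert matching (yielding anchor weight $\comp{b}$), and only then shows $\row(y) = k$: letting $k' = \row(y) \leq k$ and $\comp{b'} = \thread(\kd_{\thread(U)} \setminus \{y\})$, one appends $y$ to the thread of $\Mtch_{\thd}(\kd_{\thread(U)} \setminus \{y\})$ anchored at row $k'$ to get a matching sequence on $\kd_{\thread(U)}$ of weight $\comp{b'} + \comp{e}_{k'}$ with $\comp{b'} \preceq \comp{a}$, which forces $U \in \KDs(\comp{a},k-1)$ and a contradiction unless $k' = k$.

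The gap you flag in your Part (1) is real and I do not think the left-swap device can close it. You need $b_{r_c^k} \leq c - 2 < c-1 = b_k$ for the swap at positions $(r_c^k, k)$ to be a legitimate left swap below $\comp{b}$, but the row $r_c^k$ of the label-$k$ cell in column $c$ bears no a priori relation to the $r_c^k$-th \emph{part} of $\comp{b}$: the label set $\{i \mid (\comp{b}+\comp{e}_k)_i \geq c\}$ assigned in column $c$ is decoupled from the row set $\{j \mid d_j \geq c\}$ of the cells, so one cannot read off $b_{r_c^k}$ from the labeling. Similarly, ``reassigning the label-$k$ path to anchor at row $r_c^k$'' is not an operation available on matching sequences. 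The fix is exactly the paper's: produce the witnessing composition $\comp{b'}$ from the \emph{thread decomposition} of $\kd_{\thread(U)} \setminus \{y\}$, which is automatically $\preceq \comp{a}$ once Part (2) is established, so no side condition on $\comp{b}$'s parts is needed. This is also why you should not attempt to pin $y$ to row $k$ before Part (2) --- your Part (2) argument quietly assumes the label-$k$ cell in column $c$ sits in row $k$ once $d_k \geq c$, which needs the same minimality argument and is not ``the converse'' for free.
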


\begin{proof}
  By Lemma~\ref{lem:3.7}, since $U \in \KDs(\comp{a},k)$, we have $\thread(U) \preceq \comp{b} + \comp{e}_{k}$ for some weak composition $\comp{b} \in \lswap(\comp{a})$, and, by Lemma~\ref{lem:col-added}, $c = b_k+1$. Thus $\kd_{\thread(U)} \in \KD(\comp{b}+\comp{e}_k)$, and we may consider the Kohnert labeling $L = \Lbl_{\comp{b}+\comp{e}_k}(\kd_{\thread(U)})$. Let $y$ be the cell in column $c$ of $\kd_{\thread(U)}$ with $L(y)=k$. Note $y$ is the rightmost cell with label $k$. Thus we may defined a matching $M$ on $\kd_{\thread(U)} \setminus \{y\}$ by matching cells in adjacent columns if and only if they have the same label under $L$, and $\wt(M) = \comp{b}$. Thus, by Theorem~\ref{thm:TFAE_kohnert} and Proposition~\ref{prop:lswap}, we have $\kd_{\thread(U)} \setminus \{y\} \in \KD(\comp{b}) \subset \KD(\comp{a})$. To prove the lemma, we have only to show  $y$ is in row $k$. 

  Let $\comp{b'} = \thread(\kd_{\thread(U)} \setminus \{y\})$. Since $\kd_{\thread(U)} \setminus \{y\} \in \KD(\comp{a})$, Lemma~\ref{lem:3.7} ensures $\comp{b'} \preceq \comp{a}$. Let $k'$ be the row index of $y$. Since $L(y) = k$, we know by \cite[Theorem~2.8]{AS18} that $k' \leq k$. Since $\kd_{\thread(U)}$ is a key diagram, for every position $(i,j)$ occupied by $\kd_{\thread(U)}$, the position $(i-1,j)$ is occupied by $\kd_{\thread(U)}$ as well. Therefore, considering the thread decomposition of $\kd_{\thread(U)} \setminus \{y\}$, we have $b'_{k'} = c - 1$, where the path in the thread decomposition that is anchored at row $k'$, if it exists, only occupies row $k'$. Hence, either by appending $y$ to the end of the path anchored at row $k'$ (if $c > 1$) or having $y$ form its own path (if $c = 1$), we obtain from $\Mtch_{\thd}(\kd_{\thread(U)} \setminus \{y\})$ a matching sequence on $\kd_{\thread(U)}$ with weight $\comp{b'} + \comp{e}_k$. So by Theorem~\ref{thm:TFAE_kohnert}, we have
  $U \in \KD(\comp{b'} + \comp{e}_{k'})$. If $k' < k$, then $U \in \KDs(\comp{a},k-1)$, which directly contradicts how $U$ is picked from $\KDstr$. Thus, $k' = k$, as desired.
\end{proof}

Lemma~\ref{lem:excision} motivates the following notation.

\begin{definition}
  For $U \in \KDstr$, the \emph{excised weight of $U$} (with respect to $\comp{a}$ and $k$) is the weak composition $\thread(\kd_{\thread(U)} \setminus \{y\})$, where $y$ is the cell of $\kd_{\thread(U)}$ in position $(c,k)$ for $c$ the added column of $U$.
  \label{def:excision}
\end{definition}

\begin{figure}[ht]
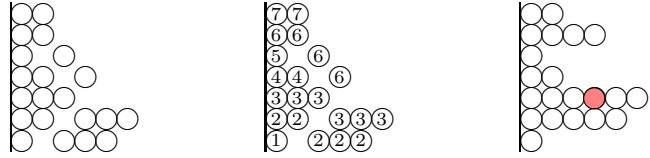

  \begin{displaymath}
    \arraycolsep=3\cellsize
    \begin{array}{ccc}
    \vline\cirtab{%
      \ & \ \\
      \ & \ \\
      \ & & \ \\
      \ & \ & & \ \\
      \ & \ & \ \\
      \ & \ & & \ & \ & \ \\
      \ & & \ & \ & \ } &
    \vline\cirtab{%
      7 & 7 \\
      6 & 6 \\
      5 & & 6 \\
      4 & 4 & & 6 \\
      3 & 3 & 3 \\
      2 & 2 & & 3 & 3 & 3 \\
      1 & & 2 & 2 & 2 } &
    \vline\cirtab{%
      \ & \ \\
      \ & \ & \ & \ \\
      \ \\
      \ & \ \\
      \ & \ & \ & \cball{red} & \ & \ \\
      \ & \ & \ & \ & \ \\
      \ }
    \end{array}
  \end{displaymath}
  \caption{\label{fig:excised-wt}A generic Kohnert diagram $U$ (left) in $\KDstr$ for $\comp{a} = (1,5,2,1,2,6,3)$ and $k=3$ with excised column $c=4$, its thread decomposition (middle), and the key diagram $\kd_{\thread(U)}$ (right) with the cell in position $(c,k)$ indicated.}
\end{figure}

\begin{example}
  Consider the weak composition $\comp{a} = (1,5,2,1,2,6,3)$, and the generic Kohnert diagram $U$ on the left side of Fig.~\ref{fig:excised-wt}. The thread weight of $U$ is $\thread(U) = (1,5,6,2,1,4,2)$, and so $U$ is a Kohnert diagram for $\comp{b}'+\comp{e}_k$ for $\comp{b}=(2,5,3,2,1,6,2) \preceq \comp{a}$ and $k=3$, and so $U \in \KDstr$. Comparing column weights, the added column of $U$ with respect to $\comp{a}$ is $c=4$. The key diagram $\kd_{\thread(U)}$ shown on the right side of Fig.~\ref{fig:excised-wt} indeed has a cell $(\cball{red})$ in position $(4,3)$, and removing it gives a diagram with thread weight $\comp{b} = (1,5,3,2,1,6,2)$, which is the excised weight of $U$.
  \label{ex:excised-wt}
\end{example}

With notation as in Lemma~\ref{lem:excision} and $\comp{b} = \thread(\kd_{\thread(U)} \setminus \{y\})$ the excised weight of $U$, from Lemmas~\ref{lem:excision}(2) and \ref{lem:3.7}, we know $\comp{b} \preceq \comp{a}$ and $\kd_{\thread(U)} \in \KD(\comp{b} + \comp{e}_k)$. Thus, $U \in \KD(\comp{b} + \comp{e}_k)$, and we may consider the Kohnert labeling $\Lbl_{\comp{b} + \comp{e}_k}(U)$.  Through this labeling we shall partition $U$ into sub-diagrams, which we then transform and glue back together to obtain the image of $U$ under the map $\partial_{\comp{a},k}$. 

The simplicity of the statement of the following lemma belies its utility. 

\begin{lemma}
  For $U \in \KDstr$ and $\comp{b}$ the excised weight of $U$, each cell $x$ in the first column of $U$ has $\Lbl_{\comp{b}+\comp{e}_k}(x)$ equal to the row index of $x$. In particular, $\wt(\Mtch_{\comp{b}+\comp{e}_k}(U)) = \comp{b} + \comp{e}_k$.
  \label{lem:L-minimal-labeling}
\end{lemma}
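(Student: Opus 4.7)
Write $\comp{d} = \thread(U)$. By Lemma~\ref{lem:excision} we have $U \in \KD(\comp{b}+\comp{e}_k)$. My first step is to identify the rows occupied by column-$1$ cells of $U$. Column-weight preservation under Kohnert moves gives that column~$1$ of $U$ has $|\{i : (\comp{b}+\comp{e}_k)_i \geq 1\}|$ cells; by Proposition~\ref{prop:matching-thread-decomposition-specializes-Kohnert-labeling} these cells are the anchors of $\Mtch_{\thd}(U) = \Mtch_{\comp{d}}(U)$ and therefore sit at rows $\{i : d_i \geq 1\}$. A brief case split on the excised cell $y = (c,k)$ reconciles the two sets: if $c \geq 2$, removing $y$ does not touch column~$1$ of $\kd_{\comp{d}}$, so the bijection between column-$1$ cells of $\kd_{\comp{d}}\setminus\{y\}$ and its thread anchors gives $\{i : b_i \geq 1\} = \{i : d_i \geq 1\}$, and adding $\comp{e}_k$ changes nothing since $b_k = c-1 \geq 1$; if $c = 1$ instead, removing $y$ deletes the row-$k$ cell from column~$1$ giving $\{i : b_i \geq 1\} = \{i : d_i \geq 1\}\setminus\{k\}$, and $\comp{e}_k$ restores $k$. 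Either way $\{i : d_i \geq 1\} = \{i : (\comp{b}+\comp{e}_k)_i \geq 1\}$; call this common set $S = \{r_1 < \cdots < r_m\}$, which by Definition~\ref{def:kohnert-label} is also the label set used in column~$1$ by $\Lbl_{\comp{b}+\comp{e}_k}(U)$.

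So the Kohnert labeling restricts in column~$1$ to some permutation $\sigma$ of $S$ sending row $r_j$ to the label $\sigma(r_j)$. My second step is to show $\sigma = \mathrm{id}$. For this I would invoke the general property from \cite[Theorem~2.8]{AS18}, used also in the proof of Lemma~\ref{lem:excision} to conclude $k' \leq k$, that $\Lbl_{\comp{a}'}(T)(x) \geq \row(x)$ for every $T \in \KD(\comp{a}')$ and every cell $x \in T$; applied to the column-$1$ cells of $U$ this yields $\sigma(r_j) \geq r_j$ for every $j$. A descending induction now forces $\sigma = \mathrm{id}$: $\sigma(r_m) \in S$ combined with $\sigma(r_m) \geq r_m = \max S$ gives $\sigma(r_m) = r_m$, and inductively $\sigma(r_j) \in S\setminus\{r_{j+1},\ldots,r_m\}$ together with $\sigma(r_j) \geq r_j$ gives $\sigma(r_j) = r_j$. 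Hence each cell at row $r$ in column~$1$ of $U$ carries label $r$, establishing the first assertion.

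The ``in particular'' clause is then immediate. Under $\Mtch_{\comp{b}+\comp{e}_k}(U)$, cells sharing a label lie on a common connected component, and the column-$1$ anchor of each component carries label equal to its row by the first assertion. Hence the component anchored at row $r$ consists of exactly the cells labeled $r$, and by Definition~\ref{def:kohnert-label} the number of such cells is $|\{j : (\comp{b}+\comp{e}_k)_r \geq j\}| = (\comp{b}+\comp{e}_k)_r$; summing over $r\in S$ gives $\wt(\Mtch_{\comp{b}+\comp{e}_k}(U)) = \comp{b}+\comp{e}_k$. The most delicate step in the plan is the first: matching $\{i : d_i \geq 1\}$ with $\{i : (\comp{b}+\comp{e}_k)_i \geq 1\}$ in the boundary case $c = 1$. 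The excised-weight construction is set up precisely so that adding back $\comp{e}_k$ compensates for the cell removed from column~$1$ of $\kd_{\comp{d}}$, and verifying this mild case analysis is where the technical content of the plan lies.
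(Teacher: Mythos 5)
Your proposal is correct and follows essentially the same strategy as the paper: first show that the set of labels used in column~$1$ by $\Lbl_{\comp{b}+\comp{e}_k}$ coincides with the set of row indices of the column-$1$ cells of $U$, then combine this with the fact (cited from \cite[Theorem~2.8]{AS18}) that every label is at least its cell's row index, and close with a pigeonhole/descending-induction argument. The only notable difference is in how the first step is organized: the paper directly observes that for $j \neq k$ one has $b_j > 0 \iff \thread(U)_j > 0$ (since removing the cell $y$ in row $k$ from $\kd_{\thread(U)}$ leaves column~$1$ unchanged in those rows) and that both sides are positive at $j = k$, whereas you reach the same equality of supports via an explicit case split on $c = 1$ versus $c \geq 2$ together with the column-weight count of column~$1$. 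Both are valid, and your case split correctly handles the boundary where $y$ does sit in column~$1$; the paper's version simply sidesteps that split by arguing row-by-row rather than reasoning about removal of $y$.
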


\begin{proof}
  Recall $\comp{b} = \thread(\kd_{\thread(U)} \setminus \{y\})$ for $y$ the cell in position $(c,k)$ of $U$, where $c$ is the added column of $U$. Thus, for all $j \neq k$, we have $b_j > 0$ if and only if $\thread(U)_j > 0$. On the other hand, since $y$ is in row $k$, the key diagram $\kd_{\thread(U)}$ occupies positions in row $k$. Therefore, $\thread(U)_k > 1$, and in particular, $U$ has a cell at position $(1,k)$. By construction, the set of labels used in $\Lbl_{\comp{b}+\comp{e}_k}$ coincides with the row indices of the cells in column $1$ of $U$. Since each cell $x$ has $\Lbl_{\comp{b}+\comp{e}_k}(U)$ at least equal to the row index of $x$, each cell in $U$ in column $1$ must have its row as its label.
\end{proof}

In particular, by Lemma~\ref{lem:L-minimal-labeling}, $U$ has a cell in column $1$ of row $k$, and this cell belongs to a component of $\Mtch_{\comp{b} + \comp{e}_k}(U)$ of length $c$.

\begin{definition}
  For $U \in \KDstr$, let $c$ denote the added column of $U$ and $\comp{b}$ the excised weight of $U$. Define sets
  \begin{eqnarray}
    U^+ &=& \{ x \in U \mid \plength_{\Mtch_{\comp{b} + \comp{e}_k}}(x) \geq c \}, \\
    U^- &=& \{ x \in U \mid \plength_{\Mtch_{\comp{b} + \comp{e}_k}}(x) < c \},
  \end{eqnarray}
  and let $U^+_* = U^+ \setminus \{w\}$, where $w$ is the cell in column $1$ of row $k$.
\end{definition}

\begin{figure}[ht]
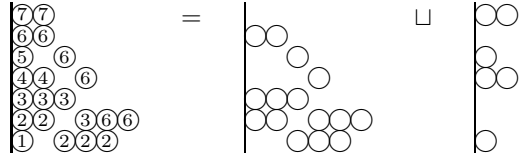

  \begin{displaymath}
    \arraycolsep=\cellsize
    \begin{array}{ccccc}
    \vline\cirtab{%
      7 & 7 \\
      6 & 6 \\
      5 & & 6 \\
      4 & 4 & & 6 \\
      3 & 3 & 3 \\
      2 & 2 & & 3 & 6 & 6 \\
      1 & & 2 & 2 & 2 } & = &
    \vline\cirtab{%
        &   \\
      \ & \ \\
        & & \ \\
        &   & & \ \\
      \ & \ & \ \\
      \ & \ & & \ & \ & \ \\
        & & \ & \ & \ } & \sqcup & 
    \vline\cirtab{%
      \ & \ \\
        &   \\
      \ & &   \\
      \ & \ & &   \\
        &   &   \\
      & \\
      \ & &  &  & } 
    \end{array}
  \end{displaymath}
  \caption{\label{fig:stratum-partition}The partitioning of the generic Kohnert diagram $U$ (left) in $\KDstr$ for $\comp{a} = (1,5,2,1,2,6,3)$ and $k=3$ using the Kohnert labeling to obtain $U^+$ (middle) and $U^-$ (right).}
\end{figure}

\begin{example}
  Continuing with Ex.~\ref{ex:excised-wt}, we take the Kohnert labeling the diagram $U$ with respect to $\comp{b}+\comp{e}_k = (1,5,4,2,1,6,2)$, where $\comp{b}$ is the excised weight of $U$ and $k=3$, as shown on the left side of Fig.~\ref{fig:stratum-partition}. The added column $c=4$ dictates which labels are included for each half of the partitioning, giving the decomposition shown on the right side of Fig.~\ref{fig:stratum-partition}.
  \label{ex:stratum-partition}
\end{example}

Notice $U = U^{+} \sqcup U^{-}$. We can now define the map $\partial_{\comp{a},k}$.

\begin{definition}
  Given a weak composition $\comp{a}$ and an integer $k>1$, the \emph{$k$th stratum map of $\comp{a}$}, denoted by $\partial_{\comp{a},k}$, acts on $U \in \KDstr$ by
  \begin{equation}
    \partial_{\comp{a},k}(U) = \rectify(U^+_*) \cup U^- .
  \end{equation}
\end{definition}

\begin{figure}[ht]
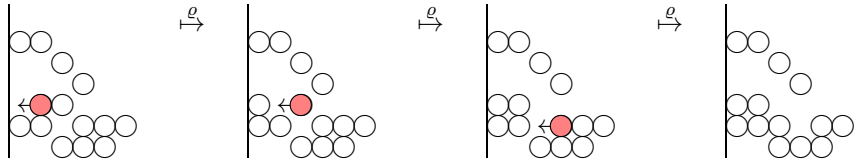

  \begin{displaymath}
    \arraycolsep=\cellsize
    \begin{array}{ccccccc}
    \vline\cirtab{%
        &   \\
      \ & \ \\
        & & \ \\
        &   & & \ \\
        & \mball & \ \\
      \ & \ & & \ & \ & \ \\
        & & \ & \ & \ } & \stackrel{\rect}{\mapsto} &
    \vline\cirtab{%
        &   \\
      \ & \ \\
        & & \ \\
        &   & & \ \\
      \  &  & \mball \\
      \ & \ & & \ & \ & \ \\
        & & \ & \ & \ } & \stackrel{\rect}{\mapsto} &
    \vline\cirtab{%
        &   \\
      \ & \ \\
        & & \ \\
        &   & & \ \\
      \  &  \ \\
      \ & \ & & \mball & \ & \ \\
        & & \ & \ & \ } & \stackrel{\rect}{\mapsto} &
    \vline\cirtab{%
        &   \\
      \ & \ \\
        & & \ \\
        &   & & \ \\
      \  &  \ \\
      \ & \ & \ &  & \ & \ \\
        & & \ & \ & \ } 
    \end{array}
  \end{displaymath}
  \caption{\label{fig:stratum-map}The rectification of the diagram $U^+_*$ obtained from $U^+$ by removing the cell in position $(1,3)$.}
\end{figure}

\begin{example}
  Continuing with Ex.~\ref{ex:stratum-partition}, we remove the cell in position $(1,k)$ of $U^+$ and rectify to obtain the diagram on the right side of Fig.~\ref{fig:stratum-map}. Notice this is disjoint from $U^-$. Their union is the result of applying the stratum map $\partial_{\comp{a},k}$ to the diagram $U$. Observe this is indeed a Kohnert diagram for $\comp{a} = (1,5,2,1,2,6,3)$.
  \label{ex:stratum-map}
\end{example}

The map $\partial_{\comp{a},k}$ is well-defined with $\wt(U) = \wt(\partial_{\comp{a},k}(U)) + \comp{e}_k$ by Lemma~\ref{lem:L-minimal-labeling}. In Section~\ref{sec:stratify-range}, we study properties of rectification to prove the following.

\begin{theorem}
  The diagram $\partial_{\comp{a},k}(U)$ is a Kohnert diagram of $\comp{a}$.
  \label{thm:stratum-image}
\end{theorem}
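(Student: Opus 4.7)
The plan is to exhibit a matching sequence $M$ on $\partial_{\comp{a},k}(U)$ of weight exactly $\comp{b}$, the excised weight of $U$. Since Lemma~\ref{lem:excision} gives $\comp{b} \preceq \comp{a}$, Theorem~\ref{thm:TFAE_kohnert} will then immediately yield $\partial_{\comp{a},k}(U) \in \KD(\comp{a})$; in particular, $\partial_{\comp{a},k}(U)$ need only be shown to be a generic Kohnert diagram en route to producing $M$.

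I would work throughout with the Kohnert matching $N = \Mtch_{\comp{b}+\comp{e}_k}(U)$. By Lemma~\ref{lem:L-minimal-labeling}, each path of $N$ anchors at its row index, so the label-$k$ path anchors at $w$ and has length exactly $c = b_k+1$. The splitting $U = U^+ \sqcup U^-$ restricts $N$ to matching sequences $N^+$ on $U^+$ and $N^-$ on $U^-$, whose weights sum to $\comp{b}+\comp{e}_k$ with the row-$k$ anchor carried by $\wt(N^+)$. A direct computation shows $\matchable_{U^+_*}$ is nonnegative outside column $2$, and within column $2$ attains $-1$ exactly at rows $r \leq k$ for which $\matchable_{U^+}(2,r) = 0$; hence $U^+_*$ is a weak Kohnert diagram, and by Proposition~\ref{prop:weak-Kohnert-removable-column} its rectification proceeds by sweeping a single sequence of cells leftward. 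Iterating Lemma~\ref{lem:weak-Kohnert-removable-cell-lowest} and Corollary~\ref{cor:insertion-rectify1box} along this sweep, I would track both the positions visited and a running update of the matching, producing at the end a matching sequence $M^+$ on $\rectify(U^+_*)$ of weight $\wt(N^+) - \comp{e}_k$. Setting $M = M^+ \cup N^-$ then yields a matching sequence on $\partial_{\comp{a},k}(U)$ of weight $\comp{b}$, as required.

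The main obstacle is a pair of interlocking disjointness claims: (i) the cells produced by $\rectify(U^+_*)$ do not collide with any cell of $U^-$, so that $\rectify(U^+_*) \cup U^-$ is a genuine disjoint union of the correct cardinality; and (ii) updating the matching through the rectification steps stays entirely within the footprint of $U^+$, leaving $N^-$ untouched. The underlying reason both should hold is that $N$ is a single global matching whose paths are cell-disjoint by construction, so the secondary bumps triggered by removing $w$ propagate only along cells already in $U^+$. Formalizing this will require an inductive argument on the number of rectification steps, using the local daisy-chain structure of Lemma~\ref{lem:weak-Kohnert-daisy-chain} at each step to re-route the matching one column to the left without disturbing anything in $U^-$.
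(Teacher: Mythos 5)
Your framework is the right one (build a matching sequence $M'$ with $\wt(M') \preceq \comp{a}$ and invoke Theorem~\ref{thm:TFAE_kohnert}), and you correctly identify disjointness of $\rectify(U^+_*)$ and $U^-$ as essential. But there are two gaps that would stop this proof from closing.

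First, producing a matching on $\rectify(U^+_*)$ of weight \emph{exactly} $\wt(N^+) - \comp{e}_k$ (and hence $\wt(M) = \comp{b}$ on all of $\partial_{\comp{a},k}(U)$) is not achievable. After $w$ is deleted, the sweep $y_1, \ldots, y_{c-1}$ delivers $y_1$ into column $1$, where it becomes a new anchor — but by Lemma~\ref{lem:rectification} its row is only \emph{weakly below} $k$, not necessarily equal to $k$. The row-$k$ anchor is thus replaced by an anchor at a possibly strictly lower row, and exact equality fails. What is actually needed, and what the paper proves, is only $\wt(M') \preceq \comp{b}$; combined with $\comp{b} \preceq \comp{a}$, that already suffices for Theorem~\ref{thm:TFAE_kohnert}. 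You should weaken your target to a $\preceq$ bound.

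Second, ``tracking a running update of the matching'' through the rectification steps is not a routine operation. Once $w$ is removed, $N^+\big|_{U^+_*}$ is not a matching sequence at all (the column-$2$ cell formerly matched to $w$ is left dangling), and each application of $\rect$ changes the underlying cell set; there is no canonical way to push a fixed matching forward along the sweep. The paper avoids this entirely: it never tracks $N^+$ through rectification. Instead it exploits the finer decomposition $\rectify(U^+_*) = (U^+ \setminus \setVV) \sqcup \rectify(U^+_* \cap \setVV)$ from Lemma~\ref{lem:same-moved-cells}, applies \emph{thread decompositions} (not Kohnert matchings) to each piece, and uses Lemma~\ref{lem:rectification-thread-weight} to pin down $\thread(\rectify(U^+_* \cap \setVV) \setminus \setY)$ exactly. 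The daisy-chain structure of Lemma~\ref{lem:weak-Kohnert-daisy-chain} — which you cite — is indeed the engine behind Lemma~\ref{lem:rectification-thread-weight}, but it controls how \emph{thread} decompositions deform under $\rect$, not arbitrary matchings. Your instinct to reach for it is right, but it must be routed through thread decompositions of the two sub-diagrams rather than through a direct bookkeeping of $N$. Finally, the disjointness of $\rectify(U^+_*)$ and $U^-$ is not merely ``paths are cell-disjoint by construction''; it is proved in the paper by a contradiction argument using Lemmas~\ref{lem:moved-cells-in-lower-half} and~\ref{lem:lower-half-thread-decomposition}, and deserves its own careful treatment before the matching construction begins.
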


Therefore $\partial_{\comp{a},k}$ is indeed a map from the stratum $\KDstr$ into the Kohnert space $\KD(\comp{a})$. In Section~\ref{sec:stratify-inject}, we prove this map is reversible and thus injective.

\begin{theorem}
  For $U \in \KDstr$, the diagram $U$ is the unique pre-image of the diagram $\partial_{\comp{a},k}(U)$ under the $k$th stratum map $\partial_{\comp{a},k}$ of $\comp{a}$. That is, the maps
  $$
  \partial_{\comp{a},k}: \KDstr \longrightarrow \KD(\comp{a})
  $$
  are injective.
  \label{thm:stratum-inject}
\end{theorem}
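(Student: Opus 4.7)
My plan is to prove injectivity of $\partial_{\comp{a},k}$ by describing an explicit left inverse on its image. Given $V = \partial_{\comp{a},k}(U)$ for some $U \in \KDstr$, I will recover $U$ in three stages: locate the added column $c$ of $U$ with respect to $\comp{a}$ together with the distinguished ``landing cell'' $y$ where the rectification of $U^+_*$ terminates; reverse-rectify in column $c$ to recover the disjoint union $U^+_* \cup U^-$; and restore the excised cell at position $(1, k)$.

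\textbf{Stage 1.} The starting observation is that every cell of $U^-$ lies strictly left of column $c$, since by definition the cells of $U^-$ sit on components of $\Mtch_{\comp{b}+\comp{e}_k}(U)$ of length less than $c$, and such components occupy only columns $1, \ldots, c-1$. Consequently the cells of $V$ in columns $\geq c$ come entirely from $\rectify(U^+_*)$, and the thread decomposition $\Mtch_{\thd}(V)$ restricted to columns $\geq c$ agrees with $\Mtch_{\thd}(\rectify(U^+_*))$. By Corollary~\ref{cor:insertion-rectify1box}(3) the final cell $y$ moved under the rectification of $U^+_*$ is the rightmost cell of its path in $\Mtch_{\thd}(\rectify(U^+_*))$ and the highest such cell in its column. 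I will show that the pair $(c, y)$ is distinguished in $V$ alone by a canonical choice, namely as the pair in $V$ satisfying this thread-decomposition property for which iterated reverse rectification from $y$ in column $c$ yields a diagram admitting a cell at $(1, k)$ whose union produces an element of $\KD(\comp{b}+\comp{e}_k)$ for some $\comp{b} \preceq \comp{a}$.

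\textbf{Stages 2 and 3.} With $(c, y)$ in hand, iterated reverse rectification is deterministic: at each step, Lemma~\ref{lem:weak-Kohnert-removable-cell-highest} together with Corollary~\ref{cor:weak-kohnert-thread-decomposition} ensures that the unique highest removable cell of the moving column is the cell to be un-moved one column to the right. The process terminates when the moving cell reaches a column in which it is the only cell, yielding a diagram equal to $U^+_* \cup U^-$. By Lemma~\ref{lem:L-minimal-labeling}, position $(1, k)$ is vacant in this diagram, and appending a cell there recovers $U$. A final consistency check that $\partial_{\comp{a},k}$ applied to the reconstructed $U$ returns $V$ then establishes injectivity.

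\textbf{The main obstacle} will be Stage 1: showing that the pair $(c, y)$ is uniquely recoverable from $V$ alone. In columns strictly less than $c$ the cells of $\rectify(U^+_*)$ and of $U^-$ are interleaved, and a priori there may be several pairs $(c', y')$ in $V$ satisfying the thread-decomposition condition of Corollary~\ref{cor:insertion-rectify1box}(3). The resolution will leverage the fact that $U^-$ is completely undisturbed by the map, together with the precise structural characterizations of highest and lowest removable cells given by Lemmas~\ref{lem:weak-Kohnert-removable-cell-lowest}, \ref{lem:weak-Kohnert-removable-cell-highest}, and \ref{lem:weak-Kohnert-daisy-chain}, to rule out that any alternative choice $(c', y')$ produces a diagram in $\KDstr$ whose image under $\partial_{\comp{a},k}$ is $V$.
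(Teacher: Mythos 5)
Your high-level architecture — recover the added column, undo rectification, restore the excised cell — matches the paper's, but the two crucial ingredients of the paper's proof are missing, and one of your building blocks is misapplied.

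First, a technical error: Corollary~\ref{cor:insertion-rectify1box}(3) characterizes the single cell that travels left across many columns during rectification in the \emph{top-insertion} setting, where a lone cell is appended in a new column. The rectification of $U^+_*$ is structurally different: the cells $y_1,\ldots,y_{c-1}$ of the rectification path $\setY$ are \emph{distinct} cells, each of which moves exactly one column leftward (Lemma~\ref{lem:rectification} — note the daisy-chain pattern of Lemma~\ref{lem:weak-Kohnert-daisy-chain}). There is no single ``landing cell'' $y$ to which the corollary applies, so your proposed characterization of the pair $(c,y)$ via $\Mtch_{\thd}(\rectify(U^+_*))$ does not get off the ground. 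The paper's Theorem~\ref{thm:moved-cells-recoverable-given-added-col} plays the role you want, but it is a nontrivial new result characterizing the rectification path cell-by-cell, not a consequence of the top-insertion corollary.

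Second, and more fundamentally, you correctly identify uniqueness of the added column $c$ as ``the main obstacle,'' but you never resolve it — you list the lemmas you hope to ``leverage'' without giving an argument. The paper resolves it with a concrete counting argument that you do not anticipate: by Lemma~\ref{lem:same-longer-parts} and Lemma~\ref{lem:col-added-bounds}, the quantity $\#\{j>k \mid \thread(\partial_{\comp{a},k}(U))_j \geq c\}$ equals $\#\{j>k \mid a_j \geq c\}$, whereas a hypothetical second pre-image with added column $c'>c$ and excised weight $\comp{b'}$ would force $b'_k \geq c > a_k$ and hence $\#\{j>k \mid b'_j \geq c\} < \#\{j>k \mid a_j \geq c\}$, contradicting $\thread(\partial_{\comp{a},k}(U)) \preceq \comp{b'}$ (Lemma~\ref{lem:image-thread-weight}). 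This numerical argument bypasses any structural identification of $(c,y)$ inside $V$ alone, which is the step your proposal leaves open. As written, your Stage~1 is a proof obligation, not a proof.
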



We now prove Theorem~\ref{thm:RSKD}, there exists a weight-preserving bijection
\[ \KD(\comp{a}) \times \KD(\comp{e}_k) \stackrel{\sim}{\longrightarrow} \bigcup_{\substack{ \comp{b} \preceq \comp{a} \\ 1 \leq j \leq k }} \KD(\comp{b} + \comp{e}_j), \]
using the injectivity of the stratum maps together with the bijectivity of the top and bottom insertion maps.

In the proof, we extend our notion of row-weights on sets of Kohnert diagrams to Cartesian products of such sets, e.g. if we have $(S,T) \in \KD(\comp{a}) \times \KD(\comp{b})$ for some weak compositions $\comp{a}$ and $\comp{b}$, then we define $\wt(S,T) = \wt(S) + \wt(T)$.
In this way, we can discuss the row-weight spaces of said sets, e.g. given another weak composition $\comp{d}$, the set $[\KD(\comp{a}) \times \KD(\comp{b})]_{\comp{d}}$
is the set of all tuples $(S,T)$ with $S \in \KD(\comp{a})$ and $T \in \KD(\comp{b})$ such that $\wt(S) + \wt(T) = \comp{d}$.



\begin{proof}[Proof of Theorem~\ref{thm:RSKD}]
Let $\comp{a}$ be a weak composition
It suffices to show
\begin{equation}
    \#[\KD(\comp{a}) \times \KD(\comp{e}_k)]_{\comp{b}} = \#[\KDs(\comp{a},k)]_{\comp{b}},
        \label{e:bijection-per-wt}
\end{equation}
for every positive integer $k \leq n$ and every weak composition $\comp{b}$.
To start, we fix a weak composition $\comp{b}$.
We immediately get 
$$\#[\KD(\comp{a}) \times \KD(\comp{e}_n)]_{\comp{b}} = \#[\KDs(\comp{a},n)]_{\comp{b}}
$$
via the weight-preserving bijection in 
Eq.~\eqref{e:insert-top} of
Theorem~\ref{thm:insertion-top}.
We will now show
$\#[\KDs(\comp{a},k)]_{\comp{b}} = \#[\KD(\comp{a}) \times \KD(\comp{e}_k)]_{\comp{b}}$
for every positive integer $k \leq n$.
By stratification, we have
$$
\KDs(\comp{a},n) = \KDs(\comp{a},1) \sqcup \bigsqcup_{1 < k \leq n} \KDstr.
$$
The weight-preserving bijection in Eq.~\eqref{e:insert-bot} of Theorem~\ref{thm:insertion-bottom} gives us 
\begin{equation}
    \#[\KDs(\comp{a},1)]_{\comp{b}} = \#[\KD(\comp{a}) \times \KD(\comp{e}_1)]_{\comp{b}} = \#[\KD(\comp{a}) \times \{\kd_{\comp{e}_1}\}]_{\comp{b}}.
        \label{e:bijection-per-wt-bot}
\end{equation}
On the other hand, 
for every $1 < k \leq n$,
the 
stratum map 
$\partial_{\comp{a},k}$
which is injective by Theorem~\ref{thm:stratum-inject}
and excises a cell in row $k$, gives us
\begin{equation}
    \#[\KDstr]_{\comp{b}} \leq \#[\KD(\comp{a})]_{\comp{b} - \comp{e}_k} = \#[\KD(\comp{a}) \times \{\kd_{\comp{e}_k}\}]_{\comp{b}}
        \label{e:injection-per-wt-strat}
\end{equation}

Eq.~\eqref{e:bijection-per-wt-bot}~and~\eqref{e:injection-per-wt-strat} together imply
\[
\begin{array}{r@{}l}
    \#[\KDs(\comp{a},n)]_{\comp{b}} &{}= \#[\KDs(\comp{a},1)]_{\comp{b}} + \sum_{1 < k \leq n} \#[\KDstr]_{\comp{b}} \\
     &{}\leq \#[\KD(\comp{a}) \times \{\kd_{\comp{e}_1}\}]_{\comp{b}} + \sum_{1 < k \leq n} \#[\KD(\comp{a}) \times \{\kd_{\comp{e}_k}\}]_{\comp{b}}\\
    &{}\leq \#[\KD(\comp{a}) \times \KD(\comp{e}_n)]_{\comp{b}}.
\end{array}
\]
Thus, since
$
\#[\KD(\comp{a}) \times \KD(\comp{e}_n)]_{\comp{b}} = \#[\KDs(\comp{a},n)]_{\comp{b}},
$
we have for each $1 < k \leq n$, the inequality in Eq.~\eqref{e:injection-per-wt-strat} is in fact an equality.
So for each $1 < k \leq n$, we have
\[
\begin{array}{r@{}l}
    \#[\KDs(\comp{a},k)]_{\comp{b}} &{}= \#[\KDs(\comp{a},1)]_{\comp{b}} + \sum_{1 < j \leq k} \#[\KDstr]_{\comp{b}} \\
     &{}= \#[\KD(\comp{a}) \times \{\kd_{\comp{e}_1}\}]_{\comp{b}}
     + \sum_{1 < j \leq k} \#[\KD(\comp{a}) \times \{\kd_{\comp{e}_j}\}]_{\comp{b}} \\
    &{}= \#[\KD(\comp{a}) \times \KD(\comp{e}_k)]_{\comp{b}}.
\end{array}
\]
In conclusion, Eq.~\eqref{e:bijection-per-wt} holds for every positive integer $k \leq n$ and every weak composition $\comp{b}$, as desired.
\end{proof}

\subsection{Image of the stratum maps}
\label{sec:stratify-range}

To prove the image of $U$ under $\partial_{\comp{a},k}$ is a Kohnert diagram in $\KD(\comp{a})$, we first show $\rectify(U^+_*)$ and $U^-$ are disjoint, from which it follows that $\partial_{\comp{a},k}(U)$ is a generic Kohnert diagram. To this end, we study the rectification process in detail. 

\begin{lemma}
  Let $U \in \KDstr$ with added column $c$. The diagram $\rect^{i-1}(U^+_*)$ is not a generic Kohnert diagram if and only if $i < c$. Moreover, for $y_i$ the cell that moves from column $i+1$ of $\rect^{i-1}(U^+_*)$ to column $i$ of $\rect^i(U^+_*)$, we have 
  \begin{enumerate}
  \item each $y_i$ is weakly below the cell of $\setW$ in column $i+1$, and
  \item $k \geq \row(y_1) \geq \row(y_2) \geq \cdots \geq \row(y_{c-1})$.
  \end{enumerate}
\label{lem:rectification}
\end{lemma}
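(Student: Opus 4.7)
The plan is to establish all three assertions simultaneously by induction on $i$. Set $\rho_i = \row(y_i)$, with the convention $\rho_0 = k$, and write $w_j$ for the cell of $\setW$ in column $j$ for $1 \leq j \leq c$. By Lemma~\ref{lem:L-minimal-labeling}, $w_1 = (1,k)$, and since the restriction of the Kohnert matching $\Mtch_{\comp{b}+\comp{e}_k}(U)$ to $U^+$ pairs $w_j$ with $w_{j+1}$ while going weakly upward from right to left, one has $k = \row(w_1) \geq \row(w_2) \geq \cdots \geq \row(w_c)$.

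The book-keeping is direct. Writing $V_i = \rect^i(U^+_*)$, after $i$ rectification steps column $j$ of $V_i$ differs from column $j$ of $U^+$ only by deleting the cell at row $\rho_{j-1}$ and inserting one at row $\rho_j$, for $1 \leq j \leq i$; column $i+1$ of $V_i$ is column $i+1$ of $U^+$ with the cell at row $\rho_i$ removed; and columns $\geq i+2$ are unchanged. This yields an explicit expression for $\matchable_{V_i}(j,r)$ in terms of $\matchable_{U^+}$ and indicators $[\rho_\bullet \geq r]$. In particular,
\[ \matchable_{V_i}(i+2,r) = \matchable_{U^+}(i+2,r) - [\rho_i \geq r], \]
so by Lemma~\ref{lem:weak-Kohnert-removable-cell-lowest}, when defined, $\rho_{i+1}$ is the largest $r \leq \rho_i$ with $\matchable_{U^+}(i+2,r) = 0$.

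The heart of the argument is a single lower bound proved via the matching: for every $r$ with $\row(w_{j+1}) < r \leq \row(w_j)$, we have $\matchable_{U^+}(j+1,r) \geq 1$. Indeed, $U^+$ is a disjoint union of complete matching paths of length $\geq c$, so for $j < c$ the restriction of the matching gives a bijection between cells of $U^+$ in columns $j+1$ and $j$. Restricting to row $\geq r$, the cell $w_j$ lies in column $j$ at row $\geq r$ but is matched to $w_{j+1}$ at row $< r$, producing an unmatched excess cell in column $j$ and the strict inequality. The parallel reasoning at $j = c$, where $w_c$ has no partner in column $c+1$ because its path terminates at $w_c$, shows $\matchable_{U^+}(c+1,r) \geq 1$ for all $r \leq \row(w_c)$.

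With these pieces, the induction is formal. The hypothesis $\rho_{i-1} \leq \row(w_i)$ combined with the lower bound on the interval $\row(w_{i+1}) < r \leq \rho_{i-1} \leq \row(w_i)$ forces $\rho_i \leq \row(w_{i+1})$, proving (2), while $\rho_i \leq \rho_{i-1}$ is built into the definition, proving (3). For (1), rectification must continue whenever $\matchable_{U^+}(i+2,1) = 0$, which holds for $i+2 \leq c$ since both columns $i+1$ and $i+2$ of $U^+$ contain exactly one cell per length-$\geq c$ path; the $j=c$ case of the lower bound then shows column $c+1$ of $V_{c-1}$ has $\matchable \geq 0$, so $V_{c-1}$ is generic. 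The main obstacle will be verifying that every column of $V_i$ \emph{other than} $i+2$ remains matchable throughout the rectification, which reduces via the explicit $\matchable_{V_i}$ formula to a case-split over the four intervals determined by $\rho_j \leq \rho_{j-1} \leq \rho_{j-2}$; once the key ``diagonal matching'' lower bound through $w_j \leftrightarrow w_{j+1}$ is isolated, each case follows from the definitions.
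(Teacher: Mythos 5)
Your outline is correct and follows the same inductive strategy as the paper: track $\matchable$ through the rectification steps and use the matching structure of $U^+$. Two organizational points are genuine improvements worth noting: isolating the ``diagonal matching'' lower bound ($\matchable_{U^+}(j+1,r)\geq 1$ for $\row(w_{j+1})<r\leq\row(w_j)$) as a stand-alone lemma, and writing an explicit formula for $\matchable_{V_i}(j,r)$ in terms of $\matchable_{U^+}$ and the indicators $[\rho_\bullet\geq r]$, both of which the paper uses only implicitly inside a case analysis.

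However, you have misdiagnosed which tool closes the ``main obstacle.'' After the four-interval split for a column $j\leq i+1$, the only dangerous interval is $\rho_{j-1}<r\leq\rho_{j-2}$, where $\matchable_{V_i}(j,r)=\matchable_{U^+}(j,r)-1$. You propose to handle this via the diagonal matching bound, but that bound only controls $\row(w_j)<r\leq\row(w_{j-1})$, and since the induction gives $\rho_{j-1}\leq\row(w_j)$, the dangerous interval can extend strictly \emph{below} $\row(w_j)$, where the diagonal bound is silent. The correct observation, which you already have available, is that $\rho_{j-1}$ was \emph{defined} (via the behavior of $\rect$) as the largest $r\leq\rho_{j-2}$ with $\matchable_{U^+}(j,r)=0$; since $U^+$ is generic, this immediately forces $\matchable_{U^+}(j,r)\geq 1$ on the entire dangerous interval, with no appeal to the diagonal bound at all. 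The diagonal bound is precisely what you need for assertion~(1) (showing $\rho_i\leq\row(w_{i+1})$), not for preserving matchability. Two small additional nits: the characterization of the moving cell should be read off directly from Definition~\ref{def:rect} rather than Lemma~\ref{lem:weak-Kohnert-removable-cell-lowest} (whose hypothesis, that $V_i$ is a weak Kohnert diagram, you have not yet established at that point), and the genericity of $V_{c-1}$ requires checking column $c$ as well as column $c+1$ via the same four-interval split. With those repairs the argument is complete.
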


\begin{proof}
  If $c = 1$, the result is trivial, so we assume $c > 1$. Since $U^+$ consists of all components of $\Mtch_{\comp{b}+\comp{e}_k}(U)$ of length $\geq c$, $U^+$ has the same number of cells for each column weakly left of column $c$, and so for each column $1 < c' \leq c$, we have
  \begin{equation}
    \matchable_{U^+}(c',1) = 0.
    \label{eq:perfect}
  \end{equation}
  Let $\setW = \{w_1,\ldots,w_c\}$ with $w_j$ the cell in column $j$.
  
  We proceed by induction on $i$, with base case of $i = 1$ in which we consider $U^+_*$ itself. By Eq.~\eqref{eq:perfect}, since $U^+_*$ has one fewer cell in the first column than $U^+$, we must have $\matchable_{U^+_*}(2,1) = -1$. In particular, $\rect^0(U^+_*) = U^+_*$ is not a generic Kohnert diagram. Letting $y_1$ denote the cells that moves under $\rect$, it must do so from column $2$ of $U^+_*$ to column $1$ of $\rect(U^+_*)$. Among the cells of $U^+_*$ above $w_2$, $\Mtch_{\comp{b}+\comp{e}_k}$ gives a matching from the cells in column $2$ to the cells in column $1$, and so $\matchable_{U^+_*}(2,j) \geq 0$ for all rows $j$ above the row of $w_2$. By definition of $\rect$, the cell $y_1$ must lie weakly below $w_2$ and so weakly below row $k$ as well. This proves all statements for $i=1$.

  For $1 \leq i < c$, we now assume the result for all $h \leq i$, so that $\rect^{h-1}(U^+_*)$ is not a generic Kohnert diagram, giving a sequence of cells $y_1,\ldots,y_{i}$ where $y_h$ moves from column $h+1$ of $\rect^{h-1}(U^+_*)$ to column $h$ of $\rect(\rect^{h}(U^+_*))$ and satisfies (1) $y_h$ lies weakly below $w_{h+1}$ for $h \leq i$ and (2) $k \geq \row(y_1) \geq \cdots \geq \row(y_{i})$. 

  Since $\rect^{i-1}(U^+_*)$ and $U^+_*$ coincide for all columns strictly right of column $i$, and differ in column $i$ only in the presence of $y_{i-1}$ in $U^+_*$, it follows from Eq.~\eqref{eq:perfect} that $\matchable_{\rect^{i-1}(U^+_*)}(i+1,j) \geq -1$ for all rows $j$ and $\matchable_{\rect^{i-1}(U^+_*)}(i+1,j) \geq 0$ for all $j > \row(y_{i-1})$. Thus when $y_i$ moves from column $i+1$ to column $i$, we have $\matchable_{\rect^{i}(U^+_*)}(i+1,j) \geq 0$ for every row $j$. Therefore $\rect^{i}(U^+_*)$ is a generic Kohnert diagram if and only if $\matchable_{\rect^{i}(U^+_*)}(i+2,j) \geq 0$ for every row $j$. We consider two cases.
  
  First suppose $i=c-1$, so that $i+2=c+1$. Since $c>1$, the Kohnert matching sequence $\Mtch_{\comp{b}+\comp{e}_k}(U)$ matches each cell in column $c+1$ of $U^+_*$ with a cell in column $c$ of $U^+_*$. Using this, we have a matching from each cell in column $c+1$ of $\rect^{i}(U^+_*)$ with a cell in column $c$ of $U^+_*$ \emph{except} for the cell, say $x$, that was matched to $y_{c-1}$ in $U^+_*$. Since $y_{c-1}$ is weakly below $w_c$, we may match $x$ with $w_c$ to complete the matching, thereby showing $\matchable_{\rect^{i}(U^+_*)}(i+2,j) \geq 0$ for every row $j$., and so $\rect^{i}(U^+_*)$ is a generic Kohnert diagram.

  Second suppose $i<c-1$, so that $i+2 \leq c$. By Eq.~\eqref{eq:perfect}, we have $\matchable_{\rect^{i}(U^+_*)}(i+2,1) = -1$, and so $\rect^{i}(U^+_*)$ is not a generic Kohnert diagram. Thus let $y_{i+1}$ be the cell of $\rect^{i}(U^+_*)$ that moves in passing to $\rect(\rect^{i}(U^+_*))$. Now let $x$ be the cell in column $i+1$ of $U^+_*$ such that $\Mtch_{\comp{b}+\comp{e}_k}(x) = y_i$. Since $\Mtch_{\comp{b}+\comp{e}_k}$ gives a matching from column $i+2$ to column $i+1$ in $U^+_*$, it gives a matching for cells weakly above the row of $x$ from column $i+2$ to column $i+1$. In particular, $\matchable_{\rect^{i}(U^+_*)}(i+2,j) \geq 0$ for all rows $j > \row(x)$. In particular, $y_{i+1}$ is weakly below $x$, and so too weakly below $y_i$, proving (1). If $w_{i+2}$ is weakly above $x$, then $y_{i+1}$ is weakly below $w_{i+2}$ as well, proving (2). Otherwise, if $w_{i+2}$ is strictly below $x$, then, similar to the case $i=c-1$, we construct a matching for cells above $\row(w_{i+1})$ from column $i+2$ to column $i+1$ using $\Mtch_{\comp{b}+\comp{e}_k}(U)$ for every cell except $x$ and then matching $x$ to $w_{i+1}$, since $x$ is weakly below $y_i$ which is below $w_{i+1}$. Thus $\matchable_{U_i}(i+2,j) \geq 0$ for all rows $j > \row(w_{i+2})$. So $y_{i+1}$ is again weakly below $w_{i+2}$ proving (2) for this case as well. Therefore all statements hold for $i+1$, and so the result follows by induction.
\end{proof}

Lemma~\ref{lem:rectification} describes the general movement of cells of $U^+_*$ under rectification. It is also helpful to understand their labels under $\Lbl_{\comp{b}+\comp{e}_k}(U)$. The cells with label $k$ under $\Lbl_{\comp{b} + \comp{e}_k}(U)$ are of particular importance, so we introduce the following notation. Recall $b_k + 1 = c$, and so there are precisely $c$ of these cells which form a component of the matching induced by $\Lbl_{\comp{b} + \comp{e}_k}$.

\begin{definition}
  For $U \in \KDstr$ and $\comp{b}$ the excised weight of $U$, define sets
  \begin{eqnarray}
    \setW &=& \{ x \in U \mid \Lbl_{\comp{b} + \comp{e}_k}(x) = k \} , \\
    \setV & = & \{ x \in U \mid \Lbl_{\comp{b} + \comp{e}_k}(x) < k \} \\
    \setVV & = & \{ x \in U \mid \Lbl_{\comp{b} + \comp{e}_k}(x) \leq k \}.
  \end{eqnarray}
  Writing $\setW = \{w_1,\ldots,w_c\}$ with $w_i$ in column $i$, we set $\setW_* = \setW \setminus \{ w_1 \}$.
\end{definition}

Notice we have $\setW \subset U^+$ and $\setW_* \subset U^+_*$.

\begin{lemma}
  For $U \in \KDstr$, $c$ the added column of $U$, and $\comp{b}$ the excised weight of $U$, let $y_i$ be the cell that moves from column $i+1$ of $\rect^{i-1}(U^+_*)$ to column $i$ of $\rect^i(U^+_*)$. Then for $i<c$, we have $\Lbl_{\comp{b}+\comp{e}_k}(y_i) \leq k$.
  \label{lem:moved-cells-in-lower-half}
\end{lemma}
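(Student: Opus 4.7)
The plan is to reduce to a structural statement about the Kohnert labeling. By Lemma~\ref{lem:rectification}(1), $y_i$ lies in column $i+1$ weakly below $w_{i+1}$, the unique cell with label $k$ in that column under $\Lbl_{\comp{b}+\comp{e}_k}$. Since $y_i \in U^+_* \subseteq U^+$, it suffices to show that every cell of $U^+$ in column $i+1$ weakly below $w_{i+1}$ has label at most $k$.

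First I would sharpen Lemma~\ref{lem:rectification} by computing the matchability index on $\rect^{i-1}(U^+_*)$. Since $U^+$ is a union of complete matching components, Eq.~\eqref{eq:perfect} gives $\matchable_{U^+}(i+1,r) \geq 0$ for all $r$, and directly comparing cells yields
\[ \matchable_{\rect^{i-1}(U^+_*)}(i+1,r) = \matchable_{U^+}(i+1,r) - [\row(y_{i-1}) \geq r], \]
with the convention $\row(y_0) = k$ (reflecting the removal of $w_1$ at row $k$ from column $1$). Combined with Lemma~\ref{lem:weak-Kohnert-removable-cell-lowest}, this identifies $y_i$ as the cell in column $i+1$ at the highest row $r \leq \row(y_{i-1}) \leq k$ with $\matchable_{U^+}(i+1,r) = 0$.

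Next I would analyze the Kohnert labeling (Definition~\ref{def:kohnert-label}) at $y_i$. Because labels are assigned bottom-to-top and $y_i$ is weakly below $w_{i+1}$, label $k$ has not yet been used when $y_i$ is processed. Thus $y_i$ receives either label $k$ itself, a label strictly less than $k$, or some label $\ell > k$ surviving greedy selection only because both $k$ (through $w_{i+2}$) and every smaller candidate are blocked by their column-$(i+2)$ cells sitting strictly above $\row(y_i)$. The main obstacle will be ruling out this last scenario, which I plan to handle by counting, strictly above $\row(y_i)$, the $U^+$-components carrying labels $\leq k$ whose cells reach columns $i+1$ and $i+2$. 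The perfect matching of $U^+$ between these two columns, together with the descending chain $\row(w_{i+1}) \geq \row(w_{i+2})$ coming from Definition~\ref{def:matching}(3) and the defining equality $\matchable_{U^+}(i+1,\row(y_i)) = 0$, should force at least one label $\ell \leq k$ to remain available for $y_i$, contradicting $\ell > k$ and completing the proof.
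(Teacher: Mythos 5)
Your opening reduction is correct and matches the paper's: by Lemma~\ref{lem:rectification}(1), $y_i$ sits weakly below $w_{i+1}$ in column $i+1$, and since $y_i\in U^+$, it suffices to show every cell of $U^+$ in column $i+1$ weakly below $w_{i+1}$ has label at most $k$ (equivalently, cells of $U^+\setminus\setVV$ in each column $j\leq c$ lie strictly above $w_j$, which is exactly what the paper proves). After that, however, your plan diverges and runs into a genuine gap.

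The detour through the matchability index is correct as far as it goes, and your identification of $y_i$ as the highest row $r\leq\row(y_{i-1})$ with $\matchable_{U^+}(i+1,r)=0$ is right; but it is beside the point. The claim does not depend on which cell weakly below $w_{i+1}$ actually moves — it holds for all $U^+$-cells weakly below $w_{i+1}$ — and so the extra information $\matchable_{U^+}(i+1,\row(y_i))=0$ cannot be what drives the proof. Worse, it is the wrong column pair: $\matchable_{U^+}(i+1,\cdot)$ compares columns $i$ and $i+1$, whereas the greedy selection that assigns a label to $y_i$ in column $i+1$ is governed by which labels in column $i+2$ sit above $y_i$. There is nothing special your equality says about columns $i+1$ versus $i+2$ beyond the generic $\matchable_{U^+}(i+2,\cdot)\geq 0$, so the ``force at least one label $\ell\leq k$ to remain available'' step is not supported by the facts you have lined up. More fundamentally, counting cells above $\row(y_i)$ cannot distinguish cells by label (which label belongs to $U^+$ versus $U^-$, or is $\leq k$ versus $>k$), so a matchability count alone will not pin down the availability of a specific label $\leq k$.

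The argument that actually closes the gap — and that the paper uses — is structural, not a count. Suppose some $U^+$-cell $x$ in column $j\leq c$ with label $>k$ lies weakly below $w_j$. Since label $k$ is still free when the bottom-to-top labeling reaches $x$, the only way $x$ avoids receiving $k$ is that $w_{j+1}$ sits strictly above $x$. But $x\in U^+$ has a matched cell $x'$ in column $j+1$ weakly below $x$ (and with the same label $>k$), hence strictly below $w_{j+1}$, and the same conclusion propagates to column $j+1$. Iterating pushes the contradiction rightward until column $c$, where $\setW$ terminates and the vacuous column-$(c+1)$ condition forces that cell to be assigned a label $\leq k$, a contradiction. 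The paper runs this as a downward induction starting from the base case at column $c$. You should replace the final counting step by this propagation along the matched components of $y_i$ and $\setW$; the $\matchable$ computation in your proposal can then be dropped entirely.
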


\begin{proof}
  Let $\setW = \{w_1,\ldots,w_c\}$ with $w_i$ the cell in column $i$. We claim each cell $x\in U^+ \setminus \setVV$ in column $i \leq c$ lies strictly above the cell $w_i$. Indeed, in column $c$, if some such $x$ lies below $w_c$, then since $\Lbl_{\comp{b}+\comp{e}_k}(x) > k = \Lbl_{\comp{b}+\comp{e}_k}(w_i)$, the labeling algorithm will assign $k$ to $x$ instead of $w_c$, since $x$ must also have been available and there is no cell labeled $k$ to the right. Continuing left, consider the largest column index $i<c$ such that some cell $x \in U^+ \setminus \setVV$ in column $i$ lies below $w_i$, then in order for $x$ not to have been selected, we must have $w_{i+1}$ in a row strictly above that of $x$. However, since $\plength_{\Mtch_{\comp{b}+\comp{e}_k}}(x) \geq c = \plength_{\Mtch_{\comp{b}+\comp{e}_k}}(w_i)$, there is also a cell $x'$ in column $i+1$ in a row weakly below that of $x$, and hence below $w_{i+1}$, a contradiction to the choice of $i$. Thus each $x$ in column $i \leq c$ must indeed lie above $w_i$.

  By Lemma~\ref{lem:rectification}(1), in $U$ each cell $y_i$ for $i < c$ is weakly below the cell $w_{i+1}$ in its column. Thus, from the claim, $y_i$ cannot be in $U^+ \setminus \setVV$. The result follows.
\end{proof}

One final lemma necessary to prove $\rectify(U^+_*)$ and $U^-$ are disjoint relates the thread decomposition of $\setVV$ with the Kohnert labeling used to define it. 

\begin{lemma}
  For $U \in \KDstr$ and $\comp{b}$ the excised weight of $U$, we have
  \begin{eqnarray}
    \Mtch_{\thd}(\setVV) & = & \Mtch_{\comp{b}+\comp{e}_k}(U)\big|_{\setVV} = \Mtch_{(b_1,\ldots,b_k)+\comp{e}_k} (\setVV) , \label{e:label-thread} \\
    \Mtch_{\thd}(U^+ \cap \setVV) & = & \Mtch_{\comp{b}+\comp{e}_k}(U)\big|_{U^+ \cap \setVV} . \label{e:label-thread-cap}
  \end{eqnarray}
  \label{lem:lower-half-thread-decomposition}
\end{lemma}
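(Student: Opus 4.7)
The plan is to establish each of the three equalities by showing the relevant subdiagram is a generic Kohnert diagram whose thread decomposition matches a Kohnert labeling, then invoke Proposition~\ref{prop:matching-thread-decomposition-specializes-Kohnert-labeling}.

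First I would prove the second equality in \eqref{e:label-thread}, namely $\Mtch_{\comp{b}+\comp{e}_k}(U)|_{\setVV} = \Mtch_{(b_1,\ldots,b_k)+\comp{e}_k}(\setVV)$, by checking that the two Kohnert labeling procedures produce the same labels on cells of $\setVV$. Processing right-to-left and bottom-to-top, at each cell of $\setVV$ the admissible labels $i\leq k$ are exactly those with $(b+e_k)_i\geq j$, and whenever $i \leq k$ the label-$i$ cell in the adjacent column (if any) is itself an element of $\setVV$ and occurs in the same row under either procedure (by induction over columns). Hence the greedy selection rule returns the same label at every step. Along the way, the restriction of $\Mtch_{\comp{b}+\comp{e}_k}(U)$ to $\setVV$ is a valid matching sequence (same-label pairs are preserved by the restriction), so by Theorem~\ref{thm:TFAE_kohnert} the diagram $\setVV$ is generic, and by Lemma~\ref{lem:L-minimal-labeling} the anchor weight is exactly $(b_1,\ldots,b_k)+\comp{e}_k$.

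For the first equality in \eqref{e:label-thread}, Proposition~\ref{prop:matching-thread-decomposition-specializes-Kohnert-labeling} applied to $\setVV$ gives $\Mtch_{\thd}(\setVV) = \Mtch_{\thread(\setVV)}(\setVV)$, so combined with the equality of labelings above it suffices to show $\thread(\setVV) = (b_1,\ldots,b_k)+\comp{e}_k$. Lemma~\ref{lem:3.7} yields $\preceq$ immediately; for the reverse I would argue structurally that because $\Lbl_{\comp{b}+\comp{e}_k}(U)$ assigns labels bottom-to-top within each column, the cells of $\setVV$ form exactly the bottom portion of each column of $U$, so the greedy-lowest thread algorithm applied to $\setVV$ traces out precisely the same paths as the restriction of $\Mtch_{\comp{b}+\comp{e}_k}(U)$ to $\setVV$, whose anchor weight is $(b_1,\ldots,b_k)+\comp{e}_k$. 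The argument for \eqref{e:label-thread-cap} follows the same template: since paths of $\Mtch_{\comp{b}+\comp{e}_k}$ share both their label and their length, the restriction to $U^+\cap\setVV$ is again a valid matching sequence, making $U^+\cap\setVV$ generic, and a parallel labeling/threading argument then yields $\Mtch_{\thd}(U^+\cap\setVV) = \Mtch_{\comp{b}+\comp{e}_k}(U)|_{U^+\cap\setVV}$.

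The main obstacle is the equality $\thread(\setVV) = (b_1,\ldots,b_k)+\comp{e}_k$ (and its analogue for $U^+\cap\setVV$), as opposed to merely the $\preceq$ inequality supplied by Lemma~\ref{lem:3.7}. Here the layered structure provided by Lemma~\ref{lem:L-minimal-labeling}---which pins down the column-one cells of $\setVV$ to a specific set of rows and forces each column of $\setVV$ to coincide with the bottom portion of the corresponding column of $U$---is what compels the greedy thread decomposition of $\setVV$ to coincide with the restricted Kohnert matching and thereby attain this anchor weight exactly.
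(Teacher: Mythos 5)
Your proof has a genuine gap at the reverse inequality $\thread(\setVV) \succeq (b_1,\ldots,b_k)+\comp{e}_k$, which is exactly where the lemma's difficulty lies. You appeal to a "layered structure" claim that the cells of $\setVV$ form the bottom portion of each column of $U$, attributing this to Lemma~\ref{lem:L-minimal-labeling}, but that lemma only pins down labels in column $1$. For columns $j>1$, the Kohnert labeling $\Lbl_{\comp{b}+\comp{e}_k}(U)$ can in principle assign a label $>k$ to a cell lying below one with label $\leq k$ (same-column threads for a general Kohnert labeling are not a priori non-crossing), so the claim needs its own argument and does not immediately follow. Moreover, even granting that claim, the final step---that the greedy threading of $\setVV$ "traces out precisely the same paths" as $\Mtch_{\comp{b}+\comp{e}_k}|_{\setVV}$ and hence has the right anchor weight---is essentially a restatement of the first equality you are trying to prove; asserting it does not supply a proof.

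The paper's argument for the reverse inequality is genuinely different and uses machinery you omit. It first establishes $\thread(\setVV)_k = c$ by gluing the thread matching on $\setVV$ to the Kohnert labeling on $U\setminus\setVV$ to obtain a matching sequence on $U$ of weight $\thread(\setVV)+(0^k,b_{k+1},\ldots)$; it then observes that the key diagram of this weight lies in $\KDs(\comp{a},k)$ but, because $U \notin \KDs(\comp{a},k-1)$, this key diagram is also excluded from $\KDs(\comp{a},k-1)$, so Lemma~\ref{lem:excision} forces a cell at position $(c,k)$ and hence $\thread(\setVV)_k \geq c$. Your argument never invokes the stratification hypothesis $U \notin \KDs(\comp{a},k-1)$, which is load-bearing here; without it the desired anchor-weight equality fails. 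The two easier parts of your plan---identifying the two Kohnert labelings (second equality in \eqref{e:label-thread}) and reducing the first equality via Proposition~\ref{prop:matching-thread-decomposition-specializes-Kohnert-labeling} and Lemma~\ref{lem:3.7} to the weight computation---are fine and match the paper, but the weight computation itself is the content of the lemma and is left unproved.
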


\begin{proof}
  Since Definition~\ref{def:kohnert-label} prioritizes the smallest labels first, it follows that $\Lbl_{\comp{b}+\comp{e}_k}(U)$ restricted to labels $\leq k$ is precisely $\Lbl_{(b_1,\ldots,b_k)+\comp{e}_k} (\setVV)$. By Lemma~\ref{lem:L-minimal-labeling}, this means $\wt(\Mtch_{(b_1,\ldots,b_k)+\comp{e}_k} (\setVV)) = (b_1,\ldots,b_k)+\comp{e}_k$. By Proposition~\ref{prop:matching-thread-decomposition-specializes-Kohnert-labeling}, to prove the lemma, it suffices to show $\thread(\setVV) = (b_1,\ldots,b_k)+\comp{e}_k$. 

  Since $\Mtch_{(b_1,\ldots,b_k)+\comp{e}_k} (\setVV)$ is a matching sequence on $\setVV$ of weight $(b_1,\ldots,b_k)+\comp{e}_k$, we have $\setVV \in \KD((b_1,\ldots,b_k)+\comp{e}_k)$ by Theorem~\ref{thm:TFAE_kohnert}. Hence by Lemma~\ref{lem:3.7}, we have $\thread(\setVV) \preceq (b_1,\ldots,b_k)+\comp{e}_k$. 

  For the reverse inequality, we first establish $\thread(U)_k = c$. Since $\thread(\setVV) \preceq (b_1,b_2,\cdots,b_k) + \comp{e}_k$, we have $\thread(\setVV)_k \leq b_k + 1 = c$. On the other hand, 
  $$
  \thread(\setVV) + (0^k,b_{k+1},b_{k+2}, \cdots) \preceq \comp{b} + \comp{e}_k,
  $$
  so by Proposition~\ref{prop:lswap}, we have $\kd_{\thread(\setVV) + (0^k,b_{k+1},b_{k+1},\cdots)} \in \KD(\comp{b} + \comp{e}_k) \subset \KDs(\comp{a},k)$. Combining the labeling $\Lbl_{\comp{b}+\comp{e}_k}(x)$ on cells of $U \setminus \setVV$ with the thread matching on $\setVV$ gives a matching sequence on $U$ of weight $\thread(\setVV) + (0^k,b_{k+1},b_{k+2}, \cdots)$. Thus $U \in \KD(\thread(\setVV) + (0^k,b_{k+1},b_{k+2}, \cdots))$ by Theorem~\ref{thm:TFAE_kohnert}. Since $U \notin \KDs(\comp{a},k-1)$, neither is $\kd_{\thread(\setVV) + (0^k,b_{k+1},b_{k+1},\cdots)}$. Moreover, $\kd_{\thread(\setVV) + (0^k,b_{k+1},b_{k+1},\cdots)}$ has the same added column $c$ as $U$, since they share the same column weight. So by Lemma~\ref{lem:excision}, the diagram $\kd_{\thread(\setVV) + (0^k,b_{k+1},b_{k+1},\cdots)}$ has a cell $y$ at position $(c,k)$, and consequently, $\thread(\setVV)_k \geq c$. Therefore $(\thread(\setVV))_k = c$ as claimed.

  Since $\comp{b} = \thread(\kd_{\thread(U)} \setminus \{y\})$, where $y$ is in row $k$, we necessarily have $b_j = \thread(U)_j$ for all $j < k$. Moreover, since $\thread(U) \preceq \comp{b} + \comp{e}_k$, we have
  $$
  (0^{k-1},\thread(U)_k,\thread(U)_{k+1}, \cdots) \preceq (0^{k-1},b_k + 1, b_{k+1},b_{k+2},\cdots).
  $$
  Meanwhile, since $U\in\KD(\thread(\setVV) + (0^k,b_{k+1},b_{k+2},\cdots))$, by Lemma~\ref{lem:3.7} we have $\thread(U) \preceq \thread(\setVV) + (0^k,b_{k+1},b_{k+2},\cdots)$. Combining these, we have
  \begin{multline*}
    (b_1,\cdots,b_{k-1}, \thread(U)_k, \thread(U)_{k+1}, \cdots) =
    \thread(U) \\
    \preceq (\thread(\setVV)_1,\cdots,\thread(\setVV)_{k-1},b_k + 1,b_{k+1},b_{k+2},\cdots).
  \end{multline*}
  So since $(0^{k-1},(\thread(U))_k,(\thread(U))_{k+1},\cdots) \preceq (0^{k-1},b_k + 1, b_{k+1},b_{k+2},\cdots)$, it follows that $(b_1,b_2,\cdots,b_{k-1}) \preceq (\thread(\setVV)_1,\thread(\setVV)_2,\cdots,\thread(\setVV)_{k-1})$. Combining this with $(\thread(\setVV))_k = c$  proves $(b_1,\ldots,b_k)+\comp{e}_k \preceq \thread(\setVV)$ as desired.

  Eq.~\eqref{e:label-thread-cap} follows from Eq.~\eqref{e:label-thread} since the threading algorithm begins with the longest threads. 
\end{proof}

\begin{theorem}
  For $U \in \KDstr$, the diagrams $\rectify(U^+_*)$ and $U^-$ are disjoint. Consequently, the diagram $\partial_{\comp{a},k}(U)$ is a generic Kohnert diagram.
\end{theorem}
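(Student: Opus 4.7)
The plan is to use Lemma~\ref{lem:rectification} to pin down which cells are affected by the rectification of $U^+_*$, rule out any collision with $U^-$ at the landing positions, and finally combine matching data to see $\partial_{\comp{a},k}(U)$ is a generic Kohnert diagram.

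By Lemma~\ref{lem:rectification}, the rectification of $U^+_*$ is a sequence of $c-1$ cell moves: at step $i$ a uniquely determined cell $y_i$ slides from column $i+1$ to column $i$, landing at position $(i,\row(y_i))$, where the rows $\row(y_i)$ are weakly decreasing, each bounded by $k$, and each $y_i$ lies weakly below the cell $w_{i+1}\in\setW$. Writing $y_i'$ for the shifted cell, we have
\[ \rectify(U^+_*) = \bigl(U^+_*\setminus\{y_1,\dots,y_{c-1}\}\bigr)\cup\{y_1',\dots,y_{c-1}'\}. \]
Since $U^+_*\subseteq U^+$ is disjoint from $U^-$, the claim $\rectify(U^+_*)\cap U^-=\varnothing$ reduces to showing that none of the landing positions $(i,\row(y_i))$ is occupied by a cell of $U^-$.

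Fix $i$ and let $z_i=\Mtch_{\comp{b}+\comp{e}_k}(y_i)$. By Lemma~\ref{lem:moved-cells-in-lower-half}, $y_i\in U^+\cap\setVV$, and the matching constraint gives $z_i\in U^+\cap\setVV$ with $\row(z_i)\geq\row(y_i)$. When equality holds, $z_i$ itself occupies the landing position and lies in $U^+$, so there is no collision. Otherwise, suppose for contradiction that a cell $x\in U^-$ occupies $(i,\row(y_i))$. Then $x$ is strictly below $z_i$ in column $i$ while $y_i$ is weakly below $x$ in column $i+1$, so when the Kohnert labeling algorithm processes column $i$ from bottom to top, the label $\Lbl(y_i)\leq k$ is available to $x$, forcing $\Lbl(x)\leq\Lbl(y_i)$ by minimality. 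Equality would match $x$ to $y_i$, contradicting $\Mtch_{\comp{b}+\comp{e}_k}(y_i)=z_i$, so $\Lbl(x)<\Lbl(y_i)$ and in particular $x\in\setV$. Invoking Lemma~\ref{lem:lower-half-thread-decomposition} to replace the Kohnert matching on $\setVV$ by the thread decomposition, careful analysis of the thread of $x$ against the structural constraint that it must sit strictly below $z_i$ in column $i$ while remaining compatible with the label-$k$ thread $\setW$ will force the thread of $x$ to have length at least $c$, contradicting $x\in U^-$.

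Once disjointness is established, we obtain a matching sequence on $\partial_{\comp{a},k}(U)$ by combining the thread-decomposition matching of $\rectify(U^+_*)$ (available because rectification yields a generic Kohnert diagram) with the restriction of $\Mtch_{\comp{b}+\comp{e}_k}$ to $U^-$. The latter is self-contained since any Kohnert partner of a cell with $\plength<c$ lies on the same short-path component and hence also in $U^-$. Disjointness guarantees that taking the union of these matchings produces a well-defined matching sequence on $\partial_{\comp{a},k}(U)$, and Theorem~\ref{thm:TFAE_kohnert} certifies it as a generic Kohnert diagram. The main obstacle is the contradiction argument: pinning down exactly how the hypothetical $U^-$-cell $x$ at the landing position is forced onto a long thread requires careful column-by-column tracking of which labels the Kohnert algorithm makes available and a tight use of the thread/label correspondence of Lemma~\ref{lem:lower-half-thread-decomposition}.
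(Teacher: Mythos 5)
Your overall strategy is sound and aligns with the paper's reduction: disjointness only needs to be checked at the landing positions $(i,\row(y_i))$, the case $\row(z_i)=\row(y_i)$ is dispatched since $z_i\in U^+$ then occupies that square, and a hypothetical $x\in U^-$ at the landing position must be ruled out. Your step deriving $\Lbl_{\comp{b}+\comp{e}_k}(x)<\Lbl_{\comp{b}+\comp{e}_k}(y_i)\leq k$ (hence $x\in\setV$) by walking through the greedy column-$i$ labeling and noting that equality would force $\Mtch_{\comp{b}+\comp{e}_k}(y_i)=x\neq z_i$ is correct, and it is a genuine simplification over the paper, which instead case-splits on $x\in\setVV$ versus $x\not\in\setVV$ and has to derive a separate contradiction in the second case.

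However, the step you flag as "the main obstacle" is a genuine gap: you assert that "careful analysis" against the label-$k$ thread $\setW$ will force $x$ onto a thread of length $\geq c$, but you never give that analysis, and the appeal to $\setW$ points in the wrong direction. What is actually needed is a \emph{threading priority} argument, and it is precisely what the paper's $x\in\setVV$ case does. By Lemma~\ref{lem:lower-half-thread-decomposition}, $\Mtch_{\thd}(\setVV)=\Mtch_{\comp{b}+\comp{e}_k}\big|_{\setVV}$, so in the thread decomposition of $\setVV$ the cell $y_i$ threads to $z_i$ and has thread length $\geq c$, whereas your $x\in\setV\subset\setVV$ lies in $U^-$ and so has thread length $<c$. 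The threading algorithm starts threads from the rightmost column, so $y_i$'s thread (starting in a column $\geq c$) is entirely processed before $x$'s shorter thread (starting in a column $<c$) begins; in particular $x$ is still available when the algorithm threads $y_i$ into column $i$. Since $x$ sits in the same row as $y_i$ it is the lowest admissible target, so the algorithm must send $y_i$ to $x$, contradicting $\Mtch_{\thd}(\setVV)(y_i)=z_i$ with $z_i$ strictly above $x$. Spelling this out closes the gap; as written, the proposal only states the desired conclusion. Your final paragraph, combining $\Mtch_{\thd}(\rectify(U^+_*))$ with $\Mtch_{\comp{b}+\comp{e}_k}\big|_{U^-}$ to exhibit a matching sequence on the disjoint union, is correct.
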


\begin{proof}
  Continuing notation from Lemma~\ref{lem:rectification}, with $c$ the added column of $U$, for $1\leq i < c$, let $y_i$ denote the cell that moves from column $i+1$ of $\rect^{i-1}(U^+_*)$ to column $i$ of $\rect^i(U^+_*)$. Since $U^+_*$ and $U^-$ are disjoint, it suffices for the theorem to show no cell in $U^-$ lies immediately left of any cell $y_i$ in $U^+_*$.

  Suppose, for contradiction, some $x \in U^-$ lies immediately left of some $y_i$. By Lemma~\ref{lem:moved-cells-in-lower-half}, $y_i \in \setVV$, and so by Lemma~\ref{lem:lower-half-thread-decomposition}, the cell to which $y_i$ threads in the thread decomposition of $\setVV$ must be labeled the same as $y_i$ under $\Lbl_{\comp{b}+\comp{e}_k}(U)$, and the thread length of $y$ must be at least $c$ since $y\in U^+$.

  If $x\in\setVV$, then Lemma~\ref{lem:lower-half-thread-decomposition} also applies to $x$, so its thread has length less than $c$ since $x\in U^-$. In particular, $y$ does not thread into $x$. However, since $x$ is immediately left of $y_i$, the threading algorithm would thread $y_i$ into $x$, since $x$ has a shorter thread and so is still available when $y_i$ is threaded. This contradiction means $x\not\in\setVV$, and also $y_i$ does not thread into $x$. Since $y_i$ is immediately right of $x$, it threads into a higher cell. Thus since $x$ has a longer thread than $y_i$, it has priority in the threading algorithm, and so must terminate in a lower cell in column $1$. However, by Lemmas~\ref{lem:moved-cells-in-lower-half} and \ref{lem:L-minimal-labeling}, the thread of $y_i$ terminates in row $k$ forcing the thread of $x$ to terminate in a lower row, contradicting $x\not\in\setVV$. Thus there cannot be a cell immediately left of any $y_i$.
\end{proof}

To prove the diagram $\partial_{\comp{a},k}(U)$ is, in particular, a Kohnert diagram of $\comp{a}$, we observe that $\partial_{\comp{a},k}(U)$ can be described in a more refined way.

\begin{lemma}
  For $U \in \KDstr$, we have 
  \[ \rectify(U^+_*) = (U^+ \setminus \setVV) \sqcup \rectify(U^+_* \cap \setVV). \]
  In particular, for each $i$, the cell that moves from $\rect^{i-1}(U^+_*)$ to $\rect^i(U^+_*)$ is also the cell that moves from $\rect^{i-1}(U^+_* \cap \setVV)$ to $\rect^i(U^+_* \cap \setVV)$.
  \label{lem:same-moved-cells}
\end{lemma}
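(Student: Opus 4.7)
I will prove both statements simultaneously by induction on the rectification step $i \in \{0, 1, \ldots, c-1\}$, establishing that $\rect^i(U^+_*) = (U^+ \setminus \setVV) \sqcup \rect^i(U^+_* \cap \setVV)$ and, for $i < c-1$, that the cell selected by $\rect$ for the next move coincides in both diagrams. The base case $i = 0$ is the disjoint union $U^+_* = (U^+ \setminus \setVV) \sqcup (U^+_* \cap \setVV)$.

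The argument rests on two preliminary facts. First, by the reasoning inside the proof of Lemma~\ref{lem:moved-cells-in-lower-half}, every cell of $U^+ \setminus \setVV$ in column $j \leq c$ lies strictly above $w_j$; moreover, counting components of $\Mtch_{\comp{b}+\comp{e}_k}(U)$ of length at least $c$ whose labels exceed $k$ yields exactly $q := \#\{\ell > k : b_\ell \geq c\}$ cells of $U^+ \setminus \setVV$ in each column $1, \ldots, c$, and the Kohnert matching restricts to a bijection between adjacent columns in which the column-$(j-1)$ partner is weakly above its column-$j$ partner. Second, by Lemma~\ref{lem:lower-half-thread-decomposition} combined with Theorem~\ref{thm:TFAE_kohnert}, the diagram $U^+ \cap \setVV$ is itself a generic Kohnert diagram, since the restriction of $\Mtch_{\comp{b}+\comp{e}_k}(U)$ to $U^+ \cap \setVV$ is a valid matching sequence on it.

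For the inductive step, setting $a_j(r) := \#\{(j,s) \in U^+ \setminus \setVV : s \geq r\}$, a direct computation from the inductive hypothesis gives
\[
\matchable_{\rect^i(U^+_*)}(j,r) - \matchable_{\rect^i(U^+_* \cap \setVV)}(j,r) = a_{j-1}(r) - a_j(r).
\]
The first preliminary fact forces this difference to vanish whenever $r \leq \row(w_j)$ (since then $r \leq \row(w_{j-1})$ as well, so both counts equal $q$) and to be non-negative for $r > \row(w_j)$ by the upward-pairing bijection. Combined with $\row(y_{i+1}) \leq \row(w_{i+2})$ from Lemma~\ref{lem:rectification}(1), this shows that $\rect^i(U^+_* \cap \setVV)$ inherits matchability $-1$ at $(i+2, \row(y_{i+1}))$. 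The main obstacle, and the only delicate point, is ruling out a competing defect at some $(j,r)$ with $j \leq i+1$ or with $j = i+2$ and $r > \row(y_{i+1})$: such a competing defect would cause $\rect$ to select a different cell in $\rect^i(U^+_* \cap \setVV)$. Here the second preliminary fact is decisive: running the matchability analysis in parallel on $U^+_* \cap \setVV$ (whose defect comes solely from deleting $w_1$ from the generic diagram $U^+ \cap \setVV$) and invoking Lemma~\ref{lem:moved-cells-in-lower-half} to confirm that each $y_j \in \setVV$ lies in $U^+ \cap \setVV$, one sees that the non-negativity of $\matchable_{U^+ \cap \setVV}$ propagates through the same moves $y_1, \ldots, y_i$ to rule out spurious negative positions. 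Consequently $y_{i+1}$ is the cell selected by $\rect$ in both diagrams, completing the inductive step; taking $i = c-1$ yields the stated equality $\rectify(U^+_*) = (U^+ \setminus \setVV) \sqcup \rectify(U^+_* \cap \setVV)$ together with the step-by-step coincidence of moved cells asserted in the ``in particular'' clause.
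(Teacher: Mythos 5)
Your proof follows the same inductive strategy as the paper's: you establish $\rect^i(U^+_*) = (U^+ \setminus \setVV) \sqcup \rect^i(U^+_* \cap \setVV)$ step by step and argue that the same cell moves on both sides, using the same ingredients (Lemma~\ref{lem:rectification}, Lemma~\ref{lem:moved-cells-in-lower-half}, Lemma~\ref{lem:lower-half-thread-decomposition}, and the structural facts about $U^+ \setminus \setVV$ extracted from the proof of Lemma~\ref{lem:moved-cells-in-lower-half}). Where you differ is mainly in emphasis: you make the matchability bookkeeping explicit, computing $\matchable_{\rect^i(U^+_*)}(j,r) - \matchable_{\rect^i(U^+_* \cap \setVV)}(j,r) = a_{j-1}(r) - a_j(r)$ and observing it vanishes for $r \leq \row(w_j)$ and is non-negative above, while the paper leaves this implicit. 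This is a useful clarification, since it spells out why the defect at $(i+2,\row(y_{i+1}))$ is literally inherited rather than merely ``implied.''

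That said, you correctly identify the genuinely delicate step — ruling out a competing defect in $\rect^i(U^+_* \cap \setVV)$ at some $(j,r)$ with $j \leq i+1$ or with $j = i+2$ and $r > \row(y_{i+1})$ — but your treatment of it (``running the matchability analysis in parallel $\ldots$ one sees that the non-negativity propagates'') is at roughly the same level of detail as the paper's terse ``must do so from column $i+2$.'' The argument you are gesturing at is the right one: because $U^+ \cap \setVV$ is generic (Lemma~\ref{lem:lower-half-thread-decomposition} plus Theorem~\ref{thm:TFAE_kohnert}) and has equal column counts for columns $1,\ldots,c$, the proof of Lemma~\ref{lem:rectification} applies verbatim with $U^+_* \cap \setVV$ in place of $U^+_*$. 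That yields: (a) after $i$ moves the unique defect column is $i+2$, and (b) the moving cell of $\rect^i(U^+_* \cap \setVV)$ lies weakly below $w_{i+2}$. Combining (b) with Lemma~\ref{lem:rectification}(1) (which gives $\row(y_{i+1}) \leq \row(w_{i+2})$) and your observation that the two matchabilities agree at all rows $\leq \row(w_{i+2})$ forces the two lowest-removable-cell candidates to coincide. I would spell out this last chain explicitly rather than leaving it as ``one sees.'' In short: same approach as the paper, with a nice explicit matchability computation, but the crux step deserves one more sentence pinning down why the minimizing row is the same on both sides.
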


\begin{proof}
  If $c = 1$, then $\setW_* = \varnothing$, and so $U^+$ is a generic Kohnert diagram leaving nothing to prove. Thus we may assume $c > 1$. We show by induction on $i < c$ that 
  \[  \rect^{i}(U^+_*) = (U^+ \setminus \setVV) \sqcup \rect^i(U^+_* \cap \setVV) \]
  with the same cell moving in both cases. The base case $i=0$ follows from the decomposition $U^+_* = (U^+ \setminus \setVV) \sqcup (U^+_* \cap \setVV)$.

  Assume the result for $h < i$ for some $1 \leq i < c$. Since $\rect^i(U^+_*)$ is not a generic Kohnert diagram by Lemma~\ref{lem:rectification}, and since $U^+ \setminus \setVV$ is, it follows that $\rect^i(U^+_* \cap \setVV)$ is not a generic Kohnert diagram. Moreover, the cell $y$ that moves from $\rect^{i}(U^+_* \cap \setVV)$ to $\rect^{i+1}(U^+_* \cap \setVV)$ must do so from column $i+2$ to column $i+1$, and so must be the same cell as that which moves from $\rect^{i}(U^+_*)$ to $\rect^{i+1}(U^+_*)$ by Lemma~\ref{lem:moved-cells-in-lower-half}. Finally, in the terminal case $i=c-1$, we have 
  \[ \rectify(U^+_*) = U_{c-1} = (U^+ \setminus \setVV) \sqcup \rect^{c-1}(U^+_* \cap \setVV), \]
  with all the moving cells coinciding on each side. Since the last cell to move lies weakly below $w_c$ by Lemma~\ref{lem:rectification}, we may construct a matching from the cells of $\rect^{c-1}(U^+_* \cap \setVV)$ exactly the same as in the proof of Lemma~\ref{lem:rectification}, showing this is also a generic Kohnert diagram and so equals $\rectify(U^+_* \cap \setVV)$. 
\end{proof}

The final lemma needed for Theorem~\ref{thm:stratum-image} equates the thread decompositions of $U^+ \cap \setVV$ when we remove either $\setW_*$ or the cells that move during rectification.

\begin{definition}
  For $U \in \KDstr$, let $y_i$ be the cell that moves from column $i+1$ of $\rect^{i-1}(U^+_*)$ to column $i$ of $\rect^i(U^+_*)$. The \emph{rectification path of $U$ with respect to $\comp{a}$ and $k$} is the subdiagram $\setY = \{y_1,y_2,\cdots,y_{c-1}\} \subset \partial_{\comp{a},k}(U)$.
  \label{def:rect-path}
\end{definition}

\begin{lemma}
  For $U \in \KDstr$, the diagram $\rectify(U^+_* \cap \setVV) \setminus \setY$ is a generic Kohnert diagram with thread weight
  \[\thread(\rectify(U^+_* \cap \setVV) \setminus \setY) = \thread(U^+ \cap \setV).\]
  \label{lem:rectification-thread-weight}
\end{lemma}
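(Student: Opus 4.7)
The plan is to identify $\rectify(U^+_* \cap \setVV) \setminus \setY$ positionally with a sub-diagram of $U$, then compute its thread weight by constructing a judicious matching sequence. By Lemma~\ref{lem:same-moved-cells}, during the rectification of $U^+_* \cap \setVV$ only the cells $y_1,\ldots,y_{c-1}$ shift (each moving from column $i+1$ to column $i$), while every other cell stays fixed. Consequently, as sets of cell-positions inside $U$,
\[
\rectify(U^+_* \cap \setVV) \setminus \setY \;=\; (U^+ \cap \setVV) \setminus \{w_1, y_1, \ldots, y_{c-1}\},
\]
where on the right the cells are taken in their original positions.

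Next, I would observe that $\{w_1, y_1, \ldots, y_{c-1}\}$ occupies columns $1,2,\ldots,c$ with row indices $k, \row(y_1), \ldots, \row(y_{c-1})$ that are weakly decreasing from left to right, since $\row(y_1) \leq k$ and Lemma~\ref{lem:rectification}(2) gives $\row(y_1) \geq \row(y_2) \geq \cdots \geq \row(y_{c-1})$. This is precisely the row-column shape of a thread terminating at row $k$ in column $1$. I would then exhibit an alternative matching sequence $M$ on $U^+ \cap \setVV$ in which $\{w_1, y_1, \ldots, y_{c-1}\}$ appears as a single thread. To construct $M$, I would start from the matching $\Mtch_{\thd}(U^+ \cap \setVV) = \Mtch_{\comp{b}+\comp{e}_k}(U)\big|_{U^+ \cap \setVV}$ (equal by Lemma~\ref{lem:lower-half-thread-decomposition}) and perform a sequence of local exchanges that reroute each $y_i$ into the $\setW$-thread, justified column by column using Lemma~\ref{lem:rectification}(1) together with the characterization of $y_i$ from Lemma~\ref{lem:weak-Kohnert-removable-cell-lowest}.

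Once such an $M$ is built, restricting $M$ to the complement of $\{w_1, y_1, \ldots, y_{c-1}\}$ yields a matching sequence on $(U^+ \cap \setVV) \setminus \{w_1, y_1, \ldots, y_{c-1}\}$. Its weight differs from $\wt(M) = \thread(U^+ \cap \setVV)$ only in coordinate $k$, dropping by the full length $c$ of the removed thread. Since by Lemma~\ref{lem:lower-half-thread-decomposition} the thread weight $\thread(U^+ \cap \setVV)$ exceeds $\thread(U^+ \cap \setV)$ by exactly $c\cdot \comp{e}_k$, I conclude the restricted matching has weight $\thread(U^+ \cap \setV)$, which by Theorem~\ref{thm:TFAE_kohnert} shows the remainder is a generic Kohnert diagram for $\thread(U^+ \cap \setV)$. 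Minimality of that weight among matching-sequence weights (shown via the routine observation that $\thread(T) \preceq \wt(M)$ for any matching sequence on $T$, combined with the fact that the column weight of $(U^+ \cap \setVV) \setminus \{w_1, y_1, \ldots, y_{c-1}\}$ matches that of $U^+ \cap \setV$) then pins down the thread weight exactly.

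The main obstacle will be making the local exchange procedure rigorous. Each step replaces, at column $i+1$, the cell on the $\setW$-thread by $y_{i}$ while preserving Definition~\ref{def:matching_seq}; the delicate verification is that Lemma~\ref{lem:rectification}(1)--(2) and the extremal characterization in Lemma~\ref{lem:weak-Kohnert-removable-cell-lowest} supply precisely the row inequalities required to reroute the thread through $y_i$ without disturbing matches in adjacent columns. I expect this verification to proceed by induction on $i$, tracking how the $i$-th exchange preserves the remaining matchings between columns $i+1$ and $i+2$ using the fact (from the proof of Lemma~\ref{lem:rectification}) that the ``orphaned'' cell after the $i$-th rectification step is inherited exactly from the previous step.
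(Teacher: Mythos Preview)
Your identification of $\rectify(U^+_* \cap \setVV) \setminus \setY$ with $(U^+ \cap \setVV)\setminus\{w_1,y_1,\ldots,y_{c-1}\}$ (in original positions) is correct and is a fine starting point. The real problem is the final step. Producing a matching sequence on that diagram with anchor weight $\thread(U^+\cap\setV)$ only yields
\[
\thread\bigl((U^+\cap\setVV)\setminus\{w_1,y_1,\ldots,y_{c-1}\}\bigr)\ \preceq\ \thread(U^+\cap\setV),
\]
and your ``minimality'' argument does not close the gap: $\thread(T)\preceq\wt(M)$ for every matching $M$ on $T$ is a one-sided bound, and agreement of column weights between two Kohnert diagrams says nothing about comparability of their thread weights. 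To get equality you would have to show that your rerouted matching \emph{is} the thread decomposition of the remaining diagram, not merely some matching on it.

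That stronger statement is exactly what the paper extracts from the highest/lowest removable-cell machinery. The paper does not attempt a single global exchange; it runs an intermediate sequence $V_0=U^+\cap\setV$, $V_i=\rect(V_{i-1}\cup\{w_{i+1}\})\setminus\{y_i\}$, so that $V_{c-1}=\rectify(U^+_*\cap\setVV)\setminus\setY$. At each step $V_{i-1}\cup\{w_{i+1}\}$ is a weak but not generic Kohnert diagram in which $w_{i+1}$ is the \emph{highest} removable cell (via Lemma~\ref{lem:weak-Kohnert-removable-cell-highest}, using that in $\Mtch_{\thd}(U^+\cap\setVV)$ the cells of $\setW$ are threaded last) and $y_i$ is the \emph{lowest} removable cell (since it is the cell moved by $\rect$). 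Corollary~\ref{cor:weak-kohnert-thread-decomposition} then gives the \emph{equality} $\thread(V_i)=\thread(V_{i-1})$ directly, with no separate minimality argument needed. Your local-exchange plan, if pushed through to the point of controlling the thread decomposition rather than an arbitrary matching, would amount to reproving Corollary~\ref{cor:weak-kohnert-thread-decomposition} column by column; it is cleaner to invoke it.
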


\begin{proof}
  As usual, if the added column $c$ of $U$ satisfies $c=1$, then the result is trivial, so we assume $c>1$. Letting $\setW = \{w_1,\ldots,w_c\}$ with $w_i$ in column $i$, we consider successive steps in the rectification of $U^+_* \cap \setVV$. Let $y_i$ denote the cell that moves from column $i+1$ to column $i$ under the $i$th iteration of $\rect$. Define sets $V_i$ by 
  \[ V_i = \rect(V_{i-1} \cup \{w_{i+1}\}\setminus\{y_i\} \]
  for $1\leq i<c$, where $V_0 = U^+ \cap \setVV$. Then we have
  \[ \rect^{i}(U^+_* \cap \setVV) = V_i \sqcup \{y_s, w_t \mid 1 \leq s < i+1 < t \leq c \} . \]
  In particular, $\rect^{c-1}(U^+_* \cap \setVV) = \rectify(U^+_* \cap \setVV) = V_{c-1} \sqcup \setY$. By construction, for each $i$, the cells in columns $i$ and $i+1$ of diagrams $V_i$ and $\rect^{i}(U^+_* \cap \setVV)$ coincide. We prove by induction on $i$ that $V_i$ is a generic Kohnert diagram with thread weight 
  \[ \thread(V_i) = \thread(U^+ \cap \setV). \]
  The base case is immediate since $V_0 = U^+ \cap \setV$, so assume the result for some $0 \leq i < c-1$. Since cells of $V_i \sqcup \{w_{i+2}\}$ and $\rect^{i}(U^+_* \cap \setVV)$  coincide in columns $i+1$ and $i+2$, since $\rect^{i}(U^+_* \cap \setVV)$ is not a generic Kohnert diagram, neither is $V_i \sqcup \{w_{i+2}\}$. Therefore $V_i \sqcup \{w_{i+2}\}$ is a weak but not generic Kohnert diagram with $w_{i+2}$ a removable cell. 

  We claim $w_{i+2}$ is the \emph{highest} removable cell of $V_i \sqcup \{w_{i+2}\}$. The diagrams $V_i$ and $U^+ \cap \setV$ consist of the same cells weakly to the right of column $i+2$. By Lemma~\ref{lem:lower-half-thread-decomposition}, in the thread decomposition of $(U^+ \cap \setV) \sqcup \setW$, the cells of $\setW$ are threaded last. Thus $w_{i+2}$ is the last cell of its column to be threaded in $(U^+ \cap \setV) \sqcup \setW$, and so $w_{i+2}$ is the last cell of its column to be threaded in $V_i \sqcup \{w_{i+2}\}$ as well. Therefore $\Mtch_{\thd}(V_i) \subset \Mtch_{\thd}(V_i \sqcup \{w_{i+2}\})$. By the uniqueness claim in Lemma~\ref{lem:weak-Kohnert-removable-cell-highest}, $w_{i+2}$ is the highest removable cell of $V_i \sqcup \{w_{i+2}\}$. 

  Since $y_{i+1}$ moves from $V_i \sqcup \{w_{i+2}\} \stackrel{\rect}{\mapsto} V_{i+1}$, it is the \emph{lowest} removable cell of $V_i \sqcup \{w_{i+2}\}$. Therefore, by Corollary~\ref{cor:weak-kohnert-thread-decomposition}, $V_{i+1}$ is a generic Kohnert diagram, with thread weight $\thread(V_{i+1}) = \thread(V_i) = \thread(U^+ \cap \setV)$. 
\end{proof}


Finally, we prove Theorem~\ref{thm:stratum-image}, showing $\partial_{\comp{a},k}(U) \in \KD(\comp{a})$ for $U\in\KDstr$.

\begin{proof}[Proof of Theorem~\ref{thm:stratum-image}.]
  By Lemma~\ref{lem:same-moved-cells}, $\rectify(U^+_*) = (U^+ \setminus \setVV) \sqcup \rectify(U^+_* \cap \setVV)$, so there exists a matching sequence $M$ on $\rectify(U^+_*)$ satisfying $\wt(M) = \thread(U^+ \setminus \setVV) + \thread(U^+ \cap \setVV)$. By Lemma~\ref{lem:rectification}, the cells of $\setY$ are weakly below row $k$, and so by Lemma~\ref{lem:rectification-thread-weight}, we have
  \[\wt(M) \preceq \thread(U^+ \setminus \setVV) + \thread(U^+ \cap \setV) + (c-1) \comp{e}_k. \]
  The sub-diagrams $U^+$ and $\setV$ are constructed by taking subsets of path components of $\Mtch_{\comp{b}+\comp{e}_k}$, and so in particular, restricting this gives a matching sequence on $U \setminus \setV$  and on $U \cap \setV$. By Theorem~\ref{thm:TFAE_kohnert}, $U^+ \setminus \setVV$ and $U^+ \cap \setV$ are Kohnert diagrams of $\wt(\Mtch_{\comp{b}+\comp{e}_k}\big|_{U^+ \setminus \setVV})$ and $\wt(\Mtch_{\comp{b}+\comp{e}_k}\big|_{U^+ \cap \setV})$), respectively, and so by Lemma~\ref{lem:3.7}, we have
  \begin{eqnarray*}
    \thread(U^+ \setminus \setVV) & \preceq & \wt(\Mtch_{\comp{b}+\comp{e}_k}\big|_{U^+ \setminus \setVV}) \\
    \thread(U^+ \cap \setVV) & \preceq & \wt(\Mtch_{\comp{b}+\comp{e}_k}\big|_{U^+ \cap \setV}).
  \end{eqnarray*}
  By definition, all labels in $\Lbl_{\comp{b}+\comp{e}_k}\big|_{U^+ \setminus \setVV}$ (respectively, all labels in $\Lbl_{\comp{b}+\comp{e}_k}\big|_{U^+ \cap \setV}$) are strictly greater than (respectively, strictly less than) $k$. Thus there exists a matching on the key diagram $\kd_{\thread(U^+ \setminus \setVV) + \thread(U^+ \cap \setV) + (c-1) \comp{e}_k}$ with weight 
  \begin{multline*}
    \wt(\Mtch_{\comp{b}+\comp{e}_k}\big|_{U^+ \setminus \setVV}) + \wt(\Mtch_{\comp{b}+\comp{e}_k}\big|_{U^+ \cap \setV}) + (c-1) \comp{e}_k \\
    = \wt(\Mtch_{\comp{b}+\comp{e}_k}\big|_{U^+}) + \wt(\Mtch_{\comp{b}+\comp{e}_k}\big|_{\setW}) - \comp{e}_k
    = \wt(\Mtch_{\comp{b}+\comp{e}_k}\big|_{U^+}) - \comp{e}_k.
  \end{multline*}
  So then Theorem~\ref{thm:TFAE_kohnert} implies
  \[ \wt(M) \preceq \thread(U^+ \setminus \setVV) + \thread(U^+ \cap \setV) + (c-1) \comp{e}_k \preceq \wt(\Mtch_{\comp{b}+\comp{e}_k}\big|_{U^+}) - \comp{e}_k. \]
  Therefore the generic Kohnert diagram $\partial_{\comp{a},k}(U) = U^- \sqcup \rectify(U^+_*)$ has a matching sequence $M' = (\Mtch_{\comp{b}+\comp{e}_k}\big|_{U^-}) \sqcup M$. By Lemma~\ref{lem:L-minimal-labeling}, we have $\wt(\Mtch_{\comp{b}+\comp{e}_k}\big|_{U^-}) = \wt(\Mtch_{\comp{b}+\comp{e}_k}\big|_{U^-})$, and so $M'$ has weight 
  \[ \wt(M') = \wt(\Mtch_{\comp{b}+\comp{e}_k}\big|_{U^-}) + \wt(M) \preceq \wt(\Mtch_{\comp{b}+\comp{e}_k}\big|_{U^-}) + \wt(\Mtch_{\comp{b}+\comp{e}_k}\big|_{U^+}) - \comp{e}_k. \]
  Since $\Mtch_{\comp{b}+\comp{e}_k}$ is a matching on $U = U^- \sqcup U^+$ with $\wt(\Mtch_{\comp{b}+\comp{e}_k}) = \comp{b} + \comp{e}_k$, we have
  \[ \wt(M') \preceq \wt(\Mtch_{\comp{b}+\comp{e}_k}) - \comp{e}_k = \comp{b} + \comp{e}_k - \comp{e}_k = \comp{b} \preceq \comp{a}. \]
  Since $M'$ is a matching sequence on $\partial_{\comp{a},k}(U)$, by Theorem~\ref{thm:TFAE_kohnert} we conclude $\partial_{\comp{a},k}(U)$ is a Kohnert diagram for $\comp{a}$.
\end{proof}


\subsection{Stratum maps are injective}
\label{sec:stratify-inject}

We have shown $\partial_{\comp{a},k}$ is a well-defined map from $\KDstr$ to $\KD(\comp{a})$, so we now show $\partial_{\comp{a},k}$ is injective. To do so, we use the rectification path to partition the stratum maps $\partial_{\comp{a},k}$ as follows.

\begin{definition}
  For $U \in \KDstr$, we partition the stratum map by
  \begin{eqnarray*}
    \partial_{\comp{a},k}^+(U) & = & \rectify(U^+_*) \setminus \setY \\
    \partial_{\comp{a},k}^-(U) & = & U^- \sqcup \setY.
  \end{eqnarray*}
\end{definition}

Notice, by construction we have $\partial_{\comp{a},k}(U) = \partial_{\comp{a},k}^+(U) \sqcup \partial_{\comp{a},k}^-(U)$. We will show this partitioning is natural with respect to the thread decomposition.

\begin{proposition}
  For $U \in \KDstr$, the subdiagrams $\partial_{\comp{a},k}^+(U)$ and $\partial_{\comp{a},k}^-(U)$ are generic Kohnert diagrams. For $M^+$ and $M^-$ matching sequences on $\partial_{\comp{a},k}^+(U)$ and $\partial_{\comp{a},k}^-(U)$, respectively, for every cell $x \in \partial_{\comp{a},k}(U)$, we have
  \[\begin{array}{ll}
    \plength_{M^+}(x) \geq c & \mbox{ if } x \in \partial_{\comp{a},k}^+(U) \\
    \plength_{M^-}(x) < c & \mbox{ if } x \in \partial_{\comp{a},k}^-(U).
  \end{array}\]
  \label{prop:image-thread-halves}
\end{proposition}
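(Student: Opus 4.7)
The plan is to construct an explicit matching sequence on each of $\partial_{\comp{a},k}^+(U)$ and $\partial_{\comp{a},k}^-(U)$, conclude via Theorem~\ref{thm:TFAE_kohnert} that they are generic Kohnert diagrams, and then read off the path-length statements from column weights, using the fact that in any matching sequence on a generic Kohnert diagram $T$ the number of paths of length at least $j$ equals $\cwt(T)_j$.

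For $\partial_{\comp{a},k}^+(U)$, Lemma~\ref{lem:same-moved-cells} yields the decomposition
\[ \partial_{\comp{a},k}^+(U) = (U^+ \setminus \setVV) \sqcup \bigl(\rectify(U^+_* \cap \setVV) \setminus \setY\bigr). \]
The first piece $U^+ \setminus \setVV$ is a union of complete $\Mtch_{\comp{b}+\comp{e}_k}$-path components (those whose label is strictly greater than $k$, all of length $\geq c$), so restricting $\Mtch_{\comp{b}+\comp{e}_k}$ to it yields a matching sequence; the second piece is a generic Kohnert diagram by Lemma~\ref{lem:rectification-thread-weight}, and I take its thread decomposition. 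Since the two pieces are disjoint subsets of $\partial_{\comp{a},k}(U)$ and each cell gets matched within its own piece, their union is a matching sequence on $\partial_{\comp{a},k}^+(U)$.

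For $\partial_{\comp{a},k}^-(U) = U^- \sqcup \setY$, the restriction of $\Mtch_{\comp{b}+\comp{e}_k}$ to $U^-$ is likewise a union of path components (those of length $<c$, none of which have label $k$ since the label-$k$ component $\setW$ lies in $U^+$), and on $\setY = \{y_1,\ldots,y_{c-1}\}$ the inequalities $\row(y_1) \geq \cdots \geq \row(y_{c-1})$ from Lemma~\ref{lem:rectification}(2) chain the $y_i$'s into a single path $y_{c-1} \leftarrow \cdots \leftarrow y_1$. Both $U^-$ and $\setY$ lie in columns $1,\ldots,c-1$, and combining the two matchings yields a matching sequence on $\partial_{\comp{a},k}^-(U)$.

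Finally, let $n_c$ denote the number of $\Mtch_{\comp{b}+\comp{e}_k}$-path components of length at least $c$. By Lemma~\ref{lem:rectification}, rectifying $U^+_*$ moves exactly one cell from column $i+1$ to column $i$ for each $1 \leq i \leq c-1$, so removing the resulting cells $\setY$ leaves $\cwt(\partial_{\comp{a},k}^+(U))_j = n_c - 1$ for every $1 \leq j \leq c$. Therefore any matching sequence $M^+$ has exactly $n_c - 1$ total paths and $n_c - 1$ paths of length $\geq c$, forcing every cell to satisfy $\plength_{M^+}(x) \geq c$. Dually, $\partial_{\comp{a},k}^-(U)$ is supported in columns $1,\ldots,c-1$, so no matching sequence on it admits a path of length $\geq c$. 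The principal bookkeeping step is tracking the column-weight changes through the chain $U^+ \to U^+_* \to \rectify(U^+_*) \to \partial_{\comp{a},k}^+(U)$, but Lemma~\ref{lem:rectification} makes each change transparent.
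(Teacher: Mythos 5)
Your proof is correct and follows essentially the same approach as the paper: use Lemma~\ref{lem:same-moved-cells} to decompose $\partial_{\comp{a},k}^+(U)$, use Lemma~\ref{lem:rectification-thread-weight} and Lemma~\ref{lem:rectification} to control the pieces, observe column-weight equality in columns $1,\ldots,c$ for the positive part and column support $<c$ for the negative part, and read off the path-length bounds. The only cosmetic difference is that you construct explicit matching sequences on each piece and phrase the conclusion via the count of paths of length $\geq j$ equalling $\cwt_j$, whereas the paper states the same column-weight observations more tersely and invokes genericity of the disjoint union directly; both are valid.
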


\begin{proof}
  We first consider the diagram $\partial_{\comp{a},k}^+(U)$. By Lemma~\ref{lem:same-moved-cells}, $\partial_{\comp{a},k}^+(U) = (U^+ \setminus \setVV) \sqcup \rectify(U^+_* \cap \setVV) \setminus \setY$. By its definition, $U^+ \setminus \setVV$ is a generic Kohnert diagram with the same number of cells at each column weakly left of column $c$. On the other hand, $(\rectify(U^+_* \cap \setVV) \setminus \setY)$ is a generic Kohnert diagram by Lemma~\ref{lem:rectification-thread-weight}, with thread weight $\thread(U^+ \cap \setV)$, and hence must also have the same number of cells at each column weakly left of column $c$. Thus, $\partial_{\comp{a},k}^+(U)$ is a generic Kohnert diagram with the same number of cells at each column weakly left of column $c$. It follows that $\plength_{M^+}(x) \geq c$ for every cell $x \in \partial_{\comp{a},k}^+(U)$ and every matching sequence on $\partial_{\comp{a},k}^+(U)$.

  We next consider the diagram $\partial_{\comp{a},k}^-(U)$. By its definition, $U^-$ is a generic Kohnert diagram occupying only positions strictly left of column $c$. By Lemma~\ref{lem:rectification}, the diagram $\setY$ is a generic Kohnert diagram whose cells all lie strictly left of column $c$. Thus, $\partial_{\comp{a},k}^-(U) = U^- \sqcup \setY$ is a generic Kohnert diagram occupying only positions strictly left of column $c$. Thus $\plength_{M^-}(x) < c$ for every cell $x \in \partial_{\comp{a},k}^-(U)$ and every matching sequence on $\partial_{\comp{a},k}^-(U)$.
\end{proof}

The following is helpful in studying thread decompositions for $U$ and for $\partial_{\comp{a},k}(U)$.

\begin{lemma}
  For $U \in \KDstr$, $c$ the added column of $U$, and $\comp{b}$ the excised weight of $U$, for any weak composition $\comp{d}$ such that $\thread(U) \preceq \comp{d} \prec \comp{b} + \comp{e}_k$, we have $d_k > c$. 
  \label{lem:criterion-interval-weak-comp}
\end{lemma}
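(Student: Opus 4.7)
My plan is to argue by contradiction. Suppose $\comp{d}$ satisfies $\thread(U) \preceq \comp{d} \prec \comp{b} + \comp{e}_k$ but $d_k \leq c$. The goal is to exhibit a decomposition $\comp{d} = \comp{d}^{*} + \comp{e}_{k^{*}}$ with $\comp{d}^{*} \preceq \comp{a}$ and $k^{*} < k$, so that the hypothesis $\thread(U) \preceq \comp{d}$ together with Lemma~\ref{lem:3.7} gives $U \in \KD(\comp{d}^{*} + \comp{e}_{k^{*}}) \subseteq \KDs(\comp{a}, k-1)$, contradicting $U \in \KDstr$.

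The first step is a multiset observation. Since $\comp{d} \preceq \comp{b} + \comp{e}_k$, Proposition~\ref{prop:lswap} together with the fact that Kohnert moves preserve column weights gives $\cwt(\comp{d}) = \cwt(\comp{b} + \comp{e}_k)$, so the multisets of parts of $\comp{d}$ and of $\comp{b} + \comp{e}_k$ coincide. Since $(\comp{b} + \comp{e}_k)_k = c$, the value $c$ must appear somewhere among the parts of $\comp{d}$. Combining $d_k \leq c$ with the strictness $\comp{d} \neq \comp{b} + \comp{e}_k$, I will extract an index $k^{*}$ with $d_{k^{*}} = c$, and then establish, via a detailed analysis of how values migrate under left swaps, that this $k^{*}$ may be chosen strictly less than $k$. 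The latter is where the hypothesis $\thread(U) \preceq \comp{d}$ combined with $\thread(U)_k \geq c$ (which holds by Lemma~\ref{lem:excision}(1)) gets used.

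Once $k^{*}$ is located, I would verify $\comp{d}^{*} := \comp{d} - \comp{e}_{k^{*}} \preceq \comp{a}$ by constructing a matching sequence on $\kd_{\comp{d}^{*}}$ of weight $\comp{b}$ and invoking Corollary~\ref{cor:TFAE_lswap}. Specifically, I would begin with the Kohnert matching $\Mtch_{\comp{b}+\comp{e}_k}(\kd_{\comp{d}})$ from Definition~\ref{def:kohnert-label}. The condition $d_{k^{*}} = c = b_k + 1$ forces the label-$k$ path of this matching to terminate at the cell $(c, k^{*})$; removing this terminal cell produces $\kd_{\comp{d}} \setminus \{(c, k^{*})\} = \kd_{\comp{d}^{*}}$, together with a matching sequence whose weight is $\comp{b}$ (the label-$k$ contribution shortens from $c$ to $c-1 = b_k$, and no other thread is affected). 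Since $\comp{b} \preceq \comp{a}$ by the definition of the excised weight, Corollary~\ref{cor:TFAE_lswap} delivers $\comp{d}^{*} \preceq \comp{a}$.

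The most delicate point, and the main obstacle, is ensuring that $k^{*}$ can be chosen with $k^{*} < k$ rather than $k^{*} > k$; the latter would only yield $U \in \KDs(\comp{a}, k^{*})$ with no contradiction. I expect the argument to proceed by tracking, along any chain of descending left swaps from $\comp{b} + \comp{e}_k$ toward $\comp{d}$ that respects the lower bound $\comp{d} \succeq \thread(U)$, how the distinguished copy of the value $c$ initially at position $k$ can migrate: the only descending left swaps that strictly decrease the value at position $k$ are those of the form $(i, k)$ with $i < k$ and $b_i < c$, and each such swap deposits a copy of $c$ at position $i < k$; any compensating swaps must respect the constraint $\thread(U) \preceq \comp{d}$, which, combined with $\thread(U)_k \geq c$, prevents the newly placed copy of $c$ from being shifted to any position $\geq k$ while keeping $d_k \leq c$.
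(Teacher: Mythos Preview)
Your proposal has a genuine gap at precisely the step you flag as ``the most delicate point'': locating an index $k^{*} < k$ with $d_{k^{*}} = c$. This step cannot succeed as outlined, because of a structural fact you are missing and which the paper establishes first: $d_j = b_j$ for every $j < k$. Indeed, from the definition of the excised weight (removing the cell at $(c,k)$ from $\kd_{\thread(U)}$ does not affect thread lengths in rows below $k$) one has $\thread(U)_j = b_j$ for $j<k$, and of course $(\comp{b}+\comp{e}_k)_j = b_j$ for $j<k$; the sandwich $\thread(U) \preceq \comp{d} \preceq \comp{b}+\comp{e}_k$ then pins down $d_j = b_j$ for $j<k$. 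Consequently $d_{k^{*}} = c$ with $k^{*}<k$ would force $b_{k^{*}} = c$, and nothing guarantees such a part exists. In the paper's own running example ($\comp{a}=(1,5,2,1,2,6,3)$, $k=3$, $c=4$, $\comp{b}=(1,5,3,2,1,6,2)$) no $b_j$ with $j<3$ equals $4$, so your $k^{*}$ simply does not exist.

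Your migration heuristic also misfires. Descending from $\comp{b}+\comp{e}_k$ by left swaps, the value $c$ sitting at position $k$ can be displaced by a swap $(k,j)$ with $j>k$ and $b_j>c$: this \emph{increases} the $k$th entry and sends $c$ to a position \emph{above} $k$, not below. That is exactly what happens along every chain to a strict $\comp{d}$ in the interval (e.g.\ $\comp{d}=\thread(U)=(1,5,6,2,1,4,2)$ has the value $4$ at position $6$). So the contradiction you seek, $U\in\KDs(\comp{a},k-1)$, is not reachable by this route. The paper's proof targets a different contradiction altogether: having pinned $d_j=b_j$ for $j<k$, it gets $d_k\ge c$ immediately, and then, assuming $d_k=c$, applies the Kohnert labeling $\Lbl_{\comp{d}}$ to $\kd_{\thread(U)}$ to deduce $\comp{b}\preceq\comp{d}-\comp{e}_k$, whence $\comp{b}+\comp{e}_k\preceq\comp{d}$, contradicting the strictness $\comp{d}\prec\comp{b}+\comp{e}_k$ directly.
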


\begin{proof}
  Since $\comp{b} = \thread(\kd_{\thread(U)} \setminus \{y\})$, where by Lemma~\ref{lem:excision} the cell $y$ is in position $(c,k)$, we have $(\thread(U))_j = b_j$ for all indices $j < k$. Hence, $d_j = b_j$ for all $j < k$ as well. So since $\comp{d} \preceq \comp{b} + \comp{e}_k$, we must have $d_k \geq b_k + 1 = c$. 
  
  Suppose, for contradiction, $d_k = c$. Since $\thread(U) \preceq \comp{d}$, we may consider the Kohnert labeling $L = \Lbl_{\comp{d}}(\kd_{\thread(U)})$. Since $d_k = b_j = (\thread(U))_j$ for all indices $j < k$, we must have $L(x) = \row(x)$ for every cell $x$ below row $k$. Therefore for every cell in position $(i,k)$ for $1 \leq i \leq d_k$, must have label $L(x) = k$. In particular, $L(y) = k$ since $y$ is in position $(c,k)$. Since $d_k = b_k + 1 = c$, $y$ is the rightmost cell in $\kd_{\thread(U)}$ with label $k$ under $L$. Therefore, restricting $L$ to the diagram $\kd_{\thread(U)} \setminus \{y\}$ gives us a labeling with content weight $\comp{d} - \comp{e}_k$. By Lemma~\ref{lem:3.7}, $\comp{b} = \thread(\kd_{\thread(U)} \setminus \{y\}) \preceq \comp{d} - \comp{e}_k$. So since $b_j = d_j$ for all indices $j < k$, and since $b_k = d_k - 1$, we necessarily have $(0^k,b_{k+1},b_{k+2},\cdots) \preceq (0^k,d_{k+1}, d_{k+2},\cdots)$. Hence, since $b_j = d_j$ for all indices $j < k$, and since $b_k + 1 = d_k$, we ultimately get $\comp{b} + \comp{e}_k \preceq \comp{d}$. Since $\comp{d} \preceq \comp{b} + \comp{e}_k$ we must have $\comp{d} = \comp{b} + \comp{e}_k$, which contradicts our assumption that $\comp{d} \neq \comp{b} + \comp{e}_k$.
\end{proof}

\begin{lemma}
  For $U \in \KDstr$ and $\comp{b}$ the excised weight of $U$, we have
  \begin{equation}
    \Mtch_{\thd}(U \setminus \setW) = \Mtch_{\comp{b}+\comp{e}_k}(U)\big|_{U \setminus \setW}.
  \end{equation}
  \label{lem:complement-thread-decomposition}
\end{lemma}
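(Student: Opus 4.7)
My plan is to identify both sides with the Kohnert matching sequence of $U \setminus \setW$ with respect to a particular weak composition. Let $\comp{b}'$ be the weak composition agreeing with $\comp{b}$ in every position except the $k$th, where $\comp{b}'_k = 0$; since $b_k = c-1$, we have $\comp{b}' + c\,\comp{e}_k = \comp{b} + \comp{e}_k$. By Lemma~\ref{lem:L-minimal-labeling}, $\setW$ is the single label-$k$ path in $\Mtch_{\comp{b}+\comp{e}_k}(U)$, of length $c$ anchored at $(1,k)$; removing it leaves a matching sequence on $U \setminus \setW$ of weight $\comp{b}'$. Hence $U \setminus \setW \in \KD(\comp{b}')$ by Theorem~\ref{thm:TFAE_kohnert} and $\thread(U \setminus \setW) \preceq \comp{b}'$ by Lemma~\ref{lem:3.7}.

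For the reverse inequality, I would concatenate $\Mtch_{\thd}(U \setminus \setW)$ with the $\setW$-path to build a matching sequence on $U$ of weight $\comp{d} := \thread(U \setminus \setW) + c\,\comp{e}_k$, giving $\thread(U) \preceq \comp{d} \preceq \comp{b} + \comp{e}_k$. Were $\comp{d} \prec \comp{b} + \comp{e}_k$ strict, Lemma~\ref{lem:criterion-interval-weak-comp} would force $d_k > c$; but $(\thread(U \setminus \setW))_k = 0$ since the cell $(1,k) = w_1$ is absent from $U \setminus \setW$, so $d_k = c$, a contradiction. Therefore $\thread(U \setminus \setW) = \comp{b}'$, and Proposition~\ref{prop:matching-thread-decomposition-specializes-Kohnert-labeling} identifies $\Mtch_{\thd}(U \setminus \setW)$ with $\Mtch_{\comp{b}'}(U \setminus \setW)$.

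The lemma thus reduces to the labeling identity $\Lbl_{\comp{b}+\comp{e}_k}(U)\big|_{U \setminus \setW} = \Lbl_{\comp{b}'}(U \setminus \setW)$, which I would prove by right-to-left column induction. For columns $j > c$, the label sets coincide and the cells of $U$ in column $j$ already lie in $U \setminus \setW$, so the greedy algorithm gives identical assignments. For columns $j \leq c$, the $(\comp{b}+\comp{e}_k)$-labeling has the extra cell $w_j$ and extra label $k$; processing bottom to top, any cell of $U \setminus \setW$ below $w_j$ cannot pick $k$ (otherwise it would belong to $\setW$), so the greedy rule falls back to the smallest valid label in $\{i \neq k : b_i \geq j\}$, matching the choice made by $\Lbl_{\comp{b}'}$. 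The cell $w_j$ then absorbs $k$; above $w_j$ the remaining label pools coincide, and the inductive hypothesis on column $j+1$ makes the ``weakly lower'' tests identical, so the remaining assignments agree.

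The main obstacle is this column-induction step, specifically the bookkeeping required to show that the presence of the extra label $k$ never displaces a greedy choice on a non-$\setW$ cell. The crux is the definitional fact that $\Lbl_{\comp{b}+\comp{e}_k}$ assigns $k$ only to $\setW$: were $k$ ever the smallest valid label on a non-$\setW$ cell during the sweep, the greedy rule would have chosen it, contradicting the already-established labeling. This observation, together with the inductive hypothesis on column $j+1$ which ensures that the row occupied by each non-$k$ label in column $j+1$ is the same in both labelings, is what keeps the two algorithms synchronized throughout each column.
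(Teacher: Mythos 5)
Your proof is correct and follows essentially the same outline as the paper's: identify $\Mtch_{\comp{b}+\comp{e}_k}(U)\big|_{U\setminus\setW}$ with the Kohnert matching of $U\setminus\setW$ with respect to $\comp{b} - b_k\comp{e}_k$, invoke Proposition~\ref{prop:matching-thread-decomposition-specializes-Kohnert-labeling}, and pin down $\thread(U\setminus\setW) = \comp{b} - b_k\comp{e}_k$ by two containments (gluing the $\setW$-path back on for the reverse direction) and Lemma~\ref{lem:criterion-interval-weak-comp}. The one place you go further is the column-by-column verification of the labeling restriction identity, which the paper asserts in a single sentence; your induction correctly isolates the key point that the extra label $k$ can only land on $\setW$, so it never perturbs the greedy assignment on $U\setminus\setW$.
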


\begin{proof}
  For brevity, we let $M = \Mtch_{\comp{b}+\comp{e}_k}(U)\big|_{U \setminus \setW}$. Since the cells of $\setW$ form the path labeled $k$ in $\Lbl_{\comp{b}+\comp{e}_k}(U)$, restricting to cells of $U\setminus\setW$ is equivalent to $\Lbl_{\comp{b}-b_k\comp{e}_k}(U\setminus\setW)$. By Proposition~\ref{prop:matching-thread-decomposition-specializes-Kohnert-labeling}, in order to prove $M = \Mtch_{\thd}(U \setminus \setW)$, it suffices to show that $\thread(U \setminus \setW) = \wt(M) = \comp{b} - b_k \comp{e}_k$, by Lemma~\ref{lem:L-minimal-labeling}.

    By Theorem~\ref{thm:TFAE_kohnert}, the fact that $(U \setminus \setW)$ is the underlying diagram of the matching sequence $M$ implies that $(U \setminus \setW)$ is a Kohnert diagram of $\wt(M)$. Thus $\thread(U \setminus \setW) \preceq \wt(M)$.
  
  To prove $\wt(M) \preceq \thread(U \setminus \setW)$, note that $M \sqcup (w_1 \leftarrow w_2 \leftarrow \cdots w_c)$ is a matching sequence on $U$, so that by Theorem~\ref{thm:TFAE_kohnert}, 
  we have $\thread(U) \preceq \wt(M) + c \comp{e}_k$. Since $\thread(U \setminus \setW) \preceq \wt(M)$ with $(\thread(U \setminus \setW))_k = 0 = (\wt(M))_k$, we have 
  $$
  \thread(U) \preceq \thread(U \setminus \setW) + c \comp{e}_k \preceq \wt(M) + c \comp{e}_k = \comp{b} - b_k \comp{e}_k + c \comp{e}_k = \comp{b} + \comp{e}_k.
  $$
  If $\thread(U \setminus \setW) \neq \wt(M)$, then $\thread(U \setminus \setW) + c \comp{e}_k \neq \comp{b} + \comp{e}_k$, and it follows from Lemma~\ref{lem:criterion-interval-weak-comp} that 
  $  c = (\thread(U \setminus \setW) + c \comp{e}_k)_k > c  $,
  a contradiction.
\end{proof}

\begin{lemma}
  For $U \in \KDstr$, we have
  \[ \Mtch_{\thd}(\partial_{\comp{a},k}(U)) = \Mtch_{\thd}(\partial_{\comp{a},k}^+(U)) \sqcup \Mtch_{\thd}(\partial_{\comp{a},k}^-(U)). \]
  \label{lem:image-thread-decomposition}
\end{lemma}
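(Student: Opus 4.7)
The plan is to verify that the greedy thread-decomposition algorithm applied to $\partial_{\comp{a},k}(U)$ produces precisely the disjoint union of the algorithms run separately on $\partial_{\comp{a},k}^+(U)$ and $\partial_{\comp{a},k}^-(U)$. Since $\Mtch_{\thd}$ is uniquely determined by this greedy rule, showing that the two constructions agree thread-by-thread suffices.

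First I would record the structural facts that make the split possible. By Lemma~\ref{lem:rectification}, each rectification cell $y_i \in \setY$ lies in column $i < c$, where $c$ is the added column of $U$, and by definition $U^-$ consists of cells with $\Mtch_{\comp{b}+\comp{e}_k}$-path length less than $c$, hence also in columns $< c$; thus $\partial_{\comp{a},k}^-(U) = U^- \sqcup \setY$ is supported entirely in columns $1, \ldots, c-1$. On the other hand, Proposition~\ref{prop:image-thread-halves} gives that every cell of $\partial_{\comp{a},k}^+(U)$ lies on a matching path of length $\geq c$, so the cell counts in columns $1, \ldots, c$ all agree and every thread of $\Mtch_{\thd}(\partial_{\comp{a},k}^+(U))$ has length at least $c$. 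Therefore, when the thread-decomposition algorithm is run on $\partial_{\comp{a},k}(U)$, each thread starting in a column $\geq c$ begins with a $\partial_{\comp{a},k}^+(U)$-cell, and while remaining in columns $\geq c$ it can only match $\partial_{\comp{a},k}^+(U)$-cells. Inducting on threads, once all threads rooted in columns $\geq c$ are complete the cells consumed are exactly those of $\partial_{\comp{a},k}^+(U)$, and the residual diagram is $\partial_{\comp{a},k}^-(U)$, whose thread decomposition is then produced directly by the continuing algorithm.

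The main obstacle is the transition at column $c \to c - 1$ and all subsequent transitions in columns below $c$: when a $\partial_{\comp{a},k}^+(U)$-thread seeks the lowest available cell weakly above its current cell, one must verify that this cell lies in $\partial_{\comp{a},k}^+(U)$ rather than being poached from $\partial_{\comp{a},k}^-(U)$. For this I would analyse row positions in columns $< c$ using two key facts: cells of $U^- \subseteq \partial_{\comp{a},k}^-(U)$ are terminal in their $\Mtch_{\comp{b}+\comp{e}_k}$-paths, which via the Kohnert-labeling priority forces them into rows above those of $\partial_{\comp{a},k}^+(U)$-cells still awaiting matches from the right; and each $y_j \in \setY$ is bounded above (weakly) by the $\setW$-cell $w_{j+1}$ by Lemma~\ref{lem:rectification}(1), so the row of $y_j$ will already have been consumed by an earlier $\partial_{\comp{a},k}^+(U)$-thread by the time the current thread reaches column $j$. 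Lemmas~\ref{lem:lower-half-thread-decomposition} and \ref{lem:complement-thread-decomposition} provide the translation between the Kohnert labeling and the thread-decomposition data on $\setVV$ and $U \setminus \setW$ respectively, which is the precise language needed for these row-comparisons; a case analysis on whether the interfering cell lies in $U^-$ or in $\setY$, and on whether the threaded cell sits in $U^+ \setminus \setVV$ or in $\rectify(U^+_* \cap \setVV) \setminus \setY$, then forces the greedy choice to remain within $\partial_{\comp{a},k}^+(U)$. With this row-bound in place, the greedy algorithm on $\partial_{\comp{a},k}(U)$ exactly replicates the standalone algorithms on the two pieces, and the claimed splitting of matching sequences follows.
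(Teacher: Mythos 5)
Your overall plan---show that the greedy threading on $\partial_{\comp{a},k}(U)$ reproduces the separate thread decompositions of the two halves, by considering the first point at which a long thread could "poach'' a cell of $\partial_{\comp{a},k}^-(U)$---is the same strategy as the paper's proof, which likewise reduces to showing that each thread of $\Mtch_{\thd}(\partial_{\comp{a},k}^+(U))$ survives intact in $\Mtch_{\thd}(\partial_{\comp{a},k}(U))$ and rules out the first putative divergence by a case split on whether the poached cell lies in $\setV$ or in $\setY$.

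However, the key step you propose is based on a false premise. You assert that cells of $U^-$ are ``terminal in their $\Mtch_{\comp{b}+\comp{e}_k}$-paths.'' This is not so: $U^-$ is defined as the set of \emph{all} cells whose Kohnert-matching path length is strictly less than $c$, which includes every cell along such a short path (in any column), not merely the rightmost one. Consequently the row-comparison you derive from ``labeling priority'' does not follow; it is simply not true that $U^-$-cells lie uniformly above the $\partial_{\comp{a},k}^+(U)$-cells that are still awaiting a match, and without that inequality the case $y \in U^-$ is not resolved. The paper instead handles this case by invoking Lemma~\ref{lem:complement-thread-decomposition} (threads of $U \setminus \setW$ agree with the Kohnert matching restricted there, so $U^-$-cells carry short threads) and the case $y \in \setY$ by invoking Lemma~\ref{lem:rectification-thread-weight}, rather than a row-by-row bound. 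Your invocation of Lemma~\ref{lem:rectification}(1) for the $\setY$ case is closer to the mark, but ``the row of $y_j$ will already have been consumed'' is an assertion that itself needs the thread-length argument you have left implicit; as written it does not close the gap.
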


\begin{proof}
  By Proposition~\ref{prop:image-thread-halves}, both $\partial_{\comp{a},k}^+(U)$ and $\partial_{\comp{a},k}^-(U)$ are Kohnert diagrams, so we may consider their thread decompositions. Moreover, since all threads of $\Mtch_{\thd}(\partial_{\comp{a},k}^+(U))$ have length at least $c$, where $c$ is the added column of $U$, the result follows by Proposition~\ref{prop:image-thread-halves} from showing each thread of $\Mtch_{\thd}(\partial_{\comp{a},k}^+(U))$ is also a thread of $\Mtch_{\thd}(\partial_{\comp{a},k}(U))$.

  Let $x$ be the first cell in the thread decomposition of $\partial_{\comp{a},k}^+(U)$ such that $x$ threads into some $y \not\in \partial_{\comp{a},k}^+(U)$, and so $y \in U^- \sqcup \setY$. By Lemma~\ref{lem:complement-thread-decomposition}, we cannot have $y \in U^-$ since then it would have a thread of length shorter than $c$. Moreover, by Lemma~\ref{lem:rectification-thread-weight}, if $y \in \setY$ then $y$ must have been in $\setW$ before rectification, but then by Lemma~\ref{lem:complement-thread-decomposition}, it again has to have a thread of length shorter than $c$. Thus $y$ cannot exist, and the result follows. 
\end{proof}

We now have our first major step toward reversing the stratum maps.

\begin{theorem}
  For $U \in \KDstr$ with added column $c$, let $\setY = \{y_1,\ldots,y_{c-1}\}$ with $y_i$ in column $i$ for $i\geq 1$, and let $y_0$ be the position $(1,k)$. be the rectification path of $U$. Then for each $1 < i \leq c-1$, $y_i$ is the highest cell in column $i$ weakly below $y_{i-1}$ with $\plength_{\Mtch_{\thd}(\partial_{\comp{a},k}(U))}(y_i) < c$.
    \label{thm:moved-cells-recoverable-given-added-col}
\end{theorem}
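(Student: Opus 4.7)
The plan is to verify that $y_i$ satisfies the three asserted properties---being a cell of $\partial_{\comp{a},k}(U)$ in column $i$, lying weakly below $y_{i-1}$, and having thread length less than $c$ in $\Mtch_{\thd}(\partial_{\comp{a},k}(U))$---and then to show that no other cell strictly above $y_i$ in column $i$ but weakly below $y_{i-1}$ shares the thread-length property. The existence part is immediate from the preceding infrastructure: $y_i$ lies in column $i$ by the definition of $\setY$, Lemma~\ref{lem:rectification}(2) gives $\row(y_i) \leq \row(y_{i-1})$ via the monotonicity $k \geq \row(y_1) \geq \cdots \geq \row(y_{c-1})$, and the bound $\plength_{\Mtch_{\thd}(\partial_{\comp{a},k}(U))}(y_i) < c$ follows from $y_i \in \setY \subseteq \partial_{\comp{a},k}^-(U)$ combined with Proposition~\ref{prop:image-thread-halves} and Lemma~\ref{lem:image-thread-decomposition}.

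For uniqueness, I argue by contradiction. Suppose $z$ is a cell of $\partial_{\comp{a},k}(U)$ in column $i$ with $\row(y_i) < \row(z) \leq \row(y_{i-1})$ and thread length less than $c$. By Proposition~\ref{prop:image-thread-halves} and Lemma~\ref{lem:image-thread-decomposition}, $z$ belongs to $\partial_{\comp{a},k}^-(U) = U^- \sqcup \setY$. Since the only cell of $\setY$ in column $i$ is $y_i$ and $z \neq y_i$, we must have $z \in U^-$; because $U^-$ is left untouched by the stratum operation, $z$ sits at position $(i,\row(z))$ in the original $U$.

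To extract the contradiction, I would first exploit the matchability characterization of $\row(y_i)$ from the rectification. At step $i$, $y_i$ is chosen as the highest row in column $i+1$ of $\rect^{i-1}(U^+_*)$ where the matchability attains its minimum value of $-1$. Comparing $\rect^{i-1}(U^+_*)$ with $U^+$ in columns $i$ and $i+1$---which differ only by the absence of $y_{i-1}$ from column $i$---yields the inequality $\matchable_{U^+}(i+1, r) \geq 1$ for every row $r$ with $\row(y_i) < r \leq \row(y_{i-1})$.

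The hard part will be converting this matchability surplus in $U^+$, together with the disjointness of the Kohnert matching components of $U^+$ and $U^-$, into an actual contradiction with $z \in U^-$ at $(i, \row(z))$. The strategy I would pursue is to trace the Kohnert labeling $\Lbl_{\comp{b}+\comp{e}_k}(U)$ restricted to column $i$ using the bottom-up greedy rule: the excess of $U^+$ cells in column $i$ at rows $\geq \row(z)$ over the matching capacity supplied by column $i+1$ forces at least one long-path label in column $i$ to be assigned strictly above the rows where it would match from column $i+1$; propagating the greedy-smallest selection upward (and invoking Lemma~\ref{lem:L-minimal-labeling} for the column-$1$ anchoring and Lemma~\ref{lem:lower-half-thread-decomposition} for the interaction between $\setVV$ and $U^+$) ultimately forces the label at $z$ to be long-path, contradicting $z \in U^-$.
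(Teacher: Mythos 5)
Your treatment of the existence half is correct, and so is the reduction of maximality to ruling out a cell $z \in U^-$ in column $i$ at a row $r$ with $\row(y_i) < r \leq \row(y_{i-1})$; the derivation of $\matchable_{U^+}(i+1, r) \geq 1$ for such $r$ (together with $\matchable_{U^+}(i+1, \row(y_i)) = 0$) from the rectification step is also sound. The gap is in what comes next: you explicitly leave ``the hard part'' as a speculative sketch, and as described it does not close. The inequality $\matchable_{U^+}(i+1, r) \geq 1$ constrains only the cardinalities of $U^+$ in columns $i$ and $i+1$; by itself it places no restriction on where cells of $U^-$ sit, since a surplus of $U^+$ cells in a column is entirely compatible with $U^-$ also occupying rows in the same window. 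The claim that ``the excess $\dots$ forces at least one long-path label in column $i$ to be assigned strictly above the rows where it would match from column $i+1$'' is not justified, and the hand-offs to Lemma~\ref{lem:L-minimal-labeling} and Lemma~\ref{lem:lower-half-thread-decomposition} are left unspecified. In particular you never supply the quantitative step that would be needed: why the greedy bottom-up rule for $\Lbl_{\comp{b}+\comp{e}_k}$ cannot place a short-path label $l$ (one with $l \neq k$ and $b_l < c$) at a row strictly between $\row(y_i)$ and $\row(y_{i-1})$.

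Note also that the paper argues on the image side rather than the domain side: it uses Lemma~\ref{lem:image-thread-decomposition} and Proposition~\ref{prop:image-thread-halves} to characterize $\partial_{\comp{a},k}^-(U) = U^- \sqcup \setY$ as exactly the short-thread cells of $\partial_{\comp{a},k}(U)$, and then uses the matchability information, via the greedy choice in the thread decomposition of $\partial_{\comp{a},k}(U)$, to force the cells in the relevant row window of adjacent columns onto long threads, so they cannot lie in $U^-$. Your proposed route via $\Lbl_{\comp{b}+\comp{e}_k}(U)$ on the domain $U$ is a genuinely different and unworked plan; you would need to actually carry out the column-by-column propagation you gesture at, and right now that missing argument is the entire content of the theorem.
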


\begin{proof}
  By Lemma~\ref{lem:image-thread-decomposition} and Proposition~\ref{prop:image-thread-halves}, we have $\partial_{\comp{a},k}^-(U) = \{ x \in \partial_{\comp{a},k}(U) \mid \plength_M(x) < c \}$. Thus it suffices to show at every column $1 \leq i < c$, the cells of $U^- = \partial_{\comp{a},k}^-(U) \setminus \setY$ in column $i$ that are below $y_{i-1}$ are also below $y_i$.

  By definition of $\rect$, we have $\matchable_T(i,\row(y_i)) = 0$, and by Lemma~\ref{lem:rectification}, we have $\matchable_T(i,j) > 0$ for all rows $j$ such that $\row(y_i) < j \leq \row(y_{i-1})$. By the greedy choice of the threading algorithm, this means the cells weakly above $\row(y_i)$ and strictly below $y_{i-1}$ in columns $i-1,i$ are matched with lengths at least $c$. Thus by Proposition~\ref{prop:image-thread-halves}, they cannot lie in $U^-$, and the result follows.
\end{proof}

By Theorem~\ref{thm:moved-cells-recoverable-given-added-col}, if we know the added column $c$ of $U$, then we can recover the cells $\{y_1,y_2,\cdots,y_{c-1}\}$ that moved to obtain $\partial_{\comp{a},k}(U)$. Our next and final task for proving the injectivity of $\partial_{\comp{a},k}(U)$ is to show that the added column $c$ is unique.

The following elementary lemma allows us to adjust a matching sequence on a key diagram to pass through a certain cell at the end of its row.

\begin{lemma}
  Let $T$ be a generic Kohnert diagram, and let $c$ and $r$ be positive integers such that $T$ occupies every position weakly left of column $c$ in row $r$.
  Let $z$ be the cell in position $(c,r)$, and let $w$ be a cell in row $r$ weakly to the left of $z$. 
  
  If there exists a matching sequence $M$ on $T$ such that $\plength_M(w) \geq c$, then there exists a matching sequence $M'$ on $T$ with $\wt(M') \preceq \wt(M)$ such that $\plength_{M'}(z) \geq c$.
  \label{lem:matchings-lengthen-row-k}
\end{lemma}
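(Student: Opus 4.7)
The plan is to show that the conclusion holds trivially with $M' = M$, noting that neither the cell $w$ nor the hypothesis $\plength_M(w) \geq c$ actually plays a role in the argument, since the structural assumption that $T$ fills row $r$ all the way to column $c$ already forces $\plength_M(z) \geq c$ for \emph{every} matching sequence $M$ on $T$.

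The key observation is purely structural. Since $T$ contains every position $(j,r)$ for $1 \leq j \leq c$, each of the columns $1, 2, \ldots, c$ is nonempty in $T$. By Definition~\ref{def:matching_seq}, the restriction of $M$ to any pair of adjacent columns $j, j+1$ with $j+1 \leq c$ is a matching in the sense of Definition~\ref{def:matching}, and condition (1) of that definition gives every cell in column $j+1$ out-degree $1$ into column $j$. Starting from $z$ in column $c$ and iteratively following these out-edges produces a chain $z = x_c, x_{c-1}, \ldots, x_1$ of $c$ distinct cells with $x_j$ in column $j$, all lying in the connected component of $z$. Thus $\plength_M(z) \geq c$.

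Taking $M' = M$, we obtain $\wt(M') = \wt(M) \preceq \wt(M)$ automatically and $\plength_{M'}(z) \geq c$ by the previous paragraph, establishing the lemma. The main ``obstacle'' to a richer proof is simply recognizing that no nontrivial construction is needed: the conclusion is forced by the hypotheses on $T$ alone. Were the conclusion strengthened --- for instance, to require that $z$'s component specifically be anchored at $(1,r)$, or that its length match $\plength_M(w)$ --- one would need an exchange argument at each column $j < c$, replacing the current partner of $(j+1,r)$ with $(j,r)$ and showing via Proposition~\ref{prop:lswap} and Corollary~\ref{cor:TFAE_lswap} that the resulting matching sequence $M'$ has weight below $\wt(M)$ in the left swap order; however, under the stated conclusion, this is unnecessary.
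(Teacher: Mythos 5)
Your structural observation is correct: with the lemma as literally typeset, $\plength_M(z) \geq c$ holds automatically for \emph{every} matching sequence $M$, since $z$ lies in column $c$, each column $1,\dots,c$ is nonempty, and every cell in a nonempty column $j>1$ has out-degree one into column $j-1$ by Definition~\ref{def:matching}(1). Thus $M'=M$ satisfies the stated conclusion and the hypothesis on $w$ is never used.

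However, this should be read as a red flag rather than a shortcut. Look at how the lemma is invoked. In the proof of Lemma~\ref{lem:col-added-bounds}, the target cell $y$ sits at position $(b_k,k)$ with $b_k=c-1$, so that $\kd_{\comp{b}}$ has \emph{no} cell at $(c,k)$; yet the conclusion required there is $\plength_M(y)\geq c$, which exceeds $\col(y)$ by one and forces the component of $y$ to extend \emph{rightward} past its own column. The same pattern appears in the proof of Lemma~\ref{lem:gen-nonstrat-criterion}, where the lemma is applied to the cell $x'$ at $(c-1,k)$ to obtain $\plength(x')\geq c$. In both applications $T$ does \emph{not} occupy position $(c,r)$, the literal hypotheses are not met, and the desired bound is strictly stronger than what column position alone can deliver. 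The exchange argument in the paper's proof is exactly what establishes this stronger claim: it locates the rightmost $w$ in row $r$ with a long path and pushes the long component one column to the right at a time, re-threading the subdiagram $S$ of the two affected components and checking that the new matching still has weight $\preceq \wt(M)$. Under the literal statement, that rightmost $w$ is always $z$ itself, the base case fires immediately, and the inductive case would never execute.

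In short, your proof is sound for the lemma as printed, but you have in effect found an off-by-one error in its statement rather than a genuine triviality in the paper's argument. The version actually used takes $z$ to be the rightmost cell of $T$ in row $r$ and demands a path length reaching a threshold that can exceed $\col(z)$; that is nontrivial and requires the column-by-column swap you correctly anticipated in your final paragraph. Your instinct there was on target — the mathematical content of the lemma lives in that exchange argument, and a proof that stops short of it does not supply what Lemmas~\ref{lem:col-added-bounds} and \ref{lem:gen-nonstrat-criterion} need.
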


\begin{proof}
  Let $M$ be a matching sequence on $T$ and suppose $w$ is the rightmost cell in row $r$ weakly to the left of $z$ such that $\plength_M(w) \geq c$. If $w=z$, then there is nothing to show, so assume $w \neq z$ in which case we may let $x$ be the cell of $T$ to the immediate right of $w$ in row $k$. By induction, it suffices to show that there exists a matching sequence $M'$ on $T$ such that $\plength_{M'}(x) \geq c$.

  If $M(x)=w$, then $M$ is such a matching, so we may assume this is not the case. Therefore $M(x)$ lies strictly above row $k$ and the cell $y$ for which $M(y)=w$ lies strictly below row $k$. Consider the underlying diagram $S$ of the path components of $w$ and $x$ in $M$, and let $\Mtch_{\thd}(S) = P \sqcup Q$, where $P$ has length $\plength_M(w) \geq c$ and $Q$ has length $\plength_M(x) < c$. Since $M(x)$ is above $w$, the latter is threaded first, and so belongs to $P$. If $x$ also belongs to $P$, then we set $N$ to be the thread matching on $S$. Otherwise, $x$ belongs to $Q$, which is strictly shorter than $P$, and so there must exist some pair of cells $u$ and $v$ in some column $i$ weakly to the right of $x$, with the cell $u$ of $Q$ above the cell $v$ of $P$, such that in column $(i+1)$, either the cell of $Q$ is below the cell of $P$ or there is no cell of $Q$ at all. In either case, we may construct a matching sequence $N$ on $S$ from $\Mtch_{\thd}(S)$ by swapping the cells into which $u$ and $v$ thread, setting $N(x)=w$, and, for $y$ the cell that threads into $w$, setting $N(y)$ to be the cell into which $x$ threads. Then $\wt(N) = \thread(S)$, and $\plength_N(x) = \plength_N(w) \geq c$. Use this to define a matching sequence $M' = (M \mid_{T \setminus S}) \sqcup N$ on $T$, so that $\plength_{M'}(x) \geq c$ and via Lemma~\ref{lem:3.7} $\wt(M') = \wt(M \mid_{T \setminus S}) + \thread(S) \preceq \wt(M)$.
\end{proof}

The next three lemmas provide the foundation for proving the uniqueness of the added column of any pre-image of $\partial_{\comp{a},k}(U)$.

\begin{lemma}
  For any $U \in \KDstr$, the added column $c$ of $U$ satisfies $a_k < c$. Moreover, for $\comp{b}$ the excised weight of $U$, we have
  \begin{equation}
    \# \{ j > k \mid b_j \geq c \} = \# \{ j > k \mid a_j \geq c \}
    \label{eq:col-added-bounds}
  \end{equation}
  \label{lem:col-added-bounds}
\end{lemma}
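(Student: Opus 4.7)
The plan is to prove both claims simultaneously by contradiction, exploiting the defining property $U \notin \KDs(\comp{a},k-1)$ of $\KDstr$ as the source of the contradiction.

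First I would reformulate the two conclusions as a single equality. Since $\comp{b} \preceq \comp{a}$ forces $\comp{b}$ and $\comp{a}$ to share the same multiset, $\#\{j : b_j \geq c\} = \#\{j : a_j \geq c\}$, so the second claim is equivalent to $\#\{j \leq k : b_j \geq c\} = \#\{j \leq k : a_j \geq c\}$. Combined with $b_k = c - 1 < c$ and $a_k < c$, both conclusions collapse to the single equality $\#\{i < k : b_i \geq c\} = \#\{i < k : a_i \geq c\}$. The characterization of $\preceq$ gives the opposite weak inequality, so assuming the lemma fails yields the strict inequality $\#\{i < k : b_i \geq c\} \geq \#\{i < k : a_i \geq c\} + 1$; in particular, there is an index $j < k$ with $b_j \geq c$.

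Take $j$ to be the largest such index in $[1, k-1]$ and form $\comp{d}$ by exchanging the entries of $\comp{b}$ at positions $j$ and $k$, so $d_j = c - 1$, $d_k = b_j$, and $d_i = b_i$ otherwise. The goal is to show $U \in \KD(\comp{d} + \comp{e}_j) \subseteq \KDs(\comp{a}, k-1)$, which contradicts $U \in \KDstr$. The weak compositions $\comp{b} + \comp{e}_k$ and $\comp{d} + \comp{e}_j$ agree outside of positions $j$ and $k$, where they carry $(b_j, c)$ and $(c, b_j)$ respectively; since $b_j \geq c$, placing the smaller value $c$ at the earlier position $j$ in $\comp{d} + \comp{e}_j$ puts it weakly above $\comp{b} + \comp{e}_k$ in left swap order. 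Combined with $\thread(U) \preceq \comp{b} + \comp{e}_k$ from Lemma~\ref{lem:3.7}, this yields $\thread(U) \preceq \comp{d} + \comp{e}_j$, so $U \in \KD(\comp{d} + \comp{e}_j)$ once $\comp{d} \preceq \comp{a}$ is established.

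The main obstacle I anticipate is verifying $\comp{d} \preceq \comp{a}$ via the defining inequality $\#\{i \leq j' : d_i \geq m\} \geq \#\{i \leq j' : a_i \geq m\}$ for all pairs $(j', m)$. Outside the range $j \leq j' < k$ it reduces directly to $\comp{b} \preceq \comp{a}$. Inside that range the $\comp{d}$-count drops by exactly one relative to the $\comp{b}$-count precisely when $c \leq m \leq b_j$, so a matching unit of strict slack in the $\comp{b} \preceq \comp{a}$ inequality at that pair is required. The case $m = c$ is supplied directly by the strict inequality derived above, and the maximality of the chosen $j$ keeps the $\comp{b}$-count constant on $[j, k-1]$ for $m \geq c$. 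Extending the strict slack to all larger thresholds $c < m \leq b_j$ is the delicate step, combining the multiset conservation between the counts above and below row $k$ with the constraints on $\comp{a}$ forced by the failure hypothesis; if the single swap turns out to be insufficient for some threshold, the plan is to iterate through a sequence of swaps traversing the left swap interval between $\comp{b}$ and $\comp{a}$ until a valid $\comp{d}$ is reached.
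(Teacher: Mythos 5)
Your overall strategy --- reduce both conclusions to a single counting identity, assume it fails, and derive $U \in \KDs(\comp{a},k-1)$ for a contradiction --- matches the paper's strategy. The reformulation via the multiset constraint on $\preceq$ is correct, and the observation that $\comp{b}+\comp{e}_k \preceq \comp{d}+\comp{e}_j$ (since you move the smaller entry earlier) is also correct, so $U\in\KD(\comp{d}+\comp{e}_j)$ would indeed follow once $\comp{d}\preceq\comp{a}$ is established.

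However, the verification of $\comp{d}\preceq\comp{a}$ is a genuine gap, not merely a ``delicate step.'' You correctly show the needed strict slack $\#\{i\le j': b_i\ge m\} \ge \#\{i\le j': a_i\ge m\}+1$ holds at $m=c$, but for $c<m\le b_j$ nothing in your hypotheses forces it: the $\comp{b}$-count and the $\comp{a}$-count both decrease as $m$ grows, and the difference can collapse to $0$. A concrete illustration of the problem: with $\comp{a}=(1,4,1,3)$, $\comp{b}=(3,4,1,1)$, $k=4$, $c=2$, one has $\comp{b}\preceq\comp{a}$, $b_k=c-1$, and the counting equality fails; your recipe picks $j=2$ and produces $\comp{d}=(3,1,1,4)$, which is \emph{not} $\preceq\comp{a}$ (the prefix count for $m=4$ at $j'=2$ is $0$ for $\comp{d}$ but $1$ for $\comp{a}$). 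In this particular case $U=\kd_{(3,4,1,2)}$ does belong to $\KDs(\comp{a},3)$ --- via $\comp{a}+\comp{e}_3$, a composition that is not a single swap of $\comp{b}$ at all --- so $U\notin\KDstr$ and the lemma is vacuous there, but the example shows that the single-swap $\comp{d}$ you define simply does not land below $\comp{a}$ in general, and the ``iterate through swaps'' fallback is too vague to fill this hole: it is not specified which sequence of swaps to perform, why they terminate, or why the row index stays below $k$. The paper's proof sidesteps this by working directly with matching sequences on the diagram $\kd_{\comp{b}}$: the failure of the counting identity produces a cell $z$ in column $c$ below row $k$ whose Kohnert label (with respect to $\comp{a}$) is at least $k$, and Lemma~\ref{lem:matchings-lengthen-row-k} together with careful label-swapping along matching paths yields a matching $M$ with $\wt(M)\preceq\comp{a}$ passing through row $k$ to column $c$; removing $z$ then produces the right diagram and row index $\row(z)<k$. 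The weak composition one ends up using is $\thread(\kd_{\comp{b}+\comp{e}_k}\setminus\{z\})$, which can differ from $\comp{b}$ by a chain of cyclic shifts, not a single transposition. Without invoking this diagram-level structure --- the Kohnert labeling $\Lbl_{\comp{a}}$ on $\kd_{\comp{b}}$ and the path-component information it carries --- I do not see how a purely arithmetic argument on prefix counts of weak compositions can locate the correct $\comp{d}$ and $j$.
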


\begin{proof}
  From the relation $\comp{b} \preceq \comp{a}$, for any value $c$ we have inequalities
  \begin{eqnarray*}
    \# \{ j > k \mid b_j \geq c \} & \leq & \# \{ j > k \mid a_j \geq c \} , \\
    \# \{ j \geq k \mid b_j \geq c \} & \leq & \# \{ j \geq k \mid a_j \geq c \} .
  \end{eqnarray*}
  Further, since $b_k = c-1$ for $c$ the added column of $U$, the left hand sides above must be equal. Therefore if equality holds for the lower expression, then $a_k < c$, and so Eq.~\eqref{eq:col-added-bounds} follows. Thus it suffices to show 
  \begin{equation}
    \# \{ j > k \mid b_j \geq c \} = \# \{ j \geq k \mid a_j \geq c \}.
    \label{eq:col-added-weight-criterion}
  \end{equation}
  Suppose Eq.~\eqref{eq:col-added-weight-criterion} is false. We will show this implies $U \in \KDs(\comp{a},k-1)$, contradicting $U \in \KDstr$, thereby proving Eq.~\eqref{eq:col-added-weight-criterion} and so, too, Eq.~\eqref{eq:col-added-bounds}.

  Let $y$ be the cell in position $(b_k,k)$ of the key diagram $\kd_{\comp{b}}$. We claim there exists a matching sequence $M$ on $\kd_{\comp{b}}$ with $\wt(M) \preceq \comp{a}$ such that $\plength_M(y) \geq c$. The key diagram $\kd_{\comp{b}}$ is a Kohnert diagram of $\comp{a}$ by Proposition~\ref{prop:lswap}, so
  by Definition~\ref{def:kohnert-label}, we may consider the labeling $\Lbl_{\comp{a}}$ on $\kd_{\comp{b}}$, with corresponding matching sequence $\wt(\Mtch_{\comp{a}}) \preceq \comp{a}$.
  If $\plength_{\Mtch_{\comp{a}}}(y) \geq c$, then we may take $M = \Mtch_{\comp{a}}$. Otherwise, by Lemma~\ref{lem:matchings-lengthen-row-k}, we may assume every cell in row $k$ of $\kd_{\comp{b}}$ belongs to a path component in $\Mtch_{\comp{a}}$ strictly shorter than $c$. If Eq.~\eqref{eq:col-added-weight-criterion} is false, then there exists a cell $z$ in column $c$ of $\kd_{\comp{b}}$ such that $z$ is below row $k$ but $L(z) \geq k$. 
  Let $i$ be the leftmost column such that the cell on the path component of $\Mtch_{\comp{a}}$ containing $z$ lies strictly below row $k$. If $i = c$, then we may take $M=\Mtch_{\comp{a}}$ for all cells other than $z$ and set $M(z)=y$. Then $\wt(M) \prec \wt(\Mtch_{\comp{a}}) \preceq \comp{a}$ and $\plength_M(y) = \plength_{M_0}(z) \geq c$. Else if $i<c$, then let $w$ denote the cell in position $(i,k)$. By our assumption about the lengths of paths in $\Mtch_{\comp{a}}$ for cells in row $k$, we have $\plength_{\Mtch_{\comp{a}}}(w) < c \leq \plength_{\Mtch_{\comp{a}}}(z)$. Thus there exists some column maximal column $t \geq i$ such that the cells on the path component of $\Mtch_{\comp{a}}$ containing $w$ lie above the cells on the path component of $\Mtch_{\comp{a}}$ containing $z$ in every column $s$ between $i$ and $t$.
  Define $L'$ to be the labeling derived from $L$ by swapping the labels of the cells in the path components of $\Mtch_{\comp{a}}$ containing $w$ and $z$ for every column $s$ between $i$ and $t$.
  Then $\wt(L') = \wt(L) = \comp{a}$. The same argument used in the proof of Proposition~\ref{prop:label-wt} applies to the underlying matching sequence of $L'$, and so we obtain a matching sequence $\Mtch_{L'}$ on $\kd_{\comp{b}}$ with $\wt(\Mtch_{L'}) \preceq \comp{a}$ such that $\plength_M(w) = \plength_{\Mtch_{\comp{a}}}(z) \geq c$, proving the claim.    

  Let $M$ be any matching sequence on $\kd_{\comp{b}}$ with $\wt(M) \preceq \comp{a}$ and $\plength_M(y) \geq c$, and let $z$ be the cell in column $c$ for which $M(z) = y$. Let $z'$ denote the cell in position $(c,k)$ of the key diagram $\kd_{\comp{b} + \comp{e}_k}$. Define a matching sequence $M'$ on $\kd_{\comp{b} + \comp{e}_k} \setminus \{z\}$ from $M$ by setting $M'(z') = y$, $M'(x) = z'$ if $M(x)=z$, and $M'(x)=M(x)$ otherwise. Then $\wt(M') = \wt(M) \preceq \comp{a}$. Thus, by Theorem~\ref{thm:TFAE_kohnert}, the diagram $\kd_{\comp{b} + \comp{e}_k} \setminus \{z\}$ is a Kohnert diagram of $\comp{a}$, so that by Lemma~\ref{lem:3.7}, we have $\thread(\kd_{\comp{b} + \comp{e}_k} \setminus \{z\}) \preceq \comp{a}$. Define another matching sequence $N$ on $\kd_{\comp{b} + \comp{e}_k}$ with anchor weight $\thread(\kd_{\comp{b} + \comp{e}_k} \setminus \{z\}) + \comp{e}_{\row(z)}$ by taking $\Mtch_{\thd}(\kd_{\comp{b} + \comp{e}_k} \setminus \{z\})$ and then matching $z$ into the cell to its immediate left in $\kd_{\comp{b} + \comp{e}_k}$. By Theorem~\ref{thm:TFAE_kohnert}, $\kd_{\comp{b} + \comp{e}_k}$ is a Kohnert diagram of $\thread(\kd_{\comp{b} + \comp{e}_k} \setminus \{z\}) + \comp{e}_{\row(z)}$, and so by Lemma~\ref{lem:3.7}, 
  \[ \thread(U) \preceq \comp{b} + \comp{e}_k \preceq \thread(\kd_{\comp{b} + \comp{e}_k} \setminus \{z\}) + \comp{e}_{\row(z)}. \]
  Since $\thread(\kd_{\comp{b} + \comp{e}_k} \setminus \{z\}) \preceq \comp{a}$ and $\row(z) < k$, we conclude that $U \in \KDs(\comp{a},k-1)$.
\end{proof}

\begin{lemma}
  For any $U \in \KDstr$, the diagram $\partial_{\comp{a},k}^+(U)$ is a Kohnert diagram of $\thread(U^+)$, and the diagram $\partial_{\comp{a},k}^-(U)$ is a Kohnert diagram of $\thread(U^-) + (c-1) \comp{e}_k$. Thus $\partial_{\comp{a},k}(U) = \partial_{\comp{a},k}^+(U) \sqcup \partial_{\comp{a},k}^-(U)$ is a Kohnert diagram of $\comp{b}$, and $\thread(\partial_{\comp{a},k}(U)) \preceq \comp{b}$.
  \label{lem:image-thread-weight}
\end{lemma}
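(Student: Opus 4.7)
The plan is to build explicit matching sequences of the required weights on the two halves $\partial_{\comp{a},k}^+(U)$ and $\partial_{\comp{a},k}^-(U)$, and then invoke Theorem~\ref{thm:TFAE_kohnert} together with Lemma~\ref{lem:3.7} to conclude, combining by disjoint union for the final assertion about $\partial_{\comp{a},k}(U)$.

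For $\partial_{\comp{a},k}^+(U)$, I would start from the splitting $\partial_{\comp{a},k}^+(U) = (U^+ \setminus \setVV) \sqcup (\rectify(U^+_* \cap \setVV) \setminus \setY)$ guaranteed by Lemma~\ref{lem:same-moved-cells}. On $U^+ \setminus \setVV$, a union of complete components of $\Mtch_{\comp{b}+\comp{e}_k}(U)$ anchored strictly above row $k$, the restriction $\Mtch_{\comp{b}+\comp{e}_k}|_{U^+ \setminus \setVV}$ is already a matching sequence. On $\rectify(U^+_* \cap \setVV) \setminus \setY$, Lemma~\ref{lem:rectification-thread-weight} provides a thread-decomposition matching of weight $\thread(U^+ \cap \setV)$. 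Since these two subdiagrams are vertically separated by row $k$, the two matchings glue into a matching sequence on $\partial_{\comp{a},k}^+(U)$, whose weight I would then compare term-by-term against $\wt(\Mtch_{\comp{b}+\comp{e}_k}|_{U^+})$: the top piece matches rows above $k$ exactly, while the bottom piece refines the $U^+ \cap \setVV$ contribution through left-swap order once the $\setW$ thread is dropped. Theorem~\ref{thm:TFAE_kohnert} and Lemma~\ref{lem:3.7} then yield membership in $\KD(\thread(U^+))$.

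For $\partial_{\comp{a},k}^-(U) = U^- \sqcup \setY$, I would use $\Mtch_{\thd}(U^-)$ of weight $\thread(U^-)$ on the first piece; since $U^-$ is a union of components of $\Mtch_{\comp{b}+\comp{e}_k}(U)$ of length strictly less than $c$, it is a generic Kohnert diagram. On $\setY = \{y_1,\dots,y_{c-1}\}$, Lemma~\ref{lem:rectification} places $y_i$ in column $i$ with $k \geq \row(y_1) \geq \cdots \geq \row(y_{c-1})$, so matching $y_{i+1} \to y_i$ yields a single path of length $c-1$ anchored at $y_1$. Gluing produces a matching sequence of weight $\thread(U^-) + (c-1)\comp{e}_{\row(y_1)} \preceq \thread(U^-) + (c-1)\comp{e}_k$, and Theorem~\ref{thm:TFAE_kohnert} gives the claim. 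Finally, combining both halves, the resulting matching on $\partial_{\comp{a},k}(U)$ has weight at most $\wt(\Mtch_{\comp{b}+\comp{e}_k}) - \comp{e}_k = \comp{b}$ by Lemma~\ref{lem:L-minimal-labeling}, and Theorem~\ref{thm:TFAE_kohnert} together with Lemma~\ref{lem:3.7} delivers both $\partial_{\comp{a},k}(U) \in \KD(\comp{b})$ and $\thread(\partial_{\comp{a},k}(U)) \preceq \comp{b}$.

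The main obstacle will be verifying that the matchings on the two pieces of $\partial_{\comp{a},k}^-(U)$ glue consistently across the boundary between $U^-$ and $\setY$: I need to check that no cell of $U^-$ sits in a position relative to some $y_i$ that would violate Definition~\ref{def:matching}(3) for the combined matching across adjacent columns. This uses the structural information from Lemmas~\ref{lem:rectification} and \ref{lem:moved-cells-in-lower-half}, together with the disjointness argument from the proof of Theorem~\ref{thm:stratum-image}, which together constrain where the $y_i$ can lie relative to the components of $U^-$ and force the thread anchor to land weakly below row $k$.
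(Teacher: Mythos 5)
Your overall strategy --- decompose via Lemma~\ref{lem:same-moved-cells}, equip each piece with a matching sequence via Lemmas~\ref{lem:rectification-thread-weight} and~\ref{lem:rectification}, glue, and apply Theorem~\ref{thm:TFAE_kohnert} and Lemma~\ref{lem:3.7} --- is essentially the same as the paper's, and you identify the right supporting lemmas. Two issues deserve attention.

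First, the justification ``vertically separated by row $k$'' for why the matchings on the two halves of $\partial_{\comp{a},k}^+(U)$ glue is incorrect: cells of $U^+ \setminus \setVV$ have $\Lbl_{\comp{b}+\comp{e}_k}$-labels exceeding $k$, but the label records the anchor row of a path in column $1$, not the row of the cell itself, and such cells routinely occupy rows $\leq k$ (in Fig.~\ref{fig:stratum-partition}, row-$2$ cells carry label $6$). Fortunately no geometric separation is needed: a disjoint union of two matching sequences on disjoint diagrams is automatically a matching sequence on the union, since every column-$(i+1)$ cell is already matched within its own piece and in-degrees remain at most one. That same observation disposes of the worry in your final paragraph about the $U^-$/$\setY$ boundary; what genuinely needs checking there is only that the underlying \emph{diagrams} are disjoint, and that is established in the proof of Theorem~\ref{thm:stratum-image}, which you cite.

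Second, the weight accounting at the end is asserted rather than derived. ``At most $\wt(\Mtch_{\comp{b}+\comp{e}_k}) - \comp{e}_k = \comp{b}$'' is indeed the conclusion, but you need to exhibit the chain that produces it. The paper computes $\wt(M^+) = \thread(U^+ \setminus \setVV) + \thread(U^+ \cap \setV)$ from the disjoint union of thread decompositions, bounds $\wt(M^-) \preceq \thread(U^-) + (c-1)\comp{e}_k$, then bounds each $\thread$ term by the corresponding restriction of $\wt(\Lbl_{\comp{b}+\comp{e}_k})$ via Theorem~\ref{thm:TFAE_kohnert} and Lemma~\ref{lem:3.7}, and finally uses $b_k = c-1$ to telescope: the $c\,\comp{e}_k$ contributed by the $\setW$-path in the labeling weight is replaced by the $(c-1)\comp{e}_k$ from $\setY$, leaving exactly $\comp{b}$. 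Note also that these additions require the summands to have supports in disjoint sets of rows (anchors strictly above $k$, strictly below $k$, and at most $k$ from $\setY$), since $\preceq$ is not preserved under arbitrary coordinate-wise addition. This bookkeeping is the content of the lemma and has to be written out; the phrase ``compare term-by-term'' does not by itself establish the bound.

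A minor difference: on $U^+ \setminus \setVV$ you take $\Mtch_{\comp{b}+\comp{e}_k}|_{U^+ \setminus \setVV}$ where the paper takes $\Mtch_{\thd}(U^+ \setminus \setVV)$. Either yields a matching sequence, but the paper's choice gives the weight $\thread(U^+ \setminus \setVV)$ directly, which makes the final comparison cleaner.
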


\begin{proof}
  Recall $\partial_{\comp{a},k}(U) = \partial_{\comp{a},k}^+(U) \sqcup \partial_{\comp{a},k}^-(U)$, where $\partial_{\comp{a},k}^-(U) = U^- \sqcup \setY$ and, by Lemma~\ref{lem:same-moved-cells}, we have $\partial_{\comp{a},k}^+(U) = (U^+ \setminus \setVV) \sqcup \rectify(U^+_* \cap \setVV) \setminus \setY$. By Lemma~\ref{lem:rectification-thread-weight}, we have $\thread(\rectify(U^+_* \cap \setVV) \setminus \setY) = \thread(U^+ \cap \setV)$ giving a matching sequence $M^+ = \Mtch_{\thd}(U^+ \setminus \setVV) \sqcup \Mtch_{\thd}(\rectify(U^+_* \cap \setVV) \setminus \setY)$ on $\partial_{\comp{a},k}^+(U)$ with anchor weight $\wt(M^+) = \thread(U^+ \setminus \setVV) + \thread(U^+ \cap \setV) = \thread(U^+)$. On the other hand, Lemma~\ref{lem:rectification} implies the diagram $\setY$ is a Kohnert diagram of $(c-1) \comp{e}_k$, giving a matching sequence $M^- = \Mtch_{\thd}(U^-) \sqcup \Mtch_{\thd}(\setY)$ with anchor weight $\wt(M^-) \preceq \thread(U^-) + (c-1) \comp{e}_k$. By Theorem~\ref{thm:TFAE_kohnert}, $\partial_{\comp{a},k}^+(U)$ is a Kohnert diagram of $\thread(U^+)$ and $\partial_{\comp{a},k}^-(U)$ is a Kohnert diagram of $\thread(U^-) + (c-1) \comp{e}_k$.
  
  Combining $M^+$ and $M^-$ gives a matching sequence $M$ on $\partial_{\comp{a},k}(U)$ with anchor weight $\wt(M) \preceq \thread(U^-) + \thread(U^+) + (c-1) \comp{e}_k$. By Theorem~\ref{thm:TFAE_kohnert}, we have $U^{\pm} \in \KD(\wt(L \mid_{U^{\pm}}))$, which by Lemma~\ref{lem:3.7} implies $\thread(U^{\pm}) \preceq \wt(L \mid_{U^{\pm}})$.  Hence
  $$
  \thread(U^+) + \thread(U^-) \preceq \wt(L \mid_{U^+}) + \wt(L \mid_{U^-}) = \wt(L \mid_{U^+ \sqcup U^-}) = \comp{b} - b_k \comp{e}_k.
  $$
  Since $b_k = c-1$, it follows that $\wt(M) \preceq \comp{b} - (c-1) \comp{e}_k + (c-1) \comp{e}_k = \comp{b}$. We conclude $\partial_{\comp{a},k}(U) \in \KD(\comp{b})$ by Theorem~\ref{thm:TFAE_kohnert}, and so $\thread(\partial_{\comp{a},k}(U)) \preceq \comp{b}$ by Lemma~\ref{lem:3.7}.
\end{proof}

\begin{lemma}
  For $U \in \KDstr$, $c$ the added column and $\comp{b}$ the excised weight of $U$, the weight of the thread decomposition of $\partial_{\comp{a},k}^+(U)$ is
  \[ \thread(\partial_{\comp{a},k}^+(U))_j =
  \begin{cases}
    b_j & \mbox{ if } b_j \geq c \\
    0 & \mbox{ otherwise.}
  \end{cases} \]
  In particular, we have
  \begin{equation}
    \# \{ j > k \mid b_j \geq c \} = \# \{ j > k \mid (\thread(\partial_{\comp{a},k}(U))_j \geq c \}.
    \label{eq:same-upper-longer-half-column-weight}
  \end{equation}
  \label{lem:same-longer-parts}
\end{lemma}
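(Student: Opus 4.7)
The plan is to first pin down $\partial_{\comp{a},k}^+(U)$ combinatorially, read off its thread weight from the Kohnert labeling, and then derive the cardinality identity.

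First, I would show that $\partial_{\comp{a},k}^+(U) = U^+ \setminus \setW$ by identifying the rectification path as $\setW_* = \{w_2,\ldots,w_c\}$. Using Lemma~\ref{lem:same-moved-cells}, I would work with $\rect$ applied to $U^+_* \cap \setVV$ and show inductively that $y_i = w_{i+1}$: since cells in $\setV$ sit strictly below row $k$ and cells in $\setW$ sit at row $k$, the subdiagram $U^+ \cap \setVV$ has no cells above row $k$, so once the first $i-1$ rectification steps have relocated $w_2,\ldots,w_i$ one column leftward, the measure $\matchable_{\rect^{i-1}(U^+_* \cap \setVV)}(i+1,r)$ vanishes for $r > k$ and equals $-1$ at $r = k$, where the only available cell is $w_{i+1}$. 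Thus the cells of $\setY$ in their final positions occupy $(1,k),\ldots,(c-1,k)$, and since the only row-$k$ cells of $U^+$ in columns $1$ through $c$ are $w_1,\ldots,w_c$, removing $\setY$ from $\rectify(U^+_*)$ cancels the relocated row-$k$ cells and leaves $\partial_{\comp{a},k}^+(U) = U^+ \setminus \setW$.

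Next, writing $\widehat{\comp{b}}$ for the weak composition with $\widehat{b}_j = b_j$ when $b_j \geq c$ and $\widehat{b}_j = 0$ otherwise, I would show $\thread(U^+ \setminus \setW) = \widehat{\comp{b}}$. By construction $U^+ \setminus \setW$ is the union of those components of $\Mtch_{\comp{b}+\comp{e}_k}(U)$ labeled $l \neq k$ with $b_l \geq c$, so the restriction of $\Lbl_{\comp{b}+\comp{e}_k}(U)$ to $U^+ \setminus \setW$ assigns to each column $j$ the label set $\{l : b_l \geq \max(c,j)\}$, which agrees with the set required by $\Lbl_{\widehat{\comp{b}}}(U^+ \setminus \setW)$. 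Since restricting the label pool cannot introduce a smaller valid choice at any cell, this restriction coincides with $\Lbl_{\widehat{\comp{b}}}(U^+ \setminus \setW)$, and combined with Lemma~\ref{lem:L-minimal-labeling} the induced matching has anchor weight exactly $\widehat{\comp{b}}$. Mimicking the strategy of Lemma~\ref{lem:complement-thread-decomposition} and applying Proposition~\ref{prop:matching-thread-decomposition-specializes-Kohnert-labeling}, I would then conclude $\Mtch_{\thd}(U^+ \setminus \setW) = \Mtch_{\widehat{\comp{b}}}(U^+ \setminus \setW)$ and hence $\thread(U^+ \setminus \setW) = \widehat{\comp{b}}$, which is the claimed formula.

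Finally, for the cardinality identity, Lemma~\ref{lem:image-thread-decomposition} splits $\Mtch_{\thd}(\partial_{\comp{a},k}(U))$ as $\Mtch_{\thd}(\partial_{\comp{a},k}^+(U)) \sqcup \Mtch_{\thd}(\partial_{\comp{a},k}^-(U))$, and Proposition~\ref{prop:image-thread-halves} certifies that the threads of length at least $c$ lie entirely in $\partial_{\comp{a},k}^+(U)$. Hence the indices $j$ with $\thread(\partial_{\comp{a},k}(U))_j \geq c$ coincide with the support of $\thread(\partial_{\comp{a},k}^+(U)) = \widehat{\comp{b}}$, namely $\{j : b_j \geq c\}$, and restricting to $j > k$ gives Eq.~\eqref{eq:same-upper-longer-half-column-weight}. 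The main obstacle will be the reverse containment $\widehat{\comp{b}} \preceq \thread(U^+ \setminus \setW)$ in the second step: the restricted matching yields $\thread \preceq \widehat{\comp{b}}$ immediately, but equality will require carefully adapting the extension trick from the proof of Lemma~\ref{lem:complement-thread-decomposition}, likely by assembling a matching sequence on a suitable key diagram and invoking Lemma~\ref{lem:criterion-interval-weak-comp} to leverage the defining property of the excised weight $\comp{b}$.
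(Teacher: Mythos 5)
The first step of your argument rests on a false premise: you assert that ``cells in $\setW$ sit at row $k$,'' but Lemma~\ref{lem:L-minimal-labeling} only guarantees this for the cell $w_1$ in the first column. In general the label-$k$ cell $w_j$ in column $j>1$ can lie strictly below row $k$, and indeed the paper's own running example does this: in Fig.~\ref{fig:stratum-partition} with $\comp{a}=(1,5,2,1,2,6,3)$ and $k=3$, the cell $w_4$ sits at position $(4,2)$, not row $3$. Consequently your induction breaks down at its core: $\matchable_{\rect^{i-1}(U^+_* \cap \setVV)}(i+1,r)$ is not forced to equal $-1$ precisely at $r=k$, because there need not even be a cell at position $(i+1,k)$. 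The weakly decreasing row sequence $k \geq \row(y_1) \geq \cdots \geq \row(y_{c-1})$ in Lemma~\ref{lem:rectification}(2) is only an inequality precisely because the rectification path can descend below row $k$, as seen in Fig.~\ref{fig:stratum-map} where the third step moves the cell at row $2$.

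This undermines the claim $\partial_{\comp{a},k}^+(U) = U^+ \setminus \setW$. What the paper actually proves, in Lemma~\ref{lem:rectification-thread-weight}, is the weaker but sufficient statement that $\thread(\rectify(U^+_* \cap \setVV) \setminus \setY) = \thread(U^+ \cap \setV)$ as \emph{thread weights}, using the machinery of highest versus lowest removable cells and Corollary~\ref{cor:weak-kohnert-thread-decomposition}; that machinery exists precisely because the diagrams $\rectify(U^+_* \cap \setVV)\setminus\setY$ and $U^+\cap\setV$ need not coincide. Your steps 2 and 3 then compute the thread weight of a diagram that may differ from $\partial_{\comp{a},k}^+(U)$. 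Even granting the thread-weight coincidence, you would still need to justify that threading the disjoint union $\partial_{\comp{a},k}^+(U) = (U^+ \setminus \setVV) \sqcup (\rectify(U^+_* \cap \setVV)\setminus\setY)$ yields the sum of the parts' thread weights, which is itself not automatic. The paper sidesteps both issues by a sandwich argument: it glues $\Mtch_{\thd}(\partial_{\comp{a},k}^+(U)) \sqcup \Mtch_{\thd}(\setY\sqcup\{w_1\}) \sqcup \Mtch_{\thd}(U^-)$ into a matching sequence on $U$, obtains the lower bound $\thread(U) \preceq \thread(\partial_{\comp{a},k}^+(U)) + \thread(U^-) + c\comp{e}_k$, pairs it with an upper bound $\preceq \comp{b}+\comp{e}_k$ coming from the Kohnert labeling, and then forces equality via Lemma~\ref{lem:criterion-interval-weak-comp}, which you reserve for a role it does not play in your sketch. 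Your final step deducing Eq.~\eqref{eq:same-upper-longer-half-column-weight} from the formula via Lemma~\ref{lem:image-thread-decomposition} and Proposition~\ref{prop:image-thread-halves} is fine, but it rests on the earlier gaps.
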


\begin{proof}
  By construction of $\partial_{\comp{a},k}^+(U)$, we have $\partial_{\comp{a},k}^+(U) \sqcup \setY = U^+_* = U^+ \setminus \{w_1\}$, where we regard $\setY$ as a sub-diagram of $U$ (pre-rectification). So then the diagram $U$ can be partitioned as
  \[ U = (U^+) \sqcup U^- = \partial_{\comp{a},k}^+(U) \sqcup (\setY \sqcup \{w_1\}) \sqcup U^-. \]
  Lemma~\ref{lem:rectification} implies the diagram $\setY \sqcup \{w_1\}$ is a generic Kohnert diagram, where since $w_1$ is in row $k$ by Lemma~\ref{lem:L-minimal-labeling}, we have $\thread(\setY \sqcup \{w_1\}) = c \comp{e}_k$.
  Thus, there exists a matching sequence
  $M = \Mtch_{\thd}(\partial_{\comp{a},k}^+(U)) \sqcup \Mtch_{\thd}(\setY \sqcup \{w_1\}) \sqcup \Mtch_{\thd}(U^-)$ on $U$,
  with anchor weight $\wt(M) = \thread(\partial_{\comp{a},k}^+(U)) + c \comp{e}_k + \thread(U^-)$.
  By Theorem~\ref{thm:TFAE_kohnert}, we obtain
  $\thread(U) \preceq \thread(\partial_{\comp{a},k}^+(U)) + \thread(U^-) + c \comp{e}_k$.
  
  On the other hand, since $\partial_{\comp{a},k}^+(U)$ is a Kohnert diagram of $\thread(U^+)$ by Lemma~\ref{lem:image-thread-weight}, it follows by Lemma~\ref{lem:3.7} that $\thread(\partial_{\comp{a},k}^+(U)) \preceq \thread(U^+)$. By Theorem~\ref{thm:TFAE_kohnert}, we have $U^{\pm} \in \KD(\wt(L \mid_{U^{\pm}}))$, which by Lemma~\ref{lem:3.7} implies $\thread(U^{\pm}) \preceq \wt(L \mid_{U^{\pm}})$. Hence
  \[ \thread(U^+) + \thread(U^-) \preceq \wt(L \mid_{U^+}) + \wt(L \mid_{U^-}) = \wt(L \mid_{U^+ \sqcup U^-}) = \comp{b} - b_k \comp{e}_k. \]
  Therefore we have
  \[ \thread(\partial_{\comp{a},k}^+(U)) + \thread(U^-) + c \comp{e}_k \preceq \thread(U^+) + \thread(U^-) + c \comp{e}_k \preceq \comp{b} + (c - b_k) \comp{e}_k = \comp{b} + \comp{e}_k. \]
  Since $\thread(U) \preceq \thread(\partial_{\comp{a},k}^+(U)) + \thread(U^-) + c \comp{e}_k \preceq \comp{b} + \comp{e}_k$, where the $k$th part is given by precisely $b_k + 1 = c$, Lemma~\ref{lem:criterion-interval-weak-comp} implies we must have
  \[ \thread(\partial_{\comp{a},k}^+(U)) + \thread(U^-) + c \comp{e}_k = \comp{b} + \comp{e}_k. \]
  Now $b_k = c-1 < c$, and by the definition of $U^-$, all parts of $\thread(U^-)$ are necessarily less than $c$. Thus $\thread(\partial_{\comp{a},k}^+(U)) = \comp{b}^+$, and Eq.~\ref{eq:same-upper-longer-half-column-weight} follows.
\end{proof}

Finally, we combine these results to prove for $U \in \KDstr$, the diagram $U$ is the unique pre-image of the diagram $\partial_{\comp{a},k}(U)$ under the $k$th stratum map $\partial_{\comp{a},k}$ of $\comp{a}$. That is, $\partial_{\comp{a},k}$ is injective.

\begin{proof}[Proof of Theorem~\ref{thm:stratum-inject}.]
  Let $U \in \KDstr$. By Lemma~\ref{lem:rectification}, we know exactly how each $y_i \in \setY$ moved in constructing $\partial_{\comp{a},k}(U)$. Moreover, by Lemma~\ref{lem:L-minimal-labeling}, the excised cell $w_1$ of $U$ is in position $(1,k)$. Thus, by Theorem~\ref{thm:moved-cells-recoverable-given-added-col}, we may completely recover the Kohnert diagram $U \in \KDstr$ from its image under $\partial_{\comp{a},k}$ provided we know the added column $c$ of $U$. It follows that if we have a different diagram $U' \in \KDstr$ such that $\partial_{\comp{a},k}(U') = \partial_{\comp{a},k}(U)$, then $U'$ must come equipped with a distinct added column $c' \neq c$ and consequently have an excised weight $\comp{b'} \neq \comp{b}$, since by construction we would get $b'_k = c'-1 \neq c-1 = b_k$.

  Suppose, for contradiction, $U' \in \KDstr$ with added column $c' > c$ and excised weight $\comp{b'} \neq \comp{b}$ satisfies $\partial_{\comp{a},k}(U') = \partial_{\comp{a},k}(U)$. By Eq.~\eqref{eq:same-upper-longer-half-column-weight} of Lemma~\ref{lem:same-longer-parts} and Eq.~\eqref{eq:col-added-bounds} of Lemma~\ref{lem:col-added-bounds}, we have 
  \[\# \{ j > k \mid (\thread(\partial_{\comp{a},k}(U)))_j \geq c \} = \# \{ j > k \mid b_j \geq c \} = \# \{ j > k \mid a_j \geq c \}. \]
  
  By Lemma~\ref{lem:excision}~(2), $\comp{b'} \preceq \comp{a}$, and by Lemma~\ref{lem:col-added-bounds}, we have $c > a_k$, so that by our assumption, we have $b' = c'-1 \geq c > a_k$, and so 
  \[ \# \{ j > k \mid b'_j \geq c \} < \# \{ j > k \mid a_j \geq c \}. \]
  However, since $\thread(\partial_{\comp{a},k}(U)) \preceq \comp{b'}$, by Lemma~\ref{lem:image-thread-weight}, we have
  \[\# \{ j > k \mid (\thread(\partial_{\comp{a},k}(U)))_j \geq c \} \geq \# \{ j > k \mid a_j \geq c \}. \]
  These two inequalities directly contradict the earlier one, and so $c$ is unique.
\end{proof}

%
\section{Key applications}
%
\label{sec:applications}

In its more general form, Pieri's rule \cite{Pie93} gives an elegant formula for the Schur expansion of the product of a Schur polynomial with a \emph{single row} Schur polynomial as the sum over all ways to add $m$ cells successively to the diagram of the partition with no two cells added in the same column. We call these added cells a \emph{horizontal $m$-strip}. The same bijective proof using the Robinson--Schensted--Knuth insertion algorithm applies to this more general case as well, due to properties of the algorithm when inserting successive entries that are weakly decreasing. Analogous to this, in Section~\ref{sec:applications-hor} we present our final main result giving a similar bijection that takes a pair of Kohnert diagrams, the latter being a horizontal strip, and maps it bijectively to another Kohnert diagram. We prove the bijection in Section~\ref{sec:applications-bij} by considering further properties of the bijection for the single cell case. Finally, in Section~\ref{sec:applications-pos}, we look closely at our formula to characterize when the product is nonnegative, giving simpler formulas for certain special cases, including the geometrically important \emph{Schubert polynomials} of Lascoux and Sch{\"u}tzenberger \cite{LS82}.

\subsection{Horizontal strips}
\label{sec:applications-hor}

As with the single cell case, an elegant combinatorial proof uses the RSK insertion algorithm \cite{Rob38,Sch61,Knu70} together with a row-bumping lemma that considers successive insertions.

\begin{theorem}[\cite{Sch61}]
  Given any partition $\lambda$ of length at most $n$ and any positive integer $m$, there exists a weight-preserving bijection
  \begin{equation}
    \SSYT_n(\lambda) \times \SSYT_n(m) \stackrel{\sim}{\longrightarrow} \bigsqcup_{\substack{  \mu \supset \lambda \\ \mu/\lambda \ \mathrm{hor.} \ m-\mathrm{strip} }} \SSYT_n(\mu) ,
    \label{e:RSK-pieri}
  \end{equation}
  where $\mu/\lambda$ is the set-theoretic difference of the diagrams of $\mu$ and $\lambda$.
  \label{thm:RSK-pieri}
\end{theorem}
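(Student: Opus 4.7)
The plan is to derive Theorem~\ref{thm:RSK-pieri} from the single-cell case (Theorem~\ref{thm:RSK}) by iterating the insertion, reading the entries of the single-row tableau from left to right. Concretely, given $(T, U) \in \SSYT_n(\lambda) \times \SSYT_n(m)$, write the entries of $U$ as $u_1 \leq u_2 \leq \cdots \leq u_m$. Set $T_0 = T$ and, for $1 \leq i \leq m$, define $T_i$ to be the image of $(T_{i-1}, \tau_i)$ under the bijection of Theorem~\ref{thm:RSK}, where $\tau_i \in \SSYT_n(1)$ is the single cell carrying entry $u_i$. Let $\lambda = \mu^{(0)} \subset \mu^{(1)} \subset \cdots \subset \mu^{(m)} = \mu$ denote the shapes of $T_0, T_1, \ldots, T_m$, so that each $\mu^{(i)}/\mu^{(i-1)}$ is a single cell $c_i$. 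Weight preservation of the composition is immediate from the weight-preserving property of each single-cell step.

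The crux is to verify that $\mu/\lambda = \{c_1, \ldots, c_m\}$ is a horizontal strip, i.e.\ no two $c_i$'s lie in the same column. This is the classical row-bumping lemma for RSK: if one successively inserts $u_i \leq u_{i+1}$ into a semistandard tableau, then the bumping path of $u_{i+1}$ lies strictly to the right of and weakly above the bumping path of $u_i$ in every row where both paths exist, so the terminal cell $c_{i+1}$ lies strictly right of $c_i$ in the same row or strictly above $c_i$. I would prove this by induction on the row index, using the fact that at each row the bumped entry during the insertion of $u_{i+1}$ is weakly greater than the one bumped by $u_i$ from the same row (since $u_{i+1} \geq u_i$ and semistandardness forces the chosen bumping column to move weakly right). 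In particular, no two of the $c_i$'s share a column, which is the horizontal strip condition on $\mu/\lambda$.

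For the inverse direction, given $T' \in \SSYT_n(\mu)$ with $\mu/\lambda$ a horizontal $m$-strip, I would invert the single-cell bijection at the cells of $\mu/\lambda$ in reverse order — from right to left across the strip. Each reverse-insertion pops out an entry, and the row-bumping lemma run backwards shows these popped entries $u_m, u_{m-1}, \ldots, u_1$ come out in weakly decreasing order, hence assemble (after reversal) into a valid single-row tableau in $\SSYT_n(m)$. The horizontal strip hypothesis is exactly what is needed to guarantee that reverse-insertion from the rightmost unfilled cell always acts on the correct outer corner, so that the process terminates at a tableau of shape $\lambda$. The two constructions are mutually inverse because each is built stepwise from the single-cell bijection and its inverse.

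The main obstacle is the row-bumping lemma and its converse — rigorously controlling how the bumping paths for successive weakly increasing insertions sit relative to one another, and showing that reverse-insertion at the cells of a horizontal strip taken right-to-left yields a weakly decreasing sequence of popped entries. Everything else (weight preservation, bijectivity on each single-cell step) reduces immediately to Theorem~\ref{thm:RSK}; once the row-bumping comparison is established, the disjointness of the union on the right-hand side of \eqref{e:RSK-pieri} follows because different $\mu$'s correspond to different shape sequences, and the entire chain $(T_0, T_1, \ldots, T_m)$ — hence the original pair $(T, U)$ — can be reconstructed from $T_m$ and the cell set $\mu/\lambda$.
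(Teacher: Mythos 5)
Your proposal is correct and is precisely the classical row-bumping argument the paper alludes to in the paragraph immediately preceding Theorem~\ref{thm:RSK-pieri}: iterate the single-cell RSK insertion of Theorem~\ref{thm:RSK} with the weakly increasing entries of the one-row tableau, appeal to the bumping-path comparison to conclude the successively added cells occupy strictly increasing columns (so $\mu/\lambda$ is a horizontal strip), and invert via right-to-left reverse-insertion at the cells of $\mu/\lambda$, with the converse of the bumping lemma ensuring the popped entries come out weakly decreasing. This matches the paper's one-sentence sketch of the proof of this cited classical result, and nothing essential is missing; the only small phrasing imprecision is that the new cell $c_{i+1}$ is always strictly right of \emph{and} weakly above $c_i$ (not merely one or the other), though your conclusion about distinct columns is unaffected.
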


As with the simpler case, the image of the bijection in Theorem~\ref{thm:RSK-pieri} induced by RSK insertion is \emph{disjoint}, so taking generating polynomials we obtain Pieri's rule for multiplying Schur polynomials \cite{Pie93} as an immediate corollary.

\begin{theorem}[\cite{Pie93}]
  For a partition $\lambda$ and $m>0$ an integer, we have
  \begin{equation}
    s_{\lambda} s_{(m)} = \sum_{\mu/\lambda \ \mathrm{hor.} \ m-\mathrm{strip}} s_{\mu} .
    \label{e:pieri-hor}
  \end{equation}
  \label{thm:pieri-hor}
\end{theorem}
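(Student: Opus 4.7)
The plan is to obtain Theorem~\ref{thm:pieri-hor} as an immediate corollary of Theorem~\ref{thm:RSK-pieri} by taking generating polynomials of both sides of the asserted weight-preserving bijection. This is the standard way one deduces the classical Pieri rule from RSK insertion, and all the combinatorial work has already been done in stating Theorem~\ref{thm:RSK-pieri}.

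Concretely, I would first observe that the Cartesian product $\SSYT_n(\lambda) \times \SSYT_n(m)$ carries a natural weight given by the sum of the weights of its two components, so by Definition~\ref{def:schur} its generating polynomial factors as
\begin{equation*}
\sum_{(T,U)} x^{\wt(T)+\wt(U)} \; = \; s_\lambda(x_1,\ldots,x_n) \cdot s_{(m)}(x_1,\ldots,x_n).
\end{equation*}
Since the bijection of Theorem~\ref{thm:RSK-pieri} is weight-preserving, the generating polynomial of the domain equals that of the codomain. The codomain is explicitly a \emph{disjoint} union indexed by partitions $\mu \supset \lambda$ with $\mu/\lambda$ a horizontal $m$-strip, so its generating polynomial is simply $\sum_\mu s_\mu(x_1,\ldots,x_n)$, again by Definition~\ref{def:schur}. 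Equating the two sides gives Eq.~\eqref{e:pieri-hor}.

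There is no genuine obstacle in this step; the substantive content lives entirely in Theorem~\ref{thm:RSK-pieri}. What is worth highlighting, however, is the role of disjointness: it is precisely because the image of RSK insertion partitions into sets indexed by distinct output shapes $\mu$ that the expansion comes out cancellation-free with coefficients in $\{0,1\}$. This is in sharp contrast with the key polynomial analog established in Theorem~\ref{thm:monkey}, where the image of the corresponding bijection is only a (non-disjoint) union, necessitating inclusion–exclusion and producing the signs that appear in the nonsymmetric Pieri rule.
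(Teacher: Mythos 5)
Your proposal is correct and matches the paper exactly: the paper likewise derives Theorem~\ref{thm:pieri-hor} as an immediate corollary of Theorem~\ref{thm:RSK-pieri} by taking generating polynomials across the weight-preserving bijection and using disjointness of the codomain union. Your added remark contrasting this with the non-disjoint union in Theorem~\ref{thm:monkey} accurately reflects a point the paper itself emphasizes.
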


\begin{example}
  Consider the partition $\lambda = (3,2,2)$ and $m=2$. Adding a single cell to $\lambda$ results in one of the partitions $(3,2,2,1)$, $(3,3,2)$, $(4,2,2)$. To each of these we add an additional cell such that the two added cells lie in different columns, giving the five partitions $(3,2,2,2)$, $(3,3,2,1)$, $(4,2,2,1)$, $(4,3,2)$, $(5,2,2)$. Notice we consider only the resulting shapes and not the order in which the cells are added. For instance, we can arrive at the shape $(4,2,2,1)$ by first adding in the fourth column then in the first, or by first adding in the first column then in the fourth. Theorem~\ref{thm:pieri-hor} gives the following expansion illustrated in Fig.~\ref{fig:pieri-hor}
\[ s_{(3,2,2)} s_{(2)} = s_{(3,2,2,2)} + s_{(3,3,2,1)} + s_{(4,2,2,1)} + s_{(4,3,2)} + s_{(5,2,2)}. \]  
\end{example}

\begin{figure}[ht]
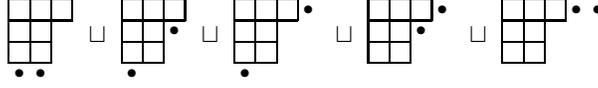

  \begin{displaymath}
    \arraycolsep=3pt
    \begin{array}{ccccccccc}
      \nulltab{ \boxify{\ } & \boxify{\ } & \boxify{\ } \\ \boxify{\ } & \boxify{\ } \\ \boxify{\ } & \boxify{\ } \\ \bullet & \bullet  }
      & \raisebox{-1\cellsize}{$\sqcup$} &
      \nulltab{ \boxify{\ } & \boxify{\ } & \boxify{\ } \\ \boxify{\ } & \boxify{\ } & \bullet \\ \boxify{\ } & \boxify{\ } \\ \bullet & } 
      & \raisebox{-1\cellsize}{$\sqcup$} &
      \nulltab{ \boxify{\ } & \boxify{\ } & \boxify{\ } & \bullet \\ \boxify{\ } & \boxify{\ } \\ \boxify{\ } & \boxify{\ } \\ \bullet & } 
      & \raisebox{-1\cellsize}{$\sqcup$} &
      \nulltab{ \boxify{\ } & \boxify{\ } & \boxify{\ } & \bullet \\ \boxify{\ } & \boxify{\ } & \bullet \\ \boxify{\ } & \boxify{\ } } 
      & \raisebox{-1\cellsize}{$\sqcup$} &
      \nulltab{ \boxify{\ } & \boxify{\ } & \boxify{\ } & \bullet & \bullet \\ \boxify{\ } & \boxify{\ } \\ \boxify{\ } & \boxify{\ } }
    \end{array}
  \end{displaymath}
  \caption{\label{fig:pieri-hor}An illustration of the Pieri rule for computing the Schur expansion of the product $s_{(3,2,2)} s_{(2)}$, where the two marked cells $(\bullet)$ denote the added horizontal $2$-strip.}
\end{figure}

Our key insertion similarly generalizes to a weight-preserving bijection 
onto a space of Kohnert diagrams whose row weights determine the monomial expansion of the product of a key polynomial with a \emph{single row} key polynomial. 

The target space of the full key-Pieri bijection may be stated as the union of the Kohnert spaces of all the ways to add $m$ cells -- whilst performing left-swaps before and after each cell addition --  to the key diagram of the weak composition with no two cells added in the same column.

\begin{definition}
  Given a weak composition $\comp{a}$, and any pair of positive integers $k \leq n$ and $m$, define
  \begin{equation}
    \KDs^{(m)}(\comp{a},k) = \bigcup_{\comp{b}} \KD(\comp{b})
    \label{e:gen-target-space}
  \end{equation}
  where the union is over all weak compositions $\comp{b}$ obtainable from a sequence of weak compositions $\comp{b}^{(0)}, \comp{b}^{(1)},\cdots,\comp{b}^{(m)} = \comp{b}$ satisfying:
  \begin{enumerate}
  \item $\comp{b}^{(0)} \preceq \comp{a}$;
  \item for each $1 \leq i \leq m$, $\comp{b}^{(i)} \preceq \comp{b}^{(i-1)} + \comp{e}_{j_i}$, where $1 \leq j_i \leq k$;
  \item $\comp{b}^{(1)}_{j_1}, \ldots, \comp{b}^{(m)}_{j_m}$ are all distinct.
  \end{enumerate}
\end{definition}

In particular, notice $\KDs^{(0)}(\comp{a},k) = \KD(\comp{a})$ and $\KDs^{(1)}(\comp{a},k) = \KDs(\comp{a},k)$. Using this, we generalize Theorem~\ref{thm:pieri-hor} to key polynomials by generalizing Theorem~\ref{thm:RSKD} to horizontal strips with the following result, proved in Section~\ref{sec:applications-bij}.

\begin{theorem}
  Given any weak composition $\comp{a}$ and any pair of positive integers $k \leq n$ and $m$, there exists a weight-preserving bijection
  \begin{equation}
    \KD(\comp{a}) \times \KD(m\comp{e}_k) \stackrel{\sim}{\longrightarrow}
    \KDs^{(m)}(\comp{a},k).
    \label{e:RSKD-hor}
  \end{equation}
  \label{thm:RSKD-hor}
\end{theorem}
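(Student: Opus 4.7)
The plan is to prove Theorem~\ref{thm:RSKD-hor} by induction on $m$, iterating the single-cell bijection of Theorem~\ref{thm:RSKD}. The base case $m = 1$ is immediate, since $\KDs^{(1)}(\comp{a}, k)$ coincides with $\KDs(\comp{a}, k)$.

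For the inductive step, I will exploit the canonical product decomposition $\KD(m\comp{e}_k) \cong \KD((m-1)\comp{e}_k) \times \KD(\comp{e}_k)$. This holds because every $U \in \KD(m\comp{e}_k)$ has exactly one cell in each of columns $1, \ldots, m$ with row chosen freely in $\{1, \ldots, k\}$, so splitting off the cell in column $m$ yields a canonical factorization. Writing $U = U' \sqcup \{u\}$ accordingly, I first apply the inductive bijection to $(T, U')$ to obtain $V \in \KDs^{(m-1)}(\comp{a}, k)$, witnessed by an admissible chain $\comp{b}^{(0)}, \ldots, \comp{b}^{(m-1)}$ with $V \in \KD(\comp{b}^{(m-1)})$. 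Applying Theorem~\ref{thm:RSKD} to $(V, u) \in \KD(\comp{b}^{(m-1)}) \times \KD(\comp{e}_k)$ then produces $W \in \KDs(\comp{b}^{(m-1)}, k)$, so $W \in \KD(\comp{b}^{(m)})$ for some $\comp{b}^{(m)} \preceq \comp{b}^{(m-1)} + \comp{e}_{j_m}$ with $j_m \leq k$. This extends the chain to length $m$ and places $W$ into $\KDs^{(m)}(\comp{a}, k)$, with weight preservation inherited term-by-term from the two component bijections.

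The main obstacle will be verifying condition~(3) in the definition of $\KDs^{(m)}(\comp{a}, k)$, namely that the added columns of the $m$ successive insertion steps are pairwise distinct. I plan to handle this by tracking column weights: by Lemma~\ref{lem:col-added}, each single-cell insertion grows the column weight by exactly one unit vector $\comp{e}_{c_i}$ corresponding to its added column, so the cumulative column-weight change across the $m$ steps is $\sum_{i=1}^{m} \comp{e}_{c_i}$. Since every $U \in \KD(m\comp{e}_k)$ has the same column weight $\comp{e}_1 + \cdots + \comp{e}_m$, the goal is to show that the iterated bijection respects this column structure, forcing $\{c_1, \ldots, c_m\} = \{1, \ldots, m\}$ as a set and hence ensuring distinctness. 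The crux of this verification will be an inductive analog of the RSK row-bumping lemma for Kohnert diagrams, most naturally established by carrying out the iteration via the top-insertion map $\ins_\infty$ from Theorem~\ref{thm:insertion-top}, which starts each new cell strictly to the right of the current rightmost occupied column and thereby maintains the requisite monotone structure across the successive rectifications.

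Invertibility then follows along the same inductive pattern: given $W \in \KDs^{(m)}(\comp{a}, k)$, Lemma~\ref{lem:col-added} recovers the added column of the last insertion step from $\cwt(W) - \cwt(\comp{b}^{(m-1)})$, the inverse of Theorem~\ref{thm:RSKD} recovers the pair $(V, u)$, and the inductive inverse yields $(T, U')$; reassembling via the product decomposition produces $(T, U) = (T, U' \sqcup \{u\})$, completing the bijection.
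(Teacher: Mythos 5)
Your plan fails at its very first step: the claimed product decomposition $\KD(m\comp{e}_k) \cong \KD((m-1)\comp{e}_k) \times \KD(\comp{e}_k)$ is not a bijection for $k > 1$ and $m > 1$. An element of $\KD(m\comp{e}_k)$ has one cell in each of columns $1,\ldots,m$ at rows $r_1 \geq r_2 \geq \cdots \geq r_m$ (the inequalities are forced by Proposition~\ref{prop:lemma2.2}), so $\#\KD(m\comp{e}_k) = \binom{k+m-1}{m}$, whereas $\#\KD((m-1)\comp{e}_k) \cdot \#\KD(\comp{e}_k) = \binom{k+m-2}{m-1}\cdot k$; these agree only when $k=1$ or $m=1$. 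Splitting off the column-$m$ cell injects into the product, but the image is cut out by the constraint $r_m \leq r_{m-1}$, which is exactly the nontrivial coupling that a naive induction on $m$ cannot see. This is the essential content of the ``row bumping'' phenomenon, and it is why the paper does not argue by peeling off one cell at a time from the $\KD(m\comp{e}_k)$ factor.

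Your argument for distinctness of the added columns is also off. The column weight of $U \in \KD(m\comp{e}_k)$ is $\comp{e}_1 + \cdots + \comp{e}_m$, but the bijection of Theorem~\ref{thm:RSKD-hor} preserves only row weight, not column weight, so this has no bearing on the added column set $\{c_1,\ldots,c_m\}$ of the image. By Lemma~\ref{lem:col-added} that set is determined by $\cwt(W) - \cwt(\comp{a})$ and depends on the shape of $\comp{a}$; it is in general nowhere near $\{1,\ldots,m\}$. The correct distinctness argument (used in the proof of Theorem~\ref{thm:insertion-top-hor}) pushes the iterated $\ins_\infty$ through the dictionary $\mathbb{D}$ of Theorem~\ref{thm:RSK-rect} and invokes the classical Pieri bijection of Theorem~\ref{thm:RSK-pieri}, which says the resulting tableau shape differs from $\mathrm{sort}(\comp{a})$ by a horizontal strip and hence has at most one added cell per column.

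Finally, even if both of these issues were repaired, you would only have an injection for $k \geq \ell(\comp{a})$, which is the content of Theorem~\ref{thm:insertion-top-hor}. Surjectivity for general $k$ is the hard part. The paper does not establish it by reversing a direct map: it develops drop decompositions (Theorem~\ref{thm:gen-target-criterion}) and degree-$m$ stratum maps $\partial^{(m)}_{\comp{a},k}\colon \KDstrm \to \KDs^{(m-1)}(\comp{a},k)$ shown injective in Theorem~\ref{thm:stratum-inject-hor}, and then runs a cardinality sandwich across the stratification $\KDs^{(m)}(\comp{a},n) = \KDs^{(m)}(\comp{a},1) \sqcup \bigsqcup_{1<k\leq n} \KDstrm$, combined with the bottom-insertion bijection at $k=1$ (Theorem~\ref{thm:insertion-bottom-hor}) and the top-insertion injection at $k=n$. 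Your sketch omits this entire apparatus, and there is no evident way to close the surjectivity gap within the inductive framework you propose.
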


\begin{example}
  Consider the weak composition $\comp{a}=(2,0,3,2)$ with $k=3$ and $m=2$. When adding cells in the key polynomial case, recall that we may move cells down to support the addition in a given row. If we again filter terms so that we avoid the redundancy when one set of Kohnert diagrams is wholly contained in another, then we have the following seven terms illustrated in Fig.~\ref{fig:pierikey-hor},
  \begin{eqnarray*}
    \scriptstyle
    \KD(2,0,3,2) \times \KD(0,0,2) & \scriptstyle = & \scriptstyle \KD(2,2,3,2) \cup \KD(2,3,3,1) \cup \KD(3,1,3,2) \cup \KD(2,1,4,2) \\ & & \scriptstyle \cup \KD(3,0,4,2) \cup \KD(2,3,4) \cup \KD(2,0,5,2).
  \end{eqnarray*}
  \label{ex:pierikey-hor}
\end{example}

\begin{figure}[ht]
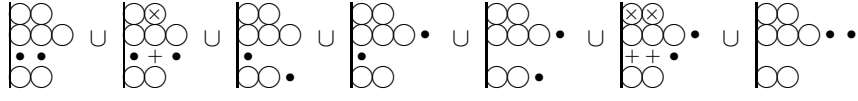

  \begin{displaymath}
    \arraycolsep=3pt
    \begin{array}{ccccccccccccc}
      \vline\nulltab{ \circify{\ } & \circify{\ } \\ \circify{\ } & \circify{\ } & \circify{\ } \\ \bullet & \bullet \\ \circify{\ } & \circify{\ } }
      & \raisebox{-1\cellsize}{$\cup$} &
      \vline\nulltab{ \circify{\ } & \circify{\times} \\ \circify{\ } & \circify{\ } & \circify{\ } \\ \bullet & + & \bullet \\ \circify{\ } & \circify{\ } }
      & \raisebox{-1\cellsize}{$\cup$} &
      \vline\nulltab{ \circify{\ } & \circify{\ } \\ \circify{\ } & \circify{\ } & \circify{\ } \\ \bullet \\ \circify{\ } & \circify{\ } & \bullet }
      & \raisebox{-1\cellsize}{$\cup$} &
      \vline\nulltab{ \circify{\ } & \circify{\ } \\ \circify{\ } & \circify{\ } & \circify{\ } & \bullet \\ \bullet \\ \circify{\ } & \circify{\ } }
      & \raisebox{-1\cellsize}{$\cup$} &
      \vline\nulltab{ \circify{\ } & \circify{\ } \\ \circify{\ } & \circify{\ } & \circify{\ } & \bullet \\ \\ \circify{\ } & \circify{\ } & \bullet }
      & \raisebox{-1\cellsize}{$\cup$} &
      \vline\nulltab{ \circify{\times} & \circify{\times} \\ \circify{\ } & \circify{\ } & \circify{\ } & \bullet \\ + & + & \bullet \\ \circify{\ } & \circify{\ } }
      & \raisebox{-1\cellsize}{$\cup$} &
      \vline\nulltab{ \circify{\ } & \circify{\ } \\ \circify{\ } & \circify{\ } & \circify{\ } & \bullet & \bullet \\ \\ \circify{\ } & \circify{\ } } 
    \end{array}
  \end{displaymath}
  \caption{\label{fig:pierikey-hor}An illustration of the Pieri rule computing the key expansion of the product $\key_{(2,0,3,2)} \key_{(0,0,2)}$, where the two marked cells $(\bullet)$ denote the added horizontal $2$-strip and crossed cells $(\times)$ drop to the marked positions $(+)$.}
\end{figure}

Generalizing Definition~\ref{def:addable}, we have the following notion of horizontal strips for key diagrams parallel to the case for Young diagrams.

\begin{definition}
  Given a weak composition $\comp{a}$ and positive integers $k$ and $m$, a \emph{$k$-addable horizontal $m$-strip for $\comp{a}$} is a sequence of positions
  \[ (c_1,r_1) \ , \ (c_2, r_2) \ , \ \ldots \ , \ (c_m,r_m) \]
  such that the columns indices $c_1,c_2,\ldots,c_m$ are distinct and $(c_i,r_i)$ is a $k$-addable cell for $\comp{a}^{(i-1)}$, where $\comp{a}^{(0)} = \comp{a}$ and $\comp{a}^{(i)} = \supp_{\comp{a}^{(i-1)}}^{(c_{i},r_{i})} + \comp{e}_{r_{i}}$ for $i>0$. 
  \label{def:k-add-hor}
\end{definition}

Note we may always take $c_1 < c_2 < \cdots < c_m$ in Definition~\ref{def:k-add-hor}. This concept of horizontal strips allows us to tighten the expansion on the right hand side of Theorem~\ref{thm:RSKD-hor}, analogous to Theorem~\ref{thm:k-addable}.

\begin{corollary}
  Given a weak composition $\comp{a}$ and positive integers $k \leq n$ and $m$, there exists a weight-preserving bijection
  \begin{equation}
    \KD(\comp{a}) \times \KD(m\comp{e}_k) \stackrel{\sim}{\longrightarrow}
    \bigcup_{\substack{
        (c_1,r_1),\ldots,(c_m,r_m) \\ k-\text{addable $m$-strip}  }}
    \KD(\comp{a}^{(m)}) ,
    \label{e:k-add-hor}
  \end{equation}
  where $\comp{a}^{(0)} = \comp{a}$ and $\comp{a}^{(i)} = \supp_{\comp{a}^{(i-1)}}^{(c_{i},r_{i})} + \comp{e}_{r_{i}}$ for $1 \leq i \leq m$.
  \label{cor:k-add-hor}
\end{corollary}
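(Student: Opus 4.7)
The plan is to prove Corollary~\ref{cor:k-add-hor} by showing the target space $\KDs^{(m)}(\comp{a},k)$ from Theorem~\ref{thm:RSKD-hor} coincides with the union over $k$-addable horizontal $m$-strips, after which the bijection of Theorem~\ref{thm:RSKD-hor} transports directly. I would proceed by induction on $m$, iterating at each step of the chain defining $\KDs^{(m)}(\comp{a},k)$ the reduction that took Theorem~\ref{thm:RSKD} to Theorem~\ref{thm:k-addable}.

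The base case $m=1$ is exactly Theorem~\ref{thm:k-addable} combined with Theorem~\ref{thm:RSKD}: a $k$-addable horizontal $1$-strip reduces to a single $k$-addable cell since the distinctness condition is vacuous. For the inductive step, fix a valid sequence $\comp{b}^{(0)}, \comp{b}^{(1)}, \ldots, \comp{b}^{(m)}$ witnessing $\KD(\comp{b}^{(m)}) \subseteq \KDs^{(m)}(\comp{a},k)$. The initial segment $\comp{b}^{(0)}, \ldots, \comp{b}^{(m-1)}$ witnesses $\KD(\comp{b}^{(m-1)}) \subseteq \KDs^{(m-1)}(\comp{a},k)$, so by the inductive hypothesis $\kd_{\comp{b}^{(m-1)}} \in \KD(\comp{a}^{(m-1)})$ for some weak composition $\comp{a}^{(m-1)}$ coming from a $k$-addable horizontal $(m-1)$-strip $(c_1,r_1), \ldots, (c_{m-1},r_{m-1})$. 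Proposition~\ref{prop:lswap} gives $\comp{b}^{(m-1)} \preceq \comp{a}^{(m-1)}$, so $\comp{b}^{(m)} \preceq \comp{b}^{(m-1)} + \comp{e}_{j_m} \preceq \comp{a}^{(m-1)} + \comp{e}_{j_m}$. Applying Theorem~\ref{thm:k-addable} to $\comp{a}^{(m-1)}$ then yields a $k$-addable cell $(c_m,r_m)$ for $\comp{a}^{(m-1)}$ with $\KD(\comp{b}^{(m)}) \subseteq \KD(\supp_{\comp{a}^{(m-1)}}^{(c_m,r_m)} + \comp{e}_{r_m}) = \KD(\comp{a}^{(m)})$, as required.

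The converse inclusion is direct: given a $k$-addable horizontal $m$-strip $(c_1,r_1), \ldots, (c_m, r_m)$, the canonical choice $\comp{b}^{(i)} = \comp{a}^{(i)}$ with $j_i = r_i$ is a valid sequence. Indeed, $\supp_{\comp{a}^{(i-1)}}^{(c_i,r_i)} \preceq \comp{a}^{(i-1)}$ (by Definition~\ref{def:top-support}) immediately gives $\comp{a}^{(i)} \preceq \comp{a}^{(i-1)} + \comp{e}_{r_i}$, and the identity $\comp{a}^{(i)}_{r_i} = c_i$ (since $\supp$ puts $c_i - 1$ in row $r_i$) translates the distinctness of $c_1, \ldots, c_m$ into the distinctness condition (3) of the definition of $\KDs^{(m)}(\comp{a}, k)$.

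The main obstacle will be ensuring that the distinctness condition transfers correctly throughout the induction: while the identification $\comp{a}^{(i)}_{r_i} = c_i$ tightly links the two formulations when the sequence is canonical, the inductive step provides only the existence of a $k$-addable $(m-1)$-strip inside $\comp{a}^{(m-1)}$, not uniqueness. One must show that for every valid sequence $\comp{b}^{(0)}, \ldots, \comp{b}^{(m)}$ whose witnessing values $\comp{b}^{(i)}_{j_i}$ are distinct, the induction can be organized so the resulting cells $(c_m, r_m)$ and $(c_1, r_1), \ldots, (c_{m-1}, r_{m-1})$ have distinct columns. Achieving this will likely require strengthening the inductive hypothesis to record not just the existence of a $k$-addable $(m-1)$-strip but its column indices, and then leveraging the analysis of the $\supp$ operation from Section~\ref{sec:formula-addable} to produce a $k$-addable cell at step $m$ in a column distinct from those already used.
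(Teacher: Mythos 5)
The paper states Corollary~\ref{cor:k-add-hor} without a written proof, remarking only that the reduction is analogous to Theorem~\ref{thm:k-addable}; so there is no textual comparison to make, and the question is whether your argument is sound. Your overall plan --- identify $\KDs^{(m)}(\comp{a},k)$ with the union over $k$-addable horizontal $m$-strips by iterating the one-cell reduction --- is the right one, and the base case and the ``converse'' containment are fine. But the inductive step has one genuine error and one gap that you flag but do not close.

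\textbf{The erroneous inequality.} You write ``$\comp{b}^{(m)} \preceq \comp{b}^{(m-1)} + \comp{e}_{j_m} \preceq \comp{a}^{(m-1)} + \comp{e}_{j_m}$.'' The second relation is false in general: adding $\comp{e}_j$ is not monotone for the left swap order. Take $\comp{a}^{(m-1)}=(0,2)$, $\comp{b}^{(m-1)}=(2,0)$, $j_m = 2$. Then $\comp{b}^{(m-1)}\preceq\comp{a}^{(m-1)}$, but $\comp{b}^{(m-1)}+\comp{e}_2 = (2,1)$ and $\comp{a}^{(m-1)}+\comp{e}_2 = (0,3)$ sort to the distinct partitions $(2,1)$ and $(3)$, so they are not even comparable. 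Fortunately this step is not actually needed: you already have $\comp{b}^{(m-1)}\preceq\comp{a}^{(m-1)}$ and $j_m\leq k$, so the pair $(\comp{b}^{(m-1)}, j_m)$ is a legal index in the left side of Theorem~\ref{thm:k-addable} applied with base $\comp{a}^{(m-1)}$, and Lemmas~\ref{lem:addable} and~\ref{lem:k-addable} then give a single $k$-addable cell $(c_m,r_m)$ of $\comp{a}^{(m-1)}$ with $\KD(\comp{b}^{(m-1)}+\comp{e}_{j_m})\subseteq\KD(\supp_{\comp{a}^{(m-1)}}^{(c_m,r_m)}+\comp{e}_{r_m})$. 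That is the correct route; the detour through $\comp{a}^{(m-1)}+\comp{e}_{j_m}$ should be removed.

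\textbf{The distinctness of columns.} You correctly identify that you must show $c_m\notin\{c_1,\ldots,c_{m-1}\}$, and you propose to strengthen the inductive hypothesis. No strengthening is needed; the point is a column-weight observation. Kohnert moves, left swaps, and the $\supp$ operation all preserve column weight, so once your inductive hypothesis gives $\comp{b}^{(m-1)}\preceq\comp{a}^{(m-1)}$ you automatically have $\cwt(\comp{b}^{(m-1)}) = \cwt(\comp{a}^{(m-1)}) = \cwt(\comp{a}) + \sum_{i<m}\comp{e}_{c_i}$. The new column is $c_m = b^{(m-1)}_{j_m}+1$, so $\cwt(\comp{b}^{(m)}) = \cwt(\comp{a}) + \sum_{i<m}\comp{e}_{c_i} + \comp{e}_{c_m}$; and the constraint in the definition of $\KDs^{(m)}(\comp{a},k)$ (equivalently, the uniqueness of the added column $m$-set recorded just after that definition and enforced explicitly in Recursion~\eqref{e:hor-recursion-def}) forces $\cwt(\comp{b}^{(m)})-\cwt(\comp{a})$ to be a sum of $m$ \emph{distinct} basis vectors. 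Hence $c_m$ is distinct from $c_1,\ldots,c_{m-1}$. In other words, the column set of the strip produced by the inductive step is determined by $\cwt(\comp{b}^{(m-1)})$, so distinctness is automatic rather than something to thread through a reinforced hypothesis.

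With those two corrections the argument closes and matches the intended ``analogous to Theorem~\ref{thm:k-addable}'' proof.
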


Revisiting Ex.~\ref{ex:pierikey-hor}, we have the key polynomial expansion
\begin{eqnarray*}
  \key_{(2,0,3,2)} \key_{(0,0,2)} & = & \key_{(2,2,3,2)} + \key_{(2,3,3,1)} + \key_{(3,1,3,2)} - \key_{(2,2,3,1)} + \key_{(2,1,4,2)} \\
  & & + \key_{(3,0,4,2)} + \key_{(2,3,4)} - \key_{(3,2,4)} + \key_{(2,0,5,2)}.
\end{eqnarray*}

\subsection{Iterated bijections}
\label{sec:applications-bij}

  By Lemma~\ref{lem:3.7} and Proposition~\ref{prop:lswap}, for a generic Kohnert diagram $T$, the key diagram $\kd_{\thread(T)}$ belongs to any set $\KD(D)$ to which $T$ belongs. Consequently, for any weak composition $\comp{a}$, and any pair of positive integers $k \leq n$ and $m$, we have $T \in \KDs^{(m)}(\comp{a},k)$ if and only if $\kd_{\thread(T)} \in \KDs^{(m)}(\comp{a},k)$. This useful fact will be invoked ubiquitously, albeit implicitly, throughout this section.

Since left-swaps and, more generally, Kohnert moves do not change column weights, it follows for each diagram $U \in \KDs^{(m)}(\comp{a},k)$, there exists a unique \emph{added column m-set} $\{c_1,c_2,\cdots,c_m\}$ satisfying
$$
\cwt(U) = \cwt(\comp{a}) + \sum_{i=1}^m \comp{e}_{c_i}.
$$
%
%
%
We immediately observe from the definitions that for each $m$, the target space $\KDs^{(m+1)}(\comp{a},k)$ can be obtained from $\KDs^{(m)}(\comp{a},k)$ through the following recursion:
\begin{equation}
\KDs^{(m+1)}(\comp{a},k) =
\bigcup_{\substack{
\kd_{\comp{b}} \in \KDs^{(m)}(\comp{a},k) \\
1 \leq c_1 < c_2 < \cdots < c_m 
}}
\left\{
U \in \KDs(\comp{b},k) 
\mathrel{\Bigg|} \cwt(U) = \cwt(\comp{a}) + \sum_{i=1}^m \comp{e}_{c_i}. 
\right\}
    \label{e:hor-recursion-def}
\end{equation}
When $k=1$, the above recursion simplifies to
\begin{equation}
\KDs^{(m+1)}(\comp{a},1) = \bigcup_{\kd_{\comp{b}} \in \KDs^{(m)}(\comp{a},k)} \KDs(\comp{b},1).
    \label{e:hor-recursion-def-1}
\end{equation}
We use
Recursions~\eqref{e:hor-recursion-def}~and~\eqref{e:hor-recursion-def-1} 
to prove Theorem~\ref{thm:RSKD-hor} in two special cases that parallel our work in Section~\ref{sec:bijection}. 
The first case we explore involves iterating the bottom insertion map $\ins_1$.

\begin{theorem}
  For each weak composition $\comp{a}$ and positive integer $m$, iterating the bottom insertion map $\ins_1$ induces a weight-preserving bijection
  \begin{equation}
    \KD(\comp{a}) \times \KD(m \comp{e}_1) \stackrel{\sim}{\longrightarrow} \KDs^{(m)}(\comp{a},1).
        \label{e:insert-bot-hor}
  \end{equation}
  In particular, Theorem~\ref{thm:RSKD-hor} is proved for $k=1$.
  \label{thm:insertion-bottom-hor}
\end{theorem}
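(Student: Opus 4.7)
I proceed by induction on $m$, with the base case $m = 1$ being precisely Theorem~\ref{thm:insertion-bottom}. Since $\KD(m\comp{e}_1) = \{\kd_{m\comp{e}_1}\}$ is a singleton, the desired bijection amounts to a weight-preserving bijection $\KD(\comp{a}) \to \KDs^{(m)}(\comp{a}, 1)$ realized by the iterated map $\ins_1^m$. Writing $\ins_1^{m+1} = \ins_1 \circ \ins_1^m$ and invoking the inductive hypothesis, it suffices to show $\ins_1$ descends to a bijection $\KDs^{(m)}(\comp{a}, 1) \to \KDs^{(m+1)}(\comp{a}, 1)$; the weight shift of $\comp{e}_1$ at each application is immediate.

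The inductive step rests on the recursion
\[
\KDs^{(m+1)}(\comp{a}, 1) = \bigcup_{\kd_{\comp{b}} \in \KDs^{(m)}(\comp{a}, 1)} \KDs(\comp{b}, 1),
\]
obtained by peeling off the last step of the defining sequence $\comp{b}^{(0)}, \ldots, \comp{b}^{(m+1)}$. For well-definedness, Lemma~\ref{lem:3.7} and Proposition~\ref{prop:lswap} place $\kd_{\thread(V)}$ in $\KDs^{(m)}(\comp{a}, 1)$ for every $V \in \KDs^{(m)}(\comp{a}, 1)$, so Theorem~\ref{thm:insertion-bottom} puts $\ins_1(V) \in \KDs(\thread(V), 1) \subseteq \KDs^{(m+1)}(\comp{a}, 1)$. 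For surjectivity, any $W \in \KDs^{(m+1)}(\comp{a}, 1)$ lies in some $\KDs(\comp{b}, 1)$ with $\kd_{\comp{b}} \in \KDs^{(m)}(\comp{a}, 1)$, and Theorem~\ref{thm:insertion-bottom} then supplies a preimage in $\KD(\comp{b}) \subseteq \KDs^{(m)}(\comp{a}, 1)$.

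The main obstacle is injectivity of $\ins_1$ on $\KDs^{(m)}(\comp{a}, 1)$, since $\ins_1$ is not globally injective on generic Kohnert diagrams. I would establish the column-weight invariant that $\cwt(V) - \cwt(\comp{a})$ is a $0/1$ vector with exactly $m$ ones at strictly increasing positions for every $V \in \KDs^{(m)}(\comp{a}, 1)$; this follows inductively from the defining sequence, since the added column at step $i$ equals $(\comp{b}^{(i-1)})_1 + 1$ and left swaps only weakly increase the first part, yielding $(\comp{b}^{(i)})_1 \geq (\comp{b}^{(i-1)})_1 + 1$. Writing the support of $\cwt(W) - \cwt(\comp{a})$ as $c_1 < \cdots < c_{m+1}$ for $W \in \KDs^{(m+1)}(\comp{a}, 1)$, any candidate preimage $V = W \setminus \{(c_j, 1)\}$ with $j \leq m$ would require $\thread(V)_1 \geq c_{m+1}$ from the recursive structure, since $V$'s own defining sequence must add its maximal support column $c_{m+1}$ as the final insertion. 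However, removing $(c_j, 1)$ creates a row-$1$ gap at $c_j \leq c_m < c_{m+1}$, and the thread anchored at row $1$ of any generic Kohnert diagram consists entirely of row-$1$ cells (because rows weakly increase going right-to-left from the anchor), so its length is at most $c_j - 1 < c_{m+1}$, a contradiction. Thus $j = m+1$ is forced, giving the unique preimage $V = W \setminus \{(c_{m+1}, 1)\}$. The hardest step is establishing this row-$1$ thread truncation, which relies critically on the algorithm's uniform-row constraint along an anchored thread.
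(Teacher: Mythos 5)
Your proof follows the same inductive skeleton as the paper's: base case from Theorem~\ref{thm:insertion-bottom}, the recursion $\KDs^{(m+1)}(\comp{a},1) = \bigcup_{\kd_{\comp{b}} \in \KDs^{(m)}(\comp{a},1)} \KDs(\comp{b},1)$ drives the inductive step, and the forward direction and surjectivity are obtained by routing through $\kd_{\thread(\cdot)}$ and invoking Theorem~\ref{thm:insertion-bottom}. Where you genuinely differ is in the treatment of injectivity. The paper's proof extracts a unique preimage $U^* \in \KD(\comp{b})$ for one chosen $\comp{b}$ with $\kd_{\comp{b}}\in\KDs^{(m)}(\comp{a},1)$ and $U\in\KDs(\comp{b},1)$, then appeals to the inductive hypothesis; but since $\ins_1$ is not globally injective and different choices of $\comp{b}$ with different column weights give different candidate preimages $U\setminus\{(c,1)\}$, the paper's uniqueness assertion for $T$ is terser than it should be. You close this by making two points explicit: (i) the column-weight invariant that $\cwt(V) - \cwt(\comp{a})$ is a $\{0,1\}$-vector with exactly $m$ ones, with the ones at strictly increasing columns, which follows because a step $\comp{b}^{(i)} \preceq \comp{b}^{(i-1)}+\comp{e}_1$ adds the cell in column $(\comp{b}^{(i-1)})_1+1$ while left swaps weakly increase the first part; and (ii) the thread of a generic Kohnert diagram anchored in row $1$ consists of cells in row $1$ in consecutive columns starting from column $1$, so $\thread(V)_1$ is bounded by the contiguous row-$1$ prefix. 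Combining (i) with $\thread(V)_1 \geq (\comp{b}^{(m)})_1$ forces the maximal added column to lie in the row-$1$ prefix of $V$, and removing $(c_j,1)$ with $j\leq m$ would break this, so only $j=m+1$ survives. Both parts check out, and the observation in (ii) about uniform-row threads is exactly right (rows weakly increase leftward, so a row-$1$ anchor pins the whole thread to row $1$). This gives a more self-contained injectivity argument than the paper records, so yours is a correct proof that is somewhat more careful on the uniqueness step; the trade-off is that you introduce auxiliary machinery (the column-weight invariant and the row-$1$ thread characterization) not needed by the paper's terser route.
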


\begin{proof}
We prove the theorem by induction on $m$. For our base cases, $m=0$ is trivial, and $m=1$ follows from Theorem~\ref{thm:insertion-bottom}.

We now assume for our induction hypothesis that for some $m \leq 1$, iterating the bottom insertion map $\ins_{1}$ induces a weight-preserving bijection of the form Eq.~\eqref{e:insert-bot-hor}.
Let $T \in \KD(\comp{a})$, and consider the diagrams $U^* = \ins_1^m(T)$ and $U = \ins_1(U^*) = \ins_1^{m+1}(T)$.
Since each iteration of $\ins_1$ is (row) weight-preserving, a quick induction argument gives us $\wt(U) = \wt(T) + (m+1)\comp{e}_1$. 

We wish to show $U \in \KDs^{(m+1)}(\comp{a},1)$. To do so, we invoke the induction hypothesis to obtain $U^* \in \KDs^{(m)}(\comp{a},1)$, and therefore $\kd_{\thread(U^*)} \in \KDs^{(m)}(\comp{a},k)$ as well. 
In turn, Theorem~\ref{thm:insertion-bottom} implies $U \in \KDs(\thread(U^*),1)$, 
and so following Recursion~\eqref{e:hor-recursion-def-1}, 
we have $U \in \KDs^{(m+1)}(\comp{a},1)$.

Now let $U \in \KDs^{(m+1)}(\comp{a},1)$. We want to show there exists a unique $T \in \KD(\comp{a})$ such that $U = \ins_1^{m+1}(T)$. 
From Recursion~\eqref{e:hor-recursion-def-1}, we know $U \in \KDs(\comp{b},1)$ for some weak composition $\comp{b}$ such that $\kd_{\comp{b}} \in \KDs^{(m)}(\comp{a},1)$. 
So by Theorem~\ref{thm:insertion-bottom}, there exists a unique diagram $U^* \in \KD(\comp{b}) \subset \KDs^{(m)}(\comp{a},k)$ such that $U = \ins_1(U^*)$.
We then invoke our induction hypothesis to conclude there exists a unique diagram $T \in \KD(\comp{a})$ such that $U = \ins_1(\ins_1^m(T)) = \ins_1^{m+1}(T)$.  
\end{proof}

The second special case we explore involves iterating the \emph{top} insertion map $\ins_{\infty}$, where we make use of Theorem~\ref{thm:RSK-rect} to prove the following.

\begin{theorem}
  Let $\comp{a}$ be a weak composition, and set $\ell = \max_i\{a_i > 0\}$. For every positive integer $\ell \leq k \leq n$,
  iterating the top insertion map $\ins_{\infty}$ induces a weight-preserving injection
  \begin{equation}
    \KD(\comp{a}) \times \KD(m \comp{e}_k) \hookrightarrow \KDs^{(m)}(\comp{a},k).
    \label{e:insert-top-hor}
  \end{equation}
  \label{thm:insertion-top-hor}
\end{theorem}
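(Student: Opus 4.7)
The plan is to proceed by induction on $m$, paralleling the proof of Theorem~\ref{thm:insertion-bottom-hor}. The base case $m=0$ is trivial and $m=1$ is Theorem~\ref{thm:insertion-top}. Since $\key_{m\comp{e}_k}$ equals the complete homogeneous polynomial $h_m(x_1,\ldots,x_k)$, every $U \in \KD(m\comp{e}_k)$ consists of cells $(1, r_1), (2, r_2), \ldots, (m, r_m)$ with $k \geq r_1 \geq r_2 \geq \cdots \geq r_m \geq 1$, and I define the iteration by reading these rows off in increasing column order,
\[
  \ins_\infty^m(T, U) := \ins_\infty\bigl(\ldots\ins_\infty(\ins_\infty(T, r_1), r_2)\ldots, r_m\bigr).
\]
The hypothesis $k \geq \ell$ together with the bound $r_i \leq k$ ensures that the single-cell hypothesis of Theorem~\ref{thm:insertion-top} applies at every step (since inserting into a row $\leq k$ of a diagram whose occupied rows lie $\leq k$ produces a diagram whose occupied rows still lie $\leq k$).

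For the inductive step, I would decompose $U = U^* \sqcup \{(m, r_m)\}$ with $U^* \in \KD((m-1)\comp{e}_k)$, apply the induction hypothesis to obtain $V^* := \ins_\infty^{m-1}(T, U^*) \in \KDs^{(m-1)}(\comp{a}, k)$, and then set $V := \ins_\infty(V^*, r_m)$. Since $\kd_{\thread(V^*)} \in \KDs^{(m-1)}(\comp{a}, k)$ and $V \in \KDs(\thread(V^*), k)$ by Theorem~\ref{thm:insertion-top}, Recursion~\eqref{e:hor-recursion-def} delivers $V \in \KDs^{(m)}(\comp{a}, k)$ \emph{provided} the column added at this step is distinct from the $m-1$ columns added at the previous steps of the iteration.

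Establishing this column-distinctness is the central obstacle. I plan to prove the stronger monotonicity statement that the successive added columns $c_1, c_2, \ldots, c_m$ are strictly increasing, $c_1 < c_2 < \cdots < c_m$. Morally, this is the Kohnert incarnation of the classical RSK fact that inserting a weakly increasing row into an SSYT bumps cells into strictly increasing columns; when $T$ arises from an SSYT, Theorem~\ref{thm:RSK-rect} already reduces the claim to that horizontal-strip statement. For general $T \in \KD(\comp{a})$ the plan is to combine Lemma~\ref{lem:rectification} with Corollary~\ref{cor:insertion-rectify1box}, which characterizes the endpoint of a rectification as the highest removable, rightmost-in-its-thread cell of its column. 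The inequality $r_{i+1} \leq r_i$ should ensure that any candidate settling position for the step-$(i{+}1)$ cell in column $c_i$ or further left is now blocked by the cell deposited at step $i$, forcing $c_{i+1} > c_i$; the essential case analysis splits on whether the step-$i$ rectification terminates at the new rightmost column $c+1$ or strictly to its left, which is where the bulk of the technical work lives.

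Finally, weight-preservation is immediate from the weight-preservation of each $\ins_\infty$, and injectivity of the iterated map follows by induction. Given $V$ in the image, strict column monotonicity identifies the cell inserted at step $m$ as the one residing in the strictly rightmost of the newly added columns, which lets one invert the final application of $\ins_\infty$ (via the reconstruction of $(V^*, r_m)$ given in the proof of Theorem~\ref{thm:insertion-top}); then $(T, U^*)$ is recovered uniquely by the inductive hypothesis, completing the injection.
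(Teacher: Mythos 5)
Your inductive skeleton is exactly the paper's, and your identification of column-distinctness as the central obstacle (strengthened to strict monotonicity $c_1 < \cdots < c_m$) is correct. However, the way you propose to close that obstacle contains both a misconception and a gap.

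You write that "when $T$ arises from an SSYT, Theorem~\ref{thm:RSK-rect} already reduces the claim to that horizontal-strip statement," and then propose a separate direct rectification analysis "for general $T \in \KD(\comp{a})$." That dichotomy is illusory. The map $\mathbb{D}$ of Theorem~\ref{thm:RSK-rect} is a \emph{bijection} $\SSYT_n(\lambda) \to \KD(\mathrm{rev}(\lambda))$, and since $\mathrm{rev}(\lambda)$ (the weakly increasing rearrangement of $\comp{a}$) is maximal in left swap order among rearrangements of $\comp{a}$, Proposition~\ref{prop:lswap} gives $\KD(\comp{a}) \subseteq \KD(\mathrm{rev}(\lambda))$. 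So \emph{every} $T \in \KD(\comp{a})$ equals $\mathbb{D}(T')$ for a unique $T' \in \SSYT_n(\lambda)$, and every $H \in \KD(m\comp{e}_k)$ equals $\mathbb{D}(H')$ for a unique one-row tableau $H'$. Iterating Theorem~\ref{thm:RSK-rect} therefore transports the entire iterated top insertion to classical RSK row insertion of the reading word of $H'$ into $T'$, with no exceptional cases. Column distinctness and strict monotonicity then follow immediately from the classical horizontal-strip statement (Theorem~\ref{thm:RSK-pieri}), as does injectivity from the classical bijection. This is precisely what the paper does: three short citations replace the analysis you were planning.

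Because you did not see this, your proof as written has a genuine hole at the step you yourself flag as hard. You propose to reprove a piece of the row bumping lemma from scratch inside the rectification machinery via Lemma~\ref{lem:rectification} and Corollary~\ref{cor:insertion-rectify1box}, acknowledge this is "where the bulk of the technical work lives," and do not supply the case analysis. Even granting that such an argument could be pushed through, it is substantial unproven work, and it is unnecessary work: the SSYT correspondence you thought applied only sometimes applies always, and it closes the argument cleanly.
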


\begin{proof}
We prove the theorem by induction on $m$. 
For our base cases, the result for $m=0$ is trivial, and $m=1$ follows from Theorem~\ref{thm:insertion-top}.

We now assume for our induction hypothesis that for some $m \geq 1$, iterating the top insertion map $\ins_{\infty}$ induces a weight-preserving injection of the form Eq.~\eqref{e:insert-top-hor}.
Let $(T,H) \in \KD(\comp{a}) \times \KD((m+1)\comp{e}_k)$, where
we may write 
$$H = \{(i,j_i) \mid 1 \leq i \leq m+1\}$$
for some $k \geq j_1 \geq \cdots \geq j_{m+1} > 0$.
By iterating the top insertion map $\ins_{\infty}$, we obtain the following diagrams $U^*$ and $U$:
$$
U^* = \ins_{\infty}(\ins_{\infty}( \cdots (\ins_{\infty}(T,j_1),j_2),\cdots),j_{m}),
$$
$$
U = \ins_{\infty}(U^*,j_{m+1}).
$$
Since each iteration of $\ins_{\infty}$ is (row) weight-preserving, a quick induction argument gives us $\wt(U) = \wt(T) + \wt(H)$.

We now want to show $U \in \KDs^{(m+1)}(\comp{a},k)$.
Firstly, observe how the sub-diagram $H \setminus \{(m+1,j_{m+1})\}$ is a Kohnert diagram of $m \comp{e}_j$.
It follows by our induction hypothesis
$U^* \in \KDs^{(m)}(\comp{a},k)$, and therefore
$\kd_{\thread(U^*)} \in \KDs^{(m)}(\comp{a},k)$ as well.
In turn, Theorem~\ref{thm:insertion-top} implies $U \in \KDs(\thread(U^*),k)$. 
In light of Recursion~\eqref{e:hor-recursion-def}, it suffices to show
$$
\cwt(U) - \cwt(\comp{a}) = \sum_{i \in C} \comp{e}_i,
$$
for some column set $C \in {\mathbb{N} \choose m+1}$.

Let $\mathbb{D}$ be the weight-preserving bijection from semistandard Young tableaux to generic Kohnert diagrams obtained by moving cells in column $c$ with entry $r$ to position $(c,n+1-r)$.
From Theorem~\ref{thm:RSK-rect}, we get
$$
U = \mathbb{D}(\mathrm{RSK}(\Tab(T), w)),
$$
where 
$$
w = (n - j_1 + 1), (n - j_2 + 1), \cdots, (n - j_{m+1} + 1)
$$
is precisely the reading word of the tableau $\Tab(H)$, and
where the right hand side of the equation denotes the result of inserting $w$
into the tableau $\Tab(T)$ via $\mathrm{RSK}$.
By Proposition~\ref{prop:sort}, we have $\Tab(T) \in \SSYT_n(\lambda)$, where $\lambda = \mathrm{sort}(\comp{a})$, and $\Tab(H) \in \SSYT_n(m+1)$.
Thus, Theorem~\ref{thm:RSK-pieri} implies $\Tab(U) \in \SSYT_n(\mu)$, for some partition $\mu \supset \lambda$ with $\mu / \lambda$ a horizontal $(m+1)$-strip.
Since $\cwt(U) = \cwt(\mu)$ and $\cwt(\comp{a}) = \cwt(\lambda)$, 
we conclude
$$
\cwt(U) - \cwt(\comp{a}) = \cwt(\mu) - \cwt(\lambda) = \sum_{i=1}^{m+1} \comp{e}_{c_i},
$$
for some column indices $c_1, c_2, \cdots, c_{m+1}$, which all have to be distinct since $\mu / \lambda$ is a horizontal $(m+1)$-strip. 
Therefore, $U \in \KDs^{(m+1)}(\comp{a},k)$.

We have left to show $(T,H)$ is the unique pre-image of $U$ under iterated application of the top insertion $\ins_{\infty}$.
Let $(T',H') \in \KD(\comp{a}) \times \KD((m+1) \comp{e}_k)$ be one such pre-image of $U$.
Following the same arguments as we have done regarding $T$ and $H$, we conclude
$(\Tab(T'),\Tab(H')) \in \SSYT_n(\lambda) \times \SSYT_n(m+1)$ is a pre-image of $\Tab(U)$ under RSK-insertion.
But by Theorem~\ref{thm:RSK-pieri} again, $(\Tab(T),\Tab(H)) $ is the unique pre-image of $\Tab(U)$.
Hence, we must have $(T',H') = (T,H)$.
\end{proof}

We may, if we wish, continue on to show the map described in Theorem~\ref{thm:insertion-top-hor} is a bijection and hence prove Theorem~\ref{thm:RSKD-hor} for $k \geq \ell$. 
However, doing so at this juncture will not be necessary.
Moving forward, we will describe weight-preserving injective maps in the reverse direction for all values of $k>1$, and so
our current result of injectivity in the forward direction for sufficiently large values of $k$ is actually sufficient for proving the desired theorem in its entirety, after which a separate proof of surjectivity will have been rendered redundant. 

Given a weak composition $\comp{a}$ and fixed positive integer $m$,
we now proceed to stratify the target spaces 
$\KDs^{(m)}(\comp{a},k)$ for each positive integer $k$
in a similar fashion as we have done to the target spaces $\KDs(\comp{a},k)$ in Section~\ref{sec:stratify}.
As is the case when $m=1$, these target spaces in general form a nested sequence of spaces of Kohnert diagrams,
$$
\cdots \supset \KDs^{(m)}(\comp{a},k+1) \supset \KDs^{(m)}(\comp{a},k) \supset \cdots \supset \KDs^{(m)}(\comp{a},1).
$$
We use this to \emph{stratify} the target space of our desired bijection by
\begin{equation}
    \KDstrm = \KDs^{(m)}(\comp{a},k) \setminus \KDs^{(m)}(\comp{a},k-1).
\end{equation}
For each pair of positive integers $1 < k \leq n$ and $m$, we define an injective map
\begin{equation}
    \partial^{(m)}_{\comp{a},k}: \KDstrm \longrightarrow \KDs^{(m-1)}(\comp{a},k)
\end{equation}
satisfying $\wt(U) = \wt(\partial^{(m)}_{\comp{a},k}(U)) + \comp{e}_k$, and we show how these maps together with the iterated top and bottom insertions prove Theorem~\ref{thm:RSKD-hor}.

Thankfully, we need not build these \emph{degree-$m$} stratum maps from scratch.
It turns out the stratum maps of any fixed degree $m$ can be described as a degree-$1$ stratum map, which we defined in Section~\ref{sec:stratify}. That is, given a weak composition $\comp{a}$ and positive integers $1 < k \leq n$ and $m$, there exists for each Kohnert diagram $U \in \KDstrm$ a weak composition $\comp{b}$ with $\kd_{\comp{b}} \in \KDs^{(m-1)}(\comp{a},k)$ such that
\begin{equation}
\partial^{(m)}_{\comp{a},k}(U) = \partial_{\comp{b},k}(U).
    \label{e:gen-strat}
\end{equation}
Much of the rest of this section is devoted to identifying an appropriate weak composition $\comp{b}$ for which Eq.~\eqref{e:gen-strat} holds.
That the general stratum maps are injective thereafter follows from the same logic used in Section~\ref{sec:stratify-inject} to prove the stratum maps at $m=1$ are injective.

Recall Definition~\ref{def:excision}, in which the excised weight of a Kohnert diagram $U$ in the $k$th stratum $\KDstr$ is obtained from an appropriate decomposition of the key diagram $\kd_{\thread(U)}$.
We generalize this notion and find for each Kohnert diagram $U$ in the $k$th degree-$m$ stratum $\KDstrm$ a suitable decomposition of the key diagram $\kd_{\thread(U)}$.
To that end, we make the following key observation. 

\begin{lemma}
Let $K$ be a key diagram that admits a nontrivial decomposition $K = T \sqcup Y$, for some generic Kohnert diagram $T$.
If $y \in Y$ is the highest cell in the leftmost column occupied by $Y$, 
then
$T \sqcup \{y\} \in \KD(\thread(T) + \comp{e}_{\row(y)})$.
    \label{lem:key-decomp}
\end{lemma}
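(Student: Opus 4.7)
By Theorem~\ref{thm:TFAE_kohnert} (equivalently, Lemma~\ref{lem:matching-wt}), it suffices to exhibit a matching sequence $M$ on $T \sqcup \{y\}$ with $\wt(M) = \thread(T) + \comp{e}_r$, where $r = \row(y)$. The key structural input is that $K = \kd_{\comp{c}}$ is a key diagram containing $y = (c_0, r)$, so $\comp{c}_r \geq c_0$, and hence $(c, r) \in K$ for every $1 \leq c \leq c_0$. Because $c_0$ is the leftmost column of $Y$, the cells $(1, r), (2, r), \ldots, (c_0 - 1, r)$ all lie in $T$. Moreover, for any row $s$ supporting a cell of $T$ at some column $\geq c_0$, the entire prefix $(1, s), (2, s), \ldots, (c_0 - 1, s)$ likewise lies in $T$, by the same argument applied to $s$.

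I would use this to prove the main claim: in $\Mtch_{\thd}(T)$, the thread anchored at $(1, r)$ consists of exactly the cells $(1, r), (2, r), \ldots, (c_0 - 1, r)$, so in particular $\thread(T)_r = c_0 - 1$. The argument traces the greedy threading algorithm from the rightmost column leftward. Any thread starting at a cell $(c, s) \in T$ with $c \geq c_0$, or at a cell $(c_0 - 1, s)$ with $s < r$, propagates leftward along row $s$: the greedy rule chooses the lowest cell weakly above the current row, and the key-diagram structure guarantees that the row-$s$ cell in the next column is in $T$ and still available, because earlier threads have themselves stayed at strictly lower rows. Consequently, none of these earlier threads ever occupies $(c', r)$ for $c' \leq c_0 - 1$, and when the greedy algorithm eventually reaches $(c_0 - 1, r)$ as its next starting cell, it produces a single thread using exactly $(1, r), (2, r), \ldots, (c_0 - 1, r)$ of length $c_0 - 1$.

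With this thread structure in hand, $M$ is obtained from $\Mtch_{\thd}(T)$ by adjoining the edge $(c_0 - 1, r) \leftarrow y$, extending the row-$r$ thread to length $c_0$. The matching condition holds trivially since both cells lie in row $r$; all other threads of $\Mtch_{\thd}(T)$ are untouched; and the anchor weight increases by exactly $\comp{e}_r$. Applying Lemma~\ref{lem:matching-wt} then yields $T \sqcup \{y\} \in \KD(\thread(T) + \comp{e}_r)$. The main obstacle is the inductive thread-tracing argument of the previous paragraph: at each step of the greedy algorithm, one must invoke the key-diagram containment $T \sqcup Y = K$ to guarantee that the relevant row-$s$ cell one column to the left is both in $T$ and unclaimed, and handle the threads in the correct order (rows below $r$ before row $r$) so that $(c_0 - 1, r)$ is undisturbed when its turn arrives.
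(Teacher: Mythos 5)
Your overall strategy matches the paper's exactly: reduce the lemma to the claim that in $\Mtch_{\thd}(T)$ the thread anchored at $(1,r)$ is precisely $(1,r),(2,r),\ldots,(c_0-1,r)$, then append the edge $y \to (c_0-1,r)$ and apply Theorem~\ref{thm:TFAE_kohnert}. The paper states the thread claim tersely (``Consequently, the path of the cell $x$ \ldots begins at $x$ and is anchored at the row of $y$'') and leaves the justification implicit. You try to supply that justification, but two of your supporting assertions are false, and this is a genuine gap.

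First, the claim that ``any thread starting at a cell $(c,s)\in T$ with $c\geq c_0$ \ldots propagates leftward along row $s$'' fails whenever $c>c_0$. Concretely, take $K$ with nonempty rows $1,2,3$ of lengths $c_0,c_0+1,c_0$ and $Y=\{(c_0,2)\}$, so $y=(c_0,2)$ and $T=K\setminus\{y\}$. The threading of $T$ begins at $(c_0+1,2)$, but since $(c_0,2)\notin T$ the algorithm must next take $(c_0,3)$, and the thread continues along row $3$. So a thread starting at row $2$ in column $c_0+1$ does not propagate along row $2$. The statement ``the key-diagram structure guarantees that the row-$s$ cell in the next column is in $T$'' is only valid for columns $\leq c_0-1$, where $T$ coincides with $K$; in columns $\geq c_0$, cells of $Y$ have been deleted from $K$, so that guarantee is lost.

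Second, the claim that earlier threads ``have themselves stayed at strictly lower rows'' is not the right invariant. In the same example, thread $1$ occupies $(c_0-1,3)$ before thread $2$ is even started, yet thread $2$ enters column $c_0-1$ at row $1$, strictly below thread $1$. The correct invariant, which is what makes the paper's ``consequently'' work, is that once a thread reaches a column $j\leq c_0-1$, it continues at the same row in every column $j,j-1,\ldots,1$; in particular distinct threads occupy distinct rows there. This is not an observation about lower rows but a statement that must be proved by induction on the order in which threads are processed: threads are started in weakly decreasing starting column, so any thread processed before the one currently at $(j+1,s)$ and passing through column $j$ must also pass through column $j+1$, and the inductive hypothesis forces it to occupy column $j$ in the same row as column $j+1$, hence some row different from $s$; therefore $(j,s)\in T$ is available, and since it is the lowest cell in column $j$ weakly above row $s$, the greedy rule selects it. With this corrected argument, one deduces first that no cell of $T$ in column $c_0$ matches into $(c_0-1,r)$ (because $(c_0,s)\in T$ forces $s\neq r$ and that cell matches into $(c_0-1,s)$), so the thread of $(c_0-1,r)$ begins there, and then that it proceeds along row $r$ to $(1,r)$. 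Your conclusion $\thread(T)_r=c_0-1$ and the final step of adjoining $y$ are correct, but the justification of the thread claim needs to be replaced by this induction.
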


\begin{proof}
If $y$ is in the first column, then we are done.
So suppose $y$ is not in the first column. Since $K$ is a key diagram, our choice of $y$ guarantees 
that for every cell of $T \sqcup \{y\}$ weakly to the left of $y$ and not in column $1$, the position to its immediate left is occupied by $T$.
Consequently, the path of the cell $x$ of $T$ to  the immediate left of $y$ begins at $x$ and is anchored at the row of $y$. 
We may then match $y$ into $x$ to obtain a matching sequence $\Mtch_{\thd}(T) \cup (x \leftarrow y)$ on $T \sqcup \{y\}$ with anchor weight $\thread(T) + \comp{e}_{\row(y)}$. 
Therefore, by Theorem~\ref{thm:TFAE_kohnert}, 
the diagram $T \sqcup \{y\}$ is a Kohnert diagram of $\thread(T) + \comp{e}_{\row(y)}$.
\end{proof}

We use Lemma~\ref{lem:key-decomp} in an inductive argument to establish a criterion that every Kohnert diagram in the target space of our desired bijection must satisfy.

\begin{definition}
Let $U$ be a diagram.
Let $\comp{a}$ be a weak composition and let $k \leq n$ and $m$ be positive integers.
Suppose $U$ admits a decomposition $U = T \sqcup Y$, where $T \in \KD(\comp{a})$ and $Y$ is a horizontal $m$-strip whose cells are all weakly below row $k$. 
Then we say that the decomposition $U = T \sqcup Y$ is a \emph{drop decomposition} of $U$ (with respect to $\comp{a}$, $k$ and $m$),
and we refer to the cells of $Y$ as the \emph{added cells} associated to the drop decomposition.
\end{definition}

\begin{theorem}
Let $\comp{a}$ be a weak composition, and let $k \leq n$ and $m$ be positive integers. 
Suppose $U$ is a generic Kohnert diagram.

If 
the key diagram $\kd_{\thread(U)}$ 
admits a drop decomposition with respect to $\comp{a}$, $k$ and $m$, 
then $U \in \KDs^{(m)}(\comp{a},k)$. 
Conversely, if $U \in \KDs^{(m)}(\comp{a},k)$, then $U$
admits a drop decomposition with respect to $\comp{a}$, $k$ and $m$, and each added cell associated to the drop decomposition occupies a column in the added column set of $U$. 
    \label{thm:gen-target-criterion}
\end{theorem}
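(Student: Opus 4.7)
The plan is to prove both implications by induction on $m$. Lemma~\ref{lem:key-decomp} drives the forward direction, peeling cells off $Y$ one at a time, while the recursion~\eqref{e:hor-recursion-def}, resting on an $m=1$ base step built from the Kohnert matching machinery, handles the converse.

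For the forward direction, the case $m = 0$ reduces to Lemma~\ref{lem:3.7}. For $m \geq 1$, given a drop decomposition $\kd_{\thread(U)} = T \sqcup Y$ with respect to $\comp{a}, k, m$, let $y$ be the highest cell of $Y$ in its leftmost column. Lemma~\ref{lem:key-decomp} gives $T \sqcup \{y\} \in \KD(\comp{b}^{(1)})$ for $\comp{b}^{(1)} = \thread(T) + \comp{e}_{\row(y)}$, and since $\thread(T) \preceq \comp{a}$ and $\row(y) \leq k$, the key diagram $\kd_{\comp{b}^{(1)}}$ lies in $\KDs(\comp{a},k)$. The rewriting $\kd_{\thread(U)} = (T \sqcup \{y\}) \sqcup (Y \setminus \{y\})$ is a drop decomposition with respect to $\comp{b}^{(1)}, k, m-1$, so the inductive hypothesis yields $U \in \KDs^{(m-1)}(\comp{b}^{(1)},k)$, and the recursion~\eqref{e:hor-recursion-def} then places $U$ in $\KDs^{(m)}(\comp{a},k)$. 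The required column-distinctness is automatic: the added column of $\kd_{\comp{b}^{(1)}}$ with respect to $\comp{a}$ is $\col(y)$, the added columns of $U$ with respect to $\comp{b}^{(1)}$ are the columns of $Y \setminus \{y\}$ by induction, and together these exhaust the (distinct) columns of the horizontal strip $Y$.

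For the converse, the case $m = 0$ is immediate. For $m \geq 1$, the recursion produces $\comp{b}$ with $\kd_{\comp{b}} \in \KDs^{(m-1)}(\comp{a},k)$ and $U \in \KDs(\comp{b},k)$, so that the added column of $U$ with respect to $\comp{b}$ is disjoint from the added column set of $\kd_{\comp{b}}$ with respect to $\comp{a}$. The $m=1$ step (described below) applied to $U$ relative to $\comp{b}$ delivers $U = T_1 \sqcup \{y\}$ with $T_1 \in \KD(\comp{b}) \subseteq \KDs^{(m-1)}(\comp{a},k)$, $\row(y) \leq k$, and $\col(y)$ the added column of $U$ with respect to $\comp{b}$. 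The inductive hypothesis applied to $T_1$ furnishes a drop decomposition $T_1 = T \sqcup Y'$ with respect to $\comp{a}, k, m-1$ whose added cells occupy the added column set of $\kd_{\comp{b}}$ with respect to $\comp{a}$. Setting $Y = Y' \sqcup \{y\}$ gives a horizontal $m$-strip weakly below row $k$ whose columns fill out the added column set of $U$ with respect to $\comp{a}$, as required.

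The main obstacle is the $m = 1$ step of the converse: given $U \in \KDs(\comp{a},k)$, exhibiting a cell $y$ in the added column $c$ of $U$ with $\row(y) \leq k$ and $U \setminus \{y\} \in \KD(\comp{a})$. Take $k^* \leq k$ minimal with $U \in \KDs(\comp{a},k^*)$ and let $\comp{b}$ be the corresponding excised weight of $U$ (using Lemma~\ref{lem:L-minimal-labeling} when $k^* \geq 2$ and Theorem~\ref{thm:insertion-bottom} when $k^* = 1$), so that $U \in \KD(\comp{b} + \comp{e}_{k^*})$ with $b_{k^*} + 1 = c$ by Lemma~\ref{lem:col-added}, and $\wt(\Mtch_{\comp{b}+\comp{e}_{k^*}}(U)) = \comp{b} + \comp{e}_{k^*}$. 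In particular the path labeled $k^*$ has length $c$; let $y$ be its terminal cell, in column $c$. The row-comparison constraint in Definition~\ref{def:kohnert-label} requires a fixed label to occupy weakly smaller rows as the column index grows, so $\row(y) \leq k^* \leq k$. Excising $y$ and its matching edge leaves a matching sequence on $U \setminus \{y\}$ of anchor weight $\comp{b} \preceq \comp{a}$, whence Theorem~\ref{thm:TFAE_kohnert} gives $U \setminus \{y\} \in \KD(\comp{a})$, completing the base step and the induction.
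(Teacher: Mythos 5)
Your proof is correct, and the forward direction matches the paper's approach---both iterate Lemma~\ref{lem:key-decomp} on the leftmost cell of $Y$, the paper doing so in one pass (building the full sequence $\comp{b}^{(0)},\ldots,\comp{b}^{(m)}$ directly) and you via induction on $m$. The converse is where you diverge. The paper unrolls the definitional sequence $\comp{b}^{(0)},\ldots,\comp{b}^{(m)}$ by \emph{downward} induction, removing at each step the cell with label $j_i$ in the Kohnert labeling $\Lbl_{\comp{b}^{(i-1)}+\comp{e}_{j_i}}$---no minimality in $k$ is ever invoked. You instead induct \emph{upward} on $m$ and, for the core $m=1$ step, pass to the minimal stratum $k^* \leq k$ and deploy the excised-weight machinery of Section~\ref{sec:stratify-define} (Lemma~\ref{lem:L-minimal-labeling}, or Theorem~\ref{thm:insertion-bottom} when $k^*=1$) to locate the removable cell as the terminus of the $k^*$-labeled path. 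Both work; yours pulls in machinery that the paper reserves for Lemma~\ref{lem:excision-hor} and the stratum injectivity argument, while the paper's single sweep through the $\comp{b}^{(i)}$'s is more economical. One small caution: citing the recursion~\eqref{e:hor-recursion-def} to close the forward step is technically backwards---that identity strips the \emph{last} added cell, whereas you strip the \emph{first}---but the underlying sequence-splicing is sound: since $\comp{c}^{(0)} \preceq \comp{b}^{(1)} = \thread(T) + \comp{e}_{\row(y)}$ and $\thread(T) \preceq \comp{a}$, you may prepend $\thread(T)$ to the IH's witnessing sequence and verify the definition of $\KDs^{(m)}(\comp{a},k)$ directly.
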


\begin{proof}
Suppose $K = T \sqcup \{y_1,y_2,\cdots,y_m\}$ drop decomposition with respect to $\comp{a}$, $k$ and $m$, where the cells $y_1,y_2,\cdots,y_m$ are labeled left-to-right in $K$. 
Let $\comp{b}^{(0)} = \thread(T)$, and for each $1 \leq i \leq m$ define $\comp{b}^{(i)} = \thread(T \sqcup \{y_1,\cdots,y_i\})$.
By the definition of a drop decomposition, $T \in \KD(\comp{a})$ which implies $\comp{b}^{(0)} \preceq \comp{a}$ by Lemma~\ref{lem:3.7}.
For each $1 \leq i \leq m$, we have $\comp{b}^{(i)} \preceq \comp{b}^{(i-1)} + \comp{e}_{\row(y_i)}$
by Lemma~\ref{lem:key-decomp}, where by the definition of the drop decomposition, each $y_i$ is weakly below row $k$. 
Note $\thread(U) = \thread(K) = \comp{b}^{(m)}$, so
$U \in \KD(\comp{b}^{(m)})$.
Following the indexing set of the right-hand side of Eq.~\eqref{e:RSKD-hor} of Theorem~\ref{thm:RSKD-hor}, 
we conclude $U \in \KDs^{(m)}(\comp{a},k)$.

On the other hand, suppose $U \in \KDs^{(m)}(\comp{a},k)$.
Following the indexing set of the right-hand side of Eq.~\eqref{e:RSKD-hor} of Theorem~\ref{thm:RSKD-hor}, 
there exists a sequence of weak compositions $\comp{b}^{(0)}, \comp{b}^{(1)}, \cdots, \comp{b}^{(m)}$ satisfying
\begin{enumerate}
    \item $\comp{b}^{(0)} \preceq \comp{a}$,
    \item for each $1 \leq i \leq m$, $\comp{b}^{(i)} \preceq \comp{b}^{(i-1)} + \comp{e}_{j_i}$, where $1 \leq j_i \leq k$,
\end{enumerate}
such that $U \in \KD(\comp{b}^{(m)})$.
by Proposition~\ref{prop:lswap}. 

We now set $T^{(m)} = U$, and we proceed by downward induction to show for every $1 \leq i \leq m$ there exists a decomposition $T^{(i)} = T^{(i-1)} \sqcup \{y_i\}$ such that the
sub-diagram $T^{(i-1)}$ is a Kohnert diagram of $\comp{b}^{(i-1)}$ and the cell $y_i$ is weakly below row $k$.
We assume $T^{(i)}$ is a Kohnert diagram of $\comp{b}^{(i)}$, which already holds for our base case $i = m$.
By condition~(2) above, we have $\comp{b}^{(i)} \preceq \comp{b}^{(i-1)} + \comp{e}_{j_i}$, and so by applying Lemma~\ref{lem:3.7} twice, we infer $T^{(i)} \in \KD(\comp{b}^{(i-1)} + \comp{e}_{j_i})$.
By Definition~\ref{def:kohnert-label}, we may consider the Kohnert labeling $\Lbl_{\comp{b}^{(i-1)} + \comp{e}_{j_i}}$ on $T$. This labeling induces a decomposition $T^{(i)} = T^{(i-1)} \sqcup \{y_i\}$, where $\Lbl_{\comp{b}^{(i-1)} + \comp{e}_{j_i}} \big|_{T^{(i-1)}}$ has content weight $\comp{b}^{(i-1)}$ and where $y_i$ has label $j_i$. Since $j_i \leq k$ by condition~(2), $y_i$ is weakly below row $k$ from the definition of labelings. The same argument used in the proof of Proposition~\ref{prop:label-wt} applies to the underlying matching sequence $M$ for $\Lbl_{\comp{b}^{(i-1)} + \comp{e}_{j_i}}$ restricted to $T^{(i-1)}$, and so $\wt(M) \preceq \comp{b}^{(i-1)}$. Thus by Theorem~\ref{thm:TFAE_kohnert}, $T^{(i-1)}$ is a Kohnert diagram of $\comp{b}^{(i-1)}$, and we are done.

By induction, we obtain a decomposition $U = T^{(0)} \sqcup \{y_1,y_2,\cdots,y_m\}$, where $T^{(0)}$ is a Kohnert diagram of $\comp{b}^{(0)}$ and the cells $y_1,y_2,\cdots,y_m$ are all weakly below row $k$.
Since $\comp{b}^{(0)} \preceq \comp{a}$ by condition~(1), we again apply Lemma~\ref{lem:3.7} twice to infer $T^{(0)} \in \KD(\comp{a})$. 
So in order for $T^{(0)} \sqcup \{y_1,y_2,\cdots,y_m\}$ to be a drop decomposition of $U$ with respect to $\comp{a}$, $k$ and $m$, we have left to show the cells $y_i$ are all in distinct columns. 
Since $U \in \KDs^{(m)}(\comp{a},k)$, we must have $\cwt(U) - \cwt(\comp{a}) = \sum_{i=1}^m \comp{e}_{c_i}$, where $\{c_1,c_2,\cdots,c_m\}$ is the added column set of $U$. 
Since Kohnert moves preserve column weights, we have $\cwt(T^{(0)}) = \cwt(\comp{a})$ and $\cwt(K) = \cwt(U)$. 
Therefore,
$$
\sum_{i=1}^m \comp{e}_{\col(y_i)} = \cwt(K) - \cwt(T^{(0)}) = \cwt(U) - \cwt(\comp{a}) = \sum_{i=1}^m \comp{e}_{c_i}.
$$
For the above equalities to hold, the cells $y_1,y_2,\cdots,y_m$ must each occupy a distinct column in the added column set of $U$. 
This concludes our proof.
\end{proof}

Theorem~\ref{thm:gen-target-criterion} is instrumental in proving a criterion that determines when a Kohnert diagram of $\KDs^{(m)}(\comp{a},k)$ is actually a diagram of $\KDs^{(m)}(\comp{a},k-1)$.
To describe this criterion, the following notation will be convenient.

\begin{definition}
Let $T$ be an arbitrary diagram. For each position $(c,r)$, define
\begin{equation}
    \droppable_{c,r}(T) = \# \{ x \in T \mid \col(x) = c, \row(x) \geq r \}.
        \label{e:droppable-diagram}
\end{equation}
Abusing notation,
given a weak composition $\comp{a}$ and positive integers $c$ and $r$, define
\begin{equation}
    \droppable_{c,r}(\comp{a}) = \droppable_{c,r}(\kd_{\comp{a}}) = \# \{ j \geq r \mid a_j \geq c \}. 
        \label{e:droppable-comp}
\end{equation}
\end{definition}

The above notation is highly compatible with Kohnert moves, which push cells down. Namely, if $S \prec T$ then $\droppable_{c,r}(S) \leq \droppable_{c,r}(T)$ for each position $(c,r)$; by extension, if $T$ is a Kohnert diagram of a weak composition $\comp{a}$, then $\droppable_{c,r}(T) \leq \droppable_{c,r}(\comp{a})$ for each pair of positive integers $c$ and $r$. 
We may additionally state this observation purely in terms of weak compositions, as Proposition~\ref{prop:lswap} implies for each pair of weak compositions $\comp{b} \preceq \comp{a}$, and for each pair of positive integers $c$ and $r$, we have $\droppable_{c,r}(\comp{b}) \leq \droppable_{c,r}(\comp{a})$. 



\begin{lemma}
Let $\comp{a}$ be a weak composition, and let $1 < k \leq n$ and $m$ be positive integers.
Suppose $U \in \KDs^{(m)}(\comp{a},k)$.

If every column $c$ in the added column set of $U$ satisfies
\begin{equation}
\droppable_{c,k}(\thread(U)) \leq \droppable_{c,k}(\comp{a}),
    \label{e:gen-nonstrat-criterion}
\end{equation}
then $U \in \KDs^{(m)}(\comp{a},k-1)$.
    \label{lem:gen-nonstrat-criterion}
\end{lemma}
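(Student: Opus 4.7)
The approach is via Theorem~\ref{thm:gen-target-criterion}, which characterizes $\KDs^{(m)}(\comp{a}, k-1)$ by the existence of a drop decomposition of $\kd_{\thread(U)}$ with respect to $\comp{a}$, $k-1$, and $m$. Since $U \in \KDs^{(m)}(\comp{a}, k)$, this same theorem provides an initial drop decomposition $\kd_{\thread(U)} = T_0 \sqcup Y_0$ at level $k$, where $T_0 \in \KD(\comp{a})$ and the cells of the horizontal $m$-strip $Y_0$ occupy the added column set of $U$ weakly below row $k$. The plan is to modify this into a drop decomposition $\kd_{\thread(U)} = T_1 \sqcup Y_1$ in which every cell of $Y_1$ lies strictly below row $k$; Theorem~\ref{thm:gen-target-criterion} then delivers $U \in \KDs^{(m)}(\comp{a}, k-1)$.

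Cells of $Y_0$ already at rows strictly below $k$ require no change, so the work is concentrated on each $y \in Y_0$ at some position $(c, k)$. For such a $y$, I would argue using the hypothesis that there exists a cell of $\kd_{\thread(U)}$ in column $c$ strictly below row $k$, which can then be swapped into the $T$-part. Since $T_0 \in \KD(\comp{a})$ is obtained from $\kd_{\comp{a}}$ via Kohnert moves (which only push cells downward), we have $\droppable_{c,k}(T_0) \leq \droppable_{c,k}(\comp{a})$. Combined with the equality $\droppable_{c,k}(\kd_{\thread(U)}) = \droppable_{c,k}(T_0) + 1$ (the $+1$ from $y$) and the hypothesis, this yields strict slack $\droppable_{c,k}(T_0) < \droppable_{c,k}(\comp{a})$. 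Moreover, the total number of cells in column $c$ of $\kd_{\thread(U)}$ is $\droppable_{c,1}(\comp{a}) + 1$, which strictly exceeds $\droppable_{c,k}(\kd_{\thread(U)}) \leq \droppable_{c,k}(\comp{a}) \leq \droppable_{c,1}(\comp{a})$. Hence $\kd_{\thread(U)}$ has a cell $z$ in column $c$ at some row $r < k$; such $z$ necessarily lies in $T_0$ since the unique cell of $Y_0$ in column $c$ is $y$ at row $k$. I would then perform the swap $T_1 := T_0 \cup \{y\} \setminus \{z\}$ and $Y_1 := Y_0 \cup \{z\} \setminus \{y\}$, processing each problematic column in turn; since each swap affects only one column, the slack arguments for subsequent columns are unaffected.

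The main obstacle is establishing $T_1 \in \KD(\comp{a})$ after the swap. By Theorem~\ref{thm:TFAE_kohnert}, this reduces to producing a matching sequence $M_1$ on $T_1$ with $\wt(M_1) \preceq \comp{a}$. The strategy is to re-route a matching sequence $M_0$ on $T_0$ through column $c$ to accommodate the replacement of $z$ at row $r<k$ by $y$ at row $k$; this is delicate because matchings must direct edges to weakly higher rows as one moves leftward, and raising a cell from row $r$ to row $k$ threatens local monotonicity on both the incoming and outgoing edges at $z$. The strict slack $\droppable_{c,k}(T_0) < \droppable_{c,k}(\comp{a})$ supplies the needed room: since passage from $\kd_{\comp{a}}$ to $T_0$ required pushing at least one cell in column $c$ past row $k$, we can effectively undo such a move partially by halting it at row $k$, yielding $y$ in $T_1$ in place of $z$ while preserving a matching sequence of weight $\preceq \comp{a}$. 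Once this rerouting is verified, iterating over all columns where $Y_0$ has a row-$k$ cell produces the desired level-$(k-1)$ drop decomposition of $\kd_{\thread(U)}$, completing the proof.
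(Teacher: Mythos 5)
Your high-level strategy is exactly the paper's: reduce, via Theorem~\ref{thm:gen-target-criterion}, to showing $\kd_{\thread(U)}$ admits a drop decomposition at level $k-1$, then eliminate each added cell sitting in row $k$ by swapping it with a cell of the $T$-part lower in the same column. Your counting argument for the existence of a lower cell $z$ in that column (using $\droppable_{c,1}(\thread(U)) = \droppable_{c,1}(\comp{a}) + 1$ together with the hypothesis) is sound, and your observation that the hypothesis yields strict slack $\droppable_{c,k}(T_0) < \droppable_{c,k}(\comp{a})$ is correct and important. The bookkeeping difference (the paper minimizes the number of row-$k$ added cells and argues by contradiction, you process them iteratively) is cosmetic.

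However, there is a genuine gap: you correctly identify the hard step as showing $T_1 = T_0 \cup \{y\} \setminus \{z\} \in \KD(\comp{a})$, but you then only gesture at it (``we can effectively undo such a move partially by halting it at row $k$''). This does not constitute a proof, and the difficulty is precisely what makes the lemma nontrivial. Two specific issues are left unresolved. First, for the swap to preserve a matching sequence, you need not just any cell $z$ in column $c$ below row $k$, but a cell $y'$ for which some matching sequence $M$ on $T_0$ (with $\wt(M) \preceq \comp{a}$) sends $M(y')$ weakly above row $k$ — only then do the new edges $M'(y) = M(y')$ and $M'(x) = y$ (for $x$ with $M(x) = y'$) respect the monotonicity constraint of Definition~\ref{def:matching}. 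Existence of such a pair $(M, y')$ is nonobvious: the Kohnert labeling $\Lbl_{\comp{a}}$ on $T_0$ need not directly supply it, and the paper spends most of its proof manufacturing it by starting from $\Lbl_{\comp{a}}$, invoking Lemma~\ref{lem:matchings-lengthen-row-k} to push thread lengths along row $k$, and then, when that is insufficient, hand-swapping two path components along a chosen interval of columns — an argument paralleling the proof of Proposition~\ref{prop:label-wt}. Your slack observation tells you a suitable $y'$ ought to exist, but it does not produce the required matching. Second, even granting the matching on column $c$, the claim ``passage from $\kd_{\comp{a}}$ to $T_0$ required pushing at least one cell in column $c$ past row $k$'' conflates a statement about column weights with a statement about an explicit Kohnert-move trajectory; there is no canonical trajectory to partially undo, and the actual argument must work at the level of matchings, which you have not supplied.
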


\begin{proof}
Let $U \in \KDs^{(m)}(\comp{a},k)$ and suppose every column in the added column set of $U$ satisfies Eq.~\eqref{e:gen-nonstrat-criterion}.
In light of Theorem~\ref{thm:gen-target-criterion}, it suffices to show the key diagram $K = \kd_{\thread(U)}$ has a drop decomposition with respect to $\comp{a}$, $(k-1)$ and $m$. 
By way of contradiction, we suppose this is not the case. By Lemma~\ref{lem:gen-nonstrat-criterion}, we may pick a drop decomposition $K = T \sqcup Y$ of $K$ with respect to $\comp{a}$, $k$ and $m$ that has the least number of added cells occupying row $k$. 
Let $y \in Y$ be the leftmost added cell in row $k$ of the drop decomposition,
and let $c$ be the column of $y$.
We claim there exists a matching sequence $M$ on $T$ with $\wt(M) \preceq \comp{a}$ such that for some cell $y'$ of $T$ below $y$ in column $c$, the cell $M(y')$ is weakly above row $k$. 
By the definition of a drop decomposition, $T$ is a Kohnert diagram of $\comp{a}$.
Thus, by Definition~\ref{def:kohnert-label}, we may consider the Kohnert labeling $\Lbl_{\comp{a}}$ on $T$ and the underlying Kohnert matching satisfying $\wt(\Mtch_{\comp{a}}) \preceq \comp{a}$ by Proposition~\ref{prop:label-wt}. 
Let $x'$ be the cell of $T$ to the immediate left of $y$ in row $k$.
If $\plength_{\Mtch_{\comp{a}}}(x') \geq c$,
then we may take $M = \Mtch_{\comp{a}}$. Otherwise, by Lemma~\ref{lem:matchings-lengthen-row-k}, we may assume every cell of $T$ weakly left of $y$ in row $k$ belongs to a path component in $\Mtch_{\comp{a}}$ strictly shorter than $c$. 
Since $c$ is in the added column set of $U$ by Theorem~\ref{thm:gen-target-criterion} and therefore satisfies Eq.~\eqref{e:gen-nonstrat-criterion}, we have $\droppable_{c,k}(T) < \droppable_{c,k}(\comp{a})$.
So there exists a cell $y'$ below $y$ in its column with label less than $k$. 
Let $i$ be the leftmost column such that the cell on the path component of $\Mtch_{\comp{a}}$ containing $y'$ lies strictly below row $k$.
If $i = c$, then we may again take $M = \Mtch_{\comp{a}}$.
Else, if $i < c$, then let $w$ be the cell in position $(i,k)$. 
By our assumption about the lengths of paths in $\Mtch_{\comp{a}}$ for cells left of $y$ in row $k$, we have $\plength_{\Mtch_{\comp{a}}}(w) < c \leq \plength_{\Mtch_{\comp{a}}}(y')$.
Thus there exists some maximal column $t \geq i$ such that the cells on the path component of $\Mtch_{\comp{a}}$ containing $w$ lie above the cells on the path component containing $y'$ for every column $s$ between $i$ and $t$. 
Define $L'$ to be the labeling derived from $\Lbl_{\comp{a}}$ by swapping the labels of the cells in the path components of $\Mtch_{\comp{a}}$ containing $w$ and $y'$ for every column $s$ between $i$ and $t$. The same argument used in the proof of Proposition~\ref{prop:label-wt} applies to the underlying matching sequence of $L'$, and so we obtain a matching sequence $\Mtch_{L'}$ on $T$ with $\wt(\Mtch_{L'}) \preceq \comp{a}$ such that $\plength_{\Mtch_{L'}}(w) = \plength_{\Mtch_{\comp{a}}}(y') \geq c$. 
By Lemma~\ref{lem:matchings-lengthen-row-k}, 
we subsequently get a matching sequence $M$ with $\wt(M) \preceq \wt(\Mtch_{L'}) \preceq \comp{a}$ such that $\plength_M(x') \geq c$, proving the claim.

Let $M$ be any matching sequence on $T$ with $\wt(M) \preceq \comp{a}$ such that for some cell $y'$ of $T$ below $y$ in column $c$, the cell $M(y')$ is weakly above row $k$. 
We will show interchanging $y$ with $y'$ as an added cell gives us a drop decomposition of $K$ (with respect to $\comp{a}$, $k$ and $m$) with one fewer added cell occupying row $k$ than the one we picked, at which point we arrive at a contradiction.
Define a matching sequence $M'$ on the diagram $T' = (T \setminus \{y'\}) \sqcup \{y\}$ from $M$ by setting $M'(y) = M(y')$, $M'(x) = y$ if $M(x) = y'$, and $M'(x) = M(x)$ otherwise. Then $\wt(M) = \wt(M) \preceq \comp{a}$.
Then $T'$ is a Kohnert diagram of $\comp{a}$ by Theorem~\ref{thm:TFAE_kohnert}.
Since $y$ and $y'$ lie in the same column $c$, the diagram $Y' = (Y \setminus \{y\}) \sqcup \{y'\}$ is a horizontal $m$-strip. 
It follows $K = T' \sqcup Y'$ is indeed a drop decomposition with respect to $\comp{a}$, $k$ and $m$, but $Y'$ has one fewer cell in row $k$ than $Y$, which contradicts our minimality assumption about the drop decomposition $U = T \sqcup Y$.
\end{proof}

\begin{lemma}
Let $\comp{a}$ be a weak composition, and let $1 < k \leq n$ and $m$ be positive integers. If $U \in \KDstrm$, then the following conditions hold.
\begin{enumerate}
    \item there exists a column $c$ in the added column set of $U$ such that
    \begin{equation}
        \droppable_{c,k}(\thread(U)) > \droppable_{c,k}(\comp{a}),
            \label{e:added-col-strat}
    \end{equation}
    \item for every column $c$ in the added column set of $U$ satisfying Eq.~\eqref{e:added-col-strat}, and for every drop decomposition of $\kd_{\thread(U)}$ with respect to $\comp{a}$, $k$ and $m$, the added cell occupying column $c$ is in position $(c,k)$.
\end{enumerate}
    \label{lem:added-col-strat}
\end{lemma}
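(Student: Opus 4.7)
The plan is to deduce part (1) as the contrapositive of Lemma~\ref{lem:gen-nonstrat-criterion}, and then prove part (2) by a short contradiction argument that tracks how cells above row $k$ in column $c$ are distributed between the Kohnert-part and the added-strip-part of a drop decomposition.

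For part (1), I would note that since $U \in \KDstrm$, by definition $U \in \KDs^{(m)}(\comp{a},k)$ but $U \notin \KDs^{(m)}(\comp{a},k-1)$. Taking the contrapositive of Lemma~\ref{lem:gen-nonstrat-criterion} applied to $U$, the negation of the conclusion ``$U \in \KDs^{(m)}(\comp{a},k-1)$'' forces the negation of the hypothesis, namely that there exists a column $c$ in the added column set of $U$ with $\droppable_{c,k}(\thread(U)) > \droppable_{c,k}(\comp{a})$. This is essentially a one-line deduction.

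For part (2), fix a column $c$ in the added column set of $U$ satisfying Eq.~\eqref{e:added-col-strat}, and consider any drop decomposition $\kd_{\thread(U)} = T \sqcup Y$ with respect to $\comp{a}$, $k$, $m$. By the last sentence of Theorem~\ref{thm:gen-target-criterion}, the $m$ added cells of $Y$ occupy exactly the $m$ columns in the added column set of $U$, one cell per column. So there is a unique added cell $y \in Y$ with $\col(y) = c$, and by the definition of a drop decomposition, $\row(y) \leq k$. The key observation is that the statistic $\droppable_{c,k}$ simply counts cells in column $c$ weakly above row $k$, so it splits additively across the decomposition:
\[ \droppable_{c,k}(\thread(U)) \;=\; \droppable_{c,k}(\kd_{\thread(U)}) \;=\; \droppable_{c,k}(T) + \droppable_{c,k}(Y). \]
Since $Y$ intersects column $c$ only in the single cell $y$, the contribution $\droppable_{c,k}(Y)$ is $1$ if $\row(y) = k$ and $0$ otherwise.

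Assume, for contradiction, that $\row(y) < k$, so $\droppable_{c,k}(Y) = 0$ and hence $\droppable_{c,k}(\thread(U)) = \droppable_{c,k}(T)$. Because $T \in \KD(\comp{a})$ and Kohnert moves only move cells downward (so the monotonicity of $\droppable_{c,k}$ passes from $\kd_{\comp{a}}$ to $T$), we have $\droppable_{c,k}(T) \leq \droppable_{c,k}(\comp{a})$. Chaining these gives $\droppable_{c,k}(\thread(U)) \leq \droppable_{c,k}(\comp{a})$, directly contradicting the hypothesis in Eq.~\eqref{e:added-col-strat}. Therefore $\row(y) = k$, i.e.\ $y = (c,k)$, as required. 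No step is a real obstacle here; the only thing to be careful about is invoking the uniqueness of the added cell per column (from Theorem~\ref{thm:gen-target-criterion}) and the additivity of $\droppable_{c,k}$ across the drop decomposition before reducing to the Kohnert-diagram monotonicity bound.
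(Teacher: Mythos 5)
Your proof is correct and mirrors the paper's argument: part (1) is exactly the contrapositive of Lemma~\ref{lem:gen-nonstrat-criterion}, and part (2) is the same contradiction via counting cells of column $c$ weakly above row $k$, splitting $\droppable_{c,k}$ across the drop decomposition and using $\droppable_{c,k}(T) \leq \droppable_{c,k}(\comp{a})$ for $T \in \KD(\comp{a})$. Your phrasing via additivity of $\droppable_{c,k}$ and explicit appeal to the uniqueness clause of Theorem~\ref{thm:gen-target-criterion} is slightly more spelled out, but it is the same argument.
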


\begin{proof}
Condition~(1) immediately follows from Lemma~\ref{lem:gen-nonstrat-criterion}. Now let $c$ be a column in the added column set of $U$ satisfying Eq.~\eqref{e:added-col-strat}, and let $\kd_{\thread(U)} = T \sqcup Y$ be a drop decomposition of $\kd_{\thread(U)}$ with respect to $\comp{a}$, $k$ and $m$. 
Let $y \in Y$ be the added cell of the drop decomposition in column $c$.
Since $T \in \KD(\comp{a})$, we necessarily have 
$
\droppable_{c,k}(T) \leq \droppable_{c,k}(\comp{a}).
$
If $y$ is not in row $k$, then since $y$ is the only cell in column $c$ of $\kd_{\thread(U)} \setminus T$, it follows
$$
\droppable_{c,k}(\thread(U)) = \droppable_{c,k}(T) \leq \droppable_{c,k}(\comp{a}).
$$
This directly contradicts our assumption of $c$ satisfying Eq.~\eqref{e:added-col-strat}. 
Therefore, $y$ must be in row $k$. 
\end{proof}

\begin{lemma}
Let $\comp{a}$ be a weak composition, and let $1 < k \leq n$ and $m$ be positive integers.
Suppose $U \in \KDstrm$, and let
$c$ be the rightmost column in the added column set of $U$ satisfying Eq.~\eqref{e:added-col-strat} of Lemma~\ref{lem:added-col-strat}.

Then there exists a drop decomposition of $\kd_{\thread(U)}$ with respect to $\comp{a}$, $k$ and $m$ such that the rightmost added cell in row $k$ is at column $c$.
    \label{lem:added-col-strat-rightmost}
\end{lemma}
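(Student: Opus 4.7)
The plan is to start with any drop decomposition $\kd_{\thread(U)} = T \sqcup Y$ with respect to $\comp{a}$, $k$ and $m$, whose existence is guaranteed by Theorem~\ref{thm:gen-target-criterion}. By Lemma~\ref{lem:added-col-strat}(2), the added cell in column $c$ occupies position $(c,k)$. The goal is then to modify the decomposition, leaving the cell at $(c,k)$ untouched, so that no column $c' > c$ in the added column set has its added cell in row $k$.

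The crux is a swap construction that I extract from the proof of Lemma~\ref{lem:gen-nonstrat-criterion}. Let $c' > c$ lie in the added column set and suppose the corresponding added cell $y_{c'}$ sits at $(c',k)$. By the maximality of $c$, column $c'$ fails \eqref{e:added-col-strat}, so $\droppable_{c',k}(\thread(U)) \leq \droppable_{c',k}(\comp{a})$; since exactly one cell of $Y$ lies in column $c'$ and it is weakly above row $k$, subtracting it yields the strict inequality $\droppable_{c',k}(T) < \droppable_{c',k}(\comp{a})$. Mimicking the argument in the proof of Lemma~\ref{lem:gen-nonstrat-criterion} -- starting from the Kohnert labeling $\Lbl_{\comp{a}}$ of $T$, interchanging labels along two path components of the induced matching $\Mtch_{\comp{a}}$, and invoking Lemma~\ref{lem:matchings-lengthen-row-k} -- I will produce a matching sequence $M$ on $T$ with $\wt(M) \preceq \comp{a}$ together with a cell $y' \in T$ strictly below $y_{c'}$ in column $c'$ such that $M(y')$ lies weakly above row $k$. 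Redirecting the path through $y'$ to pass through $y_{c'}$ then yields a matching sequence on $T' = (T \setminus \{y'\}) \sqcup \{y_{c'}\}$ of the same weight, so $T' \in \KD(\comp{a})$ by Theorem~\ref{thm:TFAE_kohnert}. The modified strip $Y' = (Y \setminus \{y_{c'}\}) \sqcup \{y'\}$ uses the same columns as $Y$ and has $y'$ strictly below row $k$, so $T' \sqcup Y'$ is again a drop decomposition, now with its column-$c'$ added cell strictly below row $k$.

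Each swap touches only column $c'$, so the added cell at $(c,k)$ remains fixed and the droppability inequalities at the other columns in the added column set persist from $T$ to $T'$. Iterating the swap over every column $c' > c$ whose current added cell sits at row $k$ therefore terminates after finitely many steps, yielding the required drop decomposition in which the rightmost added cell in row $k$ is precisely the one at $(c,k)$. The main obstacle is the matching-sequence construction that underlies each individual swap; once one has extracted from the proof of Lemma~\ref{lem:gen-nonstrat-criterion} the precise way in which the strict inequality $\droppable_{c',k}(T) < \droppable_{c',k}(\comp{a})$ forces a label strictly below row $k$ whose path in $\Mtch_{\comp{a}}$ can be made to rise to row $k$ (either directly or after a label exchange between two path components), the iteration itself is routine.
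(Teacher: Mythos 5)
Your proposal has a genuine gap in the swap step. The construction you want to extract from Lemma~\ref{lem:gen-nonstrat-criterion} relies essentially on the added cell being swapped being the \emph{leftmost} added cell in row $k$: the argument there sets $x'$ to be the cell of $T$ immediately to the left of $y$ in row $k$ and then invokes Lemma~\ref{lem:matchings-lengthen-row-k}, which in turn requires $T$ to occupy \emph{every} position in row $k$ from column $1$ up to the column of $x'$. That hypothesis holds in Lemma~\ref{lem:gen-nonstrat-criterion} precisely because $y$ is chosen leftmost. In your setting the cell at $(c,k)$ is an added cell that you must keep in row $k$ (by Lemma~\ref{lem:added-col-strat}(2) it cannot be lowered, since $\droppable_{c,k}(T) \geq \droppable_{c,k}(\comp{a})$). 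Therefore $T$ has a gap at $(c,k)$, and for any $c' > c$ the row-$k$ segment $\{(1,k),\ldots,(c'-1,k)\}$ is \emph{not} contained in $T$; Lemma~\ref{lem:matchings-lengthen-row-k} is simply inapplicable, and the path-lengthening step you flag as the crux of the swap cannot be carried out. No ordering of the iteration over the bad columns $c' > c$ avoids this, since the gap at $(c,k)$ is permanent. (If there are additional columns $c'' < c$ satisfying Eq.~\eqref{e:added-col-strat}, their added cells are also pinned to row $k$, producing further gaps.)

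The paper gets around this by not redoing the swap. It splits the added cells at columns $\leq c$ (say $p$ of them) from those at columns $> c$, forms the intermediate weak composition $\comp{a}' = \thread(T \sqcup \{y_1,\ldots,y_p\})$, verifies via a column-weight/droppability computation that $\droppable_{i,j}(\comp{a}') \leq \droppable_{i,j}(\comp{a})$ for all positions $(i,j)$ with $i > c$, and then applies Lemma~\ref{lem:gen-nonstrat-criterion} \emph{as a black box} to conclude $U \in \KDs^{(m-p)}(\comp{a}',k-1)$. In this subproblem all added columns lie strictly right of $c$, so the leftmost-added-cell argument inside Lemma~\ref{lem:gen-nonstrat-criterion} is legitimate. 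Finally, two applications of Theorem~\ref{thm:gen-target-criterion} glue the drop decomposition of $\kd_{\comp{a}'}$ (which houses $y_p$ at $(c,k)$) to the new drop decomposition of the remaining strip, which now sits entirely weakly below row $k-1$. That two-stage strategy, rather than a direct iterated swap, is what you need here.
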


\begin{proof}
Let $U \in \KDstrm$ and let $c$ be the rightmost column in the added column set of $U$ satisfying Eq.~\eqref{e:added-col-strat} of Lemma~\ref{lem:added-col-strat}. 
Pick any drop decomposition $K = T \sqcup \{y_1,y_2,\cdots,y_m\}$ of $K = \kd_{\thread(U)}$ with respect to $\comp{a}$, $k$ and $m$, where the cells $y_1,y_2,\cdots,y_m$ are labeled in order from left to right. 
Let $y_p$ for some $1 \leq p \leq m$ be the added cell in column $c$.
If $p = m$, then since $y_p$ is necessarily in row $k$ by Lemma~\ref{lem:added-col-strat}~(2),
the lemma is trivially satisfied.
So assume without loss of generality $p < m$. 
So $T \in \KD(\comp{a})$ and every cell $y_1,y_2,\cdots,y_m$ is weakly below row $k$.
Since $T \in \KD(\comp{a})$, we have $\thread(T) \preceq \comp{a}$ by Lemma~\ref{lem:3.7}.
By means of induction, we obtain via Lemma~\ref{lem:key-decomp} for each $1 \leq i \leq p$, $\thread(T \sqcup \{y_1,\cdots,y_i\}) \preceq \thread(T \sqcup \{y_1,\cdots,y_{i-1}\} + \comp{e}_{\row(y_i)}$, where each $y_i$ is weakly below row $k$.
Thus, the diagram $T' = T \sqcup \{y_1,\cdots,y_p\}$ is in $\KDs^{(p)}(\comp{a},k)$.

We now take a slight detour and consider the thread weight $\comp{a'} = \thread(T')$. 
Note $K = T' \sqcup \{y_{p+1},\cdots,y_m\}$ is a drop decomposition with respect to $\comp{a'}$, $k$ and $m$, so 
by Theorem~\ref{thm:gen-target-criterion}, 
$U \in \KDs^{(m-p)}(\comp{a'},k)$, with added column set given by the column indices of $y_{p+1},\cdots,y_m$.
We claim $U \in \KDs^{(m-p)}(\comp{a'},k-1)$,
where in light of Lemma~\ref{lem:gen-nonstrat-criterion},
it suffices to show
$$
\droppable_{\col(y_i),k}(\thread(U)) \leq \droppable_{\col(y_i),k}(\comp{a'}), 
$$
for every $p < i \leq m$.
From our assumption about $c$, we know
$$
\droppable_{\col(y_i),k}(\thread(U)) \leq \droppable_{\col(y_i),k}(\comp{a}),
$$
for every $p < i \leq m$.
It therefore further suffices to show 
\begin{equation*}
\droppable_{\col(y_i),k}(\comp{a'}) \leq \droppable_{\col(y_i),k}(\comp{a}),
\end{equation*}
for every $p < i \leq m$.
We shall in fact prove an even stronger statement. Namely, 
\begin{equation}
\droppable_{i,j}(\comp{a'}) \leq \droppable_{i,j}(\comp{a}), 
    \label{e:gen-target-droppable}
\end{equation}
for every position $(i,j)$ with $i > c$. 


Fix a position $(i,j)$ with $i > c$.
For ease of notation, let $T_0 = T$ and for $1 \leq i' \leq p$, let $T_{i'} = T \sqcup \{y_1,\cdots,y_{i'}\}$.
Firstly, since $\thread(T_0) \preceq \KD(\comp{a})$, we have $\droppable_{i,j}(\thread(T_0)) \leq \droppable_{i,j}(\comp{a})$.
Then, for each $1 \leq i' \leq p$, 
since $\thread(T_{i'}) \preceq \thread(T_{i'-1}) + \comp{e}_{\row(y_{i'})}$, we have $\droppable_{i,j}(\thread(T_{i'}) \leq \droppable_{i,j}(\thread(T_{i'-1}) + \comp{e}_{\row(y_{i'})})$.
We also have 
\begin{eqnarray*}
\cwt(\kd_{\thread(T_{i'-1}) + \comp{e}_{\row(y_{i'})}}) &=&
\cwt(\thread(T_{i'-1}) + \comp{e}_{\row(y_{i'})}) = \cwt(\thread(T_{i'})) \\
&=& \cwt(T_{i'}) = \cwt(T_{i'-1} \sqcup \{y_{i'}\}) \\
&=& \cwt(T_{i'-1}) + \comp{e}_{\col(y_{i'})}
= \cwt(\thread(T_{i'-1})) + \comp{e}_{\col(y_{i'})} \\
&=& \cwt(\kd_{\thread(T_{i'-1})}) + \comp{e}_{\col(y_{i'})},
\end{eqnarray*}
which implies 
$\kd_{\thread(T_{i'-1}) + \comp{e}_{\row(y_{i'})}} = \kd_{\thread(T_{i'-1})} \sqcup \{y_{i'}\}$.
Hence, 
\begin{eqnarray*}
\droppable_{i,j}(\thread(T_{i'-1}) + \comp{e}_{\row(y_{i'})}) 
= \droppable_{i,j}(\kd_{\thread(T_{i'-1}) + \comp{e}_{\row(y_{i'})}}) = \droppable_{i,j}(\kd_{\thread(T_{i'-1})} \sqcup \{y_{i'}\}),
\end{eqnarray*}
where
since $y_{i'}$ is weakly to the left of column $c$, and since $c < i$, 
$$
\droppable_{i,j}(\kd_{\thread(T_{i'-1})} \sqcup \{y_{i'}\}) 
= \droppable_{i,j}(\kd_{\thread(T_{i'-1})})
= \droppable_{i,j}(\thread(T_{i'-1})).
$$
It follows $\droppable_{i,j}(\thread(T_{i'})) \leq \droppable_{i,j}(\thread(T_{i'-1}))$.
By induction, we get
$$
\droppable_{i,j}(\comp{a'}) = \droppable_{i,j}(\thread(T_{p})) \leq \droppable_{i,j}(\comp{a}),
$$
and Eq.~\eqref{e:gen-target-droppable} is satisfied.
Therefore, $U \in \KDs^{(m-p)}(\comp{a'},k-1)$.


We deduce the key diagram $K = \kd_{\thread(U)}$ is also in
$\KDs^{(m-p)}(\comp{a'},k-1)$.
By Theorem~\ref{thm:gen-target-criterion}, there exists a drop decomposition $K = S' \sqcup Y'$ of $K$ with respect to $\comp{a'}$, $(k-1)$ and $(m-p)$, where $Y'$ is a horizontal $(m-p)$-strip whose cells are weakly below row $k-1$ and occupy the same columns as $y_{p+1},\cdots,y_m$.
Now since $T' \in \KDs^{(p)}(\comp{a},k)$, it follows $\kd_{\comp{a'}}$ is in $\KDs^{(p)}(\comp{a},k)$ as well.
Therefore,
since $S' \in \KD(\comp{a'})$,
we must have $S' \in \KDs^{(p)}(\comp{a},k)$. 
Consequently, by Theorem~\ref{thm:gen-target-criterion},
there exists a drop decomposition
$S' = T'' \sqcup Y''$ of $S'$ with respect to $\comp{a}$, $k$ and $p$, where since $\cwt(S') = \cwt(\comp{a'}) = \cwt(T')$, $Y''$ is a horizontal $p$-strip occupying the same columns as $y_1,\cdots,y_p$. 
Thus, we get a drop decomposition $K = T'' \sqcup (Y' \sqcup Y'')$ of $K$ with respect to $\comp{a}$, $k$ and $m$, whose added cells occupy the same columns as $y_1,\cdots,y_m$. 
By Lemma~\ref{lem:added-col-strat}~(2), the added cell at column $c$ of this new drop decomposition of $K$ is still $y_p$.
On the other hand, all the added cells to the right of $y_p$ -- namely, the cells of $Y'$ -- are weakly below row $k-1$. 
Therefore, $y_p$ is the rightmost added cell of the drop decomposition $K = T'' \sqcup (Y' \sqcup Y'')$, and the lemma is satisfied.
\end{proof}

\begin{lemma}
Let $\comp{a}$ be a weak composition, and let $k \leq n$ and $m$ be positive integers.
Suppose $U \in \KDstrm$, and let
$c$ be the rightmost column in the added column set of $U$ satisfying Eq.~\eqref{e:added-col-strat} of Lemma~\ref{lem:added-col-strat}.
Consider the key diagram $K = \kd_{\thread(U)}$, and let $y \in K$ be the cell in position $(c,k)$. 

Then the following conditions hold:
\begin{enumerate}
    \item $K \setminus \{y\} \in \KDs^{(m-1)}(\comp{a},k)$,
    \item $U \in \overline{\KDs}(\thread(K \setminus \{y\}),k)$.
\end{enumerate}
    \label{lem:excision-hor}
\end{lemma}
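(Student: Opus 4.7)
The plan is to adapt the strategy used in the proof of Lemma~\ref{lem:excision}, the $m=1$ analogue of this lemma, while exploiting the drop decomposition machinery developed in this section. First I would invoke Lemma~\ref{lem:added-col-strat-rightmost} to fix a drop decomposition $K = T \sqcup Y$ with respect to $\comp{a}$, $k$, and $m$ in which the rightmost added cell of $Y$ in row $k$ lies at column $c$. Enumerating the cells of $Y$ as $y_1, \ldots, y_m$ in left-to-right order by column, Lemma~\ref{lem:added-col-strat}(2) identifies this rightmost added cell in row $k$ as precisely $y = y_p$ for some index $p$. The crucial structural observation is that $y_{p+1}, \ldots, y_m$ must all lie strictly below row $k$, since they sit to the right of $y$ in $Y$ and $y$ is the rightmost added cell in row $k$.

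For part (1), I would show $K \setminus \{y\} \in \KDs^{(m-1)}(\comp{a},k)$ by iteratively constructing this diagram from $T$, adding one cell at a time in the order $y_1, \ldots, y_{p-1}, y_{p+1}, \ldots, y_m$. For each of the first $p-1$ additions, Lemma~\ref{lem:key-decomp} applies directly with the original $K$ as the ambient key diagram: at step $i < p$, the residual $K \setminus (T \sqcup \{y_1,\ldots,y_{i-1}\})$ has $y_i$ as the highest cell in its leftmost column, yielding $T \sqcup \{y_1, \ldots, y_i\} \in \KDs^{(i)}(\comp{a},k)$ via Recursion~\eqref{e:hor-recursion-def}. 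To continue past index $p$ while skipping $y$, I would construct, at each step $p \leq i < m-1$, an auxiliary key diagram containing the current intermediate $V^{(i)} = T \sqcup \{y_1, \ldots, y_{p-1}, y_{p+1}, \ldots, y_{i+1}\}$ in such a way that the residual horizontal strip has $y_{i+2}$ as the highest cell in its leftmost column. Iteratively applying Lemma~\ref{lem:key-decomp} with these containers produces the required chain of weak compositions witnessing $K \setminus \{y\} \in \KDs^{(m-1)}(\comp{a},k)$.

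Part (2) then follows almost immediately from part (1). Since $K \setminus \{y\}$ is a generic Kohnert diagram by part (1), Lemma~\ref{lem:key-decomp} applied to the decomposition $K = (K \setminus \{y\}) \sqcup \{y\}$ gives $K \in \KD(\thread(K \setminus \{y\}) + \comp{e}_k)$; hence $\thread(U) \preceq \thread(K \setminus \{y\}) + \comp{e}_k$ by Lemma~\ref{lem:3.7}, so $U \in \KD(\thread(K \setminus \{y\}) + \comp{e}_k) \subseteq \KDs(\thread(K \setminus \{y\}), k)$. For the non-membership in $\KDs(\thread(K \setminus \{y\}), k-1)$, I would argue by contradiction: any $U \in \KD(\comp{c} + \comp{e}_j)$ with $\comp{c} \preceq \thread(K \setminus \{y\})$ and $j < k$ would, together with part (1) and Recursion~\eqref{e:hor-recursion-def}, force $U \in \KDs^{(m)}(\comp{a}, k-1)$, contradicting $U \in \KDstrm$.

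The main obstacle will be constructing the auxiliary key diagrams used in part (1) at steps $p \leq i < m-1$. Because the intermediate diagrams $V^{(i)}$ are generally not subsets of their own thread's key diagrams, one cannot simply use $\kd_{\thread(V^{(i)})}$ as the container; instead, the container must be chosen so that the cells of $V^{(i)}$ fit inside it and the residual is a horizontal strip with $y_{i+2}$ sitting at the top of its leftmost column. The decisive structural fact that I expect will drive this construction is that $y_{p+1}, \ldots, y_m$ lie strictly below row $k$ and strictly to the right of column $c$, so they do not interfere with the row-$k$ configuration of the cells already placed in the intermediate diagrams.
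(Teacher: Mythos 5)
Your setup is correct and you correctly identify the right starting point (Lemma~\ref{lem:added-col-strat-rightmost} giving a drop decomposition $K = T \sqcup Y$ with $y = y_p$ the rightmost added cell in row $k$, and the observation that $y_{p+1},\ldots,y_m$ lie strictly below row $k$). However the proposal has two genuine gaps.

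For part~(1), you acknowledge but do not close the central difficulty: once $y_p$ is skipped, the intermediate diagrams $V^{(i)}$ are no longer complements of horizontal strips inside a key diagram, so Lemma~\ref{lem:key-decomp} cannot be applied. The paper avoids this iterative construction entirely. It instead takes the subdiagram $S \subset T$ of cells weakly northeast of $y$, and pushes each cell of $S$ up to the row of the leftmost cell of its matching path in a matching sequence $M$ on $T$ with $\wt(M) \preceq \comp{a}$, producing $\hat{S}$ and a new key diagram $\hat{K} = (K\setminus S) \sqcup \hat{S}$ in which $y$ is the rightmost cell of row $k$. Then $\hat{K}\setminus\{y\}$ is itself a key diagram admitting the drop decomposition $\hat{T} \sqcup (Y\setminus\{y\})$, so Theorem~\ref{thm:gen-target-criterion} gives $\hat{K}\setminus\{y\} \in \KDs^{(m-1)}(\comp{a},k)$ in one application. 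Finally $K\setminus\{y\} \prec \hat{K}\setminus\{y\}$ (since $S \prec \hat{S}$) transfers membership. This single ``ceiling'' diagram $\hat{K}$ replaces the whole chain of auxiliary containers you were looking for; without some comparable construction your part~(1) does not go through.

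Part~(2) contains a genuine logical error, not just a gap. You assume $U \in \KD(\comp{c}+\comp{e}_j)$ with $\comp{c} \preceq \thread(K\setminus\{y\})$ and $j<k$, and claim this forces $U \in \KDs^{(m)}(\comp{a},k-1)$ by extending a chain witnessing $K\setminus\{y\} \in \KDs^{(m-1)}(\comp{a},k)$. But that chain was only promised to have $j_1,\ldots,j_{m-1} \leq k$; some $j_i$ may equal $k$. Appending one more step with $j_m = j < k$ therefore only re-derives $U \in \KDs^{(m)}(\comp{a},k)$, which was already known, and does not put $U$ in $\KDs^{(m)}(\comp{a},k-1)$. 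The paper instead applies Theorem~\ref{thm:gen-target-criterion} directly to the hypothetical membership $U \in \KDs(\comp{b},k-1)$ with $\comp{b}=\thread(K\setminus\{y\})$: this forces a single added cell $y'$ in column $c$ strictly below $y$, and then the count $\droppable_{c,k}(K\setminus\{y'\}) = \droppable_{c,k}(\comp{b})+1$ contradicts $K\setminus\{y'\} \in \KD(\comp{b})$. You would need an argument of this kind, isolating the extra cell in column $c$, rather than one that relies on promoting the $k$-chain to a $(k-1)$-chain.
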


\begin{proof}
By Lemma~\ref{lem:added-col-strat-rightmost} there exists a drop decomposition $K = T \sqcup Y$ with respect to $\comp{a}$, $k$ and $m$ such that $y \in Y$ is the rightmost added cell in row $k$.
So $T$ is a Kohnert diagram of $\comp{a}$ and by Theorem~\ref{thm:TFAE_kohnert} there exists a matching sequence $M$ on $T$ with $\wt(M) \preceq \comp{a}$. 

Let $S \subset T$ be the sub-diagram of cells weakly northeast of $y$.
Note $M \mid_S$ comprises of matching paths that terminate at cells above $y$ in its column. 
We then define $\hat{S}$ be the diagram obtained from $S$ by pushing
each cell $x \in S$ up to the row of the leftmost cell of the path component of $M \mid_S$ containing $x$. 
In particular, every cell in row $k$ of $S$ gets pushed up, leaving $y$ as the rightmost cell of the key diagram $\hat{K} = (K \setminus S) \sqcup \hat{S}$. 
Theorem~\ref{thm:TFAE_kohnert} implies $S \prec \hat{S}$, and therefore $K \setminus \{y\} \prec \hat{K} \setminus \{y\}$. 
To prove Statement~(1) of the lemma, it suffices to show $\hat{K} \setminus \{y\} \in \KDs^{(m-1)}(\comp{a},k)$.
Since every pair of cells $x$ and $y$ in $S$ with $M(x) = y$ end up in the same row in $\hat{S}$, we may define a matching sequence $\hat{M}$ on the diagram $\hat{T} = (T \setminus S) \sqcup \hat{S}$ from $M$ by taking $\hat{M} = M$ for all cells of $\hat{T}$. 
Since the cells that get pushed up under $S \mapsto \hat{S}$ are all to the right of $y$ and hence not in column $1$, we have $\wt(\hat{M}) = \wt(M) \preceq \comp{a}$.
It follows $\hat{T} \in \KD(\comp{a})$ by Theorem~\ref{thm:TFAE_kohnert},
and the decomposition $\hat{K} \setminus \{y\} = \hat{T} \sqcup (Y \setminus \{y\})$ is a drop decomposition with respect to $\comp{a}$, $k$ and $(m-1)$.
So by Theorem~\ref{thm:gen-target-criterion} we have $\hat{K} \setminus \{y\} \in \KDs^{(m-1)}(\comp{a},k)$.
Statement~(1) of the lemma follows.

Now let $\comp{b} = \thread(K \setminus \{y\})$.
Since $K = (K \setminus \{y\}) \sqcup \{y\}$ is a drop decomposition with respect to $\comp{b}$, $k$ and $1$, we have $U \in \KDs(\comp{b},k)$ by Theorem~\ref{thm:gen-target-criterion}. 
If Statement~(2) is false, then by Theorem~\ref{thm:gen-target-criterion}
for some cell $y'$ of $K$ below $y$ in its column, $K = (K \setminus \{y'\}) \sqcup \{y'\}$ is a drop decomposition with respect to $\comp{b}$, $k-1$ and $1$. 
It follows the diagram $T' = K \setminus \{y'\}$ is a Kohnert diagram of $\comp{b}$. 
But since $y'$ is picked to be strictly below $y$, we must have $\droppable_{\col(y),k}(T') = \droppable_{\col(y),k}(\comp{b}) + 1$, and $T'$ could not be a Kohnert diagram of $\comp{b}$. 
Having arrived at a contradiction, we conclude Statement~(2) must hold.
\end{proof}

\begin{definition}
We refer to the weak composition $\thread(K \setminus \{y\})$ in Lemma~\ref{lem:excision-hor} as the \emph{degree-$(m-1)$ excised weight} of a Kohnert diagram $U \in \KDstrm$. 
That is, given the rightmost column $c$ of the added column $m$-set of $U$ satisfying $\droppable_{c,k}(\thread(U)) > \droppable_{c,k}(\comp{a})$, the degree-$(m-1)$ excised weight of $U$ is given by $\thread(\kd_{\thread(U)} \setminus \{y\})$, where $y$ is the cell in position $(c,k)$.
    \label{def:excision-hor}
\end{definition}

The degree-$(m-1)$ excised weight is precisely the weak composition we need to define the degree-$(m-1)$ stratum maps $\partial^{(m)}_{\comp{a},k}$.

\begin{definition}
Given a weak composition $\comp{a}$ and positive integers $1 < k \leq n$ and $m$, the \emph{$k$th stratum map of degree $m$ of $\comp{a}$}, denoted by $\partial^{(m)}_{\comp{a},k}$, acts on a Kohnert diagram $U \in \KDstrm$ with degree-$(m-1)$ excised weight $\comp{b}$ by
\begin{equation}
    \partial^{(m)}_{\comp{a},k}(U) = \partial_{\comp{b},k}(U).
\end{equation}
\end{definition}

We know from Section~\ref{sec:stratify} the map $\partial^{(m)}_{\comp{a},k}$ is well-defined with $\wt(U) = \wt(\partial_{\comp{a},k}(U)) + \comp{e}_k$.
Moreover, Lemma~\ref{lem:excision-hor}~(1) together with Theorem~\ref{thm:stratum-image} inform us $\partial^{(m)}_{\comp{a},k}(U) \in \KDs^{(m-1)}(\comp{a},k)$. 

\begin{theorem}
Let $\comp{a}$ be a weak composition, and let $1 < k \leq n$ and $m$ be positive integers.
For $U \in \KDstrm$, the diagram $U$ is the unique pre-image of the diagram $\partial^{(m)}_{\comp{a},k}(U)$ under the $k$th degree-$m$ stratum map $\partial^{(m)}_{\comp{a},k}$ of $\comp{a}$. That is, the maps
$$
\partial^{(m)}_{\comp{a},k}: \KDstrm \longrightarrow \KDs^{(m-1)}(\comp{a},k)
$$
are injective.
    \label{thm:stratum-inject-hor}
\end{theorem}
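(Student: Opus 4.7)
The plan is to reduce the claim to the degree-$1$ injectivity established in Theorem~\ref{thm:stratum-inject}. By construction, $\partial^{(m)}_{\comp{a},k}(U) = \partial_{\comp{b},k}(U)$ for $\comp{b}$ the degree-$(m-1)$ excised weight of $U$, and Lemma~\ref{lem:excision-hor}(2) places $U$ in the degree-$1$ stratum $\overline{\KDs}(\comp{b},k)$. Thus if $U' \in \KDstrm$ is a second pre-image of $V := \partial^{(m)}_{\comp{a},k}(U)$ with degree-$(m-1)$ excised weight $\comp{b}'$, it suffices to establish $\comp{b} = \comp{b}'$: both $U$ and $U'$ would then lie in $\overline{\KDs}(\comp{b},k)$ with common image under $\partial_{\comp{b},k}$, and Theorem~\ref{thm:stratum-inject} would force $U = U'$.

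To show $\comp{b} = \comp{b}'$, I would argue by contradiction. Let $c$ and $c'$ denote the excised columns associated to $U$ and $U'$ respectively (via Lemma~\ref{lem:added-col-strat-rightmost}), so that $b_k = c - 1$ and $b'_k = c' - 1$. Without loss of generality, $c' > c$. Mirroring the proof of Theorem~\ref{thm:stratum-inject}, I would establish horizontal-strip analogs of Lemmas~\ref{lem:col-added-bounds}, \ref{lem:image-thread-weight}, and \ref{lem:same-longer-parts}, yielding an identity relating the count $\#\{j > k : b_j \geq c\}$ to a corresponding count in $\thread(V)$ and to a count controlled by $\comp{a}$ together with the $m-1$ non-excised added columns of $U$. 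Since $b'_k = c' - 1 \geq c$ and $\kd_{\comp{b}'} \in \KDs^{(m-1)}(\comp{a},k)$, the drop decomposition rigidity from Lemma~\ref{lem:added-col-strat} forces the value $b'_k$ to have migrated into row $k$ from a higher row, strictly reducing $\#\{j > k : b'_j \geq c\}$. Combined with $\thread(V) \preceq \comp{b}'$, this yields the sought contradiction.

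The main obstacle is extending Lemma~\ref{lem:col-added-bounds} to the horizontal-strip setting, since one loses the direct inequality $\comp{b} \preceq \comp{a}$ enjoyed in the degree-$1$ case. I expect to recover the needed count identity by inducting along a drop decomposition $\kd_{\comp{b}} = T \sqcup Y$ given by Theorem~\ref{thm:gen-target-criterion}: starting from a matching on $T \in \KD(\comp{a})$ constructed as in the proof of Lemma~\ref{lem:col-added-bounds} (via the label swap trick and Lemma~\ref{lem:matchings-lengthen-row-k}), one iteratively extends the matching across each added cell of $Y$ while tracking $\droppable_{c,k}$ contributions, exploiting the monotonicity of $\droppable_{c,r}$ under left swaps. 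Once this count identity is in hand, the remaining counting contradiction proceeds in parallel with the proof of Theorem~\ref{thm:stratum-inject}.
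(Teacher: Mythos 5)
Your overall reduction to Theorem~\ref{thm:stratum-inject} and your plan to derive a $\droppable$-type counting contradiction when the two excised columns disagree both match the paper's strategy. However, the obstacle you single out --- extending Lemma~\ref{lem:col-added-bounds} to the horizontal-strip setting --- is not an obstacle the paper faces, and your proposed detour around it is both unnecessary and incompletely developed.

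The point you are missing is that the degree-$m$ stratum was engineered, via the $\droppable$ criterion, precisely so that the degree-$1$ machinery applies as-is with the excised weight as the base composition. Lemma~\ref{lem:excision-hor}(2) puts $U$ in $\overline{\mathcal{D}}(\comp{b},k)$ and $U'$ in $\overline{\mathcal{D}}(\comp{b}',k)$, so Lemma~\ref{lem:same-longer-parts} applies verbatim to $(U,\comp{b})$ and Lemma~\ref{lem:image-thread-weight} applies verbatim to $(U',\comp{b}')$ --- no ``horizontal-strip analogs'' are required. What replaces Lemma~\ref{lem:col-added-bounds} in the counting argument is the defining stratum inequality $\droppable_{c,k}(\thread(U)) > \droppable_{c,k}(\comp{a})$ from Lemma~\ref{lem:added-col-strat}(1), together with the observation that the degree-$1$ excised weight of $U$ viewed in $\overline{\mathcal{D}}(\comp{b},k)$ is $\comp{b}$ itself. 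Chaining these through Lemma~\ref{lem:same-longer-parts} gives $\droppable_{c,k+1}(\thread(V)) \geq \droppable_{c,k}(\comp{a})$. For the other direction, one uses $\thread(V) \preceq \comp{b}'$ (Lemma~\ref{lem:image-thread-weight}), and then --- this is the step your ``migrated from a higher row'' sentence is gesturing at without pinning down --- the column-weight identity $\cwt(\comp{b}') = \cwt(\comp{b})$ forces $c$ to lie outside the added column $(m-1)$-set of $\kd_{\comp{b}'}$ (else $c$ would be double-counted in the added column $m$-set of $U$), so by Theorem~\ref{thm:gen-target-criterion} every cell of $\kd_{\comp{b}'}$ in column $c$ lies in the $\KD(\comp{a})$-part of a drop decomposition, giving $\droppable_{c,k}(\comp{b}') \le \droppable_{c,k}(\comp{a})$. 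Since $b'_k = c'-1 \ge c$, this shaves off one more cell at row $k$ and yields $\droppable_{c,k+1}(\thread(V)) \le \droppable_{c,k}(\comp{a}) - 1$, the desired contradiction. Your plan of ``inducting along the drop decomposition while tracking $\droppable$ contributions'' may eventually recover a comparable inequality, but as stated it is not carried far enough to be checked, and the paper's use of the stratum criterion sidesteps that work entirely.
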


\begin{proof}
Let $U \in \KDstrm$, and
let $V = \partial^{(m)}_{\comp{a},k}(U) \in \KDs^{(m-1)}(\comp{a},k)$.
Given the degree-$(m-1)$ excised weight $\comp{b}$ of $U$, 
we may completely recover $U$ from $V = \partial_{\comp{b},k}(U)$ 
via 
Lemma~\ref{lem:L-minimal-labeling}
and
Theorem~\ref{thm:moved-cells-recoverable-given-added-col}
provided we know the added column $c$ of $U$ with respect to $\comp{b}$. 
It follows if we have a different diagram $U' \in \KDstrm$ such that $\partial^{(m)}_{\comp{a},k}(V) = V$, then $U'$ must come equipped with a distinct added column $c' \neq c$ with respect to its own degree-$(m-1)$ excised weight $\comp{b'}$, where $\comp{b'} \neq \comp{b}$ by Theorem~\ref{thm:stratum-inject}.
Suppose, for contradiction, there exists $U' \in \KDstrm$ with added column $c' > c$ with respect to excised weight $\comp{b'} \neq \comp{b}$ satisfying $\partial^{(m)}_{\comp{a},k}(U') = V$. 

On one hand, since $c$ is the added column of $U$ with respect to $\comp{b}$, by comparing Definitions~\ref{def:excision}~and~\ref{def:excision-hor} we observe the excised weight of $U$ with respect to $\comp{b}$ is itself $\comp{b} = \thread( \kd_{\thread(U)} \setminus \{(c,k)\} )$. 
Hence, 
by Eq.~\eqref{eq:same-upper-longer-half-column-weight} of Lemma~\ref{lem:same-longer-parts}, we have 
$
\droppable_{c,k+1}(\thread(V)) = \droppable_{c,k+1}(\comp{b}).
$
Since $\comp{b} = \thread( \kd_{\thread(U)} \setminus \{(c,k)\} )$, we also have
$$
\droppable_{c,k+1}(\comp{b}) \geq \droppable_{c,k+1}(\kd_{\thread(U)} \setminus \{(c,k)\}) = \droppable_{c,k+1}(\thread(U)).
$$
In accordance with Lemma~\ref{lem:excision-hor}, $c$ satisfies $\droppable_{c,k}(\thread(U)) > \droppable_{c,k}(\comp{a})$.
Therefore,
$$
\droppable_{c,k+1}(\thread(V)) = \droppable_{c,k+1}(\comp{b}) = \droppable_{c,k+1}(\thread(U)) \geq \droppable_{c,k}(\comp{a}).
$$

On the other hand, since $V = \partial_{\comp{b'},k}(U')$, by Lemma~\ref{lem:image-thread-weight} we have $\thread(V) \preceq \comp{b'}$, and so $\droppable_{c,k+1}(\thread(V)) \leq \droppable_{c,k+1}(\comp{b'}$. 
We know $\kd_{\thread(U')} \setminus \{(c',k)\} \in \KDs^{(m-1)}(\comp{a},k)$ by
Lemma~\ref{lem:excision-hor}, so since
$\comp{b'} = \thread(\kd_{\thread(U')} \setminus \{(c',k)\})$,
we have $\kd_{\comp{b'}} \in \KDs^{(m-1)}(\comp{a},k)$ as well.
Thus, by Theorem~\ref{thm:gen-target-criterion}, there exists a drop decomposition $\kd_{\comp{b'}} = T \sqcup Y$ with respect to $\comp{a}$, $k$ and $(m-1)$, where $T \in \KD(\comp{a})$.
Since $\thread(V) \preceq \comp{b'}$, we have $\cwt(\comp{b'}) = \cwt(\thread(V))$.
We similarly have $\cwt(\comp{b}) = \cwt(\thread(V))$, and so $\cwt(\comp{b'}) = \cwt(\comp{b})$.
It follows $c$ is not in the added column $(m-1)$-set of $\kd_{\comp{b'}}$ with respect to $\comp{a}$, for if so, then since $c$ 
is also in the added column $(m-1)$-set of $\comp{b}$, and since $c$
is the added column of $U$ with respect to $\comp{b}$, we would be counting $c$ twice in the added column $m$-set of $U$ with respect to $\comp{a}$.
So by Theorem~\ref{thm:gen-target-criterion} every cell of $\kd_{\comp{b'}}$ in column $c$ is also a cell of $T$, which means 
$$
\droppable_{c,k}(\comp{b'}) = \droppable_{c,k}(T) \leq \droppable_{c,k}(\comp{a}).
$$
Consequently, since $b'_k = c'-1 \geq c$, we get 
$$
\droppable_{c,k+1}(\comp{b'}) = \droppable_{c,k+1}(\comp{b'}) - 1 < \droppable_{c,k+1}(\comp{a}).
$$
Therefore, 
$$
\droppable_{c,k+1}(\thread(V)) \leq \droppable_{c,k+1}(\comp{b'}) < \droppable_{c,k+1}(\comp{a}).
$$

In conclusion, we have two directly contradicting inequalities $\droppable_{c,k+1}(\thread(V)) \geq \droppable_{c,k+1}(\comp{a})$ and $\droppable_{c,k+1}(\thread(V)) < \droppable_{c,k+1}(\comp{a})$, and so $c$ must be unique. 
\end{proof}

We may now proceed with the proof of Theorem~\ref{thm:RSKD-hor}. 

\begin{proof}[Proof of Theorem~\ref{thm:RSKD-hor}]
Let $\comp{a}$ be a weak composition.
Through induction, we will show for any given positive integer $m$ we have
\begin{equation}
\#[\KD(\comp{a}) \times \KD(m \comp{e}_k)]_{\comp{b}} = \#[\KDs^{(m)}(\comp{a},k)]_{\comp{b}},
    \label{e:bijection-per-wt-hor}
\end{equation}
for every positive integer $k \leq n$ and every weak composition $\comp{b}$.
For our base case, since $\KDs^{(1)}(\comp{a},k) = \KD(\comp{a})$ for every positive integer $k \leq n$, 
we know
Eq.~\eqref{e:bijection-per-wt-hor} holds given $m=1$
by way of the weight-preserving bijection in Eq.~\eqref{e:RSKD} of Theorem~\ref{thm:RSKD}.

Suppose, as our induction hypothesis,
for some $m > 1$ Eq.~\eqref{e:bijection-per-wt-hor} holds 
given $m-1$.
We will show Eq.~\eqref{e:bijection-per-wt-hor}
also holds given $m$.
To start, fix a weak composition $\comp{b}$.
We immediately get the
inequality $\#[\KD(\comp{a}) \times \KD(m\comp{e}_n)]_{\comp{b}} \leq \#[\KDs^{(m)}(\comp{a},n)]_{\comp{b}}$
by the weight-preserving injection in Eq.~\eqref{e:insert-top-hor} of Theorem~\ref{thm:insertion-top-hor}.
We want to show that, inversely,
$\#[\KDs^{(m)}(\comp{a},n)]_{\comp{b}} \leq \#[\KD(\comp{a}) \times \KD(m\comp{e}_n)]_{\comp{b}}$.
By stratification, we have
$$
\KDs^{(m)}(\comp{a},n) = \KDs^{(m)}(\comp{a},1) \sqcup \bigsqcup_{1 < k \leq n} \KDstrm.
$$
The weight-preserving bijection in Eq.~\eqref{e:insert-bot-hor} of Theorem~\ref{thm:insertion-bottom-hor} gives us 
\begin{equation}
    \#[\KDs^{(m)}(\comp{a},1)]_{\comp{b}} = \#[\KD(\comp{a}) \times \KD(m \comp{e}_1)]_{\comp{b}}.
        \label{e:bijection-per-wt-bot-hor}
\end{equation}
On the other hand, 
for every $1 < k \leq n$,
the 
degree-$m$
stratum map 
$\partial^{(m)}_{\comp{a},k}$,
which is injective by Theorem~\ref{thm:stratum-inject-hor}
and excises a cell in row $k$, gives us
$$
\#[\KDstrm]_{\comp{b}} \leq \#[\KDs^{(m-1)}(\comp{a},k)]_{\comp{b}- \comp{e}_k}.
$$
where by our induction hypothesis, 
$$
\#[\KDs^{(m-1)}(\comp{a},k)]_{\comp{b}-\comp{e}_k} = \#[\KD(\comp{a}) \times \KD((m-1)\comp{e}_k)]_{\comp{b} - \comp{e}_k}.
$$
It follows 
$$
\#[\KDstrm]_{\comp{b}} \leq \#[\KD(\comp{a}) \times \KD((m-1) \comp{e}_k)]_{\comp{b}-\comp{e}_k}.
$$
Now observe for each $1 < k \leq n$ there exists a bijection
$$
\KD((m-1)\comp{e}_k) \stackrel{\sim}{\longrightarrow} \KD(m \comp{e}_k) \setminus \KD(m \comp{e}_{k-1})
$$
that sends $H \mapsto (\{ \kd_{\comp{e}_k} \} \sqcup H')$, 
where $H'$ is obtained from $H$ by pushing each cell of $H$ to the position in its immediate right. 
Consequently,
$$
\#[\KD((m-1)\comp{e}_k)]_{\comp{b} - \comp{e}_k} = \#[\KD(m \comp{e}_k) \setminus \KD(m \comp{e}_{k-1})]_{\comp{b}},
$$
and we get
\begin{equation}
\#[\KDstrm]_{\comp{b}} \leq \#[\KD(\comp{a}) \times (\KD(m \comp{e}_k) \setminus \KD(m \comp{e}_{k-1}))]_{\comp{b}}
    \label{e:injection-per-wt-strat-hor}
\end{equation}
for each $1 < j \leq k$.

Eq.~\eqref{e:bijection-per-wt-bot-hor}~and~\eqref{e:injection-per-wt-strat-hor} together imply
\[
\begin{array}{r@{}l}
    \#[\KDs^{(m)}(\comp{a},n)]_{\comp{b}} &{}= \#[\KDs^{(m)}(\comp{a},1)]_{\comp{b}} + \sum_{1 < k \leq n} \#[\KDstrm]_{\comp{b}} \\
     &{}\leq \#[\KD(\comp{a}) \times \KD(m \comp{e}_1)]_{\comp{b}} \\
     &{\hspace{0.7cm}} + \sum_{1 < k \leq n} \#[\KD(\comp{a}) \times (\KD(m \comp{e}_k) \setminus \KD(m \comp{e}_{k-1}))]_{\comp{b}} \\
    &{}\leq \#[\KD(\comp{a}) \times \KD(m \comp{e}_n)]_{\comp{b}}.
\end{array}
\]
Thus,
$
\#[\KD(\comp{a}) \times \KD(m \comp{e}_n)]_{\comp{b}} = \#[\KDs^{(m)}(\comp{a},n)]_{\comp{b}},
$
which means for each $1 < k \leq n$, the inequality in Eq.~\eqref{e:injection-per-wt-strat-hor} is in fact an equality.
So for each $1 < k \leq n$, we have
\[
\begin{array}{r@{}l}
    \#[\KDs^{(m)}(\comp{a},k)]_{\comp{b}} &{}= \#[\KDs^{(m)}(\comp{a},1)] + \sum_{1 < j \leq k} \#[KDstrm]_{\comp{b}} \\
    &{}= \#[\KD(\comp{a}) \times \KD(m \comp{e}_1)]_{\comp{b}} \\
    &{\hspace{0.7cm}} + \sum_{1 < j \leq k} \#[\KD(\comp{a}) \times (\KD(m \comp{e}_j) \setminus \KD(m \comp{e}_{j-1}))]_{\comp{b}} \\
    &{}= \#[\KD(\comp{a}) \times \KD(m \comp{e}_k)]_{\comp{b}}.
\end{array}
\]
In conclusion, Eq.~\eqref{e:bijection-per-wt-hor} holds given $m$, as desired.
\end{proof}

\subsection{Positive expansions}
\label{sec:applications-pos}

By Theorem~\ref{thm:monkey}, the key polynomial expansion of $\key_{\comp{a}} \cdot \key_{\comp{e}_k}$ is nonnegative if and only if the $k$-addable row set for $\comp{a}$ for each $k$-addable column $c$ is a singleton. We consider several interesting cases where this holds.

First, consider the subcase in which $k=1$. Here, Theorem~\ref{thm:monkey} follows from our direct bottom insertion algorithm proved in Theorem~\ref{thm:insertion-bottom}. Since the $1$-addable row set is either empty or $\{1\}$, the resulting formula is always nonnegative.

\begin{corollary}
  Let $\comp{a}$ be a weak composition. Then we have
  \begin{equation}
    \key_{\comp{a}} \cdot s_{(m)}(x_1) = \key_{\comp{a}} \cdot (x_1)^m = \sum_{\substack{ a_1 \leq c_1 < \cdots < c_m \\ c_i-1 \in \{a_1,\ldots,a_n,c_{i-1}\} }} \key_{\comp{a}^{(m)}} ,
    \label{e:pieri-key-bottom}
  \end{equation}
  where $\comp{a}^{(0)} = \comp{a}$ and $\comp{a}^{(i)} = \supp_{\comp{a}^{(i-1)}}^{(c_{i},1)} + \comp{e}_{1}$ for $1 \leq i \leq m$.
  \label{cor:pieri-key-bottom}
\end{corollary}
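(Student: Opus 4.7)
The plan is to specialize Theorem~\ref{thm:RSKD-hor}, or more precisely its refinement in Corollary~\ref{cor:k-add-hor}, to the case $k=1$ and verify that the bijection there induces a nonnegative key expansion on the level of generating polynomials. First, I would note the elementary identities $s_{(m)}(x_1)=x_1^m$ and $\key_{m\comp{e}_1}=x_1^m$: the latter holds because the key diagram $\kd_{m\comp{e}_1}$ consists of $m$ cells in row $1$ which admit no Kohnert moves, so $\KD(m\comp{e}_1)=\{\kd_{m\comp{e}_1}\}$. This reduces the identity to computing the key expansion of $\key_{\comp{a}}\cdot\key_{m\comp{e}_1}$.

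Next I would invoke Corollary~\ref{cor:k-add-hor} with $k=1$. Since condition (2) of Definition~\ref{def:k-addable} is vacuous for $k=1$ and any $1$-addable cell must lie in row $r\leq 1$, every $1$-addable cell is of the form $(c,1)$ with the simple condition that $a_1<c$ and $c-1\in\{a_1,\ldots,a_n\}$. The key observation is then that distinct $1$-addable horizontal $m$-strips $(c_1,1),\ldots,(c_m,1)$ with $c_1<\cdots<c_m$ produce weak compositions $\comp{a}^{(m)}$ whose column weights differ, so by Corollary~\ref{cor:cwt} the sets $\KD(\comp{a}^{(m)})$ indexed by distinct strips are pairwise disjoint. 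Thus the union in Eq.~\eqref{e:k-add-hor} becomes a disjoint union, and taking generating polynomials of the weight-preserving bijection yields the nonnegative expansion
\[
  \key_{\comp{a}}\cdot\key_{m\comp{e}_1}=\sum_{(c_1,1),\ldots,(c_m,1)}\key_{\comp{a}^{(m)}},
\]
where the sum is over $1$-addable horizontal $m$-strips.

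It then remains to translate the indexing of $1$-addable horizontal $m$-strips into the condition $a_1\leq c_1<\cdots<c_m$ with $c_i-1\in\{a_1,\ldots,a_n,c_{i-1}\}$ (setting $c_0=a_1$). By the analysis above, the first step $i=1$ requires $a_1<c_1$ (or $a_1=c_1-1$, absorbed into the condition $c_1-1\in\{a_1,\ldots,a_n\}$) and $c_1-1\in\{a_1,\ldots,a_n\}$, matching the base case. For the inductive step, I would examine the action of the maximal support composition: by Definition~\ref{def:top-support} with $r_0=1$, the operator $\supp_{\comp{a}^{(i-1)}}^{(c_i,1)}$ permutes the parts of $\comp{a}^{(i-1)}$ while ensuring that its first part becomes $c_i-1$; adding $\comp{e}_1$ then sets the first part of $\comp{a}^{(i)}$ to $c_i$. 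In particular the multiset of parts of $\comp{a}^{(i)}$ is that of $\comp{a}^{(i-1)}$ with one instance of $c_i-1$ replaced by $c_i$, and an easy induction shows this multiset is the union of $\{a_1,\ldots,a_n\}$ with $\{c_i\}$, minus one copy of $c_i-1$. Hence the $1$-addability of $(c_{i+1},1)$ for $\comp{a}^{(i)}$ becomes $c_{i+1}-1\in(\{a_1,\ldots,a_n\}\cup\{c_i\})\setminus\{c_i-1\}$, and since the horizontal-strip constraint $c_{i+1}>c_i$ automatically excludes $c_{i+1}-1=c_i-1$, this collapses to $c_{i+1}-1\in\{a_1,\ldots,a_n,c_i\}$ as stated.

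The main technical obstacle is the bookkeeping in the last paragraph: tracking how the multiset of parts evolves under iterated maximal support permutations. Once one verifies carefully that $\supp_{\comp{a}^{(i-1)}}^{(c_i,1)}$ acts as a permutation on parts (and not, say, by collapsing or duplicating them), the rest of the argument is essentially a routine induction. An alternative, equivalent route is to use iterated bottom insertion $\ins_1$ from Theorem~\ref{thm:insertion-bottom-hor} directly: each application adds exactly one cell in row $1$ in the leftmost empty column of the current diagram, so the image of $\KD(\comp{a})\times\KD(m\comp{e}_1)$ under $\ins_1^m$ stratifies naturally by the $m$ columns into which cells are inserted, and summing the row-weight generating polynomials reproduces the right-hand side.
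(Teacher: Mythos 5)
Your proof is correct and follows essentially the same approach the paper takes (which it leaves largely implicit in the surrounding discussion): specialize the horizontal-strip bijection of Corollary~\ref{cor:k-add-hor} to $k=1$, observe that every $1$-addable cell sits in row $1$ so that distinct strips occupy distinct column sets, and hence conclude that the union on the right-hand side is disjoint, which kills the inclusion--exclusion signs. The extra bookkeeping you supply — tracking how $\supp$ followed by $+\comp{e}_1$ acts on the multiset of parts so that the addability condition $c_i-1\in\{a^{(i-1)}_1,\ldots,a^{(i-1)}_n\}$ collapses to $c_i-1\in\{a_1,\ldots,a_n,c_{i-1}\}$ — is the right way to match the corollary's indexing and is a detail the paper does not spell out. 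One small slip: the disjointness follows from the fact that Kohnert moves preserve column weights (an unnumbered observation in the paper), not from Corollary~\ref{cor:cwt}, which only asserts the column weight is a partition.
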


\begin{example}
  Consider the weak composition $\comp{a}=(1,4,0,3)$ and the integer $m=2$. To compute $\key_{\comp{a}} \cdot s_{(m)}(x_1)$, we choose $c_1 \geq 1$ from set obtained by adding $1$ to each part of $\comp{a}$, namely $c_1\in\{2,5,1,4\}$. Then we choose $c_2 > c_1$ from the same set now with the addition of $c_1+1$, namely $c_2\in\{2,5,1,4,c_1+1\}$. This results in five possible choices for $c_1,c_2$, with supports depicted in Fig.~\ref{fig:pieri-key-bottom}, and so we have
  \[ \key_{(1,4,0,3)} \cdot s_{(2)}(x_1) = \key_{(3,4,0,3)} + \key_{(4,4,0,2)} + \key_{(5,2,0,3)} + \key_{(5,4,0,1)} + \key_{(6,1,0,3)} . \]
  \label{ex:pieri-key-bottom}
\end{example}

\begin{figure}[ht]
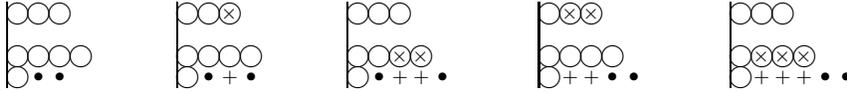

  \begin{displaymath}
    \arraycolsep=2\cellsize
    \begin{array}{ccccc}
      \vline\nulltab{ \circify{\ } & \circify{\ } & \circify{\ } \\ \\ \circify{\ } & \circify{\ } & \circify{\ } & \circify{\ } \\ \circify{\ } & \bullet & \bullet } &
      \vline\nulltab{ \circify{\ } & \circify{\ } & \circify{\times} \\ \\ \circify{\ } & \circify{\ } & \circify{\ } & \circify{\ } \\ \circify{\ } & \bullet & + & \bullet } &
      \vline\nulltab{ \circify{\ } & \circify{\ } & \circify{\ } \\ \\ \circify{\ } & \circify{\ } & \circify{ \times } & \circify{ \times } \\ \circify{\ } & \bullet & + & + & \bullet } &
      \vline\nulltab{ \circify{\ } & \circify{\times} & \circify{\times} \\ \\ \circify{\ } & \circify{\ } & \circify{\ } & \circify{\ } \\ \circify{\ } & + & + & \bullet & \bullet } &
      \vline\nulltab{ \circify{\ } & \circify{\ } & \circify{\ } \\ \\ \circify{\ } & \circify{\times} & \circify{\times} & \circify{ \times } \\ \circify{\ } & + & + & + & \bullet & \bullet }
    \end{array}
  \end{displaymath}
  \caption{\label{fig:pieri-key-bottom}An example of the nonnegative Pieri rule for bottom insertion, computing the key expansion of the product $\key_{(1,4,0,3)} \key_{(2)}$, where the two marked cells $(\bullet)$ denote the added horizontal $2$-strip and crossed cells $(\times)$ drop to the marked positions $(+)$.}
\end{figure}

Second, consider the subcase also considered by Haglund, Luoto, Mason and van Willigenburg \cite{HLMvW11-2} in which $k \geq \ell(\comp{a})$, where $\ell(\comp{a})$ denotes the largest index $i$ for which $a_i > 0$. Here, Theorem~\ref{thm:monkey} follows from our direct insertion algorithm via rectification proved in Theorem~\ref{thm:insertion-top}. Nonnegativity follows from the fact that there are no nonempty rows above row $k$ from which cells can be dropped, so each $k$-addable row set is either empty or consists of the highest row of the given length.

\begin{corollary}
  Let $\comp{a}$ be a weak composition, and set $\ell = \max_i\{a_i>0\}$. For every positive integer $\ell \leq k \leq n$, we have
  \begin{equation}
    \key_{\comp{a}} \cdot s_{(m)}(x_1,\ldots,x_k) =
    \key_{\comp{a}} \cdot (x_1+\cdots +x_k)^m =
    \sum_{\substack{ c_1 < \cdots < c_m \\ c_i-1 \in \{a_1,\ldots,a_n,c_{i-1}\} }} \key_{\comp{a}+\comp{e}_{j_1}+\cdots +\comp{e}_{j_m}} ,
    \label{e:pieri-key-top}
  \end{equation}
  where $j_i$ is the largest index of $\comp{b} = \comp{a}+\comp{e}_{j_1}+\cdots +\comp{e}_{j_{i-1}}$ such that $b_{j_i} = c_i-1$.
  \label{cor:pieri-key-top}
\end{corollary}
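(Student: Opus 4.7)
The plan is to deduce this directly from the horizontal-strip bijection in Corollary~\ref{cor:k-add-hor} (equivalently Theorem~\ref{thm:RSKD-hor}), specializing to the regime $k \geq \ell(\comp{a})$, and verifying that both the maximal support construction and the indexing set collapse to the simple form stated. The main obstacle is not positivity per se (no signs are introduced by a single application of Theorem~\ref{thm:monkey}); rather, it is confirming that the union on the right-hand side of Corollary~\ref{cor:k-add-hor} is genuinely disjoint under this hypothesis, so that passing to generating polynomials produces an honest sum.

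The first step is to observe that the hypothesis $k \geq \ell(\comp{a})$ propagates through the horizontal-strip construction: if $\comp{a}^{(i-1)}$ has all nonzero parts in rows $\leq k$, and we add a cell in some row $r_i \leq k$, then $\comp{a}^{(i)}$ still has no nonzero parts strictly above row $k$. In Definition~\ref{def:top-support}, the sequence $r_0 < r_1 < \cdots < r_q$ requires $a^{(i-1)}_{r_1} > a^{(i-1)}_{r_0}$ with $r_1 > r_i$; but no row above $k$ contributes a positive part, so the sequence terminates at $r_0 = r_i$. Consequently $\supp_{\comp{a}^{(i-1)}}^{(c_i,r_i)} = \comp{a}^{(i-1)}$ and therefore $\comp{a}^{(i)} = \comp{a}^{(i-1)} + \comp{e}_{r_i}$. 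Iterating yields $\comp{a}^{(m)} = \comp{a} + \comp{e}_{r_1} + \cdots + \comp{e}_{r_m}$, matching the addition pattern on the right-hand side of \eqref{e:pieri-key-top} with $j_i = r_i$.

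The second step is to identify the $k$-addable cells for $\comp{a}^{(i-1)}$. Definition~\ref{def:k-addable}(1) normally allows $a^{(i-1)}_{r_i} < c_i - 1$ provided some supporting row above $k$ contains the value $c_i-1$; but under our hypothesis no such supporting row exists, so we must have $a^{(i-1)}_{r_i} = c_i - 1$ exactly. Since the parts of $\comp{a}^{(i-1)}$ are exactly the parts of $\comp{a}$ except that rows $j_1,\ldots,j_{i-1}$ have been updated to values $c_1 < \cdots < c_{i-1}$, the condition $a^{(i-1)}_{r_i} = c_i - 1$ is equivalent to $c_i - 1 \in \{a_1,\ldots,a_n,c_{i-1}\}$ (strict monotonicity of $c_1,\ldots,c_{i-1}$ rules out all but the immediate predecessor). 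Definition~\ref{def:k-addable}(2) then forces $r_i$ to be the largest index $\leq k$ with $a^{(i-1)}_{r_i} = c_i - 1$, which is exactly the defining property of $j_i$ in the corollary.

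Finally, I would establish disjointness. For any two distinct horizontal $m$-strips with column sets $\{c_1 < \cdots < c_m\}$ and $\{c_1' < \cdots < c_m'\}$, the resulting compositions have column weights $\cwt(\comp{a}) + \sum_i \comp{e}_{c_i}$ and $\cwt(\comp{a}) + \sum_i \comp{e}_{c_i'}$, which differ; since Kohnert moves preserve column weight, the associated Kohnert sets are disjoint. Within a fixed column set the row indices $j_i$ are uniquely determined by the characterization above, so each strip contributes a single term. Taking generating polynomials of both sides of the bijection in Corollary~\ref{cor:k-add-hor} and using Definition~\ref{def:key}, together with the identification $\key_{\comp{e}_k + \cdots + \comp{e}_k} \cdot \key_{\comp{a}} = \key_{\comp{a}} \cdot s_{(m)}(x_1,\ldots,x_k)$ on the left (since $m\comp{e}_k$ has all cells in rows $\leq k$, yielding precisely the horizontal-strip enumeration of $s_{(m)}(x_1,\ldots,x_k)$ via Proposition~\ref{prop:sort}), then gives the identity. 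The proof specializes to the $k=1$ case of Corollary~\ref{cor:pieri-key-bottom} and, for $k = n$ with $\comp{a}$ weakly increasing, recovers Pieri's classical rule via Corollary~\ref{cor:RSKD}.
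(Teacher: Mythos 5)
Your proof is essentially correct and follows the same route the paper sketches in the paragraph preceding the corollary: specialize the horizontal-strip bijection of Corollary~\ref{cor:k-add-hor} to the regime $k \geq \ell(\comp{a})$ and observe that the $k$-addable row sets collapse to singletons and the maximal support compositions become trivial. The propagation of $\ell(\comp{a}^{(i)}) \leq k$, the reduction of the indexing set via the strict monotonicity of $c_1,\ldots,c_m$, and the disjointness argument via column weights all check out.

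One caveat worth raising: you characterize $j_i$ as the largest index $\leq k$ with $a^{(i-1)}_{j_i} = c_i - 1$ and call this ``exactly the defining property of $j_i$ in the corollary.'' The corollary's text, however, says only ``the largest index of $\comp{b}$'' with no bound on the index. For $c_i \geq 2$ the two readings agree, since every positive part of $\comp{a}^{(i-1)}$ sits in a row $\leq \ell(\comp{a}^{(i-1)}) \leq k$. But for $c_1 = 1$ with $k < n$ they diverge. Taking $\comp{a} = (0,3,0)$, $k = \ell = 2$, $n = 3$, $m = 1$: the literal reading gives $j_1 = 3 > k$ and the term $\key_{(0,3,1)}$, which contains monomials in $x_3$, whereas the correct term, and the one your argument produces, is $\key_{(1,3,0)}$, consistent with $\key_{(0,3,0)} \cdot (x_1 + x_2)$. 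Your proof thus establishes the right formula, with the bound $j_i \leq k$; it is worth flagging this as a correction to the corollary's printed phrasing rather than presenting the two as identical.
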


\begin{example}
  Consider the weak composition $\comp{a}=(1,4,0,3)$ and the integers $k=4$ and $m=2$. To compute $\key_{\comp{a}} \cdot s_{(m)}(x_1,\ldots,x_4)$, we choose $c_1$ from set obtained by adding $1$ to each part of $\comp{a}$, namely $c_1\in\{2,5,1,4\}$. Then we choose $c_2 > c_1$ from the same set now with the addition of $c_1+1$, namely $c_2\in\{2,5,1,4,c_1+1\}$. This results in eight choices for $c_1,c_2$, depicted in Fig.~\ref{fig:pieri-key-top}, and so we have
  \begin{multline*}
    \key_{(1,4,0,3)} \cdot s_{(2)}(x_1,\ldots,x_4) = \key_{(1,4,2,3)} + \key_{(1,4,1,4)} + \key_{(1,5,1,3)} + \key_{(3,4,0,3)} \\ + \key_{(2,4,0,4)} + \key_{(2,5,0,3)} + \key_{(1,4,0,5)} + \key_{(1,6,0,3)}.
  \end{multline*}
  Notice if we take $k>4$, then the expansion above changes by applying the transposition $t_{3,k}$ to each indexing composition, that is, swapping $b_3$ and $b_k=0$.
  \label{ex:pieri-key-top}
\end{example}

\begin{figure}[ht]
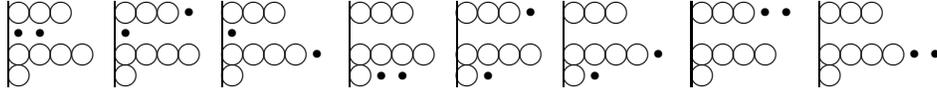

  \begin{displaymath}
    \arraycolsep=4pt
    \begin{array}{cccccccc}
      \vline\nulltab{ \circify{\ } & \circify{\ } & \circify{\ } \\ \bullet & \bullet \\ \circify{\ } & \circify{\ } & \circify{\ } & \circify{\ } \\ \circify{\ } & & } & 
      \vline\nulltab{ \circify{\ } & \circify{\ } & \circify{\ } & \bullet \\ \bullet \\ \circify{\ } & \circify{\ } & \circify{\ } & \circify{\ } \\ \circify{\ } & & } & 
      \vline\nulltab{ \circify{\ } & \circify{\ } & \circify{\ } \\ \bullet \\ \circify{\ } & \circify{\ } & \circify{\ } & \circify{\ }  & \bullet \\ \circify{\ } & & } & 
      \vline\nulltab{ \circify{\ } & \circify{\ } & \circify{\ } \\ \\ \circify{\ } & \circify{\ } & \circify{\ } & \circify{\ } \\ \circify{\ }  & \bullet & \bullet } & 
      \vline\nulltab{ \circify{\ } & \circify{\ } & \circify{\ }  & \bullet \\ \\ \circify{\ } & \circify{\ } & \circify{\ } & \circify{\ } \\ \circify{\ } & \bullet & }  &
      \vline\nulltab{ \circify{\ } & \circify{\ } & \circify{\ } \\ \\ \circify{\ } & \circify{\ } & \circify{\ } & \circify{\ }  & \bullet\\ \circify{\ } & \bullet & } &
      \vline\nulltab{ \circify{\ } & \circify{\ } & \circify{\ } & \bullet & \bullet \\ \\ \circify{\ } & \circify{\ } & \circify{\ } & \circify{\ } \\ \circify{\ } & & } &
      \vline\nulltab{ \circify{\ } & \circify{\ } & \circify{\ } \\ \\ \circify{\ } & \circify{\ } & \circify{\ } & \circify{\ } & \bullet & \bullet \\ \circify{\ } & & } 
    \end{array}
  \end{displaymath}
  \caption{\label{fig:pieri-key-top}An example of the nonnegative Pieri rule for top insertion, computing the key expansion of  $\key_{(1,4,0,3)} \key_{(0,0,0,2)}$, where the two marked cells $(\bullet)$ denote the added horizontal $2$-strip and crossed cells $(\times)$ drop to the marked positions $(+)$.}
\end{figure}

Our third subcase is more involved. For motivation, let $\mathcal{F}_n$ be the complete flag manifold of nested linear subspaces in $\mathbb{C}^n$. Borel \cite{Bor53} showed the cohomology ring $H^{*}(\mathcal{F})$ is the polynomial ring $\mathbb{C}[x_1,\ldots,x_n]$ quotiented by the ideal generated by symmetric polynomials with no constant term, and it has a distinguished linear basis of cosets $[X_w]$ for each permutation $w$. The \emph{Littlewood--Richardson coefficients} $c_{u,v}^{w}$ are the structure constants of this basis, given by
\begin{equation}
  [X_u] \cdot [X_v] = \sum_{w} c_{u,v}^{w} [X_w] .
  \label{e:cuvw}
\end{equation}
As each $[X_w]$ represents the corresponding Schubert variety $X_w$, these Littlewood--Richardson coefficients give intersection multiplicities for the corresponding Schubert varieties and as such are nonnegative. It remains an important open problem to give a direct, nonnegative combinatorial formula for $c_{u,v}^{w}$.

Lascoux and Sch{\"u}tzenberger \cite{LS82} introduced \emph{Schubert polynomials} $\schubert_w$ as polynomial representatives of Schubert classes with nice algebraic and combinatorial properties. Most importantly, Schubert polynomials are a basis for the polynomial ring whose structure constants are precisely the Littlewood--Richardson coefficients,
\begin{equation}
  \schubert_u \cdot \schubert_v = \sum_{w} c_{u,v}^{w} \schubert_w .
  \label{e:cuvw-schub}
\end{equation}
When $w$ is a \emph{Grassmannian permutation}, meaning it has at most one descent, say at position $k$, then Lascoux and Sch{\"u}tzenberger showed the Schubert polynomial $\schubert_w$ is a Schur polynomial in $k$ variables. Generalizing this, they define \emph{vexillary permutations} to be those $w$ such that there do not exist indices $1 \leq a < b < c < d$ for which $w_b < w_a < w_d < w_c$ (for various equivalent characterizations, see \cite{Mac91}). Lascoux and Sch{\"u}tzenberger showed the Schubert polynomial of a vexillary permutation is a key polynomial.

Reversing this characterization, the \emph{Lehmer code} of a permutation $w$, denoted by $\Le(w)$, is the weak composition whose $i$th term is the number of indices $j>i$ for which $w_i > w_j$. This defines a bijection between weak compositions and permutations. Macdonald \cite{Mac91} defined the following.

\begin{definition}[\cite{Mac91}]
  A weak composition $\comp{a}$ is \emph{vexillary} if the following hold:
  \begin{enumerate}
  \item\label{i:vex1} if $i<k$ and $a_i > a_k$, then $\#\{ i<j<k \mid a_j < a_k\} \leq a_i-a_k$;
  \item\label{i:vex2} if $i<k$ and $a_i \leq a_k$, then $a_j \geq a_i$ whenever $i<j<k$.
  \end{enumerate}
  \label{def:vex}
\end{definition}

\begin{example}
  The weak composition $\comp{a}=(1,4,0,3)$ is not vexillary. While condition~\eqref{i:vex1} for all relevant pairs, condition~\ref{i:vex2} fails for $i,j,k = 2,3,4$, respectively, since $a_j < a_i < a_k$. However the weak composition $\comp{b}=(0,1,4,3)$ is vexillary as both conditions are met. For this composition, the corresponding permutation is $w=1376245$ which indeed satisfies $\Le(w)=\comp{b}$. Notice as well $w$ is vexillary.
  \label{ex:vex}
\end{example}

Macdonald proved the vexillary conditions for permutations and weak compositions coincide under the Lehmer correspondence \cite[(1.32)]{Mac91}.

\begin{proposition}[\cite{Mac91}]
  A permutation $w$ is vexillary if and only if $\Le(w)$ is vexillary, and in this case $\schubert_w = \key_{\Le(w)}$.
\end{proposition}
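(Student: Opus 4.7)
The plan is to prove the two halves in sequence: first the combinatorial equivalence of vexillarity under the Lehmer correspondence, then the polynomial identity. For the equivalence, I would argue both directions by contrapositive, working directly from $\Le(w)_i = \#\{j>i : w_j < w_i\}$. Suppose $w$ contains a $2143$-pattern at $a<b<c<d$ with $w_b<w_a<w_d<w_c$; setting $i=a$ and $k=c$ forces $a_i > a_k$ (since $k$ itself is counted on the left), and a count of indices $j$ with $i<j<k$ satisfying $w_j<w_k$ (which includes the pattern position $b$, while the pattern position $d$ lies outside and constrains the room available among entries $>k$) forces either Definition~\ref{def:vex}(\ref{i:vex1}) or (\ref{i:vex2}) to fail. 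Conversely, a surplus of low entries producing a violation of (\ref{i:vex1}), or an inversion $a_j<a_i$ with $i<j<k$ and $a_i\leq a_k$ producing a violation of (\ref{i:vex2}), can each be traced back to positions $a<b<c<d$ exhibiting the forbidden pattern. The main bookkeeping challenge is translating numerical inequalities among code entries into value comparisons among $w_a,w_b,w_c,w_d$.

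For the polynomial identity $\schubert_w=\key_{\Le(w)}$, the plan is induction on the number of inversions of $w$, using the Newton divided difference $\partial_i$ acting on Schubert polynomials via $\partial_i \schubert_w = \schubert_{ws_i}$ whenever $w_i > w_{i+1}$, alongside the Demazure operator $\pi_i$ acting on key polynomials via $\pi_i \key_{\comp{a}} = \key_{s_i\comp{a}}$ whenever $a_i > a_{i+1}$. The base case $w = \mathrm{id}$ gives $1=1$. For the inductive step, I would choose a descent $i$ of $w$ such that $ws_i$ remains vexillary; Definition~\ref{def:vex} can then be used to verify both the existence of such a descent and that $\Le(ws_i) = s_i\Le(w)$, making the two recursive steps compatible. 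The hard part will be locating the descent and reconciling which of (\ref{i:vex1}) or (\ref{i:vex2}) is "tight" at it, as well as controlling the behavior of the divided difference when the relevant entries of $\Le(w)$ are equal; this requires a careful case analysis governed by the local structure of the code near the chosen descent.

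A conceptually cleaner alternative is to invoke Kohnert's theorem (Definition~\ref{def:key}) together with the pipe-dream description of $\schubert_w$: both polynomials then become generating functions over diagrams, so the identity reduces to an explicit weight-preserving bijection between reduced pipe dreams for $w$ and Kohnert diagrams for $\Le(w)$. Vexillarity is precisely the hypothesis that guarantees the crossings in a pipe dream can be coherently rearranged into the shape prescribed by $\Le(w)$, since the $2143$-pattern is exactly what would force two pipes to interact in a way that obstructs such rearrangement. I expect the main obstacle here to be the verification that the bijection is well-defined when multiple essential cells of the Rothe diagram of $w$ share a row or column; while this route is technically delicate, the resulting argument parallels the insertion-based techniques developed in Sections~\ref{sec:bijection} and \ref{sec:stratify} of the present paper and would give the most conceptual proof.
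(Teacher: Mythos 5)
The paper does not prove this proposition; it is cited directly from Macdonald~\cite{Mac91}, so your task is to reproduce a known argument rather than to take a route different from the paper's. Your sketch has two concrete gaps. In the combinatorial half, the assertion that a $2143$-pattern at positions $a<b<c<d$ with $i=a$, $k=c$ ``forces $a_i>a_k$'' is unjustified: $\Le(w)_a=\#\{j>a: w_j<w_a\}$ and $\Le(w)_c=\#\{j>c: w_j<w_c\}$ count disjoint sets of inversions, and the pattern alone imposes no inequality between them. The passage from a pattern to a violation of Definition~\ref{def:vex}(\ref{i:vex1}) or (\ref{i:vex2}) requires choosing the pattern occurrence extremally (for instance $b$ minimal, then $d$ minimal) before the inequalities on the code close up; your plan never reaches that step.

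In the algebraic half, the proposed induction cannot run as written, for two reasons. First, the two recursions are governed by genuinely different operators: Schubert polynomials satisfy $\schubert_{ws_i}=\partial_i\schubert_w$ with the Newton divided difference $\partial_i$, while key polynomials satisfy $\key_{s_i\comp{a}}=\pi_i\key_{\comp{a}}$ with the isobaric Demazure operator $\pi_i=\partial_i\circ(\,\cdot\,x_i)$, and these do not agree, so a shared descent $i$ does not make the two recursive steps compatible. Second, the claimed code identity $\Le(ws_i)=s_i\Le(w)$ is false: when $w_i>w_{i+1}$ one has $\Le(ws_i)_i=\Le(w)_{i+1}$ but $\Le(ws_i)_{i+1}=\Le(w)_i-1$, e.g. $\Le(231)=(1,1,0)$ while $\Le(231\cdot s_2)=\Le(213)=(1,0,0)\neq(1,0,1)=s_2\Le(231)$. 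The standard proof instead exhibits $\schubert_w$ for vexillary $w$ as a flagged Schur polynomial attached to the shape and flag of $w$, and separately identifies flagged Schur polynomials with the key polynomial of the corresponding code; that is the route you should follow. Your third alternative, a bijection between reduced pipe dreams for $w$ and Kohnert diagrams for $\Le(w)$, is in the right spirit and most compatible with the techniques of this paper, but as written it is only a statement of intent with the bijection still to be produced.
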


Therefore the product of two key polynomials indexed by vexillary compositions is of particular geometric importance. Condition~\eqref{i:vex2} of Definition~\ref{def:vex} is enough to ensure positivity of the key polynomial expansion in Theorem~\ref{thm:pieri-hor}. In fact, this condition is tight in the following sense.

\begin{theorem}
  Let $\comp{a}$ be a weak composition and $m$ a positive integer. Then the key polynomial expansion of $\key_{\comp{a}} s_{(m)}(x_1,\ldots,x_k)$ is nonnegative for all positive integers $k \leq n$ if and only if $\comp{a}$ satisfies Definition~\ref{def:vex}\eqref{i:vex2}.

  In particular, for $\comp{a}$ vexillary corresponding to $w$ and $v((m),k)$ the grassmannian permutation corresponding to the partition $(m)$ with unique descent at $k$, we have
  \begin{equation}
    \schubert_w \schubert_{v((m),k)} = 
    \key_{\comp{a}} \cdot s_{(m)}(x_1,\ldots,x_k) =
    \sum_{\substack{ \min(a_1,\ldots,a_k) < c_1 < \cdots < c_m \\ c_i-1 \in \{a_1,\ldots,a_n,c_{i-1}\} }}
    \key_{\comp{a}^{(m)}} ,
    \label{e:pieri-key-vex}
  \end{equation}
  where $\comp{a}^{(0)} = \comp{a}$ and $\comp{a}^{(i)} = \supp_{\comp{a}^{(i-1)}}^{(c_{i},r_{i})} + \comp{e}_{r_{i}}$, for $1 \leq i \leq m$, where $r_i$ is the largest row index $r \leq k$ for which $\comp{a}^{(i-1)}_r = c_i-1$, if it exists, or $r_i$ is the largest row index $r \leq k$ for which $\comp{a}^{(i-1)}_r < c_i-1$.
  \label{thm:pieri-key-vex}  
\end{theorem}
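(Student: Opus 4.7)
The plan proves both directions of the equivalence and identifies the explicit formula via Theorems~\ref{thm:monkey} and~\ref{thm:RSKD-hor}.

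For the forward direction (nonnegativity implies condition~(2)), I argue contrapositively. Given a violation, i.e.\ indices $i < j < k_{\textup{vex}}$ with $a_j < a_i \leq a_{k_{\textup{vex}}}$, I refine by choosing $i$ maximal; this forces every $a_{i'}$ for $i < i' < j$ to satisfy $a_{i'} < a_i$ or $a_{i'} > a_{k_{\textup{vex}}}$, since otherwise $(i', j, k_{\textup{vex}})$ would itself be a counterexample violating maximality. Setting $p = j$ and $c = a_{k_{\textup{vex}}} + 1$, this refinement ensures that both $(c, i)$ and $(c, j)$ satisfy Definition~\ref{def:k-addable} using $l = k_{\textup{vex}}$ as the supporting row above $p$, so $\{i, j\} \subseteq \Row_{\comp{a}, p}^c$. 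By Theorem~\ref{thm:monkey} the subset $R = \{i, j\}$ contributes $-\key_{\drop_{\comp{a}}^{(c, \{i, j\})} + \comp{e}_i}$ to $\key_{\comp{a}} \cdot s_{(1)}(x_1, \ldots, x_p)$; since Theorem~\ref{thm:monkey} asserts that the summands are pairwise distinct, this negative term survives and the expansion fails nonnegativity.

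For the reverse direction at $m = 1$, condition~(2) forces every $k$-addable column $c$ to have a singleton $\Row_{\comp{a}, k}^c$. If $(c, r)$ and $(c, s)$ with $r < s \leq k$ were both $k$-addable, Lemma~\ref{lem:incomparable} would give $a_r > a_s$ with both strictly less than $c$. If $a_s = c - 1$ then $a_r \leq c - 1 = a_s$, contradicting $a_r > a_s$. Otherwise Definition~\ref{def:k-addable}(1) yields $l > k$ with $a_l = c - 1$, so $a_r \leq a_l$ with $r < s < l$, and applying condition~(2) forces $a_s \geq a_r$, again a contradiction. Theorem~\ref{thm:monkey} then yields nonnegativity at $m = 1$.

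For general $m$, I invoke Theorem~\ref{thm:RSKD-hor} and Corollary~\ref{cor:k-add-hor}, expressing $\key_{\comp{a}} \cdot s_{(m)}(x_1, \ldots, x_k)$ as the generating polynomial of $\bigcup \KD(\comp{a}^{(m)})$ over all $k$-addable horizontal $m$-strips. The principal obstacle is to prove by induction on $i$ that each intermediate $\comp{a}^{(i)}$ has a unique $k$-addable row in every column $c > c_i$: the intermediate compositions need not themselves satisfy condition~(2) (the support-and-add operation can destroy vexillarity), so the single-cell argument does not directly reapply. Instead, a hypothetical second $k$-addable row for $\comp{a}^{(i)}$ in a column $c > c_i$ must be traced through the cyclic-permutation structure of the successive operators $\supp_{\comp{a}^{(j-1)}}^{(c_j, r_j)}$ back to a violation of condition~(2) in the original $\comp{a}$; the case analysis on whether the two hypothetical $k$-addable rows and the supporting $l > k$ lie on any of the earlier support chains constitutes the technical bookkeeping. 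Once uniqueness is established, each strip is determined by its column sequence $c_1 < \cdots < c_m$, distinct sequences yield $\comp{a}^{(m)}$ with distinct column weights $\cwt(\comp{a}) + \sum_{i} \comp{e}_{c_i}$, the corresponding Kohnert sets are therefore pairwise disjoint, and taking generating polynomials yields the nonnegative sum; the bound $c_1 > \min(a_1, \ldots, a_k)$ encodes the necessary existence of some $r \leq k$ with $a_r < c_1$, while an induction on $i$ verifies that $k$-addability of column $c_i$ for $\comp{a}^{(i-1)}$ is equivalent to $c_i - 1 \in \{a_1, \ldots, a_n, c_{i-1}\}$ cumulatively. The Schubert identification $\schubert_w \cdot \schubert_{v((m), k)} = \key_{\comp{a}} \cdot s_{(m)}(x_1, \ldots, x_k)$ then follows from Macdonald's identification of Schubert polynomials of vexillary permutations with key polynomials, together with the classical identification of Schubert polynomials of Grassmannian permutations with Schur polynomials in the first $k$ variables.
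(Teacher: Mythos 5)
Your $m=1$ argument in both directions is sound and takes essentially the paper's approach (with a slightly streamlined case split in the forward direction, centering on $a_s$ rather than on $a_{r_1}$, and a different but equivalent choice of extremal indices in the contrapositive). The Schubert identification at the end is correct.

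The genuine gap is in the inductive step for general $m$. You correctly identify the obstacle — the intermediate compositions $\comp{a}^{(i)}$ need not satisfy Definition~\ref{def:vex}\eqref{i:vex2} — but then defer the resolution to ``the technical bookkeeping'' of tracing a hypothetical second $k$-addable row back through the chain of $\supp$ operators. That is not a proof, and the route you sketch is actually harder than what is needed. The paper resolves this more cheaply by observing a structural invariant: every application of condition~\eqref{i:vex2} in the base-case argument uses a triple whose two smaller indices lie weakly below row $k$ and whose largest index lies strictly above row $k$. Passing from $\comp{a}$ to $\comp{a}^{(i)}$, rows $r_1,\ldots,r_i$ (all $\leq k$) only lengthen and the affected rows strictly above $k$ only shorten (they are the supporting rows of length $\leq c_i-1$), so the \emph{restricted} form of condition~\eqref{i:vex2} — lower two indices $\leq k$, top index $>k$ — is preserved even though full vexillarity may be lost, and the base-case argument reapplies verbatim. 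Without that observation (or a completed version of your case analysis) the induction does not close, and the formula~\eqref{e:pieri-key-vex} is not established for $m>1$. Everything downstream (disjointness of the Kohnert sets via distinct column weights, hence nonnegativity, and the reading-off of the indexing set) is fine once uniqueness of the addable row at each step is in hand.

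A secondary, minor point: both your contrapositive argument and the paper's produce a negative term via Theorem~\ref{thm:monkey}, which is the $m=1$ statement, and then assert negativity of the $m$-fold product for the same $k$. That leap is left implicit in the paper too, so I will not press it, but it deserves a sentence (for instance, that the two overlapping single-cell additions extend to two overlapping $k$-addable $m$-strips producing an intersection not absorbed by any single term).
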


\begin{proof}
  Suppose $\comp{a}$ satisfies condition~\eqref{i:vex2} of Definition~\ref{def:vex}. We proceed by induction on $m$. For the base case, take $c_1$ such that $\min(a_1,\ldots,a_k) \leq c_1-1 \in \{a_1,\ldots,a_n\}$, and let $R_1$ denote the $k$-addable row set for $\comp{a}$ in column $c_1$. Let $r_1$ be as in the statement of the theorem.

  We claim $R_1 = \{r_1\}$. If $a_{r_1} = c_1-1$, then $r_1 \in R_1$ by choice of $r_1$ as the highest row of that length lying weakly below row $k$. If there exists some other row index $s \in R_1$, then we must have $r_1 < s \leq k$ with $a_{s} < c_1-1$ and a row index $t > k$ such that $a_t = c_1-1$. However, in this case, condition~\eqref{i:vex2} of Definition~\ref{def:vex} ensures no row strictly between rows $r_1$ and $t$ has length strictly less than $c_1-1$, a contradiction with $r_1 < s < t$ and $a_s < c_1-1$. Thus $R_1 = \{r_1\}$ in this case. If $a_{r_1} < c_1-1$, then $a_i \neq c_1-1$ for any $i \leq k$ by choice of $r_1$. Since $c_1-1 \in \{a_1,\ldots,a_n\}$, this ensures there exists some index $t>k$ such that $a_t = c_1-1$. By choice of $r_1$ as the highest row of length less than $c_1-1$, we have $r_1 \in R_1$, and any other $r \in R_1$ must have $r<r_1$. However, in this case we would have $a_r > a_{r_1}$, but condition~\eqref{i:vex2} of Definition~\ref{def:vex} ensures no row strictly between rows $r$ and $t$ has length strictly less than $a_r$, a contradiction with $r < r_1 < t$ and $a_r > a_{r_1}$. Therefore $R_1 = \{r_1\}$ in this case as well, proving the claim, and the base case follows.

  Continuing the notation, suppose, for induction, $R_i = \{r_i\}$ for all $i< m$. Comparing $\comp{a}^{(m-1)}$ with $\comp{a}$, the only changes weakly below row $k$ occur at rows $r_1,\ldots,r_{m-1}$ which get strictly longer, and the changes for rows strictly above row $k$ result in rows of length at most $c_{m-1}-1$ getting weakly shorter. Therefore while condition~\eqref{i:vex2} of Definition~\ref{def:vex} does not necessarily hold for $\comp{a}^{(m-1)}$ as it did for $\comp{a}$, the condition does hold provided the lower row is weakly below $k$ and the higher row is strictly above $k$. Repeating our arguments from the base case above, this indeed was the only application of the condition needed, and so we conclude $R_m = \{r_m\}$. Therefore nonnegativity for all $k$ follows whenever condition~\eqref{i:vex2} of Definition~\ref{def:vex} holds, as does the explicit formula in Eq.~\eqref{e:pieri-key-vex} for the case when $\comp{a}$ is vexillary. 

  Conversely, suppose condition~\eqref{i:vex2} of Definition~\ref{def:vex} fails for $\comp{a}$. Choose $r$ so that $a_r$ is maximal and then $r$ is maximal such that there exists $s$ and $t$ with $r<s<t$ and $a_s < a_r \leq a_t$. Then choose $s$ maximal among all such indices satisfying that condition. Let $R$ denote the $s$-addable row set for $\comp{a}$ in column $a_t+1$. Then $r \in R$ by choice of $a_r$ as large as possible and $r$ as high as possible, and $s \in R$ as well by choice of $k=s$. Therefore the key polynomial expansion of $\key_{\comp{a}} s_{(m)}(x_1,\ldots,x_s)$ will have terms with negative signs.
\end{proof}

\begin{example}
  Departing from our running example as it is not vexillary, consider the weak composition $\comp{a}=(0,1,4,3)$ which is vexillary, and take $k=3$ (so as to avoid redundancy with prior examples) and $m=2$. To compute $\key_{\comp{a}} \cdot s_{(m)}(x_1,x_2,x_3)$, we choose $c_1 > \min(a_1,a_2,a_3)=0$ from set obtained by adding $1$ to each part of $\comp{a}$, so again $c_1\in\{1,2,5,4\}$. Then we choose $c_2 > c_1$ from the same set now with the addition of $c_1+1$, namely $c_2\in\{1,2,5,4,c_1+1\}$. Thus the choices correspond precisely with those in Ex.~\ref{ex:pieri-key-top}, though as seen in Fig.~\ref{fig:pieri-key-vex}, the support is vastly different, giving the expansion
  \begin{multline*}
    \key_{(0,1,4,3)} \cdot s_{(2)}(x_1,x_2,x_3) = \key_{(1,2,4,3)} + \key_{(1,4,4,1)} + \key_{(1,1,5,3)} + \key_{(0,3,4,3)} \\ + \key_{(0,4,4,2)} + \key_{(0,2,5,3)} + \key_{(0,4,5,1)} + \key_{(0,1,6,3)}.
  \end{multline*}
  \label{ex:pieri-key-vex}
\end{example}

\begin{figure}[ht]
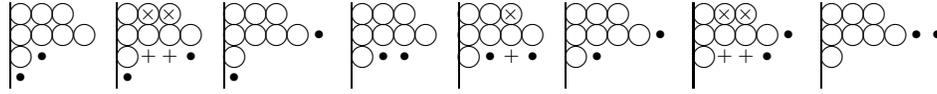

  \begin{displaymath}
    \arraycolsep=4pt
    \begin{array}{cccccccc}
      \vline\nulltab{ \circify{\ } & \circify{\ } & \circify{\ } \\ \circify{\ } & \circify{\ } & \circify{\ } & \circify{\ } \\ \circify{\ } & \bullet \\ \bullet } & 
      \vline\nulltab{ \circify{\ } & \circify{\times} & \circify{\times} \\ \circify{\ } & \circify{\ } & \circify{\ } & \circify{\ } \\ \circify{\ } & + & + & \bullet \\ \bullet } & 
      \vline\nulltab{ \circify{\ } & \circify{\ } & \circify{\ } \\ \circify{\ } & \circify{\ } & \circify{\ } & \circify{\ } & \bullet \\ \circify{\ }  \\ \bullet } & 
      \vline\nulltab{ \circify{\ } & \circify{\ } & \circify{\ } \\ \circify{\ } & \circify{\ } & \circify{\ } & \circify{\ } \\ \circify{\ } & \bullet & \bullet \\ & } & 
      \vline\nulltab{ \circify{\ } & \circify{\ } & \circify{\times} \\ \circify{\ } & \circify{\ } & \circify{\ } & \circify{\ } \\ \circify{\ } & \bullet & + & \bullet \\ & } & 
      \vline\nulltab{ \circify{\ } & \circify{\ } & \circify{\ } \\ \circify{\ } & \circify{\ } & \circify{\ } & \circify{\ } & \bullet \\ \circify{\ } & \bullet \\ & } & 
      \vline\nulltab{ \circify{\ } & \circify{\times} & \circify{\times} \\ \circify{\ } & \circify{\ } & \circify{\ } & \circify{\ } & \bullet \\ \circify{\ } & + & + & \bullet \\ &  } & 
      \vline\nulltab{ \circify{\ } & \circify{\ } & \circify{\ } \\ \circify{\ } & \circify{\ } & \circify{\ } & \circify{\ } & \bullet & \bullet \\ \circify{\ } \\ } 
    \end{array}
  \end{displaymath}
  \caption{\label{fig:pieri-key-vex}An example of the nonnegative Pieri rule in the vexillary case, computing the key expansion of $\key_{(0,1,4,3)} \key_{(0,0,2)}$, where the two marked cells $(\bullet)$ denote the added horizontal $2$-strip and crossed cells $(\times)$ drop to the marked positions $(+)$.}
\end{figure}

Lascoux and Sch{\"u}tzenberger \cite{LS90} stated a nonnegative formula for the key polynomial expansion of a Schubert polynomial, with proof details supplied by Reiner and Shimozono \cite{RS95}. Thus the nonnegativity of the key polynomial expansion follows by observing both terms on the left hand side are Schubert polynomials, and so their product is a nonnegative sum of Schubert polynomials, hence is a nonnegative sum of key polynomials. However, since the key polynomial expansion of a Schubert polynomial is indirect, our formula is indeed new. 

Monk \cite{Mon59} proved a formula for the Schubert structure constants $c_{u,v}^{w}$ when $v$ is a simple transposition, equivalently, when $v$ is grassmannian corresponding to the partition $(1)$. Sottile \cite{Sot96} proved a generalization to the Pieri rule for Schubert polynomials, when $v$ is grassmannian corresponding to the partition $(m)$.

Comparing our formula for the vexillary case with the Monk and Pieri rules for Schubert polynomials might lead to a simplified proof of the Schubert structure constants for the case when $u$ is vexillary. Indeed, one hopes that a full Littlewood--Richardson rule for key polynomials could give important insights into finding a long sought formula for the Schubert structure constants $c_{u,v}^{w}$, even if only in the open case where both $u$ and $v$ are vexillary.

%
%

\bibliographystyle{plain} 
\bibliography{monkey}

\begin{thebibliography}{10}

\bibitem{Ass-K}
Sami Assaf.
\newblock {D}emazure crystals for {K}ohnert polynomials.
\newblock preprint.

\bibitem{Ass-W}
Sami Assaf.
\newblock Weak dual equivalence for polynomials.
\newblock arXiv:1702.04051.

\bibitem{Ass18}
Sami Assaf.
\newblock Nonsymmetric {M}acdonald polynomials and a refinement of
  {K}ostka--{F}oulkes polynomials.
\newblock {\em Trans. Amer. Math. Soc.}, 370(12):8777--8796, 2018.

\bibitem{AG}
Sami Assaf and Nicolle Gonz\'{a}lez.
\newblock {D}emazure crystals for specialized nonsymmetric {M}acdonald
  polynomials.
\newblock arXiv:1901.07520, 2019.

\bibitem{AS18}
Sami Assaf and Dominic Searles.
\newblock {K}ohnert tableaux and a lifting of quasi-{S}chur functions.
\newblock {\em J. Combin. Theory Ser. A}, 156:85--118, 2018.

\bibitem{AM11}
Sami~H. Assaf and Peter R.~W. McNamara.
\newblock A {P}ieri rule for skew shapes.
\newblock {\em J. Combin. Theory Ser. A}, 118(1):277--290, 2011.

\bibitem{Bor53}
Armand Borel.
\newblock Sur la cohomologie des espaces fibr\'es principaux et des espaces
  homog\`enes de groupes de {L}ie compacts.
\newblock {\em Ann. of Math. (2)}, 57:115--207, 1953.

\bibitem{Dem74a}
Michel Demazure.
\newblock D\'esingularisation des vari\'et\'es de {S}chubert
  g\'en\'eralis\'ees.
\newblock {\em Ann. Sci. \'Ecole Norm. Sup. (4)}, 7:53--88, 1974.
\newblock Collection of articles dedicated to Henri Cartan on the occasion of
  his 70th birthday, I.

\bibitem{Dem74}
Michel Demazure.
\newblock Une nouvelle formule des caract\`eres.
\newblock {\em Bull. Sci. Math. (2)}, 98(3):163--172, 1974.

\bibitem{HLMvW11-2}
J.~Haglund, K.~Luoto, S.~Mason, and S.~van Willigenburg.
\newblock Refinements of the {L}ittlewood-{R}ichardson rule.
\newblock {\em Trans. Amer. Math. Soc.}, 363(3):1665--1686, 2011.

\bibitem{Ion03}
Bogdan Ion.
\newblock Nonsymmetric {M}acdonald polynomials and {D}emazure characters.
\newblock {\em Duke Math. J.}, 116(2):299--318, 2003.

\bibitem{Knu70}
Donald~E. Knuth.
\newblock Permutations, matrices, and generalized {Y}oung tableaux.
\newblock {\em Pacific J. Math.}, 34:709--727, 1970.

\bibitem{Koh91}
Axel Kohnert.
\newblock Weintrauben, {P}olynome, {T}ableaux.
\newblock {\em Bayreuth. Math. Schr.}, (38):1--97, 1991.
\newblock Dissertation, Universit{\"a}t Bayreuth, Bayreuth, 1990.

\bibitem{LS82}
Alain Lascoux and Marcel-Paul Sch{\"u}tzenberger.
\newblock Polyn\^omes de {S}chubert.
\newblock {\em C. R. Acad. Sci. Paris S\'er. I Math.}, 294(13):447--450, 1982.

\bibitem{LS90}
Alain Lascoux and Marcel-Paul Sch{\"u}tzenberger.
\newblock Keys \& standard bases.
\newblock In {\em Invariant theory and tableaux ({M}inneapolis, {MN}, 1988)},
  volume~19 of {\em IMA Vol. Math. Appl.}, pages 125--144. Springer, New York,
  1990.

\bibitem{Mac91}
I.~G. Macdonald.
\newblock Notes on {S}chubert polynomials.
\newblock LACIM, Univ. Quebec a Montreal, Montreal, PQ, 1991.

\bibitem{Mas09}
Sarah Mason.
\newblock An explicit construction of type {A} {D}emazure atoms.
\newblock {\em J. Algebraic Combin.}, 29(3):295--313, 2009.

\bibitem{Mon59}
D.~Monk.
\newblock The geometry of flag manifolds.
\newblock {\em Proc. London Math. Soc. (3)}, 9:253--286, 1959.

\bibitem{Pie93}
Mario Pieri.
\newblock Sul problema degli spazi secanti.
\newblock {\em Rend. Ist. Lombardo (2)}, 26:534--546, 1893.

\bibitem{RS95}
Victor Reiner and Mark Shimozono.
\newblock Key polynomials and a flagged {L}ittlewood-{R}ichardson rule.
\newblock {\em J. Combin. Theory Ser. A}, 70(1):107--143, 1995.

\bibitem{Rob38}
G.~de~B. Robinson.
\newblock On the {R}epresentations of the {S}ymmetric {G}roup.
\newblock {\em Amer. J. Math.}, 60(3):745--760, 1938.

\bibitem{San00}
Yasmine~B. Sanderson.
\newblock On the connection between {M}acdonald polynomials and {D}emazure
  characters.
\newblock {\em J. Algebraic Combin.}, 11(3):269--275, 2000.

\bibitem{Sch61}
C.~Schensted.
\newblock Longest increasing and decreasing subsequences.
\newblock {\em Canad. J. Math.}, 13:179--191, 1961.

\bibitem{Sch79}
Hermann Schubert.
\newblock {\em Kalk\"ul der abz\"ahlenden {G}eometrie}.
\newblock Springer-Verlag, Berlin-New York, 1979.
\newblock Reprint of the 1879 original, With an introduction by Steven L.
  Kleiman.

\bibitem{Sot96}
Frank Sottile.
\newblock Pieri's formula for flag manifolds and {S}chubert polynomials.
\newblock {\em Ann. Inst. Fourier (Grenoble)}, 46(1):89--110, 1996.

\bibitem{EC2}
Richard~P. Stanley.
\newblock {\em Enumerative combinatorics. {V}ol. 2}, volume~62 of {\em
  Cambridge Studies in Advanced Mathematics}.
\newblock Cambridge University Press, Cambridge, 1999.
\newblock With a foreword by Gian-Carlo Rota and appendix 1 by Sergey Fomin.

\end{thebibliography}

\end{document}